\setlist[enumerate,1]{label=(\roman*), font=\normalfont}
\colorlet{mycitecolor}{violet}
\colorlet{mylinkcolor}{Green}
\colorlet{myurlcolor}{Aquamarine}
\definecolor{dark-red}{rgb}{0.5,0.15,0.15}
\definecolor{dark-blue}{rgb}{0.15,0.15,0.5}
\definecolor{medium-blue}{rgb}{0,0,0.5}
\newcommand{\texorpdfstring}[2]{#1} %to protect math in section titles
\newcommand{\phantomsection}{} %to define \namedlabel below
\def\namedlabel#1#2{\begingroup
  #2%
  \def\@currentlabel{#2}%
  \phantomsection\label{#1}\endgroup
}
\newtheorem{theorem}{Theorem}[section]
\newtheorem{cor}[theorem]{Corollary}
\newtheorem{lemma}[theorem]{Lemma}
\newtheorem{prop}[theorem]{Proposition}
\newtheorem{fact}[theorem]{Fact}
\newtheorem*{theorem*}{Theorem}
\theoremstyle{definition}
\newtheorem{defn}[theorem]{Definition}
\newtheorem{example}[theorem]{Example}
\newtheorem{question}[theorem]{Question}
\theoremstyle{remark}
\newtheorem{remark}[theorem]{Remark}
\newtheorem*{claim*}{Claim}
\numberwithin{equation}{section}
\newcommand{\myenumlabel}[1]{\textnormal{(\roman{#1})}}
\newcounter{cycprfcnt}
\newcounter{cycprffirst}
\newcommand{\cycprfpreamble}%
{%
  \setcounter{cycprfcnt}{1}
  \setcounter{cycprffirst}{0}
  \setlength{\itemindent}{0.5\leftmargin}%
  \setlength{\leftmargin}{0pt}%
  \newcommand{\cpcurr}{\myenumlabel{cycprfcnt}}%
  \newcommand{\cpnext}{\addtocounter{cycprfcnt}{1}\cpcurr}%
  \newcommand{\cpnum}[1]{\setcounter{cycprfcnt}{##1}\cpcurr}%
  \newcommand{\cpfirst}{\cpnum{1}}%
  \newcommand{\impnext}{\cpcurr{} $\Rightarrow$ \cpnext.}%
  \newcommand{\impfirst}{\cpcurr{} $\Rightarrow$ \cpfirst.}%
  \def\makelabel##1{\ifnum\value{cycprffirst}=0\hspace{-0.7\itemindent}\setcounter{cycprffirst}{1}\fi##1}%
}%
\newenvironment{cycprf}%
{\begin{list}{\impnext}%
  {\cycprfpreamble}}%
{\qedhere\end{list}}%
\newenvironment{cycprf*}%
{\begin{list}{\impnext}%
  {\cycprfpreamble}}%
{\end{list}}% % \DeclareMathOperator{\id}{id}
\DeclarePairedDelimiter{\set}{\{}{\}}
\DeclarePairedDelimiterX\setnew[2]{\{}{\}}{#1 \nonscript\;\delimsize \vert \nonscript\; #2}
\newcommand{\acts}{\curvearrowright}
\newcommand{\aut}{\mathrm{Aut}}
\newcommand{\N}{\mathbb N}
\newcommand{\restr}[1]{\upharpoonright #1}
\newcommand{\df}{\emph}
\newcommand{\bA}{{\mathbf A}}
\newcommand{\bB}{{\mathbf B}}
\newcommand{\bC}{{\mathbf C}}
\newcommand{\bD}{{\mathbf D}}
\newcommand{\bM}{{\mathbf M}}
\newcommand{\bN}{{\mathbf N}}
\newcommand{\bQ}{{\mathbf Q}}
\newcommand{\bS}{{\mathbf S}}
\newcommand{\bT}{{\mathbf T}}
\newcommand{\bU}{{\mathbf U}}
\newcommand{\bX}{{\mathbf X}}
\newcommand{\bY}{{\mathbf Y}}
\newcommand{\cF}{\mathcal{F}}
\newcommand{\cK}{\mathcal{K}}
\newcommand{\cL}{\mathcal{L}}
\newcommand{\cM}{\mathcal{M}}
\newcommand{\cN}{\mathcal{N}}
\newcommand{\cS}{\mathcal{S}}
\newcommand{\cU}{\mathcal{U}}
\newcommand{\eps}{\epsilon}
\newcommand{\sub}{\subseteq}
\newcommand{\sminus}{\setminus}
\DeclarePairedDelimiter\gen{\langle}{\rangle}
\DeclarePairedDelimiter\abs{\lvert}{\rvert}
\newcommand{\cl}[2][]{\overline{#2}^{#1}}
\newcommand{\actson}{\curvearrowright}
\providecommand{\dotminus}{} % in case already defined (in mnsymbol)
\renewcommand{\dotminus}{\mathbin{\ooalign{\hss\raise1ex\hbox{.}\hss\cr
      \mathsurround=0pt$-$}}}
\newcommand{\findep}[1][]{%
  \mathrel{
    \mathop{
      \vcenter{
        \hbox{\oalign{\noalign{\kern-.3ex}\hfil$\vert$\hfil\cr
              \noalign{\kern-.7ex}
              $\smile$\cr\noalign{\kern-.3ex}}}
      }
    }\displaylimits_{#1}
  }
}  % from TeX stackexchange
\DeclareMathOperator{\id}{id}
\DeclareMathOperator{\tp}{tp}
\DeclareMathOperator{\tS}{S}
\DeclareMathOperator{\dcl}{dcl}
\DeclareMathOperator{\Aut}{Aut}
\DeclareMathOperator{\Homeo}{Homeo}
\DeclareMathOperator{\Sym}{Sym}
\DeclareMathOperator{\Th}{Th}
\DeclareMathOperator{\Age}{Age}
\DeclareMathOperator{\Succ}{Succ}
\DeclareMathOperator{\Exp}{Exp}
\DeclareMathOperator{\UCB}{UCB}
\renewcommand{\Im}{\mathrm{Im\,}}
\newcommand{\Fraisse}{Fra{\"\i}ss{\'e}\xspace}
\newcommand{\Wazewski}{Wa\.{z}ewski\xspace}
\renewcommand{\And}{\text{ and }}
\newcommand{\umf}{\cM}
\newcommand{\tort}{\overset{r}{\to}}
\newcommand{\divby}{\mkern+2.5mu\reflectbox{/}\mkern+1mu }
\newcommand{\waz}{W}
\newcommand{\reg}{\mathrm{Reg}}
\newcommand{\theends}{\mathrm{End}}
\newcommand{\comps}[1]{\hat{#1}}
\newcommand{\Comps}[1]{\widehat{#1}}
\newcommand{\wazs}{\bX}
\newcommand{\kc}{\kappa}
\newcommand{\pkc}{\kappa^*}
\newcommand{\kf}{\mathcal K}
\newcommand{\pkf}{{\mathcal K}^*}
\newcommand{\fullkf}{\overline{\mathcal{K}}}
\newcommand{\fullpkf}{\overline{\mathcal{K}}^*}
\newcommand{\betr}{B}
\newcommand{\centerf}{K}
\newcommand{\compf}{\Phi}
\newcommand{\rootf}{\rho}
\newcommand{\meet}{\mathbin{\wedge}}
\newcommand{\secf}{\sigma}
\newcommand{\nice}[2]{#1[#2]}
\newcommand{\subtree}[2]{#1(#2)}
\DeclareMathOperator{\Sam}{Sa}
\DeclareMathOperator{\MHP}{\cS}
\DeclareMathOperator{\Br}{Br}
\newcommand{\bxi}{\boldsymbol{\xi}}
\newcommand{\bps}{X} % the branch-points sort of the kaleidoscopic structures
\newcommand{\cps}{\Comps{X}} % the components sort of the kaleidoscopic structures
\newcommand{\bp}{p} % an element of a kaleidoscopic structure
\newcommand{\bs}{s} % an element of a kaleidoscopic structure
\newcommand{\rR}{R} % a relation in the language of the base structure
\newcommand{\cc}{\mathbf{c}} % the constant c in M
\newcommand{\rr}{\mathbf{r}} % the constant r in rooted embeddings
\newcommand{\CCLO}{\mathrm{CCLO}} 
\author{Gianluca Basso}
\address{Institut Camille Jordan \\
  Universit\'e Claude Bernard Lyon 1 \\
  Universit\'e de Lyon \\
  43, boulevard du 11 novembre 1918 \\
  69622 Villeurbanne \textsc{cedex} \\
  France}
\email{basso@math.univ-lyon-1.fr}
\author{Todor Tsankov}
\address{Institut Camille Jordan \\
  Universit\'e Claude Bernard Lyon 1 \\
  Universit\'e de Lyon \\
  43, boulevard du 11 novembre 1918 \\
  69622 Villeurbanne \textsc{cedex} \\
  France
  -- and --
  Institut Universitaire de France}
\email{tsankov@math.univ-lyon-1.fr}
\subjclass[2020]{37B05, 05D10, 22F50}
\keywords{kaleidoscopic groups, topological dynamics, universal minimal flow, comeager orbit}
\title{Topological dynamics of kaleidoscopic groups}
\begin{document}

\begin{abstract}
  Kaleidoscopic groups are a class of permutation groups recently introduced by Duchesne, Monod, and Wesolek. Starting with a permutation group $\Gamma$, the kaleidoscopic construction produces another permutation group $\mathcal{K}(\Gamma)$ which acts on a Wa\.{z}ewski dendrite (a densely branching tree-like compact space). 
  In this paper, we study how the topological dynamics of $\mathcal{K}(\Gamma)$ can be expressed in terms of the one of $\Gamma$, when the group $\Gamma$ is transitive. 
  By proving a Ramsey theorem for decorated rooted trees, we show that the universal minimal flow (UMF) of $\mathcal{K}(\Gamma)$ is metrizable iff $\Gamma$ is oligomorphic and the UMF of $\Gamma$ is metrizable. 
  More generally, we give concrete calculations, in an appropriate model-theoretic framework, of the UMF of $\mathcal{K}(\Gamma)$ when the UMF of a point stabilizer $\Gamma_c$ has a comeager orbit.
  Our results also give a large class of examples of non-metrizable UMFs with a comeager orbit. These results extend previous work of Kwiatkowska and Duchesne about the full homeomorphism groups.
\end{abstract}

\maketitle

\setcounter{tocdepth}{1}
\tableofcontents

%%%%%%%%%%%%%%%%%%%%%%%%%%%%%%%%%%%%%%%%%%%%%%%%%%

\section{Introduction}
\label{sec:introduction}

Dendrites are one-dimensional, compact topological spaces that appear naturally in dynamics (for example, as Julia sets of quadratic polynomials). 
They are tree-like, but can have dense branching. This is the case for the universal spaces in this class, the \emph{\Wazewski dendrites} $W_n$ (where $n = 3, 4, \ldots, \infty$ is the order of the branch points), whose topology is also captured by \Fraisse limits of finite trees and whose rich homeomorphism groups have been studied in topology, in dynamics, and in model theory \cites{Charatonik1980,Kwiatkowska2018,Duchesne2019,Duchesne2020}. Even though the spaces $W_n$ are connected, their homeomorphism groups are \df{non-archimedean} (i.e., admit a basis at the identity consisting of open subgroups) and are best studied as permutation groups of the countable set of branch points of the dendrite.

\emph{Kaleidoscopic groups} are subgroups of $\Homeo(W_n)$ that were recently introduced by Duchesne, Monod, and Wesolek in \cite{Duchesne2019a}. For us a \df{permutation group $\Gamma \actson M$} is simply a \emph{closed} subgroup of the full symmetric group $\Sym(M)$, endowed with the topology of pointwise convergence. Starting with a permutation group $\Gamma \actson M$, where $M$ is a countable set with $n \geq 3$ elements, the kaleidoscopic construction produces another group $\kf(\Gamma) \leq \Homeo(W_n)$ whose \df{local action} is prescribed by $\Gamma$, that is, for any branch point $x \in W_n$, the action of the stabilizer $\kf(\Gamma)_x$ on the connected components of $W_n \sminus \set{x}$ is isomorphic to the action $\Gamma \actson M$. The groups $\kf(\Gamma)$ are Polish, non-archimedean, algebraically simple, and they have many other interesting properties, as discussed in \cite{Duchesne2019a}. They can also be represented as permutation groups acting on the branch points of the dendrite.
The construction was inspired by a classical paper of Burger and Mozes \cite{Burger2000} on groups acting on trees but in contrast with \cite{Burger2000}, kaleidoscopic groups are never locally compact because of the density of branch points. 

In this paper, we are interested in the topological dynamics of kaleidoscopic groups and, more precisely, in their continuous actions on compact spaces (or \df{flows}). In particular, we study which dynamical properties are preserved when passing from $\Gamma$ to $\kf(\Gamma)$. 
We recall that a flow is called \df{minimal} if every orbit is dense, and that, for any topological group $G$, there exists a \df{universal minimal flow (UMF)}, denoted by $\umf(G)$, characterized by the property that it equivariantly maps onto any other minimal $G$-flow.
$\umf(G)$ captures a fair amount of the dynamical information about $G$: the action on $\umf(G)$ is free whenever $G$ admits a free flow, and $\umf(G)$ is a singleton iff every $G$-flow has a fixed point, for instance. 
For locally compact, non-compact groups $G$, universal minimal flows are non-metrizable objects that are difficult to understand but, somewhat surprisingly, for large Polish groups, the UMFs often admit a concrete description and carry interesting combinatorial information. The landmark paper of Kechris, Pestov, and Todor\v{c}evi\'c \cite{Kechris2005} made explicit the connection between Ramsey theory and UMFs of non-archimedean Polish groups and motivated a lot of the recent work in structural Ramsey theory and in abstract topological dynamics. At the heart of the theory is the correspondence between the Ramsey property for the finite substructures of a given ultrahomogeneous structure $\bN$ and the fact that the automorphism group $\Aut(\bN)$ is \df{extremely amenable}, i.e., such that its UMF is a singleton.

In the recent investigations of minimal flows of Polish groups, two dividing lines have emerged. The better established one is metrizability: if $\umf(G)$ is metrizable, it is always the completion of a homogeneous space of the form $G/H$ for some closed, extremely amenable $H \leq G$ that is unique up to conjugation \cites{BenYaacov2017, Melleray2016}. In this case, one has a good understanding of the topological dynamics of $G$: for example, the minimal flows are all metrizable, they are concretely classifiable \cite{Melleray2016}, and for any $G$-flow $X$, the collection of almost periodic points in $X$ is closed \cite{Zucker2018a}.
Following \cite{Basso2021a}, we say that a Polish group is \df{CAP} if its UMF is metrizable. 
Notice that if $G$ is compact, then $\umf(G) \cong G$, so in particular it is metrizable.
Another important property is that if $G$ is CAP, then $\umf(G)$, and therefore all minimal $G$-flows, have a comeager orbit.

In fact, the existence of a comeager orbit in $\umf(G)$ provides the second interesting dividing line.
In \cite{Angel2014}, Polish groups with this property are said to have the \df{generic point property}. 
It is a tameness condition, which still implies a strong structure on the category of minimal $G$-flows, but is not completely understood at the moment.
Indeed, it was only recently that Kwiatkowska~\cite{Kwiatkowska2018} found the first examples of groups $G$ such that $\umf(G)$ has a comeager orbit but is not metrizable, namely the homeomorphism groups of certain \emph{generalized \Wazewski dendrites}.  
It is a consequence of our results about UMFs of kaleidoscopic groups that many of them have comeager orbits without being metrizable, suggesting that this phenomenon is more prevalent than previously thought.
Generalizing the metrizable case, Zucker~\cite{Zucker2021} showed that if $\umf(G)$ has a comeager orbit, then it is of the form $\Sam(G/H)$, where $H \le G$ is closed and extremely amenable, and $\Sam$ denotes the \df{Samuel compactification}. We refer to Section~\ref{sec:prel-topol-dynam} for all relevant definitions and more details.

The main results of this paper connect properties of the original permutation group $\Gamma$ and its UMF with properties of the UMF of the kaleidoscopic group $\kf(\Gamma)$. Prior work about the full homeomorphism group $G = \Homeo(W_n)$ had been done by Kwiatkowska~\cite{Kwiatkowska2018} (for all $n$) and Duchesne~\cite{Duchesne2020} (for $n = \infty$), who proved that $\umf(G)$ is metrizable and identified the appropriate subgroup $H \leq G$ such that $\umf(G) \cong \widehat{G/H}$. Duchesne also proposed (cf. \cite{Duchesne2020}*{Theorem~1.12}) a concrete description of $\umf(\Homeo(W_{\infty}))$ as a space of linear orders on the branch points of the dendrite, which is unfortunately not correct. We discuss this in detail in Section~\ref{sec:case-full-home}. In another paper \cite{Duchesne2021p}, Duchesne proved that the group $\kf(C_3)$, where $C_3$ is the cyclic group of order $3$ acting on itself by translations, also has a metrizable UMF. All of these previous results can be deduced from our general theorem below.

Our first result is a characterization of when $\umf(\kf(\Gamma))$ is metrizable. Recall that a permutation group $\Gamma \actson M$ is called \df{oligomorphic} if the diagonal action $\Gamma \actson M^k$ has finitely many orbits for all $k \in \N$.

\emph{Below and everywhere in the paper, when we consider a permutation group $\Gamma \actson M$, the set $M$ is tacitly assumed to be countable with $n \ge 3$ elements.} This is so that the corresponding \Wazewski dendrite and kaleidoscopic group $\kf(\Gamma)$ are well-defined.

\begin{theorem}[cf. \Cref{th:KGammaCAP}]
  \label{th:intro:KGammaCAP}
  Let $\Gamma \acts M$ be a transitive permutation group. Then the following are equivalent:
  \begin{enumerate}
    \item \label{i:intro:th:KGammaCAP-1} $\umf(\Gamma)$ is metrizable and the action $\Gamma \actson M$ is oligomorphic;
    \item \label{i:intro:th:KGammaCAP-2} $\umf(\kf(\Gamma))$ is metrizable.
  \end{enumerate}
\end{theorem}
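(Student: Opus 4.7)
The strategy is based on the Kechris--Pestov--Todor\v{c}evi\'c correspondence and its metrizable refinements (Ben~Yaacov--Melleray--Tsankov, Zucker, Nguyen Van~Th\'e): for a non-archimedean Polish group $G$, $\umf(G)$ is metrizable iff $G$ admits a closed extremely amenable subgroup $H$ such that $G/H$ is precompact, in which case $\umf(G)\cong\widehat{G/H}$. The plan is to locate such an $H$ inside $\kf(\Gamma)$ as the stabilizer of a ``coherent decoration'' of the branch points of $W_n$, where the decoration at each branch point comes from a precompact Ramsey expansion of $M$ that witnesses metrizability of $\umf(\Gamma)$.

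For the direction (i)~$\Rightarrow$~(ii): by oligomorphicity and metrizability of $\umf(\Gamma)$, fix a Fra\"iss\'e relational expansion $\bN$ of $M$ (in a finite language, by oligomorphicity) such that $\Gamma_0:=\Aut(\bN)\le\Gamma$ is extremely amenable, $\age(\bN)$ is a precompact Ramsey expansion with the expansion property relative to the reducts, and $\Gamma/\Gamma_0$ is precompact. Consider the class $\cD$ of finite \emph{decorated trees}: pairs $(T,(\sigma_t)_{t\in T})$ where $T$ is a finite subtree of $W_n$ spanned by branch points and, for each $t\in T$, $\sigma_t$ is a structure in $\age(\bN)$ placed on the (finite) set of directions at $t$ used by edges of $T$. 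I would verify that $\cD$ is a Fra\"iss\'e class; its Fra\"iss\'e limit gives a coherent decoration $\bS$ of the full set of branch points. Set $H:=\kf(\Gamma)_{\bS}$. Oligomorphicity of $\Gamma$ ensures that for every finite tree $T$ only finitely many isomorphism types of decorations arise, which yields precompactness of $\kf(\Gamma)/H$. Extreme amenability of $H$ is equivalent, by KPT, to the Ramsey property of $\cD$, which is the central combinatorial theorem announced in the abstract. Together with the expansion property for $\cD$ (inherited vertex-by-vertex from $\bN$), this gives $\umf(\kf(\Gamma))\cong\widehat{\kf(\Gamma)/H}$ metrizable.

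For the direction (ii)~$\Rightarrow$~(i): assume $\kf(\Gamma)$ is CAP. The point stabilizer $\kf(\Gamma)_c$ of a branch point $c$ is open, hence CAP (open subgroups of CAP groups are CAP), and the local action at $c$ gives a continuous surjection $\kf(\Gamma)_c\to\Gamma$, so $\Gamma$ is CAP by passage to Hausdorff topological-group quotients. For oligomorphicity, invoke the above characterization: there is an extremely amenable closed $H\le\kf(\Gamma)$ with $\kf(\Gamma)/H$ precompact, i.e., for every open subgroup $U$ the double coset space $U\backslash\kf(\Gamma)/H$ is finite. Taking $U$ to be the pointwise stabilizer of a finite tuple of branch points gives that $\kf(\Gamma)$ has finitely many orbits on $k$-tuples of branch points for every $k$. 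Every such orbit is parametrized by (a) the combinatorial type of the spanning subtree and (b) at each vertex of that tree, the $\Gamma$-orbit of the tuple of incident used directions. Finiteness of this data for every $k$ forces $\Gamma$ to have finitely many orbits on $M^k$; that is, $\Gamma$ is oligomorphic.

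The main obstacle is the Ramsey theorem for the class $\cD$ of decorated trees. Tree Ramsey theorems (Milliken, or Sokic/Kwiatkowska-style) and the Ramsey property of $\age(\bN)$ are both available, but combining them requires care, as colorings can depend jointly on the tree skeleton and on the decorations. I would attempt a two-level partite construction: first stabilize the coloring on the decorations at each vertex using Ramseyness of $\age(\bN)$, then stabilize the tree skeleton using tree Ramsey on the resulting monochromatic structure, iterating as necessary. A secondary but nontrivial task is to establish joint embedding and amalgamation for $\cD$, which requires simultaneously amalgamating $\bN$-structures at the common vertices of the trees being amalgamated.
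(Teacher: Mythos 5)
Your direction (ii)~$\Rightarrow$~(i) is correct for the CAP part but has a gap in the oligomorphicity argument, and your direction (i)~$\Rightarrow$~(ii) rests on a construction that cannot work as stated. The underlying issue in both cases is the same: you are missing the rooting.

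For (i)~$\Rightarrow$~(ii): the stabilizer of an unrooted coherent decoration $\bS$ is (essentially) the kaleidoscopic group $\kf(\Aut(\bN))$, and for \emph{every} closed $\Delta \le \Sym(M)$ the kaleidoscopic group $\kf(\Delta)$ acts minimally on $W$ (Fact~\ref{f:kaleidoscopic-facts}\ref{i:kaleido-fact:transitive-ends}); it therefore has a nontrivial minimal flow and is never extremely amenable. Equivalently, by KPT, your class $\cD$ of unrooted decorated trees cannot have the Ramsey property, so the ``central combinatorial theorem'' you announce for it is false. The paper's resolution is to fix an endpoint $\xi$ first and work inside the stabilizer $\kf(\Gamma)_\xi$, which is shown to equal the \emph{root}-kaleidoscopic group $\pkf(\Gamma_c)$ of a point stabilizer $\Gamma_c$. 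The extremely amenable subgroup is then $\pkf(\Delta)$ for an extremely amenable $\Delta \le \Gamma_c$, built from a Ramsey expansion $\bN$ of $\bM_\cc$ (not of $\bM$), where the decorations are on \emph{rooted} trees with the component toward the root always colored by the constant $c$. The Ramsey theorem (Theorem~\ref{th:main-Ramsey}) is then proved by induction on the height of rooted trees, using a local version of the preservation of the Ramsey property under group extensions (Proposition~\ref{p:local-group-ext}) applied to the wreath-product structure of the stabilizer of the first non-root vertex. Your proposed two-level partite argument never mentions this rooting and would not produce the right statement.

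For (ii)~$\Rightarrow$~(i): the inference ``$U \backslash \kf(\Gamma)/H$ finite for all open $U$ implies $\kf(\Gamma)$ has finitely many orbits on $k$-tuples of branch points'' does not follow — finiteness of $(U,H)$-double cosets does not give finiteness of $(U,U)$-double cosets without additional information about $H$. The paper instead uses that $\kf(\Gamma) \actson W$ is minimal, hence $\MHP_{\kf(\Gamma)}(W) \cong \Sam(\kf(\Gamma)/\kf(\Gamma)_\xi)$ is a minimal flow and must be metrizable when $\kf(\Gamma)$ is CAP, which by Corollary~\ref{c:metrizability-SGW} (via Corollary~\ref{cor:KGamma-c-coprecompact}) is equivalent to oligomorphicity of $\Gamma$. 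This step genuinely requires identifying the endpoint stabilizer and showing that its co-precompactness is equivalent to oligomorphicity of $\Gamma$; it cannot be replaced by the double-coset count you propose.

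Your argument that CAP for $\kf(\Gamma)$ implies CAP for $\Gamma$ (via the open branch-point stabilizer and the surjection onto $\Gamma$) is fine and matches the paper.
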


Of course, item \ref{i:intro:th:KGammaCAP-1} above is trivially satisfied when $M$ is
finite, so we obtain the following.
\begin{cor}
  \label{c:intro:finite}
  Let $\Gamma \actson M$ be a transitive permutation group with $M$ finite. Then $\umf(\kf(\Gamma))$ is metrizable.
\end{cor}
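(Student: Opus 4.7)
The plan is to reduce the corollary directly to Theorem~\ref{th:intro:KGammaCAP} by verifying both conditions in item~\ref{i:intro:th:KGammaCAP-1} in the finite case, which should be essentially automatic.

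First, I would observe that when $M$ is finite, $\Sym(M)$ is a finite group carrying the discrete topology (the topology of pointwise convergence on a finite set is discrete), so any closed subgroup $\Gamma \le \Sym(M)$ is itself finite. In particular $\Gamma$ is compact, and the standard fact recalled in the introduction — that for compact $G$ one has $\umf(G) \cong G$ — immediately shows that $\umf(\Gamma)$ is a finite, hence metrizable, space.

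Next, I would check oligomorphicity: for every $k \in \N$, the set $M^k$ is finite, so the diagonal action $\Gamma \actson M^k$ trivially has only finitely many orbits. Hence $\Gamma \actson M$ is oligomorphic.

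With both conditions established, implication \ref{i:intro:th:KGammaCAP-1}$\Rightarrow$\ref{i:intro:th:KGammaCAP-2} of Theorem~\ref{th:intro:KGammaCAP} applies and yields that $\umf(\kf(\Gamma))$ is metrizable. There is no genuine obstacle here — the corollary is really just the remark that the first clause of Theorem~\ref{th:intro:KGammaCAP}\ref{i:intro:th:KGammaCAP-1} is vacuously satisfied when $M$ is finite, as noted in the paragraph preceding the corollary statement; the only thing worth spelling out is that finiteness of $M$ forces finiteness of $\Gamma$, since a reader might initially confuse ``transitive permutation group on a finite set'' with something possibly infinite.
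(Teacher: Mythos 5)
Your proof is correct and follows exactly the route the paper intends: verify the two clauses of Theorem~\ref{th:intro:KGammaCAP}\ref{i:intro:th:KGammaCAP-1} (metrizability of $\umf(\Gamma)$ via compactness of the finite group $\Gamma$, and oligomorphicity via finiteness of $M^k$) and invoke the theorem. The paper simply remarks that item~\ref{i:intro:th:KGammaCAP-1} is trivially satisfied when $M$ is finite; you have merely spelled out the triviality.
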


As the action $\kf(\Gamma) \actson W_n$ is minimal, it is clear that if we want to search for an extremely amenable subgroup of $\kf(\Gamma)$, we have to look at subgroups of the stabilizers of the points in $W_n$. As the action $\kf(\Gamma) \actson W_n$ has a comeager orbit, namely that of the \emph{endpoints} (for the definition of the various types of points in $W_n$ and their properties, see Section~\ref{sec:kale-root-kale}), it is natural to study the stabilizer $\kf(\Gamma)_\xi$ for some fixed endpoint $\xi \in W_n$.

To that end, if $\Delta \actson M$ is a permutation group that stabilizes some distinguished point $c \in M$, we construct a \emph{root-kaleidoscopic group} $\pkf(\Delta)$ that is a subgroup of the stabilizer $\Homeo(W_n)_\xi$. The root-kaleidoscopic construction $\pkf(\Delta)$ has a similar behavior to $\kf(\Gamma)$: for any branch point $x \in W_n$, the local action of $\pkf(\Delta)_x$ on the connected components of $W_n \sminus \set{x}$ is isomorphic to $\Delta \actson M$ \emph{and the isomorphism is such that the component containing the root $\xi$ is always labeled by $c$}. 

This construction has the crucial property that if $\Gamma \actson M$ is transitive, then $\pkf(\Gamma_c) = \kf(\Gamma)_\xi$. 
An important insight of our work is that, therefore, in order to describe $\umf(\kf(\Gamma))$, for transitive $\Gamma$, one needs to understand the dynamics of the point stabilizers of the action $\Gamma \actson M$ rather than that of $\Gamma$ itself. 

Even though all results above are stated in the language of permutation groups, our combinatorial work is done in the model-theoretic setting of  ultrahomogeneous structures (which we recall in Section~\ref{sec:model-theory-permutation}).
We associate ultrahomogeneous structures $\bN$ and $\pkf(\bN)$ to $\Delta$ and $\pkf(\Delta)$, respectively.
The finite substructures of $\pkf(\bN)$ are rooted trees decorated by substructures of $\bN$. 

We then prove a transfer Ramsey theorem (\Cref{th:main-Ramsey}), which states that rooted trees decorated with Ramsey structures have the Ramsey property.
This generalizes a result of Soki\'{c} \cite{Sokic2015}, in which the decorations are linear orders. 
The proof goes by induction on the height of such trees, and uses a group-theoretic argument about permutational wreath products for the induction step (see \Cref{p:local-group-ext}).
\Cref{th:main-Ramsey} translates to the following statement about root-kaleidoscopic groups.

\begin{theorem}[cf. Corollary~\ref{cor:KstarDeltaExtAmenable}]
  \label{th:Intro:KstarDeltaExtAmenable}
  Let $\Delta \actson M$ be a permutation group, let $c \in M$, and suppose that $\Delta$ stabilizes $c$. If $\Delta$ is extremely amenable, then so is $\pkf(\Delta)$.
\end{theorem}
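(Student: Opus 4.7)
The plan is to deduce the theorem from the main Ramsey theorem \Cref{th:main-Ramsey} via the Kechris--Pestov--Todor\v{c}evi\'c (KPT) correspondence: the combinatorial content is entirely carried by \Cref{th:main-Ramsey}, and the corollary is its translation into the language of topological dynamics. First I would realize $\Delta$ as $\Aut(\bN)$, where $\bN$ is the ultrahomogeneous structure associated to $\Delta$ as in Section~\ref{sec:model-theory-permutation}, so that $\pkf(\Delta) = \Aut(\pkf(\bN))$. Since $\Delta$ is extremely amenable, KPT gives two pieces of information about $\bN$: the class $\Age(\bN)$ has the Ramsey property, and every finite substructure of $\bN$ is rigid (a non-rigid finite substructure would yield a nontrivial continuous homomorphism from $\Delta$ to a finite, hence compact, group, contradicting extreme amenability).

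Next I would invoke \Cref{th:main-Ramsey}. By the description of $\pkf(\bN)$ given earlier, the finite substructures of $\pkf(\bN)$ are precisely the rooted trees decorated by finite substructures of $\bN$; the transfer Ramsey theorem then says exactly that $\Age(\pkf(\bN))$ has the Ramsey property.

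To apply the KPT correspondence in the reverse direction, it remains to verify that every finite substructure of $\pkf(\bN)$ is rigid. I would do this by induction on the height of the underlying rooted tree: an automorphism must fix the root (which is distinguished); the induced permutation of the subtrees hanging from the root is controlled by the decoration at the root, which is rigid by the previous paragraph, so it must be trivial; and then each subtree is fixed by the inductive hypothesis. With rigidity of $\Age(\pkf(\bN))$ in hand, KPT yields that $\Aut(\pkf(\bN)) = \pkf(\Delta)$ is extremely amenable. The only genuinely hard step is \Cref{th:main-Ramsey} itself; the remaining two ingredients (propagation of rigidity and the KPT translation) are routine bookkeeping, with the only thing worth double-checking being that the definition of the structure $\pkf(\bN)$ is set up so that its automorphism group really coincides with the topological group $\pkf(\Delta)$.
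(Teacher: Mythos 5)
Your proposal is the paper's proof: the corollary is deduced by combining Theorem~\ref{th:main-Ramsey} with the KPT correspondence (Theorem~\ref{th:KPT}), after realizing $\pkf(\Delta) = \Aut(\pkf(\bN))$ via Theorem~\ref{thm:point-kal-is-homogeneous}. Two remarks on your bookkeeping steps. First, the paper's Theorem~\ref{th:KPT} is stated for the \emph{embedding} version of the Ramsey property --- in Section~\ref{sec:ramsey-interlude}, $\binom{\bB}{\bA}$ denotes embeddings, not substructure copies --- and in that formulation rigidity of the age is a consequence of the Ramsey property rather than an additional hypothesis; so the separate rigidity check for $\Age(\pkf(\bN))$ is unnecessary, Theorem~\ref{th:main-Ramsey} already delivers the full hypothesis of Theorem~\ref{th:KPT}. (Your inductive argument on the height of the tree does correctly prove it, for what it is worth.) Second, the claim that a non-rigid $\bA \in \Age(\bN)$ yields a nontrivial continuous homomorphism \emph{from $\Delta$} to a finite group is not quite right: $\Sym(\N)$ has plenty of non-rigid finite substructures yet admits no nontrivial continuous homomorphism to any Hausdorff group. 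The correct statement is that a non-rigid $\bA$ yields a nontrivial continuous homomorphism to $\Aut(\bA)$ from the open \emph{setwise} stabilizer of $A$ in $\Delta$, which still gives the contradiction because extreme amenability passes to open subgroups by Fact~\ref{f:ea-open-subgroup}.
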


Using this theorem, together with the techniques from \cite{Kechris2005} and \cite{Zucker2021}, we obtain the following, from which Theorem~\ref{th:intro:KGammaCAP} can be deduced as a consequence.

\begin{theorem}[cf. Theorem~\ref{th:UMF-kal}]
  \label{th:intro:UMF-kal}
  Let $\Gamma \acts M$ be a transitive permutation group and let $c \in M$. If $\umf(\Gamma_c)$ has a comeager orbit, then so does $\umf(\kf(\Gamma))$. More precisely, if $\umf(\Gamma_c) = \Sam(\Gamma_c/\Delta)$ for some closed $\Delta \leq \Gamma_c$, then
  \begin{equation*}
    \umf(\kf(\Gamma)) = \Sam(\kf(\Gamma)/\pkf(\Delta)).
  \end{equation*}
\end{theorem}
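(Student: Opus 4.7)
The plan is to combine our Theorem~\ref{th:Intro:KstarDeltaExtAmenable} with the Zucker characterization of universal minimal flows with a comeager orbit \cite{Zucker2021}. The hypothesis $\umf(\Gamma_c) = \Sam(\Gamma_c/\Delta)$ forces $\Delta$ to be closed and extremely amenable. Since $\Delta \leq \Gamma_c$ stabilizes $c$, Theorem~\ref{th:Intro:KstarDeltaExtAmenable} then applies and yields that $\pkf(\Delta)$ is extremely amenable. Together with the universal property of $\umf(\kf(\Gamma))$, this produces a canonical $\kf(\Gamma)$-equivariant continuous surjection
\[
\phi : \Sam(\kf(\Gamma)/\pkf(\Delta)) \twoheadrightarrow \umf(\kf(\Gamma)),
\]
so the proof reduces to showing that $\Sam(\kf(\Gamma)/\pkf(\Delta))$ is itself a minimal $\kf(\Gamma)$-flow, as this will force $\phi$ to be an isomorphism.

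For minimality, I would exploit the filtration $\pkf(\Delta) \leq \pkf(\Gamma_c) = \kf(\Gamma)_\xi \leq \kf(\Gamma)$ together with the auxiliary minimal $\kf(\Gamma)$-flow $W_n$. By density of $\kf(\Gamma) \cdot \xi$ in $W_n$, the assignment $g\pkf(\Delta) \mapsto g\xi$ extends continuously to a $\kf(\Gamma)$-equivariant surjection $\pi : \Sam(\kf(\Gamma)/\pkf(\Delta)) \to W_n$. Any minimal subflow $F \subseteq \Sam(\kf(\Gamma)/\pkf(\Delta))$ projects onto $W_n$, and hence meets the fiber $X_\xi := \pi^{-1}(\xi)$; this fiber is $\pkf(\Gamma_c)$-invariant and equals the closure of $\pkf(\Gamma_c)/\pkf(\Delta)$ inside the ambient Samuel compactification. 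So it suffices to prove that $X_\xi$ is a minimal $\pkf(\Gamma_c)$-flow: then $F \supseteq X_\xi$, and taking $\kf(\Gamma)$-translates recovers all of $\Sam(\kf(\Gamma)/\pkf(\Delta))$.

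The main obstacle is the minimality of $X_\xi$. A natural direct approach is to produce a $\pkf(\Gamma_c)$-equivariant continuous surjection $X_\xi \to \Sam(\Gamma_c/\Delta) = \umf(\Gamma_c)$ via a local-action projection at a branch point adjacent to $\xi$, and then transport minimality back through it. The cleanest execution, however, probably sidesteps the fiber analysis entirely and works inside the model-theoretic framework built for Theorem~\ref{th:Intro:KstarDeltaExtAmenable}: once one identifies $\pkf(\bN)$ as a suitable \Fraisse Ramsey expansion of the ultrahomogeneous structure $\kf(\bM)$ associated with $\kf(\Gamma)$ (Ramseyness being supplied by Theorem~\ref{th:Intro:KstarDeltaExtAmenable}, and the requisite ``expansion property'' encoding exactly the minimality information packaged in the hypothesis $\umf(\Gamma_c) = \Sam(\Gamma_c/\Delta)$), the non-metrizable Kechris--Pestov--Todor\v{c}evi\'c framework developed by Zucker identifies $\umf(\kf(\Gamma))$ directly with $\Sam(\kf(\Gamma)/\pkf(\Delta))$.
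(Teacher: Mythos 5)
Your overall framework is the same as the paper's: reduce to showing that $\pkf(\Delta)$ is extremely amenable and presyndetic in $\kf(\Gamma)$, then invoke Zucker's characterization of UMFs with comeager orbit (Fact~\ref{f:facts-umf}). The extreme amenability step is handled correctly via Corollary~\ref{cor:KstarDeltaExtAmenable}, and the reduction to minimality of $\Sam(\kf(\Gamma)/\pkf(\Delta))$ is sound. However, the heart of the argument — presyndeticity — is left as a gesture in both of your suggested routes, and one of those routes contains a real error.

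In the fiber-analysis route there are two concrete problems. First, the assertion that $X_\xi \coloneqq \pi^{-1}(\xi)$ equals the closure of $\pkf(\Gamma_c)/\pkf(\Delta)$ inside $\Sam(\kf(\Gamma)/\pkf(\Delta))$ is not automatic: a point $p$ with $\pi(p) = \xi$ is a limit of cosets $g_i\pkf(\Delta)$ with $g_i\cdot\xi\to\xi$, and nothing forces the $g_i$ to lie in $\kf(\Gamma)_\xi$. (The comeager-orbit analysis in Proposition~\ref{p:SG-comeager} only identifies the fiber of $\Sam(\kf(\Gamma)/\kf(\Gamma)_\xi)\to W$ with a single coset, not the fiber over that coset under the further projection from $\Sam(\kf(\Gamma)/\pkf(\Delta))$.) Second, and more seriously, the sentence ``produce a $\pkf(\Gamma_c)$-equivariant continuous surjection $X_\xi\to\Sam(\Gamma_c/\Delta)$ \dots and then transport minimality back through it'' is a logical error: minimality does not lift backwards along factor maps, so a factor map onto $\umf(\Gamma_c)$ tells you nothing about minimality of $X_\xi$ itself.

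The model-theoretic route you sketch as the ``cleanest execution'' is in fact what the paper does, but the phrase that the expansion property is ``encoding exactly the minimality information packaged in the hypothesis'' elides the actual work. What is needed is a transfer statement: that if $\Delta$ is presyndetic in $\Gamma_c$ (equivalently, $\bN$ is a minimal expansion of $\bM_\cc$), then $\pkf(\Delta)$ is presyndetic in $\kf(\Gamma)$. This is Proposition~\ref{pr:H-presyndetic}, whose proof requires showing that $\pkf(\bN)$ is a minimal expansion of $\pkf(\bM_\cc)$ by a concrete combinatorial argument on the decorated rooted trees $\nice{\bA}{h}$, followed by separately establishing that $\kf(\Gamma)_\xi$ is presyndetic in $\kf(\Gamma)$ (via minimality of the $W$-flow and Proposition~\ref{p:SG-comeager}) and then chaining through Proposition~\ref{p:presyndetic-transitive}. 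None of this is a formal consequence of the Ramsey theorem; it is a parallel combinatorial input, and your proposal currently treats it as if it were already available.
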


In order to obtain a more concrete description of the Samuel compactification, we develop model-theoretic framework to represent it as an appropriate type space.
It encompasses the relational expansions, which are used in \cite{Kechris2005} to describe metrizable UMFs of automorphism groups.

As opposed to Theorem~\ref{th:intro:KGammaCAP}, which is a characterization, we do not have a converse of Theorem~\ref{th:intro:UMF-kal}. This is because it is not known whether the property of the UMF having a comeager orbit is closed under taking open subgroups (cf. Question~\ref{q:open-subgroup-of-comeager-orbit}).

Theorem~\ref{th:intro:UMF-kal} is already interesting when $\Gamma_c$ is the trivial group (so that one can take $\Delta = \Gamma_c$ above). Recall that a permutation action $\Gamma \actson M$ is called \df{simply transitive} if it is transitive and free (so that it is isomorphic to the left translation action of the group on itself).

\begin{cor}
  \label{c:intro:simply-transitive}
  Let $\Gamma \actson M$ be a simply transitive action. Then
  \begin{equation*}
    \umf(\kf(\Gamma)) = \Sam(\kf(\Gamma) / \kf(\Gamma)_\xi)
  \end{equation*}
  and $\umf(\kf(\Gamma))$ is metrizable iff $M$ is finite. In particular, if $M$ is infinite, $\umf(\kf(\Gamma))$  is a non-metrizable UMF with a comeager orbit.
\end{cor}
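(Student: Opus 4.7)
The plan is to deduce the corollary directly from Theorems~\ref{th:intro:UMF-kal} and~\ref{th:intro:KGammaCAP}, together with the identity $\pkf(\Gamma_c) = \kf(\Gamma)_\xi$ highlighted in the introduction. Since $\Gamma \actson M$ is simply transitive, the point stabilizer $\Gamma_c$ is trivial, hence extremely amenable, and $\umf(\Gamma_c)$ is a single point which is trivially $\Sam(\Gamma_c/\Gamma_c)$. So the hypothesis of Theorem~\ref{th:intro:UMF-kal} is satisfied with $\Delta = \Gamma_c$, and applying it gives
\begin{equation*}
  \umf(\kf(\Gamma)) = \Sam(\kf(\Gamma)/\pkf(\Gamma_c)) = \Sam(\kf(\Gamma)/\kf(\Gamma)_\xi),
\end{equation*}
which proves the first assertion.

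For the metrizability dichotomy, I would appeal to Theorem~\ref{th:intro:KGammaCAP}: $\umf(\kf(\Gamma))$ is metrizable iff $\umf(\Gamma)$ is metrizable and $\Gamma \actson M$ is oligomorphic. The case $M$ finite is immediate (it is a special case of Corollary~\ref{c:intro:finite}). For $M$ infinite, I need only rule out the oligomorphic condition, which is elementary: fixing $c \in M$, every pair $(x,y) \in M^2$ lies in the $\Gamma$-orbit of some $(c,z)$ by transitivity, and two pairs $(c,z_1)$, $(c,z_2)$ can only be identified by an element of $\Gamma_c = \{e\}$, forcing $z_1 = z_2$. Hence the orbits of $\Gamma \actson M^2$ are in bijection with $M$ and infinite in number, so the action is not oligomorphic and $\umf(\kf(\Gamma))$ is not metrizable.

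There is no real obstacle: all the substantive work sits in Theorems~\ref{th:intro:UMF-kal} and~\ref{th:intro:KGammaCAP} (and in the identification $\pkf(\Gamma_c) = \kf(\Gamma)_\xi$ afforded by the root-kaleidoscopic construction), and the simply transitive hypothesis reduces the input of Theorem~\ref{th:intro:UMF-kal} to its most degenerate, automatically satisfied form.
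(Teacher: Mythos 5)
Your proof is correct and matches the paper's implicit route: the first equality is a direct application of Theorem~\ref{th:intro:UMF-kal} with $\Delta = \Gamma_c$ (the trivial group), combined with the identity $\pkf(\Gamma_c) = \kf(\Gamma)_\xi$, and the metrizability dichotomy follows from Theorem~\ref{th:intro:KGammaCAP} once one observes that a simply transitive action on an infinite set cannot be oligomorphic. The elementary count of $\Gamma$-orbits on $M^2$ is exactly the right observation to make that last point precise.
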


As a consequence of the results of \cite{Zucker2021}, another way to view the space $\Sam(\kf(\Gamma) / \kf(\Gamma)_\xi)$ that appears in the above corollary is as the \df{universal highly proximal extension} of the action $\kf(\Gamma) \actson W_n$. In Section~\ref{sec:maxim-highly-prox}, we give an explicit description of this flow as a type space in an appropriate model-theoretic structure.

Corollary~\ref{c:intro:simply-transitive} gives a large supply of non-metrizable UMFs with a comeager orbit and a fairly concrete description of them, given by Theorem~\ref{th:S_GW}. Continuum many such flows had been  exhibited by Kwiatkowska~\cite{Kwiatkowska2018}, as mentioned before. Her examples are parametrized by elements of the Cantor set, while Corollary~\ref{c:intro:simply-transitive} gives examples parametrized by countable groups, which, from descriptive-set theoretic point of view, are much more complicated.

In some cases, we can also calculate the \df{Furstenberg boundary} (or the \df{universal strongly proximal flow}) of $\kf(\Gamma)$. This result generalizes one of Duchesne~\cite{Duchesne2020} about the full homeomorphism group.
\begin{theorem}[cf. Theorem~\ref{th:furstenberg-boundary}]
  \label{th:intro:furstenberg-boundary}
  Let $\Gamma \acts M$ be a transitive permutation group and let $c \in M$. 
  If $\Gamma_c$ is amenable and $\umf(\Gamma_c)$ is metrizable, then the Furstenberg boundary of $\kf(\Gamma)$ is isomorphic to $\Sam(\kf(\Gamma) / \kf(\Gamma)_\xi)$.
\end{theorem}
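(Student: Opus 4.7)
The plan is to derive the theorem from Theorem~\ref{th:intro:UMF-kal} by exhibiting $\Sam(\kf(\Gamma)/\kf(\Gamma)_\xi)$ as the largest strongly proximal factor of $\umf(\kf(\Gamma))$, using the amenability of the stabilizer $\kf(\Gamma)_\xi$.

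\textbf{Amenability of the stabilizer.} The key new ingredient is that $\kf(\Gamma)_\xi = \pkf(\Gamma_c)$ is amenable. The root-kaleidoscopic group $\pkf(\Gamma_c)$ admits a filtration by pointwise stabilizers of expanding finite subtrees of $W_n$ containing $\xi$; each successive quotient is built from copies of $\Gamma_c$ by a semidirect construction reflecting the local action on the newly added components. This realizes $\pkf(\Gamma_c)$ as an inverse limit of topological groups obtained from $\Gamma_c$ by iterated extensions. Since amenability of topological groups is preserved by extensions and by inverse limits, amenability propagates from $\Gamma_c$ to $\pkf(\Gamma_c)$.

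\textbf{The factor map and strong proximality.} Since $\umf(\Gamma_c)$ is metrizable, the results of \cites{BenYaacov2017, Melleray2016} yield an extremely amenable closed subgroup $\Delta \leq \Gamma_c$ with $\umf(\Gamma_c) = \Sam(\Gamma_c/\Delta)$. Theorem~\ref{th:intro:UMF-kal} then gives $\umf(\kf(\Gamma)) = \Sam(\kf(\Gamma)/\pkf(\Delta))$, and the inclusion $\pkf(\Delta) \subseteq \pkf(\Gamma_c) = \kf(\Gamma)_\xi$ induces a canonical $\kf(\Gamma)$-equivariant surjection
\[
  q \colon \umf(\kf(\Gamma)) \twoheadrightarrow \Sam(\kf(\Gamma)/\kf(\Gamma)_\xi).
\]
The target is strongly proximal: the natural map $\kf(\Gamma)/\kf(\Gamma)_\xi \to \kf(\Gamma)\cdot\xi \subseteq W_n$ extends to a highly proximal factor map $\Sam(\kf(\Gamma)/\kf(\Gamma)_\xi) \to W_n$, and $\kf(\Gamma)\actson W_n$ is strongly proximal (verified analogously to the full homeomorphism case treated in \cite{Duchesne2020}); highly proximal extensions preserve strong proximality.

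\textbf{Universality and main obstacle.} Let $Y$ be any minimal strongly proximal $\kf(\Gamma)$-flow. Amenability of $\kf(\Gamma)_\xi$ supplies a $\kf(\Gamma)_\xi$-invariant Borel probability measure on $Y$; strong proximality forces any such measure to be a Dirac, producing a $\kf(\Gamma)_\xi$-fixed point $y_0 \in Y$. The orbit map $g \mapsto g \cdot y_0$ descends to a $\kf(\Gamma)$-equivariant map $\kf(\Gamma)/\kf(\Gamma)_\xi \to Y$, which extends uniquely to $\Sam(\kf(\Gamma)/\kf(\Gamma)_\xi) \to Y$ by the universal property of the Samuel compactification, establishing universality. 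The most delicate step is the amenability of $\pkf(\Gamma_c)$: setting up the inverse-limit description in the model-theoretic language introduced earlier in the paper, and verifying the extension structure of the successive quotients, will likely require the same analysis of permutational wreath products that underlies the proof of Theorem~\ref{th:Intro:KstarDeltaExtAmenable}.
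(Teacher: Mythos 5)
Your overall shape — show $\kf(\Gamma)_\xi$ is amenable, show $\Sam(\kf(\Gamma)/\kf(\Gamma)_\xi)$ is minimal strongly proximal via high proximality over $W_n$, then argue universality — matches the paper's strategy in broad outline, but both of your key steps have genuine gaps.

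\textbf{Amenability of the stabilizer.} You propose to realize $\pkf(\Gamma_c)$ as an inverse limit of groups built from $\Gamma_c$ by iterated extensions, using pointwise stabilizers of expanding finite subtrees. The problem is that these pointwise stabilizers are open but \emph{not normal}, so the quotients are coset spaces, not groups, and there is no inverse system of topological groups to take a limit of. (The wreath-product description in Remark~\ref{rem:wreath-product} is of the form $\pkf(\Gamma_c)^{M\sminus\{c\}} \wr \Gamma_c$, which expresses an open subgroup of $\pkf(\Gamma_c)$ in terms of $\pkf(\Gamma_c)$ itself — this is recursive, not a reduction to smaller pieces, so the ``amenability is preserved by extensions'' argument would be circular.) The paper's actual route (Theorems~\ref{th:KGammaxiCAPUMF} and~\ref{th:KGammaxiAmenable}) first identifies $\umf(\kf(\Gamma)_\xi)$ explicitly with $\umf(\Gamma_c)^X$ under the cocycle action — and this is where the hypothesis that $\Gamma_c$ is CAP enters essentially — and then checks directly that the product measure $\mu^{\otimes X}$ is invariant using the cocycle identity. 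Notice that your proposed argument never uses that $\Gamma_c$ is CAP, which would make the conclusion stronger than what the paper claims; that is a warning sign.

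\textbf{Universality.} You write that an $H$-invariant probability measure on a strongly proximal flow must be a Dirac. This is false: take $G = \SL_2(\R)$ acting on $\bP^1$ (minimal and strongly proximal) and $H = \SO(2)$; the $H$-invariant measure is the rotation-invariant measure, not a point mass, and indeed $H$ has no fixed point on $\bP^1$. Strong proximality says that the $G$-orbit closure of any measure contains a Dirac, not that an $H$-invariant measure is one. What the paper actually uses is that $\kf(\Gamma)_\xi$ is a \emph{maximal} closed subgroup of $\kf(\Gamma)$ — this comes from primitivity of the action on endpoints, \Cref{f:kaleidoscopic-facts}\ref{i:kaleido-fact:maximal-sub} — and since $\kf(\Gamma)$ itself is not amenable (it acts strongly proximally and minimally on $W_n$), the stabilizer is maximal amenable. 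Together with presyndeticity (\Cref{pr:H-presyndetic}), this puts one exactly in the hypotheses of \cite{Zucker2021}*{Theorem~7.5}, which identifies $\Sam(\kf(\Gamma)/\kf(\Gamma)_\xi)$ as the Furstenberg boundary. You would need to bring maximality into your argument to close this gap.
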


Finally, we say a few words about our methods. It is well-known that model theory provides useful tools for studying oligomorphic permutation groups, because of Ryll-Nardzewski's theorem.
To aid with the study of a non-oligomorphic permutation group $\Gamma \acts M$, we develop the notion of a \df{full structure} for $\Gamma$ which associates to it an $\aleph_0$-categorical structure in a possibly uncountable language (cf. Theorem~\ref{th:aleph0-cat}) and which coincides with the classical construction in the oligomorphic case. While for model-theorists, $\aleph_0$-categorical structures in an uncountable language are probably just a curiosity, this viewpoint seems useful in dynamics: in particular, we obtain an interesting model-theoretic description of the universal highly proximal extension of the action $\kf(\Gamma) \actson W_n$ even when it is non-metrizable (cf. Theorem~\ref{th:S_GW}).

After some preliminary results in dynamics and model theory, most of the technical work in the paper is about constructing the root-kaleidoscopic structures, establishing the connection between them and point-stabilizers in kaleidoscopic groups (in Section~\ref{sec:kaleidoscopic_structures}), and proving the appropriate Ramsey theorem about them (Section~\ref{sec:ramsey-interlude}). The results about UMFs are collected and proved in Section~\ref{sec:univ-minim-flows}.

In Section~\ref{sec:case-full-home}, we apply the techniques we have developed to the full homeomorphism group $\Homeo(W_n)$ and study the relation of its UMF to the flow of \df{convex, converging linear orders (CCLO)} on the branch points introduced by Duchesne, which he proposed as the UMF of $\Homeo(W_\infty)$. More precisely, we prove the following.

\begin{theorem}[cf. \Cref{th:CCLO}]
  \label{th:intro:CCLO}
  Let $n = 3, 4, \ldots, \infty$, let $G = \Homeo(W_n)$, and let $\bps$ denote the set of branch points of $W_n$. Then the following hold:
  \begin{enumerate}
  \item  If $n$ is finite, then the flow $G \actson \CCLO(X)$ is isomorphic to $\umf(G)$;
  \item If $n = \infty$, then the flow $G \actson \CCLO(X)$ is a proper factor of $\umf(G)$. In particular, $\CCLO(X)$ is not isomorphic to $\umf(G)$.
  \end{enumerate}
\end{theorem}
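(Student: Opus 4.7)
The plan is to use the description of $\umf(G)$ provided by the main theorems and relate it to $\CCLO(\bps)$ via a canonical factor map. Identify $G = \Homeo(W_n)$ with the kaleidoscopic group $\kf(\Sym(M))$ for $|M| = n$; the point stabilizer $\Sym(M)_c = \Sym(M \sminus \{c\})$ is finite when $n$ is finite, and is extremely amenable (by Pestov's theorem) when $n = \infty$. In both cases \Cref{th:intro:UMF-kal} applies and, together with the type-space picture from \Cref{th:S_GW}, yields a concrete description of $\umf(G) = \Sam(\kf(\Sym(M))/\pkf(\Delta))$ for a suitable $\Delta \leq \Sym(M)_c$.

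Next, I would verify that $\CCLO(\bps)$ is a minimal $G$-flow. Minimality follows from a back-and-forth argument exploiting the ultrahomogeneity of the \Fraisse structure underlying $W_n$: any two convex converging linear orders can be approximated by the $G$-orbit of any fixed one. By universality of the UMF, this produces a canonical $G$-factor map $\pi \colon \umf(G) \to \CCLO(\bps)$.

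For part (i), when $n$ is finite, I would show that $\pi$ is injective. In the type-space picture of $\umf(G)$, a point records, at each branch point $x$, a labeling of the $n$ local components of $W_n \sminus \{x\}$ by elements of $M$, coherently across the tree. When $M$ is finite, this labeling is determined by the convex order on $\bps$: distinct components at $x$ are distinguished by the relative position of the (densely many) branch points they contain, and there are only finitely many components to distinguish. A back-and-forth argument on finite configurations then shows that two types with the same image in $\CCLO(\bps)$ agree on every finite parameter set, hence coincide, making $\pi$ an isomorphism of minimal flows.

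For part (ii), when $n = \infty$, the decoding above fails: the convex order on $\bps$ induces at each branch point $x$ a fixed linear order type on the countably many local components, and the type space of $\umf(G)$ carries strictly more information. To exhibit the proper factorization, I would construct two distinct points of $\umf(G) = \Sam(\kf(\Sym(\infty))/\kf(\Sym(\infty))_\xi)$ with the same image in $\CCLO(\bps)$, for instance by choosing two ultrafilters on the endpoint orbit $G/\kf(\Sym(\infty))_\xi$ that induce the same convex order on $\bps$ but differ in their asymptotic behavior at some branch point. The main obstacle, and technical heart of the argument, is to make this rigorous in the type-space framework of \Cref{sec:univ-minim-flows} and to verify that the two constructed points are genuinely distinct in $\umf(G)$---that is, that they are not identified by any hidden symmetry coming from $\kf(\Sym(\infty))$---which should follow from a careful analysis of the orbit structure in the Samuel compactification.
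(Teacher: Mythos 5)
Your high-level architecture (describe $\umf(G)$ via Theorem~\ref{th:intro:UMF-kal}, produce an equivariant factor map onto $\CCLO(\bps)$, prove injectivity for finite $n$ and non-injectivity for $n=\infty$) matches the paper's, but there are two concrete errors and a large acknowledged gap.

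First, $\Sym(\infty)$ is \emph{not} extremely amenable; Pestov's theorem concerns $\Aut(\Q,<)$. In fact $\umf(\Sym(\infty)) = \LO(\N)$, the space of linear orders. As a consequence, for $n=\infty$ it is \emph{not} the case that $\umf(G) = \Sam(\kf(\Sym(\infty))/\kf(\Sym(\infty))_\xi)$: that flow is $\MHP_G(W)$, which the paper shows is a proper factor of $\umf(G)$. The correct description is $\umf(G) = \Sam(G/\pkf(\Delta))$ with $\Delta \cong \Aut(\Q,<)$ sitting inside $\Sym(\infty)_c$, realized in the type space $\Exp(\bX, \pkf(\bN))$. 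This matters for part (ii), since the two points you need to separate must carry more information than a choice of end (namely, the local linear orders on the components around each branch point), so searching for them as ``ultrafilters on $G/G_\xi$'' is looking in the wrong space.

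Second, the technical heart is missing, as you acknowledge. For part (i), the paper's injectivity argument is a case split: if the two types project to different points of $\MHP_G(W)$, one uses that for finite $n$ there is always a branch point $y$ with $\Phi(x,y) = \rho(x)$ (i.e., the root's component is ``witnessed'' in $X$), and this is exactly what fails for $n=\infty$; if they project to the same point of $\MHP_G(W)$, one compares the local orders $\prec$ in a common model. Your sketch does not isolate this dichotomy or the witnessing lemma. For part (ii), the paper fixes $x_0, x_1 \in X$, considers the closed disjoint sets $A = \{p : p \models \rho(x_0) = \Phi(x_0,x_1)\}$ and $B = \{p : p \models \rho(x_0) \neq \Phi(x_0,x_1)\}$ (the latter nonempty only when branching is infinite), and uses a compactness argument plus an explicit construction of finite configurations to find $p_1 \in A$, $p_2 \in B$ with the same image. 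Your idea is pointing the right way, but ``differ in their asymptotic behavior at some branch point'' needs exactly this construction to become a proof, and verifying the constructed points are distinct is where the real work lies. Also minor: rather than proving minimality of $\CCLO(\bps)$ separately and invoking universality, the paper writes down $\pi$ explicitly as a formula involving the meet and the local order $\prec$, and computes its image to be $\CCLO(\bps)$; this is more economical because the same explicit formula drives the (non-)injectivity analysis.
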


\subsection*{Acknowledgments}
We are grateful to Andy Zucker for explaining to us the actual generality of Effros's theorem and its proof (cf. Fact~\ref{f:Effros}). We would also like to thank the anonymous referee for a careful reading of the paper and useful suggestions. Work on this project was partially supported by the ANR project AGRUME (ANR-17-CE40-0026) and the \emph{Investissements d'Avenir} program of Université de Lyon (ANR-16-IDEX-0005).

%%%%%%%%%%%%%%%%%%%%%%%%%%%%%%%%%%%%%%%%%%%%%%%%%%

\section{Preliminaries on topological dynamics}
\label{sec:prel-topol-dynam}

Let $G$ be a topological group. A \df{$G$-flow} is a continuous action $G \actson X$ on a compact Hausdorff space $X$.
A $G$-flow is called \df{minimal} if every orbit is dense. A \df{factor map} between flows is a continuous, surjective, $G$-equivariant map. It is a classical fact, due to Ellis, that among all minimal $G$-flows there is a universal one that admits a factor map to any minimal $G$-flow and this property characterizes it uniquely up to isomorphism. It is called the \df{universal minimal flow} (\df{UMF} for short) of $G$ and we will denote it by $\umf(G)$. A group $G$ is \df{extremely amenable} if $\umf(G)$ is a singleton. 
Following \cite{Basso2021a}, we say that a Polish group $G$ is \df{CAP} if $\umf(G)$ is metrizable. (The name comes from a characterization of CAP groups: a group $G$ is CAP iff in every $G$-flow, the set of almost periodic points is closed. It turns out that this a more robust property that also makes sense for general topological groups.)

If $Y$ is a uniform space, we will denote by $\Sam(Y)$ the \df{Samuel compactification} of $Y$, i.e., the Gelfand space of the algebra $\UCB(Y)$ of all uniformly continuous, bounded functions on $Y$. It is well-known and easy to see that for a metrizable uniform space $Y$, $\Sam(Y)$ is metrizable iff $Y$ is precompact and in that case, $\Sam(Y)$ is simply the completion of $Y$.
The Samuel compactification has the following universal property: any uniformly continuous function $Y \to X$, with $X$ a compact Hausdorff space, extends (uniquely) to a continuous function $\Sam(Y) \to X$. 

Let $H \leq G$ be a closed subgroup of $G$. The homogeneous space $G/H$ carries a natural uniformity inherited from the right uniformity of $G$: a basis of entourages is given by sets of the form
\begin{equation*}
  \set{(gH, vgH) : v \in V, g \in G},
\end{equation*}
where $V$ varies over (a basis of) symmetric open neighborhoods of $1_G$.
In the case where $G$ is non-archimedean, it is convenient to restrict to open subgroups $V \leq G$ because then the sets above are equivalence relations whose classes are the $(V, H)$-double cosets. We will say that $H$ is \df{co-precompact} in $G$ if the space $G/H$ is precompact in this uniformity or, equivalently, if for every open $V \ni 1_G$, there exists a finite set $F \sub G$ such that $VFH = G$; if $G$ is non-archimedean, this just means that the set $\set{VgH : g \in G}$ of double $(V, H)$-cosets is finite for every open $V \leq G$. In particular, a permutation group $\Gamma \acts M$ is oligomorphic if and only if it is co-precompact in $\Sym(M)$. 
We say that $H$ is \df{presyndetic} if for every open $V \ni 1_G$, there exists a finite set $F \sub G$ such that $FVH = G$.

The Samuel compactification $\Sam(G/H)$ is always a $G$-flow. 
When $G$ is Polish, $\Sam(G/H)$ is metrizable iff $H$ is co-precompact and it is minimal iff $H$ is presyndetic (\cite{Zucker2021}*{Proposition~6.6}). Flows of the type $\Sam(G/H)$ appear naturally when one studies metrizable UMFs and UMFs with a comeager orbit. In the following fact, we collect the results that we will need.

\begin{fact}
  \label{f:facts-umf}
  Let $G$ be a Polish group. Then the following statements hold:
  \begin{enumerate}
  \item \label{i:f:umf:1} For any closed $H \leq G$, $\umf(G) = \Sam(G/H)$ iff $H$ is extremely amenable and presyndetic.
  \item \label{i:f:umf:2} $\umf(G)$ has a comeager orbit iff there exists $H \leq G$ extremely amenable and presyndetic. In that case, $\umf(G) = \Sam(G/H)$.
  \item \label{i:f:umf:3} $G$ is CAP iff there exists $H \leq G$ which is extremely amenable, co-precompact (and presyndetic).
  \end{enumerate}
\end{fact}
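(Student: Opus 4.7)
My plan is to treat item (i) as the main lemma and obtain (ii) and (iii) from it using additional structural results from the literature.

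For the $(\Leftarrow)$ direction of (i), presyndeticity of $H$ gives minimality of $\Sam(G/H)$ by the cited \cite{Zucker2021}*{Proposition~6.6}, so I only need to check universality. Given any minimal $G$-flow $X$, extreme amenability of $H$ produces an $H$-fixed point $x_0 \in X$; the orbit map $g \mapsto g \cdot x_0$ then descends to a well-defined uniformly continuous map $G/H \to X$ (uniform continuity being immediate from the description of the right uniformity on $G/H$ given above), and extending via the universal property of the Samuel compactification yields the required continuous $G$-equivariant surjection $\Sam(G/H) \to X$. For the $(\Rightarrow)$ direction, minimality of $\Sam(G/H) = \umf(G)$ forces presyndeticity, while the coset $eH \in \Sam(G/H)$ is manifestly $H$-fixed, so extreme amenability follows from the standard fact that any closed subgroup fixing a point in $\umf(G)$ is extremely amenable (which one proves by inducing an arbitrary $H$-flow up to a $G$-flow and pulling back a fixed point).

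Item (iii) is then straightforward. For $(\Rightarrow)$, the theorem of Ben Yaacov--Melleray--Tsankov and Melleray--Nguyen Van Th\'e--Tsankov identifies a metrizable $\umf(G)$ as the completion $\widehat{G/H}$ for some extremely amenable closed $H \leq G$; item (i) then forces $H$ to be presyndetic, and metrizability of $\Sam(G/H)$ forces co-precompactness, since these properties are equivalent as recalled earlier. The $(\Leftarrow)$ direction is immediate from (i) together with the fact that co-precompactness of $H$ makes $\Sam(G/H)$ metrizable.

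The deepest part, as I see it, is the $(\Rightarrow)$ direction of (ii), which amounts to Zucker's theorem from \cite{Zucker2021}. The strategy is to apply Effros's theorem to the comeager orbit in $\umf(G)$ to identify it with $G/H$ for a closed $H$, observe that $H$ fixes the basepoint (hence is extremely amenable by the argument given for (i)), and deduce presyndeticity from the minimality of $\umf(G)$; the $(\Leftarrow)$ direction is then a direct consequence of (i) once one verifies that the orbit of $eH$ in $\Sam(G/H)$ is dense $G_\delta$. I expect the main obstacle to be precisely the correct deployment of Effros's theorem in this non-metrizable topological setting --- which is presumably why the authors single out Andy Zucker in the acknowledgments for explaining its generality and proof.
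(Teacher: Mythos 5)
Your $(\Leftarrow)$ directions of (i) and (iii), and your $(\Rightarrow)$ of (iii), are essentially fine and close to the paper's treatment, which cites Pestov for the universality argument (Fact~\ref{f:facts-umf}\ref{i:f:umf:3}$(\Leftarrow)$) and \cite{BenYaacov2017} for \ref{i:f:umf:3}$(\Rightarrow)$. The gap is in the $(\Rightarrow)$ directions of (i) and (ii), where you invoke the ``standard fact that any closed subgroup fixing a point in $\umf(G)$ is extremely amenable.'' This is false for general closed subgroups. Concretely, take $G = \Aut(\Q,<)$, which is extremely amenable so that $\umf(G)$ is a singleton, and let $H = \langle t \rangle \cong \Z$ where $t(q) = q+1$; then $H$ is discrete in $G$ (no nontrivial power of $t$ fixes $0$), hence closed, and it trivially fixes the unique point of $\umf(G)$, yet $H$ is not extremely amenable. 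The co-induction argument you sketch (pushing an $H$-flow to a $G$-flow and pulling back a fixed point) is exactly what proves Fact~\ref{f:ea-open-subgroup} in the paper, but it requires $H$ to be \emph{open} in $G$; for merely closed $H$ the construction does not yield a $G$-flow from which one can recover an $H$-fixed point, and the conclusion genuinely fails.

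The paper deals with this by citing \cite{Zucker2021}*{Theorem~7.5} for both (i) and (ii). That theorem does pass through Effros's theorem to identify the comeager orbit with $G/H$, as you anticipate, but the extreme amenability of the stabilizer is a considerably subtler matter: it uses the universality of $\umf(G)$ among minimal flows and the structure theory of maximally highly proximal flows, not just the existence of a fixed basepoint. So your derivation of (iii) from (i) is fine (since \cite{BenYaacov2017} already hands you extreme amenability of $H$, and you only need the minimality $\Leftrightarrow$ presyndeticity equivalence of \cite{Zucker2021}*{Proposition~6.6}), but the $(\Rightarrow)$ halves of (i) and (ii) cannot be dispatched by the argument you give and really do require the full strength of Zucker's result.
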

\begin{proof}
  \ref{i:f:umf:1}, \ref{i:f:umf:2} This follows from \cite{Zucker2021}*{Theorem~7.5}.

  \ref{i:f:umf:3} ($\Leftarrow$) is due to Pestov~\cite{Pestov2002a} but the argument is simple enough to include here.    The flow $\Sam(G/H)$ has the universal property that it maps to any $G$-flow with an $H$-fixed point. As $H$ is extremely amenable, every $G$-flow has an $H$-fixed point, so any minimal subflow of $\Sam(G/H)$ is isomorphic to $\umf(G)$. As $H$ is co-precompact in $G$, $\Sam(G/H)$ is metrizable and so is any subflow thereof.

  ($\Rightarrow$) is Theorem~1.1 in \cite{BenYaacov2017}.
\end{proof}

\begin{prop}
  \label{p:presyndetic-transitive}
  Let $G$ be a topological group and let $K \leq H \leq G$ be closed subgroups. If $H$ is presyndetic in $G$ and $K$ is presyndetic in $H$, then $K$ is presyndetic in $G$.
  Similarly, if we substitute co-precompact for presyndetic. 
\end{prop}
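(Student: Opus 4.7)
The plan is to chain together the two hypotheses; the only subtlety is that ``presyndetic'' places the finite set on the left of the open set, so moving the finite set produced by the inner hypothesis past the open neighborhood used for the outer hypothesis requires a conjugation trick. I would treat both statements in parallel, starting with the presyndetic case.

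For the presyndetic case, fix an open $V \ni 1_G$; the goal is to produce a finite $F \sub G$ with $FVK = G$. First I would pass to a smaller open $V' \ni 1_G$ with $V' V' \sub V$, using continuity of multiplication. Apply presyndeticity of $K$ in $H$ to the open neighborhood $V' \cap H$ of $1_H$ to obtain a finite $F_2 \sub H$ with $F_2 (V' \cap H) K = H$, so in particular $H \sub F_2 V' K$. Next, using that conjugation by a fixed element is a homeomorphism of $G$ and that $F_2$ is finite, define the open neighborhood of $1_G$
\begin{equation*}
  W := \bigcap_{f \in F_2} f V' f^{-1}.
\end{equation*}
By construction $W f \sub f V'$ for every $f \in F_2$, and hence $W F_2 \sub F_2 V'$. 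Finally, apply presyndeticity of $H$ in $G$ to $W$, producing a finite $F_1 \sub G$ with $F_1 W H = G$, and chain the inclusions:
\begin{equation*}
  G = F_1 W H \sub F_1 W F_2 V' K \sub F_1 F_2 V' V' K \sub F_1 F_2 V K,
\end{equation*}
so that $F := F_1 F_2$ works.

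The co-precompact case is entirely symmetric and I would write it up in parallel: shrink $V$ to $V'$ with $V' V' \sub V$; apply co-precompactness of $H$ in $G$ to $V'$ to obtain a finite $F_1 \sub G$ with $V' F_1 H = G$; form the open neighborhood $W := \bigcap_{f \in F_1} f^{-1} V' f$ of $1_G$, so that $f W \sub V' f$ for every $f \in F_1$, i.e.\ $F_1 W \sub V' F_1$; apply co-precompactness of $K$ in $H$ to the open set $W \cap H$ of $H$ to obtain a finite $F_2 \sub H$ with $H \sub W F_2 K$; and conclude
\begin{equation*}
  G \sub V' F_1 W F_2 K \sub V' V' F_1 F_2 K \sub V F_1 F_2 K.
\end{equation*}
The main (and essentially only) obstacle is the conjugation step used to define $W$, whose extra factor of $V'$ on one side gets absorbed by the preliminary shrinking $V' V' \sub V$. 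Once this is in place, the proof is a direct chase of inclusions.
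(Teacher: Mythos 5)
Your proof is correct and uses exactly the same argument as the paper: shrink $V$ to $V'$ with $V'V' \sub V$, apply the inner hypothesis to $V' \cap H$, conjugate $V'$ by the resulting finite set to get a new neighborhood $W$, apply the outer hypothesis to $W$, and then chase the inclusions. The only differences are cosmetic — your labels $F_1, F_2$ are swapped relative to the paper's, and you explicitly write out the co-precompact case (which you correctly mirror by conjugating on the other side), whereas the paper just says it ``follows the same lines.''
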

\begin{proof}
  We prove the statement for presyndeticity; the proof for co-precompactness follows the same lines.
  
  Let a neighborhood $V \ni 1_G$ be given and let $U \ni 1_G$ be open such that $U^2 \sub V$. Let $F_1 \sub H$ be finite such that $F_1 (U \cap H) K = H$. Let $U' = \bigcap_{f \in F_1} fUf^{-1}$ and let $F_2 \sub G$ be finite such that $F_2 U' H = G$. We claim that $(F_2F_1)VK = G$. Indeed, let $g \in G$. There exist $f_2 \in F_2, u_2 \in U'$, and $h \in H$ such that $g = f_2 u_2 h$. There are also $f_1 \in F_1, u_1 \in U$, and $k \in K$ such that $h = f_1 u_1 k$. Finally, we have that
  \begin{equation*}
    g = f_2 u_2 h = f_2 u_2 f_1 u_1 k = f_2 f_1 (f_1^{-1}u_2f_1) u_1 k \in F_2 F_1 U U K \sub (F_2F_1)VK,
  \end{equation*}
  as desired.
\end{proof}

It is easy to see that if $K \le H \le G$ are closed subgroups and $K$ is co-precompact in $G$, then $K$ is co-precompact in $H$. 
In particular, an oligomorphic subgroup of any permutation group is always co-precompact.

\begin{cor}
  \label{cor:umf-subgroups}
  Let $G$ be a Polish group. Then the following hold:
  \begin{enumerate}
  \item If $H \le G$ is a closed, presyndetic subgroup and $\umf(H) = \Sam(H/K)$ for some closed $K \leq H$, then $\umf(G) = \Sam(G/K)$.
  \item \label{i:cor:mf-subgroups-coprecompact} If $G$ has a co-precompact CAP subgroup, then $G$ is CAP.
  \end{enumerate}
\end{cor}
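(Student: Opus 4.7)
The plan is to derive both parts as direct consequences of \Cref{f:facts-umf} together with the transitivity of presyndeticity and co-precompactness supplied by \Cref{p:presyndetic-transitive}. In both cases, the strategy is: use the fact applied to $H$ to extract an extremely amenable subgroup with the appropriate smallness property inside $H$, then transfer that smallness property up to $G$ via the proposition, and finally feed the result into the same fact applied to $G$.

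For part (i), I would argue as follows. The hypothesis $\umf(H) = \Sam(H/K)$ combined with \Cref{f:facts-umf}\ref{i:f:umf:1} (applied to $H$ in place of $G$) tells us that $K$ is extremely amenable and presyndetic in $H$. Since $H$ is presyndetic in $G$ by assumption, \Cref{p:presyndetic-transitive} gives that $K$ is presyndetic in $G$. As $K$ remains extremely amenable, \Cref{f:facts-umf}\ref{i:f:umf:1} applied this time to $G$ yields $\umf(G) = \Sam(G/K)$, as wanted.

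For part (ii), let $H \leq G$ be the given co-precompact CAP subgroup. Applying \Cref{f:facts-umf}\ref{i:f:umf:3} to $H$ furnishes a closed subgroup $K \leq H$ that is extremely amenable and co-precompact in $H$. Using the co-precompact version of \Cref{p:presyndetic-transitive}, we conclude that $K$ is co-precompact in $G$. Then \Cref{f:facts-umf}\ref{i:f:umf:3}, applied to $G$, gives that $G$ is CAP.

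There is no substantive obstacle here; both parts reduce to chaining the hypotheses through the fact and the proposition. The only small point to watch is that in part (ii) one must invoke \Cref{p:presyndetic-transitive} in its co-precompact formulation (which the statement of that proposition explicitly supplies), and that \Cref{f:facts-umf}\ref{i:f:umf:3} only requires extreme amenability together with co-precompactness to recover the CAP property for $G$.
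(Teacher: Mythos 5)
Your proof is correct and is precisely the argument the paper has in mind: Corollary~\ref{cor:umf-subgroups} is stated without proof immediately after Fact~\ref{f:facts-umf} and Proposition~\ref{p:presyndetic-transitive} exactly because it follows by chaining them in the way you describe. Your parenthetical observation — that the $\Leftarrow$ direction of Fact~\ref{f:facts-umf}\ref{i:f:umf:3} needs only extreme amenability plus co-precompactness — is also accurate, as one can see from the Pestov-style argument given in the paper's proof of that fact.
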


Spaces of the form $\Sam(G/H)$, where $G$ is non-archimedean, are not difficult to describe. We have the following.
\begin{fact}
  \label{f:SGH-non-archimedean}
  Let $G$ be a non-archimedean topological group and let $H \leq G$ be a closed subgroup. Then
  \begin{equation*}
    \Sam(G/H) = \varprojlim \beta (V \divby G / H),
  \end{equation*}
  where the inverse limit is taken over the collection of open subgroups of $G$ ordered by reverse inclusion and $\beta$ denotes the Stone--\v{C}ech compactification.
\end{fact}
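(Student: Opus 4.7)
The plan is to unpack the definition of $\Sam(G/H)$ as the Gelfand spectrum of the C*-algebra $\UCB(G/H)$, and then to identify this algebra as the sup-norm closure of a directed union of subalgebras whose Gelfand spectra are the spaces $\beta(V \divby G / H)$.

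First I would use the non-archimedean hypothesis to observe that for any open subgroup $V \leq G$, the basic entourage $\set{(gH, vgH) : v \in V,\ g \in G}$ is precisely the equivalence relation on $G/H$ whose classes are the $(V,H)$-double cosets, and that, as $V$ ranges over open subgroups of $G$, these equivalence relations form a basis of the uniformity on $G/H$ (this was already noted in the excerpt, just before the statement of the fact).

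The key step will then be to prove that
\begin{equation*}
  \UCB(G/H) = \overline{\bigcup_V \ell^\infty(V \divby G / H)},
\end{equation*}
where $\ell^\infty(V \divby G / H)$ is identified with the unital C*-subalgebra of $\UCB(G/H)$ consisting of functions constant on each $(V,H)$-double coset, and the closure is taken in the sup-norm. The inclusion ``$\supseteq$'' is immediate from the preceding paragraph. For ``$\subseteq$'', given $f \in \UCB(G/H)$ and $\epsilon > 0$, uniform continuity furnishes an open subgroup $V \leq G$ such that $f$ is $\epsilon$-constant on each $(V,H)$-double coset; choosing one value of $f$ per double coset then yields a function in $\ell^\infty(V \divby G / H)$ within $\epsilon$ of $f$ in sup-norm.

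Finally, I would apply contravariant Gelfand duality. The Gelfand spectrum of $\ell^\infty(V \divby G/H)$, viewed as the algebra of bounded functions on the discrete set $V \divby G/H$, is by the universal property $\beta(V \divby G/H)$; and Gelfand duality converts the above sup-norm closed directed union of unital C*-subalgebras into the inverse limit of their spectra, with the connecting maps induced by the natural surjections $V' \divby G/H \twoheadrightarrow V \divby G/H$ for $V' \subseteq V$. This yields the claimed description of $\Sam(G/H)$. I do not expect a substantive obstacle: the only delicate point is the rounding argument in the ``$\subseteq$'' inclusion above, which relies crucially on the fact that in the non-archimedean setting the basic entourages are genuine equivalence relations, so that uniform continuity becomes equivalent to uniform approximability by functions that are locally constant for the corresponding partition into double cosets.
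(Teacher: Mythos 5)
Your argument is correct and is essentially the same as the paper's: the paper's proof (which is only a sketch) likewise identifies $\ell^\infty(V \divby G/H)$ with the $V$-invariant bounded functions on $G/H$, notes that their union is sup-norm dense in $\UCB(G/H)$, and invokes Gelfand duality to convert this directed union into an inverse limit of Stone--\v{C}ech compactifications. Your write-up just makes explicit the rounding argument for the density claim, which is the only step requiring the non-archimedean hypothesis.
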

\begin{proof}
  This follows from Gelfand duality and the fact that one can view the algebra $\ell^\infty(V \divby G / H)$ as a subalgebra of $\UCB(G/H)$ by identifying it with the $V$-invariant bounded functions and the union of all of these is dense in $\UCB(G/H)$.
\end{proof}

Effros's classical theorem from \cite{Effros1965} about non-meager orbits of Polish group actions is usually stated about actions on Polish spaces but it holds in considerably greater generality. We are grateful to Andy Zucker for explaining this to us. It is also used in the proof of \cite{Zucker2021}*{Theorem 5.5}.

\begin{fact}[Effros]
  \label{f:Effros}
  Let $X$ be a Hausdorff space. Let $G$ be a Polish group, let $G \actson X$ be a continuous action, and let $x_0 \in X$ be a point with a dense orbit. Then the following are equivalent:
  \begin{enumerate}
  \item \label{i:Effros:nonmeager} The orbit $G \cdot x_0$ is non-meager;
  \item \label{i:Effros:homeo} The map $G/G_{x_0} \to G \cdot x_0, gG_{x_0} \mapsto g \cdot x_0$ is a homeomorphism.
  \end{enumerate}
\end{fact}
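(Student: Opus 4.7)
I split the proof into the two implications.

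For (ii) $\Rightarrow$ (i), the subgroup $G_{x_0}$ is closed in $G$ (being the preimage of the singleton $\{x_0\}$ under the continuous orbit map, with $X$ Hausdorff), so the homogeneous space $G/G_{x_0}$ is Polish and in particular a Baire space. Under (ii), then, the orbit $G \cdot x_0$ with its subspace topology from $X$ is also Baire, and the conclusion follows from the following general fact which I would verify by hand: a dense Baire subspace $Y$ of a Hausdorff space $X$ is non-meager in $X$. Indeed, if $Y \subseteq \bigcup_n N_n$ with each $N_n$ closed and nowhere dense in $X$, then each $Y \setminus N_n = Y \cap (X \setminus N_n)$ is open and dense in $Y$ (density uses that $Y$ is dense in $X$), so Baireness of $Y$ gives $\bigcap_n (Y \setminus N_n) = Y \setminus \bigcup_n N_n \neq \emptyset$, contradicting the covering.

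For (i) $\Rightarrow$ (ii), the factored map $\bar\phi : G/G_{x_0} \to G \cdot x_0$ is a continuous bijection and I must show it is open; by $G$-equivariance it suffices to prove openness at the coset of $1_G$, i.e.\ that for every open neighborhood $V$ of $1_G$ there exists an open $W \subseteq X$ with $x_0 \in W$ and $W \cap G \cdot x_0 \subseteq V \cdot x_0$. Fix such a $V$, fix a complete compatible metric $d$ on $G$, and pick a decreasing basis $(V_n)$ of symmetric open neighborhoods of $1_G$ with $V_0 \subseteq V$, $V_{n+1}^2 \subseteq V_n$, and $d$-diameter tending to $0$. The key use of non-meagerness, combined with Lindelöfness of $G$, is the following: covering $G = \bigcup_i g_i^{(k)} V_k$ gives $G \cdot x_0 = \bigcup_i g_i^{(k)} (V_k \cdot x_0)$, and since each translate $g_i^{(k)} \cdot$ is a self-homeomorphism of $X$, non-meagerness of $G \cdot x_0$ in $X$ passes to $V_k \cdot x_0$, so $\overline{V_k \cdot x_0}$ has non-empty $X$-interior and, by a short argument using $V_{k+1}^2 \subseteq V_k$, contains an open neighborhood $W_k$ of $x_0$. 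One can moreover arrange the coordination $W_{k-1} \subseteq V_k W_k$.

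Set $W := W_0$. For any $y \in W \cap G \cdot x_0$, one iteratively selects $v_k \in V_k$ such that $(v_1 \cdots v_k)^{-1} y \in W_k$, using the inclusion $W_{k-1} \subseteq V_k W_k$ at each stage. With care, the partial products $v_1 \cdots v_k$ can be arranged to be $d$-Cauchy, hence convergent to some $h_\infty \in V$; continuity of the action together with Hausdorffness of $X$ then forces $h_\infty \cdot x_0 = y$, so $y \in V \cdot x_0$, as required.

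The main obstacle is the iteration itself. Since $X$ is only Hausdorff---a priori neither metrizable nor first countable---convergence in $X$ cannot come from sequences in $X$ and must be enforced indirectly through the continuous action applied to a convergent sequence in $G$. At the same time, a compatible complete metric on a general Polish group need not be (left- or right-) invariant, so making the partial products $v_1 \cdots v_k$ Cauchy requires dynamically restricting the choice of each $v_k$ to a shrinking open neighborhood of $1_G$ inside $V_k$, at which point the non-meagerness hypothesis is needed in an essential way to guarantee that admissible choices within these shrinking neighborhoods continue to exist.
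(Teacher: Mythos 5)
Your proof of (ii) $\Rightarrow$ (i) is correct and is in substance the paper's argument: under (ii) the orbit with its subspace topology is Polish, hence Baire, and a dense Baire subspace of a Hausdorff space cannot be meager.

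The direction (i) $\Rightarrow$ (ii) is where the problem lies. The paper does not reprove this implication; it cites Lemma~2.5 of Effros's 1965 paper and observes that Effros's argument goes through verbatim once one knows that for every open $V \subseteq G$ the set $V \cdot x_0$ has the Baire property in $X$ (which follows since it is analytic, and analytic sets in Hausdorff spaces have the Baire property by Nikod\'ym's theorem applied to the Souslin operation). You instead attempt a self-contained argument, and you yourself flag the place where it breaks: the partial products $g_k = v_1 \cdots v_k$ are Cauchy only in the \emph{left} uniformity of $G$ (since $g_k^{-1}g_{k+1} = v_{k+1} \to 1$), and the left uniformity on a Polish group need not be complete. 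To force convergence one must also control $g_{k+1}g_k^{-1} = g_k v_{k+1} g_k^{-1}$, which means restricting $v_{k+1}$ to $V_{k+1} \cap g_k^{-1}V_{k+1}g_k$ at stage $k$. But your $W_k$'s were built in advance from the fixed $V_k$'s, so this dynamic shrinkage invalidates the inclusion $W_k \subseteq V_{k+1}W_{k+1}$ you rely on to make the next choice, and there is no argument given that a compatible system of neighborhoods can be rebuilt on the fly. Saying that ``the non-meagerness hypothesis is needed in an essential way to guarantee that admissible choices \ldots continue to exist'' names the difficulty but does not resolve it; as written this is a genuine gap, not a routine verification. Relatedly, your step concludes only that $\overline{V_k \cdot x_0}$ has nonempty interior (non-meager $\Rightarrow$ not nowhere dense), whereas the argument that actually closes the loop in Effros's proof uses the stronger fact, supplied by the Baire property, that $V_k \cdot x_0$ is \emph{comeager} in some open set; the weaker conclusion is part of why the coordination $W_{k-1} \subseteq V_kW_k$ is also not justified as stated. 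So the first direction matches the paper, but the second needs either the Baire-property ingredient the paper invokes or a substantially more careful dynamic construction than you have sketched.
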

\begin{proof}
  \begin{cycprf}
  \item[\impnext] This is Lemma~2.5 in \cite{Effros1965}. Effros's proof goes through in the generality of our statement, except that one needs to replace Lemma~2.4 with the following: for any open set $V \sub G$, the set $V \cdot x_0$ has the Baire property in $X$. This is a consequence of the following two facts. First, $V \cdot x_0$ is \df{analytic} (i.e., a continuous image of the space $\N^\N$), and second, every analytic set in a Hausdorff space has the Baire property. The latter follows from the classical theorem of Nikodým that the collection of sets with the Baire property is closed under the Souslin operation \cite{Kechris1995}*{29.14} and that in a Hausdorff space, analytic sets can be obtained via the Souslin operation from closed sets (see \cite{Bogachev2007}*{6.6.8}).
    
  \item[\impfirst] The hypothesis implies that $Y \coloneqq G \cdot x_0$ is Polish. Suppose that it is meager. Then there exist closed, nowhere dense subsets $F_n \sub X$, for $n \in \N$, with $Y \sub \bigcup_n F_n$. As $Y$ is non-meager in itself, there exist a non-empty open $U \sub X$ and $n \in \N$ such that $U \cap Y \sub F_n \cap Y$. But then $U \sub \cl{U \cap Y} \sub F_n$, contradicting the fact that $F_n$ is nowhere dense.
  \end{cycprf}
\end{proof}

If $X$ and $Y$ are topological spaces and $\pi \colon X \to Y$ is a surjective, continuous map, we will say that $\pi$ is \df{irreducible} if for every non-empty, open $U \sub X$, there exists a point $y \in Y$ with $\pi^{-1}(\set{y}) \sub U$. 
We say that $\pi$ is \df{almost 1-to-1} whenever the set $\set{x \in X : \pi^{-1}(\pi(x)) = \set{x}}$ is comeager. 
If $\pi$ is onto and almost 1-to-1, then it is irreducible, and the converse holds if $X$ is metrizable and Baire. See \cite{deVries2014}*{A.9} for more details on irreducible maps.

If $\pi \colon X \to Y$ is a factor map between $G$-flows which is irreducible, we will say that $X$ is a \df{highly proximal extension of $Y$}. It is a result of Auslander and Glasner~\cite{Auslander1977}, for minimal flows, and of Zucker~\cite{Zucker2018} in general, that among all highly proximal extensions of a given flow $X$, there exists a universal one, which we will denote by $\MHP_G(X) \to X$. It has the property that for any other highly proximal extension $X' \to X$, there is a (highly proximal) map $\MHP_G(X) \to X'$ such that the appropriate diagram commutes.
An important feature of highly proximal extensions is that they preserve many dynamical properties.

\begin{fact}[\cite{Auslander1977}]
  Let $\pi \colon X \to Y$ be a highly proximal factor map between $G$-flows with $Y$ minimal. Then $X$ is minimal and if $Y$ is proximal (strongly proximal), then so is $X$. 
\end{fact}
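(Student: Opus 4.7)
Plan: All three conclusions follow from the same contraction mechanism. Irreducibility places a full fiber $\pi^{-1}(y)$ inside every non-empty open $U \sub X$, while minimality of $Y$ lets a group element move any chosen $y_0 \in Y$ arbitrarily close to $y$; since $\pi$ is a closed map (automatic between compact Hausdorff spaces), the fiber map $y \mapsto \pi^{-1}(y)$ is upper semicontinuous, so $\pi^{-1}(h y_0) \sub U$ eventually. Minimality of $X$ is the simplest instance: for any $x \in X$ and non-empty open $U$, the image $\pi(\cl{Gx})$ is a non-empty closed $G$-invariant subset of $Y$ and therefore equals $Y$ by minimality of $Y$; irreducibility supplies $y \in Y$ with $\pi^{-1}(y) \sub U$, and any preimage of $y$ inside $\cl{Gx}$ witnesses $Gx \cap U \neq \emptyset$.

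For proximality, I would apply proximality of $Y$ to $(\pi(x_1), \pi(x_2))$ to obtain a net $g_i \in G$ with $\pi(g_i x_k) \to y$ for $k=1,2$; a convergent subnet gives $g_i x_k \to a_k$ with $\pi(a_1)=\pi(a_2)=y$ and $(a_1,a_2) \in \cl{G \cdot (x_1,x_2)}$. Fix any $x_0 \in X$ and choose a neighborhood basis $(U_\alpha)$ of open sets with $\bigcap_\alpha \cl{U_\alpha} = \set{x_0}$ (available by regularity of the compact Hausdorff space $X$). For each $\alpha$, irreducibility supplies $y_\alpha$ with $\pi^{-1}(y_\alpha) \sub U_\alpha$, and minimality of $Y$ supplies $h_\alpha \in G$ with $h_\alpha y$ so close to $y_\alpha$ that upper semicontinuity of fibers forces $\pi^{-1}(h_\alpha y) \sub U_\alpha$; hence $h_\alpha a_k \in U_\alpha$ for both $k$. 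A convergent subnet of $(h_\alpha a_1, h_\alpha a_2)$ then lies in $\bigcap_\alpha (\cl{U_\alpha} \times \cl{U_\alpha}) = \set{(x_0,x_0)}$, exhibiting $(x_0,x_0) \in \cl{G \cdot (x_1,x_2)} \cap \Delta_X$.

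For strong proximality I would run the same contraction in the space $P(X)$ of Borel probability measures, using that the pushforward $\pi_* \colon P(X) \to P(Y)$ is continuous and $G$-equivariant. Strong proximality of $Y$ applied to $\pi_*\mu$ yields a subnet with $g_i \mu \to \mu^*$ and $\pi_*\mu^* = \delta_y$, so $\mu^*$ is supported on the single fiber $\pi^{-1}(y)$. Running the contraction with $\mu^*$ in place of $(a_1,a_2)$, the translate $h_\alpha \mu^*$ has pushforward $\delta_{h_\alpha y}$ and therefore support in $\pi^{-1}(h_\alpha y) \sub U_\alpha$; since $\setnew{\nu \in P(X)}{\supp \nu \sub \cl{U_\alpha}}$ is closed in $P(X)$, any cluster point of $(h_\alpha \mu^*)$ has support in $\bigcap_\alpha \cl{U_\alpha} = \set{x_0}$, hence equals $\delta_{x_0}$, and this Dirac mass lies in $\cl{G\mu}$. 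The only step really requiring care is upper semicontinuity of fibers, which is precisely what converts the purely topological hypothesis (irreducibility) into the dynamical contraction needed; the remainder is a clean double-net limit, organized so that the same contraction runs once on pairs of points and once on measures, with the extra bookkeeping in the measure case being the closedness of $\setnew{\nu}{\supp \nu \sub \cl{U_\alpha}}$.
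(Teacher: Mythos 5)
The paper states this as a cited Fact (Auslander--Glasner) without proof, so there is no in-paper argument to compare against; what matters is whether your argument is sound, and it is. The uniform ``contraction mechanism'' you set up is exactly the right one: irreducibility places some full fiber inside any prescribed open $U$, upper semicontinuity of the fiber map (which holds because $\pi$ is a closed map between compact Hausdorff spaces) gives an open $V$ around that base point with $\pi^{-1}(V)\subseteq U$, and minimality of $Y$ steers the chosen base point into $V$. This mechanism is applied once to prove minimality of $X$, once to pairs of points to prove proximality, and once to measures to prove strong proximality, with the only additional check in the last case being that $\setnew{\nu\in P(X)}{\supp\nu\subseteq F}$ is weak*-closed for $F$ closed --- which you correctly note and which follows since it is the intersection of the closed sets $\setnew{\nu}{\int f\,d\nu=0}$ over continuous $f\ge 0$ vanishing on $F$. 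The only point that deserves slightly more care than you give it is the final subnet argument: one should order the neighborhood basis $(U_\alpha)$ by reverse inclusion so that, for any fixed $\alpha_0$, the tail of the net lies in $\cl{U_{\alpha_0}}$, forcing the limit of any convergent subnet into $\bigcap_\alpha\cl{U_\alpha}=\set{x_0}$. With that bookkeeping made explicit, the argument is complete and self-contained, and it is essentially the classical proof strategy from Auslander--Glasner.
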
  

A flow $X$ such that $\MHP_G(X) = X$ is called \df{maximally highly proximal}, or \df{MHP}. Maximally highly proximal flows were extensively studied by Zucker~\cite{Zucker2021}. 
If $H \leq G$ is a closed subgroup of $G$, then the flow $\Sam(G/H)$ is always MHP (see \cite{Zucker2021}*{Subsection~3.2.1}). There is an important connection between Samuel compactifications of homogeneous spaces and universal highly proximal extensions of flows with a comeager orbit.

\begin{prop}
  \label{p:SG-comeager}
  Let $G$ be a Polish group and let $G \actson X$ be a $G$-flow with a comeager orbit $G \cdot x_0$. Then:
  \begin{enumerate}
    \item \label{i:f:SG-comeager:1} $\MHP_G(X) \cong \Sam(G/G_{x_0})$;
    \item \label{i:f:SG-comeager:2} Any highly proximal extension of $X$ is almost 1-to-1;
    \item \label{i:f:SG-comeager:3} If $X$ is moreover minimal, then for any two factor maps $\pi, \pi' \colon \MHP_G(X) \to X$, there is an automorphism $f$ of $\MHP_G(X)$ such that $\pi \circ f = \pi'$. In particular, any factor map $\MHP_G(X) \to X$ is almost 1-to-1.
  \end{enumerate}
\end{prop}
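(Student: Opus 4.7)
The strategy is to identify $\MHP_G(X)$ concretely as $\Sam(G/G_{x_0})$ in (i), from which (ii) and (iii) will follow via universal properties. A key input is that $\Sam(G/H)$ is always MHP, as noted earlier in the text.

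For (i), Effros's theorem (\Cref{f:Effros}) applied to the comeager (hence non-meager) orbit $G \cdot x_0$ implies that the orbit map $q \colon G/G_{x_0} \to G \cdot x_0 \sub X$ is a homeomorphism, in particular uniformly continuous into the unique compatible uniformity on compact $X$; it therefore extends to a continuous $G$-equivariant surjection $\pi \colon \Sam(G/G_{x_0}) \to X$ (surjectivity from the density of $G \cdot x_0$). The crux is showing $\pi$ is irreducible: granted this, the universal property of $\MHP_G(X)$ provides an h.p.\ factor $\MHP_G(X) \to \Sam(G/G_{x_0})$, which must be an isomorphism since the target is MHP. For irreducibility, observe that $G/G_{x_0}$ canonically embedded in $\Sam(G/G_{x_0})$ is a Polish dense subspace, hence a dense $G_\delta$ and so comeager; the plan is to show that for each such $s$, the fiber $\pi^{-1}(\pi(s))$ equals $\{s\}$, immediately witnessing irreducibility. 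By $G$-equivariance this reduces to $\pi^{-1}(x_0) = \{e\}$ (with $e$ the identity coset), and a short computation rules out preimages $t \in G \cdot e \sminus \{e\}$. The main obstacle is excluding preimages $t$ in the meager complement $\Sam(G/G_{x_0}) \sminus G \cdot e$; the plan is to invoke the universal property of $\Sam(G/G_{x_0})$ to turn such a $t$ into a $G$-equivariant self-map commuting with $\pi$ yet differing from the identity, contradicting uniqueness of the Samuel extension.

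For (ii), given an h.p.\ extension $\pi' \colon Z \to X$, the universal property applied to $\MHP_G(X) \cong \Sam(G/G_{x_0})$ furnishes an h.p.\ factor $\phi \colon \Sam(G/G_{x_0}) \to Z$ with $\pi' \circ \phi = \pi$. For $y \in G \cdot x_0$, the fiber $\pi^{-1}(y) = \{s\}$ is a singleton by (i), and the relation $\pi(w) = \pi'(\phi(w))$ combined with surjectivity of $\phi$ forces $\pi'^{-1}(y) = \{\phi(s)\}$. Since $\pi'^{-1}(G \cdot x_0)$ is comeager in $Z$ (irreducible maps pull comeager sets back to comeager sets), $\pi'$ is almost 1-to-1.

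For (iii), let $Y := \MHP_G(X) \cong \Sam(G/G_{x_0})$ and let $\pi, \pi' \colon Y \to X$ be two factor maps. Under the identification, each factor map is determined by its value at $e$, which must be a $G_{x_0}$-fixed point of $X$. Using (ii) applied to $\pi$ together with the minimality of $X$, the plan is to exhibit a $G_{x_0}$-fixed preimage $\tilde x_0 \in \pi^{-1}(\pi'(e))$ and then apply the universal property of $\Sam(G/G_{x_0}) \cong Y$ to produce a $G$-equivariant continuous $f \colon Y \to Y$ sending $e$ to $\tilde x_0$; by uniqueness, $\pi \circ f = \pi'$. Interchanging the roles of $\pi$ and $\pi'$ yields an inverse for $f$, using that $Y$ is MHP to upgrade the intertwining map to a homeomorphism, producing the desired automorphism. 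The ``in particular'' statement is then immediate: $\pi' = \pi \circ f$ is the composition of an almost 1-to-1 map (by (ii) applied to $\pi$) with a homeomorphism.
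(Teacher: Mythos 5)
Your high-level strategy coincides with the paper's: use Effros to see that $\rho \colon G/G_{x_0} \to G\cdot x_0$ is a homeomorphism, extend it to $\pi \colon \Sam(G/G_{x_0}) \to X$, prove $\pi$ is highly proximal, and conclude from the fact that $\Sam(G/G_{x_0})$ is MHP. Parts (ii) and (iii) are also built on the same scaffolding. However, your plan for the ``main obstacle'' in (i) --- excluding preimages $t \in \pi^{-1}(x_0)$ lying outside the copy of $G/G_{x_0}$ --- does not work as described. You propose to turn such a $t$ into a $G$-equivariant self-map of $\Sam(G/G_{x_0})$ via the universal property and then ``contradict uniqueness of the Samuel extension.'' There are two problems. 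First, to apply the universal property and obtain a $G$-equivariant self-map with $e \mapsto t$, you need the orbit map $gG_{x_0} \mapsto g\cdot t$ to be well-defined and uniformly continuous on $G/G_{x_0}$, which requires $t$ to be a $G_{x_0}$-fixed point of $\Sam(G/G_{x_0})$; an arbitrary $t \in \pi^{-1}(x_0)$ is only known to have a $G_{x_0}$-invariant orbit inside the fiber, not to be fixed. Second, even granting such an $f$ with $f(e)=t$, one computes $\pi \circ f|_{G/G_{x_0}} = \rho = \pi|_{G/G_{x_0}}$, so uniqueness of Samuel extensions gives only $\pi \circ f = \pi$, which is not a contradiction --- it is exactly the (yet unproved) failure of injectivity you are trying to exclude. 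The paper's argument at this point is short and direct and should replace your plan: if $p \in \pi^{-1}(x_0)$, choose a net $g_i G_{x_0} \to p$ in the dense copy of $G/G_{x_0}$; continuity of $\pi$ gives $g_i \cdot x_0 \to x_0$, and since $\rho$ is a homeomorphism (this is precisely where Effros is used), $g_i G_{x_0} \to G_{x_0}$, hence $p = G_{x_0}$. Together with the fact that $G/G_{x_0}$ is $G_\delta$ in its Samuel compactification (Engelking), this gives almost 1-to-1-ness directly.

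Your (ii) is correct and in fact spells out more detail than the paper does (the observation that irreducible maps pull dense $G_\delta$ sets back to dense $G_\delta$ sets is exactly what is needed). In (iii), the existence of a $G_{x_0}$-fixed point $\tilde x_0 \in \pi^{-1}(\pi'(e))$ is left as a ``plan'' but is the entire content of the step; what is actually needed is to first show $\pi'(e) \in G\cdot x_0$, which the paper does via a minimality argument: $\pi'^{-1}(G\cdot x_0)$ is dense $G_\delta$ in $\MHP_G(X)$ (by minimality and the fact that $G\cdot x_0$ is $G_\delta$ in $X$), so it meets the dense $G_\delta$ set $G/G_{x_0}$, hence contains it; once $\pi'(e) \in G\cdot x_0$, the fiber $\pi^{-1}(\pi'(e))$ is a singleton, automatically $G_{x_0}$-fixed. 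Also note that ``interchanging $\pi$ and $\pi'$ yields an inverse'' requires an argument (you cannot simply invoke that endomorphisms of minimal flows are automorphisms, which is false in general); the paper avoids this by observing that the induced self-map of $G/G_{x_0}$ is a $G$-equivariant bijection of a homogeneous space, hence is left multiplication by an element of the normalizer of $G_{x_0}$, which is manifestly invertible.
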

\begin{proof}
  Consider the natural bijection $\rho \colon G/G_{x_0} \to G \cdot x_0$, which by Effros's theorem (Fact~\ref{f:Effros}) is a  homeomorphism.
  By the universal property of $\Sam(G/G_{x_0})$, $\rho$ extends to a map $\pi \colon \Sam(G/G_{x_0}) \to X$. 
  We prove that $\pi^{-1}(x_0) = \set{G_{x_0}}$. 
  Indeed, suppose that $p\in \Sam(G/G_{x_0})$ is such that $\pi(p) = x_0$.
  Since $G/G_{x_0}$ embeds densely in $\Sam(G/G_{x_0})$, there is a net $(g_i G_{x_0})$ of elements of $G/G_{x_0}$ which converges to $p$. 
  By continuity, $\pi(g_i G_{x_0}) =  \rho(g_i G_{x_0})= g_i \cdot x_0$ converges to $x_0 = \rho(G_{x_0})$.
  But $\rho$ is a homeomorphism, so $g_i G_{x_0} \to G_{x_0}$, that is, $p = G_{x_0}$. 

  By \cite{Engelking1989}*{Theorem 4.3.26}, since $G/G_{x_0}$ is Polish, it is a $G_\delta$ set in all of its compactifications, in particular in $\Sam(G/G_{x_0})$. 
  Therefore, $\pi$ is almost 1-to-1, as $\pi^{-1}(\pi(g G_{x_0})) = g \pi^{-1}(x_0) = \set{g G_{x_0}}$ for all $g G_{x_0} \in G/G_{x_0}$. 

  Now \ref{i:f:SG-comeager:1} follows from the universal property of $\MHP_G(X)$: since $\pi$ is almost 1-to-1, it is highly proximal, so $\MHP_G(X)$ is a highly proximal extension of $\Sam(G/G_{x_0})$. As $\Sam(G/G_{x_0})$ is MHP, this extension must be an isomorphism.

  \ref{i:f:SG-comeager:2} Since the universal highly proximal extension is almost 1-to-1, so is any other highly proximal extension.

  \ref{i:f:SG-comeager:3} If $X$ is minimal, then $\MHP_G(X) \cong \Sam(G/G_{x_0})$ is minimal, too. Let $\pi' \colon \MHP_G(X) \to X$ be any factor map. 
  By Fact~\ref{f:Effros}, $G \cdot x_0$ is Polish, so by \cite{Engelking1989}*{Theorem 4.3.26}, it is $G_\delta$ in $X$, so $\pi'^{-1}(G \cdot x_0)$ is  $G_\delta$ and dense by minimality, so it must intersect the dense $G_\delta$ set $G/G_{x_0}$ in $\Sam(G/G_{x_0})$. In particular, $\pi'[G/G_{x_0}] = G \cdot x_0$.

  Then $\phi \coloneqq \rho^{-1} \pi' |_{G/G_{x_0}}$ is a $G$-equivariant bijection of the homogeneous space $G/G_{x_0}$, so there exists an element $h$ in the normalizer of $G_{x_0}$ in $G$ such that $\phi$ is left multiplication by $h$. In particular, $\phi$ is a $G$-equivariant uniform isomorphism of $G/G_{x_0}$, so it extends to an automorphism $f$ of the flow $\Sam(G/G_{x_0})$. If we define $\pi$ as in \ref{i:f:SG-comeager:1}, we get that $\pi \circ f = \pi'$ as desired.
\end{proof}

We finally record some of the closure properties of relevant classes of topological groups. Recall that $G$ is called \df{amenable} if every $G$-flow carries an invariant probability measure.
\begin{fact}
    \label{f:ea-open-subgroup}
    Let $G$ be a Polish group and let $V \le G$ be an open subgroup. Let $K$ be a topological group and let $\pi \colon G \to K$ be a continuous homomorphism with dense image. If $G$ is amenable/extremely amenable/CAP, then so are $V$ and $K$.
\end{fact}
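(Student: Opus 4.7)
The statement decomposes into two independent transfers: from $G$ to its dense-image quotient $K$, and from $G$ to the open subgroup $V$.

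The transfer to $K$ is essentially formal. Any $K$-flow $X$ becomes a $G$-flow via $\pi$, and continuity of the $K$-action together with density of $\pi(G)$ in $K$ implies that $\pi(G)$-orbit closures equal $K$-orbit closures, that $\pi(G)$-fixed points are $K$-fixed, and that any $\pi(G)$-invariant regular Borel probability measure on $X$ is $K$-invariant. From these observations I would read off the three transfers: if $G$ is amenable, every $K$-flow carries a $G$-invariant (hence $K$-invariant) probability measure; if $G$ is extremely amenable, every $K$-flow has a $G$-fixed (hence $K$-fixed) point; and if $G$ is CAP, then $\umf(K)$, viewed as a $G$-flow, is $G$-minimal, hence a $G$-factor of $\umf(G)$, and metrizability passes to continuous factors.

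For the open subgroup $V$, I would use coinduction. Given a $V$-flow $X$, define
\begin{equation*}
  Y := \set{f \colon G \to X : f(vg) = v \cdot f(g) \text{ for all } v \in V,\ g \in G},
\end{equation*}
with the topology of pointwise convergence and $G$-action $(g \cdot f)(h) := f(hg)$. Because $V$ is open, every coset $Vt$ is clopen and $V$-equivariance forces each $f \in Y$ to be continuous on it (on $Vt$, $f$ is the composition of right translation with the continuous orbit map $v \mapsto v \cdot f(t)$); choosing a transversal $\set{t_i}$ of $V \backslash G$ then yields a homeomorphism $Y \cong X^{V \backslash G}$, so $Y$ is compact. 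The evaluation $\mathrm{ev}_e \colon Y \to X$ is $V$-equivariant, since $\mathrm{ev}_e(v \cdot f) = f(v) = v \cdot f(e) = v \cdot \mathrm{ev}_e(f)$. Joint continuity of the $G$-action on $Y$ reduces to a local check: in a neighborhood of any $g_0 \in G$ the coset $V t_i g$ is constant (as $V \backslash G$ is discrete), so $(g, f) \mapsto f(t_i g)$ factors through the continuous $V$-action on $X$ composed with a continuous local map $g \mapsto v(g) \in V$.

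With $Y$ in hand, the three transfers go through by inspection. If $G$ is extremely amenable, a $G$-fixed $f \in Y$ is constant with $x_0 := f(e)$ satisfying $v \cdot x_0 = f(v) = x_0$ for all $v \in V$, yielding a $V$-fixed point in $X$. If $G$ is amenable, a $G$-invariant probability measure on $Y$ is in particular $V$-invariant, and its pushforward by $\mathrm{ev}_e$ is a $V$-invariant probability measure on $X$. For CAP, apply the construction with $X := \umf(V)$: a minimal $G$-subflow $Y_0 \sub Y$ is a $G$-factor of $\umf(G)$ and hence metrizable; any minimal $V$-subflow $Y_{00} \sub Y_0$ inherits metrizability; and $\mathrm{ev}_e(Y_{00})$ is a closed, $V$-invariant subset of $\umf(V)$, hence equal to $\umf(V)$ by minimality, so $\umf(V)$ is a continuous image of the metrizable $Y_{00}$ and therefore metrizable. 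The step most prone to error is verifying the joint continuity of the $G$-action on $Y$, which is precisely where the openness of $V$ is essential.
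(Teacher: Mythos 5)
Your proof is correct and takes essentially the same route as the paper: the $K$-direction is handled directly from the definitions via density of $\pi(G)$, and the $V$-direction uses the coinduction construction $X^{V\backslash G}$ together with the $V$-equivariant evaluation map, exactly as the paper invokes (citing Bodirsky--Pinsker--Tsankov, Lemma~13). You simply spell out the joint-continuity check and the CAP bookkeeping in more detail than the paper does.
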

\begin{proof}
  All of these are standard. For $K$, it is a direct consequence of the definitions. For $V$, it is possible to give a uniform proof using the co-induction construction as follows. If we start with any flow $V \actson X$, then one can define an action of $G$ on the space $X^{G/V}$ and a $V$-equivariant map $X^{G/V} \to X$ (see, e.g., \cite{Bodirsky2013}*{Lemma~13}).
  Now it is clear that if $X^{G/V}$ has an invariant measure/fixed point/metrizable subflow, then so does $X$.
\end{proof}

\begin{question}
  \label{q:open-subgroup-of-comeager-orbit}
    If $G$ is Polish, $\umf(G)$ has a comeager orbit and $V \le G$ is an open subgroup, is it true that $\umf(V)$ has a comeager orbit?
\end{question}
  
%%%%%%%%%%%%%%%%%%%%%%%%%%%%%%%%%%%%%%%%%%%%%%%%%%

\section{The model theory of permutation groups}
\label{sec:model-theory-permutation}
  
Even though the Wa\.{z}ewski dendrites are connected spaces, their homeomorphism groups (and subgroups thereof) are non-archimedean and are best studied as permutation groups. An important tool for understanding infinite permutation groups is \Fraisse theory, where the group is viewed as the automorphism group of an \emph{ultrahomogeneous structure} and one can study it via the combinatorial properties of the \emph{age} of the structure. This is particularly helpful in topological dynamics, where one can use the Kechris--Pestov--Todor\v{c}evi\'c correspondence to study extreme amenability and describe the universal minimal flows of automorphism groups.

%%%%%%%%%%%%%%%%%%%%%%%%%%%%%%%%%%%%%%%%%%%%%%%%%%

\subsection{\Fraisse limits and full structures}
\label{sec:fraisse-limits-full}

We recall all relevant definitions and establish some terminology and notation. Most of them are standard except perhaps for our treatment of expansions. A \df{signature} $\cL$ is a collection of relation symbols $\set{R_i}_i$ and function symbols $\set{F_j}_j$ to each of which is associated a natural number called its \df{arity}. An $\cL$-structure $\bM$ consists of a non-empty set $M$, called \df{the universe of $\bM$}, together with interpretations of the relation and function symbols. Each relation symbol of arity $k$ is interpreted as a subset of $M^k$ and each function symbol of arity $k$ is interpreted as a function $M^k \to M$. A function symbol of arity $0$ is called a \df{constant symbol}. A signature without function symbols is called \df{relational}. An \df{automorphism} of $\bM$ is an element of $\Sym(M)$ which respects all relation and function symbols. A \df{substructure} $\bM'$ of $\bM$ is given by a subset $M' \sub M$ which is closed under the functions in the signature (in particular, it contains all constants). The interpretations of the symbols in $\bM'$ are inherited from $\bM$. An \df{embedding} $\bM \to \bM'$ is an isomorphism from $\bM$ onto a substructure of $\bM'$. A structure $\bM$ is called \df{ultrahomogeneous} if every isomorphism between finitely generated substructures of $\bM$ extends to an automorphism of $\bM$. In this paper, we will only be interested in countable structures (but in possibly uncountable signatures).

The \df{age} of a countable structure $\bM$ is given by the isomorphism classes of all finitely generated substructures of $\bM$. Every age is countable, \df{hereditary} (closed under taking substructures) and \df{directed} (every two structures in the age embed into a third). An age is simply a collection of isomorphism classes of finitely generated structures with these three properties. It is easy to see that every age is the age of some countable structure. If $\cF$ is an age, we will say that $\bM$ is an \df{$\cF$-structure} if $\Age(\bM) \sub \cF$.

\Fraisse isolated an important additional property, called \df{amalgamation}, which characterizes the ages of ultrahomogeneous structures, and proved that for every age with amalgamation, there exists a unique countable, ultrahomogeneous structure with this age, up to isomorphism (often called its \df{\Fraisse limit}).

Let $\cL$ be a signature. An \df{atomic formula} in the variables $\bar v = (v_1, \ldots, v_n)$ is of the form $R(t_1(\bar v), \ldots, t_k(\bar v))$, where $R$ is a relation symbol and $t_1, \ldots, t_k$ are \df{terms}, i.e., compositions of functions symbols. A \df{quantifier-free formula} is a Boolean combination of atomic formulas. A \df{first-order formula} (or just a \df{formula}) is one where we also allow quantifiers. Note that all quantifier-free formulas are preserved by embeddings but first-order formulas in general are not. For the purposes of \Fraisse theory, we may and will simply consider quantifier-free formulas as part of the signature. If $\theta(\bar v)$ is a formula and $\bM$ is a structure, we will denote by $\theta(\bM)$ the set $\set{\bar a \in M^{\bar v} : \bM \models \theta(\bar a)}$. Sets of this form are called \df{definable} (or \df{$0$-definable} when we want to emphasize the absence of parameters).
For a fixed $B \sub \bM$, we can consider the subsets of $M^k$ which are \df{definable with parameters from $B$}, or \df{$B$-definable}, i.e., of the form $\set{\bar a \in M^{\bar v} : \bM \models \phi(\bar a, \bar b)}$ for some formula $\phi(\bar v, \bar w)$ and $\bar b \in B^{\bar w}$. A structure \df{eliminates quantifiers} if every $0$-definable set is definable by a quantifier-free formula.

The \df{theory of $\bM$}, denoted by $\Th(\bM)$, is the collection of all sentences true in $\bM$.
Recall that the space of \df{$k$-types} (for $k \in \N$) of $\Th(\bM)$, denoted by $\tS_k(\Th(\bM))$, is the Stone space of the Boolean algebra of $0$-definable subsets of $M^k$. For $A \sub M$, we can also similarly consider the type space with parameters from $A$, denoted by $\tS_k(A)$. We also let $\tS_\omega = \varprojlim \tS_k$. These are all $\aut(\bM)$-flows (if the set of parameters is $\Aut(\bM)$-invariant). They are metrizable if the signature is countable.
For a tuple $\bar a \in M^k$, the \df{type of $\bar a$}, denoted by $\tp \bar a$ is the element $p \in \tS_{k}(\Th(\bM))$ given by
\begin{equation*}
  p(\bar v) \models \phi(\bar v) \iff M \models \phi(\bar a), \quad \text{ for every formula } \phi.
\end{equation*}
We say that such types are \df{realized} in $\bM$. Any type $p \in \tS_{k}(\Th(\bM))$ is realized in some model of $\Th(\bM)$.

We will at some point use quantifiers, so we will also need the following weaker notion of homogeneity: a structure $\bM$ is \df{homogeneous} if for all tuples $\bar a$, $\bar b$ from $M$ such that $\tp \bar a = \tp \bar b$, there exists $g \in \Aut(\bM)$ such that $g \cdot \bar a = \bar b$. Of course, if a model is ultrahomogeneous, then it is homogeneous. A type is \df{isolated} if it is an isolated point in the corresponding type space. A model that only realizes isolated types is called \df{atomic}. It is a standard fact that a countable, atomic model is always homogeneous.

Let now $\cF$ be an age in the signature $\cL$. An \df{imaginary sort} $\cS$ for $\cF$ is given by a pair of quantifier-free formulas $\theta(\bar v)$, $\eps(\bar u_1, \bar u_2)$ such that $|\bar v| = |\bar u_1| = |\bar u_2|$ and for every $\bA \in \cF$, $\eps(\bA) \cap (\theta(\bA) \times \theta(\bA))$ is an equivalence relation on $\theta(\bA)$. We define $\cS(\bA)$ to be the set of classes of this equivalence relation.

The following lemma records some basic facts about imaginary sorts and we omit the routine proof.
\begin{lemma}
  \label{l:basic-imaginary-sorts}
  Let $\cF$ be an age and let $\cS$ be an imaginary sort for $\cF$ given by $(\theta, \eps)$. Then the following hold:
  \begin{enumerate}
  \item For any $\cF$-structure $\bM$, $\eps(\bM) \cap (\theta(\bM) \times \theta(\bM))$ is an equivalence relation on $\theta(\bM)$ and, in particular, we can define
    \begin{equation*}
      \cS(\bM) = \set{[\bar a]_{\eps(\bM)} : \bar a \in \theta(\bM)}.
    \end{equation*}
  \item If $f \colon \bM \to \bN$ is an embedding between $\cF$-structures, then $f$ naturally defines an injective map $\cS(f) \colon \cS(\bM) \to \cS(\bN)$ by
    \begin{equation*}
      \cS(f)([\bar a]_{\eps(\bM)}) = [f(\bar a)]_{\eps(\bN)} \quad \text{ for } \bar a \in \theta(\bM).
    \end{equation*}
    Moreover, this correspondence is functorial.
  \item In particular, for any $\cF$-structure $\bM$, the group $\Aut(\bM)$ acts on $\cS(\bM)$.
  \end{enumerate}
\end{lemma}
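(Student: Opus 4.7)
The overall approach is to observe that all three clauses reduce to a single principle: since $\theta$ and $\eps$ are quantifier-free, they are preserved \emph{and reflected} by embeddings of $\cL$-structures. Combined with the heredity of the age $\cF$, this is enough to transfer the equivalence-relation hypothesis from the finitely generated members of $\cF$ to arbitrary $\cF$-structures and to the morphisms between them.

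For clause (i), I would fix an $\cF$-structure $\bM$ and, given tuples $\bar a, \bar b, \bar c \in \theta(\bM)$, form the substructure $\bA \le \bM$ they generate. Heredity puts $\bA$ in $\cF$, so by hypothesis $\eps$ is already an equivalence relation on $\theta(\bA)$. Preservation and reflection of quantifier-free formulas along $\bA \hookrightarrow \bM$ then let reflexivity, symmetry, and transitivity transfer to $\bM$ on these three tuples, which is all that is required since reflexivity/symmetry/transitivity are checked one tuple, one pair, or one triple at a time.

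For clause (ii), the same preservation/reflection principle, applied to a given embedding $f\colon \bM \to \bN$, shows that $f$ sends $\theta(\bM)$ into $\theta(\bN)$ and that $\bar a \eqrel{\eps(\bM)} \bar a' \iff f(\bar a) \eqrel{\eps(\bN)} f(\bar a')$. The forward direction of the biconditional makes $\cS(f)$ well-defined on $\eps(\bM)$-classes, and the reverse direction makes it injective; functoriality then follows on the nose from the defining formula $\cS(f)([\bar a]) = [f(\bar a)]$. Clause (iii) is then a formal consequence of (ii) and functoriality: applying (ii) to $g \in \Aut(\bM)$ yields an injection $\cS(g) \colon \cS(\bM) \to \cS(\bM)$, and the identity $\cS(g^{-1}) \circ \cS(g) = \cS(\id_\bM) = \id_{\cS(\bM)}$ promotes it to a bijection and makes $g \mapsto \cS(g)$ a group homomorphism $\Aut(\bM) \to \Sym(\cS(\bM))$.

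I do not anticipate any serious obstacle here: the definition of an imaginary sort is engineered precisely to make this functorial construction go through. The only small pitfall to keep in mind is that one must use preservation \emph{and} reflection of quantifier-free formulas under embeddings, rather than preservation alone, since reflection is exactly what underlies both the injectivity in (ii) and the surjectivity of $\cS(g)$ in (iii).
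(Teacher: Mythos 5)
Your proof is correct, and it is exactly the routine argument the authors chose to omit (the paper explicitly states, just before the lemma, that they ``omit the routine proof''). Your key observations — that quantifier-free formulas are both preserved and reflected by embeddings, that heredity of the age lets you transfer the equivalence-relation hypothesis from the finitely generated substructure $\bA \in \cF$ generated by the relevant tuples up to the ambient $\cF$-structure $\bM$, and that functoriality together with $\cS(g^{-1}) \circ \cS(g) = \id$ upgrades the injections of (ii) to bijections in (iii) — are precisely what makes the definition of imaginary sort work, and there is no gap.
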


We will also allow to have symbols from the signature defined on imaginary sorts. This will be particularly useful for defining expansions.

If one is given a set $M$ and a permutation group $\Gamma \leq \Sym(M)$, one can define a relational, ultrahomogeneous structure $\bM$ with universe $M$ such that $\Aut(\bM) = \Gamma$. There are many such structures and all of them give rise to equivalent \Fraisse classes, however they do differ from the point of view of model theory. First, we always make the assumption that $\bM$ is atomic: equivalently, for every $k$ and every $\Gamma$-orbit $O \sub M^k$, there exists a quantifier-free formula $\theta(\bar u)$ such that $\theta(M) = O$. We will say that the formula $\theta$ \df{isolates} the orbit $O$. With this assumption, there are two canonical choices corresponding to the minimal and the maximal possible Boolean algebras of definable sets, respectively, under the constraint that the structure is ultrahomogeneous and $\Aut(\bM) = \Gamma$. The minimal one is obtained by introducing a relation symbol for every $\Gamma$-orbit on $M^k$ for every $k$ and it has the advantage that the resulting signature is always countable; however, for us, it is the other one which will be more relevant.
\begin{defn}
  \label{df:full-structure}
  Let $M$ be a countable set and let $\Gamma \leq \Sym(M)$ be a closed subgroup. We will say that a structure $\bM$ with universe $M$ is \df{full for $\Gamma$} if $\Aut(\bM) = \Gamma$ and for every $k \in \N$, every $\Gamma$-invariant subset of $M^k$ is definable in $\bM$. We will say it is \df{qf-full} if every such set is quantifier-free definable. A structure is \df{full} if it is full for its automorphism group.
\end{defn}

Note if $\Gamma \actson M$ is a permutation group, then all full structures for $\Gamma$ are interdefinable. If $\Gamma$ is oligomorphic, then every structure $\bM$ with automorphism group $\Gamma$ is full; if moreover $\bM$ is ultrahomogeneous, then it is qf-full. If $\Gamma$ is not oligomorphic, then every full structure necessarily has a signature of cardinality continuum. As we will see shortly, a full structure is always \df{$\aleph_0$-categorical} (i.e, the unique countable model of its first-order theory), which makes it a canonical choice for studying permutation groups model-theoretically.

\begin{theorem}
  \label{th:aleph0-cat}
  Let $M$ be a countable set and let $\Gamma \leq \Sym(M)$ be a closed subgroup. Let $\bM$ be a full structure for $\Gamma$ with universe $M$ and let $T = \Th(\bM)$. Then the following hold:
  \begin{enumerate}
  \item \label{i:aleph0:types} $\tS_k(T) = \beta(M^k/\Gamma)$;
  \item \label{i:aleph0:atomic} $\bM$ is an atomic model of $T$;
  \item \label{i:aleph0:categ} The structure $\bM$ is $\aleph_0$-categorical;
  \item \label{i:aleph0:qfelim} If $\bM$ is qf-full, then $\bM$ is ultrahomogeneous and eliminates quantifiers.
  \end{enumerate}
\end{theorem}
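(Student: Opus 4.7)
For (i), the Boolean algebra of $0$-definable subsets of $M^k$ in $\bM$ coincides with $\mathcal{P}(M^k)^\Gamma \cong \mathcal{P}(M^k/\Gamma)$, since every $0$-definable set is $\Aut(\bM)=\Gamma$-invariant and the reverse inclusion is exactly fullness. Completeness of $T=\Th(\bM)$ identifies the Lindenbaum algebra of $k$-ary formulas modulo $T$ with this algebra, because two formulas are $T$-equivalent iff they cut out the same subset in $\bM$; taking Stone duals gives $\tS_k(T)\cong\beta(M^k/\Gamma)$. For (ii), fix $\bar a \in M^k$: the orbit $\Gamma\cdot\bar a$ is $0$-definable, say by $\phi_{\bar a}$, and for any formula $\psi(\bar v)$ the set $\psi(\bM) \cap (\Gamma \cdot \bar a)$ is $\Gamma$-invariant inside a single orbit, so equals $\Gamma \cdot \bar a$ or $\emptyset$. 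By completeness, either $T \vdash \phi_{\bar a} \to \psi$ or $T \vdash \phi_{\bar a} \to \neg \psi$, so $\phi_{\bar a}$ isolates $\tp(\bar a)$, and $\bM$ is atomic.

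For (iii), I would split on whether $\Gamma$ is oligomorphic. If it is, $L$ is countable, each $\tS_k(T) = M^k/\Gamma$ is finite by (i), and Ryll-Nardzewski delivers $\aleph_0$-categoricity. Otherwise, $|M^{k_0}/\Gamma|$ is infinite for some $k_0$, so $L$ contains $2^{\aleph_0}$ distinct predicate symbols and $|L| \geq 2^{\aleph_0}$. By (ii) and the standard extension lemma for atomic models, $\bM$ elementarily embeds into any $\bN \models T$, so I identify $\bM \preceq \bN$ for countable $\bN$. The crux is to rule out any $b \in \bN \sminus \bM$: such a $b$ realizes a non-principal type over $\bM$, and the strategy is that fullness forces $|L|$ many pairwise distinct elements into $\dcl(\bM \cup \{b\}) \sub \bN$, contradicting countability. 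Concretely, for each $0$-definable partial function $f$ (with parameters from $\bM$) defined at $b$, the value $f(b)$ lies in $\dcl(\bM \cup \{b\})$; two such $f_1, f_2$ agree at $b$ precisely when the $\bM$-definable set on which they globally agree belongs to the ultrafilter $\tp(b/\bM)$. The main obstacle is verifying that the resulting equivalence relation on parameter-definable functions has $|L|$ many classes when $\tp(b/\bM)$ is non-principal; this is an ultrapower-type counting argument exploiting the abundance of $\Gamma$-invariant subsets supplied by fullness, and I expect it to be the most technical step.

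For (iv), qf-fullness says every $\Gamma$-invariant subset of $M^k$ is quantifier-free definable. Ultrahomogeneity follows because any isomorphism $\sigma \colon A \to B$ between finitely generated substructures preserves all qf-definable sets, hence all $\Gamma$-orbits on tuples: choosing an enumerating tuple $\bar a$ of $A$, the tuple $\sigma(\bar a)$ lies in the same $\Gamma$-orbit as $\bar a$, so there is $\gamma \in \Gamma = \Aut(\bM)$ with $\gamma \cdot \bar a = \sigma(\bar a)$, and $\gamma|_A = \sigma$ extends $\sigma$ to an automorphism of $\bM$. Quantifier elimination follows from qf-fullness and completeness: any formula $\phi(\bar v)$ cuts out a $\Gamma$-invariant subset of $M^{\bar v}$, which coincides with $\psi(\bM)$ for some quantifier-free $\psi$, so $\phi \leftrightarrow \psi$ holds in $\bM$ and, by completeness, in $T$.
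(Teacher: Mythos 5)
Parts \ref{i:aleph0:types}, \ref{i:aleph0:atomic}, and \ref{i:aleph0:qfelim} are correct and essentially the same as the paper's proof. The problem is \ref{i:aleph0:categ}: your strategy for the non-oligomorphic case is to rule out a new element $b \in \bN \sminus \bM$ by showing $\dcl(\bM \cup \set{b})$ is uncountable, but you leave the crucial counting argument entirely open, flagging it as ``the most technical step,'' and it is not clear it can be closed. Fullness gives uncountably many definable \emph{relations}, but the number of pairwise-inequivalent $\bM$-definable \emph{partial functions} defined at $b$ is governed by algebraicity phenomena that fullness does not control. Nothing rules out the possibility that $\dcl(\bM \cup \set{b})$ is countable (possibly just $\bM \cup \set{b}$) while $\bN$ is nonetheless forced to be uncountable, so the reduction to definable closure may simply be the wrong tool.

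The paper proves \ref{i:aleph0:categ} uniformly (no oligomorphic/non-oligomorphic split) by showing directly that any model realizing a non-isolated $k$-type $q$ has cardinality at least $2^{\aleph_0}$, following an idea of Blass. Enumerate the countably many isolated $k$-types $p_0, p_1, \ldots$ with isolating formulas $\phi_i$, fix an almost-disjoint family $\set{B_\alpha : \alpha \in 2^\N}$ of infinite subsets of $\N$, and use fullness to name $\psi_\alpha \coloneqq \bigvee_{i \in B_\alpha} \phi_i$ and the orbit-index preorder $<$ as relation symbols. If $c$ realizes $q$ in $\bN$, the definable sets $\set{x : \psi_\alpha(x) \land x > c}$ are nonempty (the sentence $\forall y\, \exists x\, (\psi_\alpha(x) \land x > y)$ holds in $\bM$ since $B_\alpha$ is infinite and transfers to $\bN$) and pairwise disjoint (since $\psi_\alpha \land \psi_{\alpha'}$ forces membership in one of the finitely many $\phi_i$ with $i \in B_\alpha \cap B_{\alpha'}$, all of which lie strictly below $c$ because $q$ is non-principal). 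The continuum-many witnesses so produced need not lie in $\dcl(\bM \cup \set{c})$, which is precisely why this route succeeds where the definable-closure route stalls; your oligomorphic branch via Ryll--Nardzewski is then unnecessary, as in that case every type is isolated and the argument is vacuous.
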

\begin{proof}
  \ref{i:aleph0:types} A subset of $M^k$ is definable iff it is $\Gamma$-invariant and the Boolean algebra of $\Gamma$-invariant subsets of $M^k$ is isomorphic to the power set of $M^k / \Gamma$.

  \ref{i:aleph0:atomic} The types realized in $\bM$ correspond to the principal ultrafilters in $\beta(M^k / \Gamma)$. As the principal ultrafilters are isolated, this implies that $\bM$ is atomic.

  \ref{i:aleph0:categ} We will show that any countable model of $T$ is atomic. Then the claim will follow from the uniqueness of the countable atomic model. The proof is inspired by a construction of Blass~\cite{BlassMathOverflow}.

  Suppose that $q \in \tS_k(T)$ is a non-isolated type; our goal is to show that any model that realizes $q$ is uncountable. Let $p_0, p_1, \ldots$ enumerate the isolated types in $\tS_k(T)$ and let $\phi_i$ be a formula that isolates $p_i$. Let $\set{B_\alpha : \alpha \in 2^\N}$ be a collection of infinite subsets of $\N$ such that for all $\alpha \neq \alpha'$, $B_\alpha \cap B_{\alpha'}$ is finite. Let $\psi_\alpha(x)$ be the relation defined in $\bM$ by $\bigvee_{i \in B_\alpha} \phi_i(x)$ and let  $<$ be the relation defined in $\bM$ by
  \begin{equation*}
    x < y \iff \phi_i(x) \land \phi_j(y) \land i < j.
  \end{equation*}
  Let now $c$ be a realization of $q$ in some model $\bN$ and consider the collection of formulas
  \begin{equation*}
    \set{\exists x \ \psi_\alpha(x) \land x > c : \alpha \in 2^\N}.
  \end{equation*}
  We claim that each one of them holds and their solutions are disjoint, showing that $\bN$ is uncountable. The first claim follows from the fact that the sentence $\forall y \exists x \ \psi_\alpha(x) \land x > y$ holds in $\bM$ (because $B_\alpha$ is infinite). For the second, note that for $\alpha \neq \alpha'$, the sentence
  \begin{equation*}
    \forall x \ \psi_\alpha(x) \land \psi_{\alpha'}(x) \implies \bigvee_{i \in B_\alpha \cap B_{\alpha'}} \phi_i(x)
  \end{equation*}
  is true in $\bM$ and the fact that $q$ is non-principal implies that $c > y$ for every $y$ for which there is $i$ such that $\phi_i(y)$.

  \ref{i:aleph0:qfelim} This follows from the fact that all definable sets in $\bM$ are $\Gamma$-invariant, so quantifier-free definable.
\end{proof}

%%%%%%%%%%%%%%%%%%%%%%%%%%%%%%%%%%%%%%%%%%%%%%%%%%

\subsection{Expansions as types}
\label{sec:expansions-types}

Starting from \cite{Kechris2005}, metrizable universal minimal flows of automorphism groups of a countable structure $\bM$ are often represented as spaces of expansions of $\bM$ in a suitable relational signature. In principle, this can always be done and it works well when the expansion is simple (say, by a linear ordering) but in more complicated situations, a more expressive formalism may be convenient. For example, in order to describe the UMF of kaleidoscopic groups, we are naturally led to consider expansions by a constant not belonging to the original structure, which, while possible to deal with in the traditional KPT setting (see \cite{Kwiatkowska2018}), require a lot of coding and are less transparent.

Our definition of expansions is more general than the usual one on two counts: first, we allow expansions by function symbols (in particular, constants) that may add new elements to the structure, and second, we also allow new symbols on the imaginary sorts. Formally, let $\bM$ be an $\cL$-structure and let $\cL' \supseteq \cL$. An \df{$\cL'$-expansion} of $\bM$ is an $\cL'$-structure $\bN$ together with a map $\iota \colon M \to N$ such that:
\begin{itemize}

\item $\iota$ is an $\cL$-embedding;
\item $N = \dcl^{\cL'} \iota[M]$;
\item $\iota[M]$ is $\Aut(\bN)$-invariant.
\end{itemize}
Moreover, we allow some of the new symbols in $\cL'$ to apply to the imaginary sorts of $\bN$. As usual, $\dcl$ denotes the \df{definable closure}, i.e., $\dcl A$ is the collection of elements $b \in N$ such that the singleton $\set{b}$ is $A$-definable. 
The main property that our definition ensures is that an expansion gives rise to a closed subgroup of the automorphism group, which, as the proof of \Cref{p:SGH-as-types} shows, is all that matters.

\begin{prop}
  Let $\bM$ be an $\cL$-structure and $\bN$ be a $\cL'$-expansion of $\bM$ with embedding $\iota \colon M \to N$.
  Then the restriction map $f \colon \Aut(\bN) \to \Aut(\bM)$ defined by $f(g) = g|_{\iota[M]}$ is a topological group embedding.
\end{prop}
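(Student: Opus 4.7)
The plan is to verify the four usual ingredients of a topological group embedding: that $f$ is a well-defined group homomorphism, that it is continuous, that it is injective, and that its inverse on the image is continuous. Well-definedness follows from the two conditions that $\iota$ is an $\cL$-embedding and that $\iota[M]$ is $\Aut(\bN)$-invariant: any $g \in \Aut(\bN)$ maps $\iota[M]$ onto itself and preserves all $\cL$-symbols (since $\cL \subseteq \cL'$), so $g|_{\iota[M]}$ transports through $\iota$ to an element of $\Aut(\bM)$. The homomorphism property is then immediate from composition of restrictions, and continuity is routine: the preimage under $f$ of a subbasic open set $\set{h \in \Aut(\bM) : h(c) = c'}$ is $\set{g \in \Aut(\bN) : g(\iota(c)) = \iota(c')}$, which is subbasic open in $\Aut(\bN)$.

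For the remaining two properties the plan is to exploit the hypothesis $N = \dcl^{\cL'}(\iota[M])$. For each $b \in N$ this yields an $\cL'$-formula $\phi_b(v, \bar w)$ and a tuple $\bar a_b$ from $\iota[M]^{|\bar w|}$ such that $b$ is the unique solution of $\phi_b(v, \bar a_b)$ in $\bN$. Since automorphisms preserve formulas, for every $g \in \Aut(\bN)$ the image $g(b)$ is the unique solution of $\phi_b(v, g(\bar a_b))$; in particular $g(b)$ is completely determined by the restriction of $g$ to the finite tuple $\bar a_b \subseteq \iota[M]$. Injectivity of $f$ follows at once: two automorphisms of $\bN$ agreeing on $\iota[M]$ must agree on every element of $N$.

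To show $f$ is open onto its image, it is enough, since $f$ is a continuous homomorphism between topological groups, to prove continuity of $f^{-1}$ at the identity. Given a basic neighborhood $U = \set{g \in \Aut(\bN) : g(b_j) = b_j, \ 1 \le j \le k}$ of $\id_\bN$, I would pick formulas and parameters $\phi_{b_j}, \bar a_{b_j}$ as above, let $F \subseteq \iota[M]$ collect all entries of the $\bar a_{b_j}$, and consider the open neighborhood $V = \set{h \in \Aut(\bM) : h|_{\iota^{-1}(F)} = \id}$ of $\id_\bM$. If $g \in \Aut(\bN)$ satisfies $f(g) \in V$, then $g$ fixes $F$ pointwise, hence $g(\bar a_{b_j}) = \bar a_{b_j}$ for every $j$, and the uniqueness clause forces $g(b_j) = b_j$, so $g \in U$. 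Thus $V \cap f[\Aut(\bN)] \subseteq f[U]$, giving the required open neighborhood. No substantial obstacle is expected; the only mildly delicate point is that new symbols in $\cL'$ may involve imaginary sorts, but the definable closure property still applies directly to elements of the real universe $N$, so the argument is unaffected.
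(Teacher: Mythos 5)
Your proposal is correct and takes essentially the same approach as the paper: well-definedness from the $\Aut(\bN)$-invariance of $\iota[M]$, and both injectivity and continuity of the inverse via the definable-closure hypothesis, picking for each $b \in N$ a formula with parameters in $\iota[M]$ that defines $\{b\}$ and observing that fixing those parameters forces an automorphism to fix $b$. The only cosmetic difference is that the paper phrases the last step with sequences while you use neighborhoods of the identity.
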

\begin{proof}
  We may assume that $\iota$ is the identity, so that $\bM$ is a substructure of the $\cL$-reduct of $\bN$. 
  Since $M$ is $\Aut(\bN)$-invariant, $g|_{M} \in \aut(\bM)$.

  Suppose that $f(g) = \id_M$. 
  Then $g(b) = b$ for any $b \in \dcl(M) = N$, so $g = \id_N$.
  
  To show that $f$ is an embedding, let $(g_n)$ be a sequence of elements of $\Aut(\bN)$ such that $g_n |_M \to \id_M$.
  Fix $b \in N$ in order to prove that eventually $g_n(b)= b$.
  Let $\phi(v, \bar a)$ define $\set{b}$; then for any $h \in \Aut(\bM)$, which fixes $\bar a$, we have $h(b) = b$. 
  But $g_n \to \id_M$, so eventually $g_n \cdot \bar a = \bar a$ and we are done.
\end{proof}

We say that an expansion is \df{relational} if $N = M$ and $\cL' \setminus \cL$ is relational. 

Next we explain how to represent any flow of the form $\Sam(G/H)$, where $G = \Aut(\bM)$ and $H \leq G$ is an arbitrary closed subgroup, as a space of types for an appropriate expansion. 
The following is just a restatement of Fact~\ref{f:SGH-non-archimedean} in a model-theoretic setting but being able to use arbitrary first order formulas is convenient in applications and also makes the action of the group $G$ more explicit.

If $\bN$ is an expansion of $\bM$, we denote
\begin{equation*}
  \Exp(\bM, \bN) = \set{p \in \tS_\omega(\Th(\bN)) : p|_\cL = \tp_\cL \bar a},
\end{equation*}
where $\bar a$ is some fixed enumeration of $\bM$. It will also often be convenient to identify the variables of the type with the elements of $M$ via this fixed enumeration. Note that $\Aut(\bM)$ acts naturally on $\Exp(\bM, \bN)$ by permuting the variables.
\begin{prop}
  \label{p:SGH-as-types}
  Let $\bM$ be a countable structure in a signature $\cL$, which is an atomic model of its first-order theory. Suppose that $\bN$ is an expansion of $\bM$ in a signature $\cL'$, and that $\bN$ is full. Denote $G = \Aut(\bM)$ and $H = \Aut(\bN)$. Then
  \begin{equation*}
    \Sam(G/H) \cong \Exp(\bM, \bN).
  \end{equation*}
\end{prop}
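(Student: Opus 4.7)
The plan is to construct the isomorphism by first exhibiting the natural $G$-equivariant map $\phi \colon G/H \to \Exp(\bM, \bN)$ sending $gH \mapsto g \cdot \tp^\bN(\bar a)$, where $G$ acts on $\tS_\omega(\Th(\bN))$ by permutation of variables, extending it continuously to $\Sam(G/H)$, and then verifying the extension is a bijection via a parallel inverse-limit description of both sides. Well-definedness is immediate since $h \in H = \Aut(\bN)$ preserves $\cL'$-types, and uniform continuity is straightforward: for a basic clopen cut out by an $\cL'$-formula $\psi(v_{m_1}, \ldots, v_{m_k})$, the open subgroup $V = G_{\set{m_1, \ldots, m_k}}$ does the job because $g_1 \in V g_2$ forces $g_1^{-1}(m_i) = g_2^{-1}(v^{-1}(m_i)) = g_2^{-1}(m_i)$. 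The universal property of $\Sam(G/H)$ then produces a continuous $G$-equivariant extension $\tilde\phi \colon \Sam(G/H) \to \Exp(\bM, \bN)$.

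To show $\tilde\phi$ is a homeomorphism, I would describe both sides as inverse limits indexed by finite $F \sub M$ and match them stage by stage. On one side, \Cref{f:SGH-non-archimedean} combined with the fact that the $G_F$ form a basis of open subgroups of $G$ yields $\Sam(G/H) = \varprojlim_F \beta(G_F \backslash G / H)$, where $H$ is viewed as a closed subgroup of $G$ via the topological group embedding $\Aut(\bN) \to \Aut(\bM)$ coming from $N = \dcl^{\cL'}(\iota[M])$ together with $\iota[M]$ being $\Aut(\bN)$-invariant. On the other side, the projection $p \mapsto p|_F$ gives $\Exp(\bM, \bN) = \varprojlim_F \Exp_F$, where $\Exp_F$ consists of the $F$-types of $\Th(\bN)$ extending $\tp_\cL(\bar a|_F)$; by \Cref{th:aleph0-cat} applied to the full structure $\bN$, this equals the Stone--\v{C}ech compactification of the set of $H$-orbits on $N^F$ realizing the correct $\cL$-type. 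The extension $\tilde\phi$ respects these inverse systems, so it suffices to show the natural map $G_F g H \mapsto [g^{-1} \cdot (\bar a|_F)]_H$ is a bijection at each finite stage.

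Injectivity uses the homogeneity of $\bN$ (which follows from atomicity and countability, themselves consequences of fullness via \Cref{th:aleph0-cat}) to lift an equality of $H$-orbits into an explicit element of $H$, which then descends through the restriction to give the required double-coset relation; the atomicity of $\bM$ ensures that the correct $\cL$-type singles out a unique $G$-orbit in $M^F$, making this descent transparent. The hard part will be surjectivity: one must show that every $H$-orbit on $N^F$ realizing the $\cL$-type of $\bar a|_F$ intersects $\iota[M]^F$. Here the structural conditions of the expansion --- $\iota[M]$ being $\Aut(\bN)$-invariant and $N = \dcl^{\cL'}(\iota[M])$ --- combine with atomicity of $\bM$ to produce, for each such $H$-orbit, a representative in $\iota[M]^F$. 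Once this bijection is in place, compatibility with the projections of the two inverse systems is automatic, and passing to the limit yields the desired $G$-equivariant homeomorphism $\Sam(G/H) \cong \Exp(\bM, \bN)$.
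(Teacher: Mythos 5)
Your high-level strategy — reduce via Fact~\ref{f:SGH-non-archimedean} to an inverse limit over finite stages indexed by $F \subseteq M$ and match the two inverse systems stage by stage — is the same as the paper's. The paper, however, sets up the finite-stage identification more economically and in a way that makes your ``surjectivity'' step unnecessary: it identifies $G_{\bar b} \backslash G / H$ with the $H$-orbits on $G \cdot \bar b \subseteq M^{|\bar b|}$, notes by homogeneity of $\bM$ that $G \cdot \bar b$ is exactly the set of $M$-tuples with the $\cL$-type of $\bar b$, and notes by fullness of $\bN$ that the power set of this orbit space is the algebra of $\cL'$-definable subsets of $G \cdot \bar b$; passing to Stone spaces then yields the description in one step. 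The paper never leaves $M^{|\bar b|}$, so no separate argument is needed to rule out tuples in $N^F \setminus \iota[M]^F$.

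The genuine gap in your proposal is in the surjectivity step, which you correctly flag as the hard part but for which the proposed justification does not go through. You want to show that every $H$-orbit on $N^F$ realizing the $\cL$-type of $\bar a|_F$ meets $\iota[M]^F$, and you offer $\Aut(\bN)$-invariance of $\iota[M]$, the condition $N = \dcl^{\cL'}(\iota[M])$, and atomicity of $\bM$. But $\Aut(\bN)$-invariance plus fullness of $\bN$ only tell you that $\iota[M]$ is $\cL'$-definable, not $\cL$-definable (or even $\cL$-type-definable) in $\bN|_\cL$, and the $\dcl$ condition gives nothing relevant here. Concretely, if there is $c \in N \setminus \iota[M]$ that satisfies the same $\cL$-formulas (evaluated in $\bN|_\cL$) as some element of $\iota[M]$ — which is not forbidden by the axioms of an expansion — then the $H$-orbit of $c$ realizes the right $\cL$-type but entirely misses $\iota[M]$, and your map at that finite stage is not onto. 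In the paper's applications this cannot happen (e.g.\ in $\bX^*$ the $\cL$-reduct already distinguishes the endpoint $\xi$ from branch points via $\neg\exists y, z\, (K(y,v,z)=v \wedge y \neq v \wedge z\neq v)$), but to close the gap you would need to add such a verification rather than appeal to the bare definition of expansion. There is also an unaddressed subtlety hidden in your formulation: when $N \supsetneq \iota[M]$, the reduct $\bN|_\cL$ need not be an elementary extension of $\bM$, so the $\cL$-type of $\bar a|_F$ ``in $\bM$'' and ``in $\bN|_\cL$'' are types in potentially different theories and cannot be compared without care. (Your injectivity step, by contrast, is fine but simpler than you suggest; it is a purely formal double-coset computation that does not need homogeneity of $\bN$ or atomicity of $\bM$.)
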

\begin{proof}
  From Fact~\ref{f:SGH-non-archimedean}, we have that
  \begin{equation*}
    \Sam(G/H) = \varprojlim \beta (V \divby G / H) = \varprojlim \beta (G_{\bar b} \divby G / H),
  \end{equation*}
  where the second limit is taken over all finite tuples $\bar b$ in $M$. The double coset space $G_{\bar b} \divby G / H$ can be identified with the set of orbits of the action $H \actson G \cdot \bar b$. Now, since $\bM$ is atomic, it is homogeneous, so 
  \begin{equation*}
    G \cdot \bar b = \set{\bar c \in M^{|\bar b|} : \tp_\cL \bar c = \tp_\cL \bar b}
  \end{equation*}
  is an $\cL$-definable set and by the fullness of $\bN$, every $H$-invariant subset of $G \cdot \bar b$ is $\cL'$-definable, so we obtain a natural identification between the power set of $G_{\bar b} \divby G / H$ and the algebra of $\cL'$-definable subsets of $G \cdot \bar b$. Passing to the respective Stone spaces, we obtain the identification
  \begin{equation*}
    \beta(G_{\bar b} \divby G / H) = \set{p \in \tS_{|\bar b|}(\Th(\bN)) : p|_\cL = \tp_\cL \bar b}.
  \end{equation*}
Finally, taking the inverse limit over all finite tuples $\bar b$ in $\bM$ completes the proof.
\end{proof}

As a simple illustration, we show how to represent model-theoretically the flow $\Sam(G/V)$ for an open subgroup $V \leq G$.
\begin{cor}
  \label{p:samuel-open}
  Let $\Gamma \leq \Sym(M)$ be a transitive permutation group and let $\bM$ be a full structure for $\Gamma$. Let $c \in M$. Then the following hold:
  \begin{enumerate}
  \item \label{i:pso:flow} $\Sam(\Gamma/\Gamma_c) \cong \tS_1(M)$ as $\Gamma$-flows;
  \item \label{i:pso:metrizable} $\Sam(\Gamma/\Gamma_c)$ is metrizable iff $\Gamma$ is oligomorphic.
  \end{enumerate}
\end{cor}
\begin{proof}
  \ref{i:pso:flow} Apply Proposition~\ref{p:SGH-as-types} to the expansion by a single constant symbol $\cc$ which is interpreted as an element $c \in M$ (it does not matter which one because of transitivity). 
  This expansion is a full structure for $\Gamma_c$ since any $\Gamma_c$-invariant set $D \sub M^k$ is defined by $\phi(\cc, \bar v)$, where $\phi(u, \bar v)$ is a formula defining $\Gamma \cdot (\set{c} \times D)$ in $\bM$.

  \ref{i:pso:metrizable} ($\Rightarrow$) Suppose that $\Gamma \actson M$ is not oligomorphic. Then there exists a tuple $\bar a$ in $M$ such that $\Gamma_{\bar a}$ has infinitely many orbits on $M$. Reasoning as above, we see that the expansion of $\bM$ by constants for $\bar a$ remains full. Therefore, by Theorem~\ref{th:aleph0-cat}, $\tS_1(\bar a) \cong \beta(M/\Gamma_{\bar a})$, which is not metrizable. So $\tS_1(M)$, which surjects onto $\tS_1(\bar a)$, is not metrizable either.

  ($\Leftarrow$) If $\Gamma$ is oligomorphic, then $\Gamma_c$ is also oligomorphic and in particular, $\Gamma_c$ is co-precompact in $\Gamma$. Therefore $\Sam(\Gamma/\Gamma_c)$ is metrizable.
\end{proof}

We recall a combinatorial criterion for presyndeticity. If $\bM$ is an ultrahomogeneous structure and $\bN$ is an ultrahomogeneous, relational expansion of $\bM$, we will say that $\bN$ is a \df{minimal expansion} of $\bM$ if for every $\bB' \in \Age(\bN)$, there exists $\bA \in \Age(\bM)$ such that for every $\bA' \in \Age(\bN)$ which is an expansion of  $\bA$, there exists an embedding of $\bB'$ into $\bA'$. 
Zucker considered this condition in a slightly different context in \cite{Zucker2018a}. 
A similar property had already been studied in the literature under the name of the \df{ordering property} (see, e.g., \cite{Kechris2005}), when the expansion is by a linear order, or the \df{expansion property} for general relational expansions (see \cite{NguyenVanThe2013}). 
The definition above is equivalent to the expansion property for a precompact expansion but not in general. The following is well-known and implied in \cite{Zucker2018a} but we could not find a direct reference, so we include the proof of the direction we need.
\begin{prop}
  \label{p:presyndetic-criterion}
  Let $\bM$ be a countable, ultrahomogeneous structure and let $\bN$ be an ultrahomogeneous, relational expansion of $\bM$. Then the following are equivalent:
  \begin{enumerate}
  \item $\bN$ is a minimal expansion of $\bM$;
  \item $\Aut(\bN)$ is presyndetic in $\Aut(\bM)$.
  \end{enumerate}
\end{prop}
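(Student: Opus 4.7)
The plan is to prove the direction (i) $\Rightarrow$ (ii), since the authors say they only need this one. Because $\Aut(\bM)$ is non-archimedean, it suffices to exhibit, for every finite tuple $\bar b$ in $\bM$, a finite $F \sub \Aut(\bM)$ such that $FV\Aut(\bN) = \Aut(\bM)$, where $V = \Aut(\bM)_{\bar b}$. The strategy is to distill this into a combinatorial question about embeddings of finite expansions that minimality answers directly.

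Let $\bB$ denote the substructure of $\bM$ generated by $\bar b$ and set $\bB^* := \bN|_\bB \in \Age(\bN)$. By minimality applied to $\bB^*$, there is some $\bA \in \Age(\bM)$ such that every $\cL'$-expansion $\bA^*$ of $\bA$ in $\Age(\bN)$ admits an embedding $\bB^* \hookrightarrow \bA^*$. Embed $\bA$ into $\bM$ as a substructure with underlying tuple $\bar a$, and let $F \sub \Aut(\bM)$ be a finite set chosen so that $\{f\bar b : f \in F\}$ enumerates every sub-tuple of $\bar a$ with the same $\cL$-type as $\bar b$. This set is finite because $\bA$ is finitely generated.

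To verify $FV\Aut(\bN) = \Aut(\bM)$, fix $g \in \Aut(\bM)$ and form the $\cL'$-expansion $\bN' := g \cdot \bN$ of $\bM$, with new relations $R^{\bN'}(\bar c) \iff R^\bN(g^{-1}\bar c)$. Then $\bN'|_\bA$ is an $\cL'$-expansion of $\bA$ in $\Age(\bN)$, so we obtain an embedding $\iota \colon \bB^* \hookrightarrow \bN'|_\bA$. Because $\iota$ preserves the $\cL$-reduct, $\iota(\bar b)$ is a sub-tuple of $\bar a$ with the same $\cL$-type as $\bar b$, hence $\iota(\bar b) = f\bar b$ for some $f \in F$. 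Unfolding, the embedding condition on $\iota$ gives $R^\bN(g^{-1}f\bar b) \iff R^\bN(\bar b)$ for each $R \in \cL' \sminus \cL$, so $g^{-1}f\bar b$ and $\bar b$ have equal quantifier-free $\cL'$-types in $\bN$. Ultrahomogeneity of $\bN$ then yields some $h \in \Aut(\bN)$ with $h\bar b = g^{-1}f\bar b$, which rearranges to $g \in fV\Aut(\bN)$ as required.

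The main obstacle is keeping the bookkeeping straight in the verification step, namely translating between an embedding of $\bB^*$ into the transported structure $\bN' = g \cdot \bN$ and equality of quantifier-free $\cL'$-types of $\bar b$ and $g^{-1}f\bar b$ in the original $\bN$. Once that translation is clear, minimality supplies $\iota$, finiteness of $F$ follows from finiteness of $\bA$, and ultrahomogeneity of $\bN$ converts the combinatorial information into the desired coset membership.
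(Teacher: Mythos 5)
Your proof is correct and follows essentially the same route as the paper's. The only cosmetic difference is bookkeeping: the paper applies minimality to the expansion $\bN\restr{g \cdot A}$, whereas you transport the structure and look at $(g \cdot \bN)\restr{\bA}$; these are the same structure up to the isomorphism $g$, and both yield the identification $\iota(\bar b) = f\bar b$ (resp.\ $gf\bar b$) followed by the same appeal to ultrahomogeneity of $\bN$.
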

\begin{proof}
  \begin{cycprf}
  \item[\impnext] Write $G = \Aut(\bM)$ and $H = \Aut(\bN)$. Let $\bar b$ be a finite tuple in $M$ and denote by $\bB$ and $\bB'$ the substructure generated by $\bar b$ in $\bM$ and in $\bN$, respectively. (Note that because the expansion is relational, $\bB$ and $\bB'$ are structures with the same underlying set $B$.) Let $\bA \in \Age(\bM)$ be a witness of minimality for $\bB'$ and realize $\bA$ as a substructure of $\bM$. Let $F$ be a finite subset of $G$ such that $\binom{\bA}{\bB} = \set{f|_{B} : f \in F}$. Our goal is to show that $G = HG_BF^{-1}$. To that end, let $g \in G$ be arbitrary. We want to find $h \in H$ and $f \in F$ such that $hgf$ fixes $B$. Let $\bA'$ be the substructure of $\bN$ with underlying set $g \cdot A$. By minimality of the expansion and the choice of $F$, there exists $f \in F$ such that $gf \cdot \bB' \cong \bB'$. Now we conclude by ultrahomogeneity of $\bN$.
    
  \item[\impfirst] The proof is similar and we omit it.
  \end{cycprf}
\end{proof}

%%%%%%%%%%%%%%%%%%%%%%%%%%%%%%%%%%%%%%%%%%%%%%%%%%

\section{Kaleidoscopic and root-kaleidoscopic groups}
\label{sec:kale-root-kale}

%%%%%%%%%%%%%%%%%%%%%%%%%%%%%%%%%%%%%%%%%%%%%%%%%%

\subsection{\Wazewski dendrites and kaleidoscopic groups}
\label{sec:wazewski-dendr-kale}

An important property of topological trees is that they are \df{uniquely arcwise connected}, i.e., between every two points there is a unique arc connecting them. (An \df{arc} is just a homeomorphic copy of the unit interval $[0, 1]$.) Dendrites are topological spaces, generalizing trees, where branch points are allowed to be dense. More formally, a \df{dendrite} is a compact, metrizable, locally connected, uniquely arcwise connected, topological space.

Let $D$ be a dendrite and for a point $x \in D$, denote by $\comps{x}$ the set of connected components of $D \sminus \set{x}$. The points of $D$ are classified by the cardinality of $\comps{x}$: $x$ is called an \df{endpoint} if $\abs{\comps{x}} = 1$; it is called a \df{regular point} if $\abs{\comps{x}} = 2$; and it is called a \df{branch point} if $\abs{\comps{x}} \geq 3$. If $x$ is a branch point, the cardinal $\abs{\comps{x}}$ is called the \df{order} of $x$. The fact that $D$ is second countable and locally connected implies that the order of every point is countable. We will denote by $\theends(D)$, $\reg(D)$, and $\Br(D)$ the sets of endpoints, regular points, and branch points of $D$, respectively. The set $\Br(D)$ is always countable and $\theends(D)$ is a $G_\delta$ subset of $D$.

We will denote
\begin{equation}
  \label{eq:def-hat}
  \Comps{\Br(D)} = \bigsqcup_{x \in \Br(D)} \comps{x}.
\end{equation}
When $a \in \comps{x}$ will say that $a$ is a component \df{around} $x$. For a set $Z$, denote by $\Delta_Z$ the diagonal $\set{(z_1, z_2)  \in Z^2 : z_1 = z_2}$. Define the map
\begin{equation*}
  \compf \colon (\Br(D) \times D) \sminus \Delta_{\Br(D)} \to \Comps{\Br(D)}
\end{equation*}
by
\begin{equation}
  \label{eq:dfn:compf}
  \compf(x, y) = a \iff a \in \comps{x} \And  y \in a.
\end{equation}
In words, $\Phi(x, y)$ is the component around $x$ that contains $y$.

If $x, y \in D$ with $x \neq y$, we will denote by $[x, y]$ the unique arc connecting $x$ and $y$ in $D$ and we let $(x, y) = [x, y] \sminus \set{x, y}$. We define the function $K \colon D^3 \to D$ by
\begin{equation}
  \label{eq:defn:center}
  \set{\centerf(x, y, z)} = [x, y] \cap [y, z] \cap [x, z]
\end{equation}
and we say that $K(x, y, z)$ is the \df{center of $x$, $y$, and $z$}. Note that if $x$, $y$, and $z$ are branch points or distinct endpoints, then $K(x, y, z)$ is always a branch point.

An arc $[x, y]$ with $x, y \in \Br(D)$ is called \df{free} if its interior $(x, y)$ is an open set in $D$; equivalently, if $(x, y)$ does not contain any branch points. The following theorem of \Wazewski~\cite{Wazewski1923} (see also Charatonik~\cite{Charatonik1980}) is the basis for much of what follows.
\begin{theorem}[\Wazewski]
  \label{th:Wazewski}
  For every $n \in \set{3, 4, \ldots, \infty}$, there exists a unique, up to homeomorphism, dendrite $W_n$ such that:
  \begin{itemize}
  \item every branch point of $W_n$ has order $n$;
  \item $W_n$ has no free arcs.
  \end{itemize}
\end{theorem}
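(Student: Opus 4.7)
The plan is to prove existence via a direct inductive construction and uniqueness via a back-and-forth on the countable set of branch points, followed by continuous extension to the full dendrite.

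For \textbf{existence}, I would build $W_n$ as the completion of an increasing union of finite trees $T_0 \subseteq T_1 \subseteq \cdots$, each realized as a subset of a common ambient compact metric space. Start with an arc $T_0$. To pass from $T_k$ to $T_{k+1}$, choose a finite $2^{-k}$-dense subset of $T_k$ and at each such point $p$ that has not yet been ``finalized'', attach enough new arcs, each of diameter at most $2^{-k}$, so that $p$ becomes a branch point of order $n$ (for finite $n$) or gains one new incident arc (for $n = \infty$); then declare $p$ finalized. Let $W$ be the completion of $\bigcup_k T_k$. The routine verifications are that $W$ is compact, locally connected, and uniquely arcwise connected (hence a dendrite), that every branch point has order exactly $n$, and that there are no free arcs, as every arc $[x,y]$ eventually meets one of the dense subsets chosen at some stage, producing an interior branch point.

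For \textbf{uniqueness}, let $W$ and $W'$ both satisfy the hypotheses. Both $\Br(W)$ and $\Br(W')$ are countable and, by the no-free-arcs condition, dense. I would construct a bijection $f \colon \Br(W) \to \Br(W')$ by back-and-forth, preserving the combinatorial data given by the center map $\centerf$ of \eqref{eq:defn:center} and the component map $\compf$ of \eqref{eq:dfn:compf}. At the extension step, to place a new branch point inside a prescribed component around an already matched point $x$, one uses density of branch points within that component, together with the fact that every branch point has order exactly $n$ in both spaces, so the bijection between the components around $x$ and those around $f(x)$ is well defined and extends consistently with matches made so far.

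The last and hardest step is extending $f$ to a homeomorphism $F \colon W \to W'$; this is where I expect the main obstacle to lie. The key observation is that for any $p \in W$, finite intersections of components of the form $\compf(x, p)$ with $x \in \Br(W)$ close to $p$ form a neighborhood basis of $p$; since $\compf$ is preserved by $f$, such neighborhoods are carried to neighborhoods of the corresponding point in $W'$. Running the back-and-forth with an enumeration of $\Br(W)$ and $\Br(W')$ that controls metric radii jointly with combinatorial depth makes $f$ uniformly continuous on $\Br(W)$, so it extends to a continuous map on the completion, which one identifies with the original dendrite $W$; a symmetric argument provides the inverse and gives the desired homeomorphism. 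The technical heart of the argument is precisely this coupling of combinatorial back-and-forth with metric control, for which the component-based neighborhood basis is the essential tool.
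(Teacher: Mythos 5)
The paper does not prove this theorem; it cites it as a classical result of \Wazewski, with Charatonik~(1980) as a modern reference, so there is no in-paper argument to compare against. Evaluating your sketch on its own, the existence construction has a genuine gap when $n = \infty$: you ``declare $p$ finalized'' after it ``gains one new incident arc,'' but a regular point that acquires one extra arc and is then frozen has order $3$, not $\aleph_0$. To obtain infinite order you must revisit the same point infinitely often, with arcs whose diameters tend to $0$, and your freshly chosen finite $2^{-k}$-dense sets give no guarantee that any given $p$ recurs. Moreover, calling the remaining verification ``routine'' understates it: you must also rule out that a point of $W \sminus \bigcup_k T_k$, a limit of branch points and of free endpoints of attached arcs, ends up a branch point of unintended order or destroys local connectedness; the diameter bound $2^{-k}$ is exactly what makes this work, and it has to be used, not merely stated.

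On uniqueness, the back-and-forth idea is right, but the extension to a homeomorphism is cleaner as a standalone lemma than as ``metric control woven into the enumeration.'' Any betweenness-preserving bijection $\Br(W) \to \Br(W')$ extends automatically to a homeomorphism, because finite intersections of the sets $\compf(x, \cdot)$ with $x \in \Br(W)$ form a basis for the topology of $W$ and $f$ respects them; this is essentially the content of \cite{Duchesne2019}*{Proposition~2.4}, which the paper invokes in \Cref{prop:aut-group-of-the-dendrite}. With that lemma the back-and-forth becomes purely combinatorial (use density of $\Br$ inside any prescribed component and a free bijection of the not-yet-used components around each matched vertex) and needs no coupling with a metric; asking $f$ to be uniformly continuous at each finite stage tries to build in by hand a property that holds for free once the combinatorial map is complete, and it is not clear your enumeration scheme actually achieves it.
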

The second condition of the theorem is equivalent to the statement that $\Br(W_n)$ is dense in $W_n$.
It is not hard to see that $\reg(W_n)$ and $\theends(W_n)$ are also dense in $W_n$. The dendrite $W_n$ is called the \df{\Wazewski dendrite of order $n$}.

Throughout the paper, we will usually suppress the number $n \in \set{3, 4, \ldots, \infty}$ from the notation and we will write $W$ instead of $W_n$ to avoid clutter.

The homeomorphism group $\Homeo(W)$ acts transitively on each of the sets $\Br(W)$, $\reg(W)$, and $\theends(W)$ and, by density, a homeomorphism $g$ is determined uniquely by its action on each of these sets. The action $\Homeo(W) \actson \Br(W)$ defines an injective continuous homomorphism $\Homeo(W) \to \Sym(\Br(W))$, where $\Sym(\Br(W))$ is equipped with the pointwise convergence topology with $\Br(W)$ considered as a discrete set. It was shown in \cite{Duchesne2019} that this homomorphism is a topological group embedding and, consequently, the image is closed and $\Homeo(W)$ is a non-archimedean Polish group.

Next we explain the construction of kaleidoscopic permutation groups carried out in \cite{Duchesne2019a}. To lighten notation, we will denote by $X$ the set $\Br(W)$ and as in \eqref{eq:def-hat}, we write:
\begin{equation*}
  \cps = \bigsqcup_{x \in X} \comps{x}.
\end{equation*}

For the rest of the section and throughout the paper, fix a set $M$ of cardinality $n$.
A function $\kappa \colon \cps \to M$ is called a \df{coloring} if for each $x \in X$, the restriction $\kappa_x \coloneqq \kappa|_{\comps{x}}$ is a bijection.

\begin{defn}[\cite{Duchesne2019a}]
  \label{df:kaleido-coloring}
  A coloring $\kappa$ is \df{kaleidoscopic} if for all $x \neq y \in X$ and all $a \neq b \in M$, there exists $z \in X \cap (x, y)$ such that $\kappa(\Phi(z, x)) = a$ and $\kappa(\Phi(z, y)) = b$.  
\end{defn}

 It is shown in \cite{Duchesne2019a} that a kaleidoscopic coloring exists and is unique up to homeomorphism of $W$. Any coloring $\kappa$ also defines a cocycle $\alpha_\kappa \colon \Homeo(W) \times X \to \Sym(M)$ by
 \begin{equation}
   \label{eq:defn:cocycle}
  \alpha_\kappa(g, x) = \kappa_{g \cdot x} \circ g|_{\comps{x}} \circ \kappa_x^{-1},
\end{equation}
which satisfies the usual cocycle identity:
\begin{equation}
  \label{eq:cocycle-id}
  \alpha_\kappa(gg', x) = \alpha_\kappa(g, g' \cdot x) \alpha_\kappa(g', x).
\end{equation}
We call $\alpha_\kappa(g, x)$ the \df{local action of $g$ at $x$}.

Now let $\Gamma$ be a closed subgroup of $\Sym(M)$ and fix a kaleidoscopic coloring $\kappa$ of $\Comps{X}$. Define the \df{kaleidoscopic group} $\cK(\Gamma)$ by
\begin{equation}
  \label{eq:defin:KGamma}
  \cK(\Gamma) = \set{g \in \Homeo(W) : \alpha_\kappa(g, x) \in \Gamma \text{ for all } x \in X}.
\end{equation}
It follows from the uniqueness of the kaleidoscopic coloring that $\cK(\Gamma)$ does not depend on $\kappa$, up to conjugation by an element of $\Homeo(W)$.

Below we collect several facts from \cite{Duchesne2019a} that we will use.
\begin{fact}
  \label{f:kaleidoscopic-facts}
  Let $\Gamma \leq \Sym(M)$ be a closed subgroup and let $\cK(\Gamma)$ be defined as above. Then the following hold:
  \begin{enumerate}
  \item \label{i:kaleido-fact:closed} $\cK(\Gamma)$ is a closed subgroup of $\Homeo(W)$ (and therefore also of $\Sym(X)$);
  \item \label{i:kaleido-fact:oligo} The action $\cK(\Gamma) \actson X$ is oligomorphic iff the action $\Gamma \actson M$ is oligomorphic;
  \item \label{i:kaleido-fact:transitive-ends} The actions of $\cK(\Gamma)$ on $\theends(W), \reg(W), \bps$ are transitive. In particular, the action $\cK(\Gamma) \acts W$ is minimal. 
  \item \label{i:kaleido-fact:surjective-homo} For each $x \in \bps$, the map $\alpha_{\kc}(\cdot, x) \colon \kf(\Gamma)_x \to \Gamma$ is a surjective homomorphism. 
  \item \label{i:kaleido-fact:maximal-sub} The action $\cK(\Gamma) \acts \theends(W)$ is primitive, so the stabilizer $\cK(\Gamma)_\xi$ is a maximal subgroup of $\cK(\Gamma)$. 
  \end{enumerate}
\end{fact}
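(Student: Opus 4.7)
Since this statement collects results from \cite{Duchesne2019a}, my plan is to verify each item from the definitions, leaning on the kaleidoscopic property (Definition~\ref{df:kaleido-coloring}) to produce enough flexibility. For (i), I would first show that for each fixed $x \in X$ the local action map $\alpha_\kappa(\cdot, x) \colon \Homeo(W) \to \Sym(M)$ is continuous: the value of $\alpha_\kappa(g, x)$ on a label $a \in M$ is determined by $g \cdot y$ for any single $y \in \kappa_x^{-1}(a)$, which varies continuously in $g$. Hence $\kf(\Gamma) = \bigcap_{x \in X} \alpha_\kappa(\cdot, x)^{-1}(\Gamma)$ is a closed intersection, and the embedding into $\Sym(X)$ comes from the fact that $\Homeo(W) \hookrightarrow \Sym(X)$ is a topological group embedding by \cite{Duchesne2019}. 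For (iv), I would build $g \in \kf(\Gamma)_x$ realizing a given $\sigma \in \Gamma$ by a recursive construction: partition $W \sminus \set{x}$ into the components $\comps{x}$, permute them by $\sigma$ using the self-similarity of $W$ (each component's closure is a copy of $W$), and extend inductively by homeomorphisms on each component in a way that respects the kaleidoscopic coloring up to the local permutation imposed by $\sigma$; uniqueness of the coloring up to homeomorphism ensures the extensions can be chosen to again lie in $\kf(\Gamma)$ after relabeling.

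For (iii), transitivity of $\kf(\Gamma)$ on each of $\theends(W)$, $\reg(W)$, and $X$ follows by a back-and-forth argument that uses Definition~\ref{df:kaleido-coloring} to match colorings at each newly introduced branch point; once transitivity on $X$ is established, minimality of $\kf(\Gamma) \acts W$ follows from density of branch points. For (ii), the orbit of a $k$-tuple in $X^k$ under $\kf(\Gamma)$ is determined by the finite combinatorial tree spanned by its points together with, at each branch point of this tree, the $\Gamma$-orbit type of the tuple of labels attached to the outgoing components. This bijective correspondence is again proved by back-and-forth via the kaleidoscopic property. The "only if" direction of (ii) then follows by restricting attention to pairs of branch points separated by exactly one intermediate branch point, whose orbits under $\kf(\Gamma)$ correspond to orbits of pairs from $M$ under $\Gamma$.

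The main difficulty is (v), primitivity of $\kf(\Gamma) \acts \theends(W)$. Equivalently, I would show that the stabilizer $\kf(\Gamma)_\xi$ acts transitively on $\theends(W) \sminus \set{\xi}$, which implies that the only blocks are singletons and the whole space. Given distinct $\eta_1, \eta_2 \in \theends(W) \sminus \set{\xi}$, one must produce $g \in \kf(\Gamma)_\xi$ with $g \cdot \eta_1 = \eta_2$. The strategy is a back-and-forth along the arcs $[\xi, \eta_1]$ and $[\xi, \eta_2]$: successively pick branch points on each arc, use the kaleidoscopic property to ensure that at each step we can prescribe the two colors distinguishing the direction toward $\xi$ from the direction toward the moving endpoint, and use the surjectivity of the local action (item (iv)) combined with the transitivity of $\Gamma$ on $M$ to realize the required local permutations inside $\kf(\Gamma)$. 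The delicate point is maintaining compatibility of all local choices along the arc so that the limit homeomorphism actually fixes $\xi$; this is where density of branch points and the absence of free arcs in $W$ (Theorem~\ref{th:Wazewski}) are used, as they guarantee enough room to make corrections.
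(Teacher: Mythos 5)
Since the paper's own proof is simply a citation of \cite{Duchesne2019a} (Theorems~1.4 and~1.9, Propositions~5.1 and~5.5, Corollary~5.11), your attempt to rederive all five items from the definitions is a genuinely different route. Your outlines for (i), (iii) and (iv) go in the right direction — continuity of $g \mapsto \alpha_\kappa(g,x)$, back-and-forth, propagation of local actions — but they systematically omit the step that turns a compatible family of piecewise-defined maps into an actual homeomorphism of $W$, which is where the patchwork lemma (\cite{Duchesne2019a}*{Lemma~2.3}, recalled in this paper as Lemma~\ref{l:patchwork}) is needed.

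There are two genuine gaps. For the ``only if'' direction of (ii), you appeal to ``pairs of branch points separated by exactly one intermediate branch point,'' but no such pairs exist in $W_n$: branch points are dense, so any two of them have infinitely many others between them. The configuration you want is a \emph{star}: a branch point $z$ together with branch points $y_1, \ldots, y_k$ lying in pairwise distinct components around $z$. The $\kf(\Gamma)$-orbit of the $(k+1)$-tuple $(z, y_1, \ldots, y_k)$ is governed by the $\Gamma$-orbit of $(\kappa(\Phi(z,y_1)), \ldots, \kappa(\Phi(z,y_k)))$, which lets you transfer failure of oligomorphicity from $\Gamma$ to $\kf(\Gamma)$.

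For (v), you write that primitivity is ``equivalently'' transitivity of $\kf(\Gamma)_\xi$ on $\theends(W) \sminus \set{\xi}$. That latter property is $2$-transitivity, which is strictly stronger than primitivity and not equivalent to it. Primitivity is the statement that \cite{Duchesne2019a} actually proves (and all the paper needs, since it only uses the consequent maximality of the stabilizer). If you nevertheless aim for $2$-transitivity, note that it is not enough, and also not clear without a transitivity hypothesis on $\Gamma$, to match colours along the two arcs; you must also arrange the local action at every branch point \emph{off} the arcs to lie in $\Gamma$, and then invoke Lemma~\ref{l:patchwork} to assemble a homeomorphism. The sketch as written does not carry out that verification.
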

\begin{proof}
  We point to the statements in \cite{Duchesne2019a}.
  Items \ref{i:kaleido-fact:closed} and \ref{i:kaleido-fact:oligo} are Theorem 1.9 and Theorem 1.4, respectively.
 Item \ref{i:kaleido-fact:transitive-ends} is Proposition 5.5,
  \ref{i:kaleido-fact:surjective-homo} is Proposition 5.1, and
  \ref{i:kaleido-fact:maximal-sub} is  Corollary 5.11.
\end{proof}

The groups $\kf(\Gamma)$ are never locally compact. This can be seen in many ways; perhaps the fastest way to deduce it from what we have said so far is to apply \ref{i:kaleido-fact:transitive-ends} from the fact above: the set $\theends(W)$ is dense $G_\delta$ and its complement is dense, so it is not $F_\sigma$ and therefore cannot be a continuous image of a locally compact Polish group.

%%%%%%%%%%%%%%%%%%%%%%%%%%%%%%%%%%%%%%%%%%%%%%%%%%

\subsection{Rooted kaleidoscopic groups}
\label{sec:rooted-kaleidoscopic}

An important role in the theory of kaleidoscopic groups is played by the stabilizer $\cK(\Gamma)_\xi$ of an endpoint $\xi \in \theends(W)$. By Fact~\ref{f:kaleidoscopic-facts}, this stabilizer does not depend on $\xi$, up to conjugation. 
In this subsection we consider a rooted variant of the kaleidoscopic construction and we describe the relation between this construction and the endpoint stabilizer for a transitive $\Gamma$.

Let $\xi \in \theends(W)$ be a fixed endpoint which we call the \df{root}.
The root allows us to define two other functions which will be useful: $\rootf \colon X \to \cps$ by
\begin{equation*}
  \rootf(x) = a \iff a \in \comps{x} \And \xi \in a
\end{equation*}
and the \df{meet}, ${\meet} \colon X^2 \to X$  by
\begin{equation*}
  x \meet y = \centerf(x, y, \xi).
\end{equation*}
Since the meet is associative, we can write $\bigwedge \bar x$ as a shorthand for $x_1 \meet \cdots \meet x_k$. 
The meet also defines a partial order $\preceq$ on $X$ by
\begin{equation}
  \label{eq:defn:prec}
  x \preceq y \iff x \meet y = x,
\end{equation}
which makes $X$ a meet semi-lattice.

We define root-kaleidoscopic colorings, the appropriate analogue of kaleidoscopic colorings (cf. Definition~\ref{df:kaleido-coloring}) for rooted groups. They have similar genericity properties but satisfy the additional condition that around each branch point the color of the component containing the root is always the same element $c$.
\begin{defn}
  \label{df:rooted-kaleidoscopic-col}
  Let $c \in M$ be a distinguished element. A \df{rooted coloring} of $W$ is a function $\pkc \colon \cps \to M$ such that for each $x$, $\pkc_x \coloneqq \pkc|_{\comps{x}}$ is a bijection and $\pkc(\rho(x)) = c$. A rooted coloring $\pkc$ is called \df{root-kaleidoscopic} if for all $x \in \bps$, $y \in \left(\xi, x\right)$ and $a \ne c$ in $M$, there is $z \in (y, x)$ with $\pkc(\compf(z, x)) = a$. 
\end{defn}

  Concerning the existence of root-kaleidoscopic colorings, it is possible to follow the approach of \cite{Duchesne2019a} and show that the generic rooted coloring is root-kaleidoscopic. However, we prefer to give a construction based on a given kaleidoscopic coloring, which will be useful later.

  Let $\kappa$ be a kaleidoscopic coloring (as in Definition~\ref{df:kaleido-coloring}).  Let $\secf \colon M \to \Sym(M)$ be a function such that $\secf(c) = 1_{\Sym(M)}$ and $\secf(a)\cdot a = c$ for all $a \in M$. Define
\begin{equation}
  \label{eq:defn:pkc}
  \pkc(a) = \secf(\kc(\rootf(x))) \cdot \kc(a), \quad \text{where } a \in \comps x.
\end{equation}

\begin{lemma}
  \label{l:root-kaleidoscopic-section}
  The function $\pkc$ defined by \eqref{eq:defn:pkc} is a root-kaleidoscopic coloring of $W$.
\end{lemma}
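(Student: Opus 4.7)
The plan is to verify the two defining conditions of \Cref{df:rooted-kaleidoscopic-col} separately. The rooted coloring part is immediate from the construction: for each $x \in \bps$, the map $\pkc_x$ is the composition of the bijection $\kc_x \colon \comps{x} \to M$ with the fixed permutation $\secf(\kc(\rootf(x)))$ (acting on $M$), and hence a bijection; and setting $b = \kc(\rootf(x))$, the defining property of $\secf$ yields $\pkc(\rootf(x)) = \secf(b) \cdot b = c$.

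The main step is the root-kaleidoscopic property. I will fix $x \in \bps$, $y \in (\xi, x)$ and $a \in M \sminus \set{c}$, and reduce the statement to the kaleidoscopic property of $\kc$ (\Cref{df:kaleido-coloring}) applied to $x$ and a suitably chosen branch point in $(y, x)$. Since $W$ has no free arcs, branch points are dense in every non-degenerate subarc, so one can pick some $y' \in \bps \cap (y, x)$. Applying the kaleidoscopic property of $\kc$ to the distinct branch points $y', x$ with the distinct colors $c, a$ then produces $z \in \bps \cap (y', x)$ such that
\[
  \kc(\compf(z, y')) = c \And \kc(\compf(z, x)) = a.
\]
Note that $z \in (y, x)$, since $(y', x) \subseteq (y, x)$.

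To conclude, I observe that $z \in (y', x) \subseteq (\xi, x)$, so $y'$ lies on the arc from $\xi$ to $z$. Thus the component around $z$ containing $y'$ coincides with the one containing $\xi$, that is, $\rootf(z) = \compf(z, y')$. Hence $\kc(\rootf(z)) = c$, and therefore $\secf(\kc(\rootf(z))) = \secf(c) = 1_{\Sym(M)}$. Plugging into \eqref{eq:defn:pkc} gives $\pkc(\compf(z, x)) = \kc(\compf(z, x)) = a$, as required. The main subtlety is selecting the first color to be $c$, so that the section $\secf$ collapses to the identity and the root-kaleidoscopic equation reduces to a single application of the kaleidoscopic property of $\kc$; the minor point that $y$ itself need not be a branch point is handled by the density of branch points in $(y, x)$.
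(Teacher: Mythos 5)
Your proof is correct and follows essentially the same route as the paper's: reduce the root-kaleidoscopic condition to the kaleidoscopic property of $\kc$ by choosing the color of the component towards the root to be $c$, so that the section $\secf$ collapses to the identity. You are slightly more careful than the paper's proof on one small point: the definition of root-kaleidoscopicity quantifies over arbitrary $y \in (\xi, x)$, which need not be a branch point, whereas the kaleidoscopic condition for $\kc$ only applies to pairs of branch points. The paper's proof glosses over this and applies kaleidoscopicity directly to $x$ and $y$; you correctly insert an intermediate branch point $y' \in \bps \cap (y, x)$ (using density of branch points) before invoking kaleidoscopicity, which makes the argument fully rigorous.
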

\begin{proof}
  That $\pkc$ is a rooted coloring is clear from the definition. To check that it is root-kaleidoscopic, let $a \in M$ and $x,y$ with $y \in (\xi, x)$ be given. By kaleidoscopicity of $\kc$, there is $z \in (y, x)$ such that $\kc(\compf(z, x)) = a$ and $\kc(\compf(z, y)) = c$. But $\rootf(z) = \compf(z, y)$, so $\secf(\kc(\rootf(z))) = \secf(c)$ is the identity.
Therefore $\pkc(\compf(z, x)) = \kc(\compf(z, x)) = a$ and $\pkc(\compf(z, y)) = \kc(\compf(z, y)) = c$.
\end{proof}

Root-kaleidoscopic colorings are also unique up to translation by an element of $\Homeo(W)_\xi$ (cf. Corollary~\ref{c:uniqueness-root-kalei}).
As in the non-rooted case, $\pkc$ defines a cocycle $\alpha_{\pkc} \colon \Homeo(W)_{\xi} \times X \to \Sym(n)_c$ using the same formula \eqref{eq:defn:cocycle}.

Now let $\Delta \acts M$ be a permutation group which fixes some $c \in M$. 
Fix a root-kaleidoscopic coloring $\pkc$. 
We define the \df{root-kaleidoscopic group} $\pkf(\Delta)$ by
\begin{equation}
  \label{eq:defin:KstarDelta}
  \pkf(\Delta) = \set{g \in \Homeo(W)_{\xi} : \alpha_{\pkc}(g, x) \in \Delta \text{ for all } x \in X}.
\end{equation}
It follows from the uniqueness of the root-kaleidoscopic coloring that $\pkf(\Delta)$ does not depend on $\pkc$, up to conjugation by an element of $\Homeo(W)_\xi$. When $\Gamma$ is transitive, the root-kaleidoscopic group of a point stabilizer is equal to the stabilizer of $\xi$ in $\kf(\Gamma)$, as the next proposition shows.

\begin{prop}
  \label{prop:KstarGamma_c-is-stabilizer}
  Let $\Gamma \acts M$ be a transitive permutation group, let $c \in M$, let $\kappa$ be a kaleidoscopic coloring of $\Comps{X}$, and let $\pkc$ be the root-kaleidoscopic coloring given by \eqref{eq:defn:pkc}. Then
  \begin{equation*}
    \pkf(\Gamma_c) = \kf(\Gamma)_\xi,
  \end{equation*}
  where the groups $\pkf(\Gamma_c)$ and $\kf(\Gamma)$ are constructed using the colorings $\pkc$ and $\kappa$, respectively.
\end{prop}
\begin{proof}
  By transitivity of $\Gamma$, there is $\sigma \colon M \to \Gamma$ such that $\sigma(c) = 1_\Gamma$ and $\sigma(a) \cdot a = c$ for all $a \in M$. Suppose that $\kc$ is a kaleidoscopic coloring and $\pkc$ is defined by \eqref{eq:defn:pkc} from $\kc$ and $\sigma$.
  It follows from the uniqueness of root-kaleidoscopic colorings that $\pkc$ gives rise to $\pkf(\Gamma_c)$ by \eqref{eq:defin:KstarDelta}.
  From the definition \eqref{eq:defn:pkc} it is clear that for all $g \in \Homeo(W)$ and $x \in X$,
  \begin{equation*}
  \alpha_{\pkc}(g, x) \in \Gamma_c \text{ if and only if } \alpha_{\kc}(g, x) \in \Gamma.
\end{equation*}
Now, looking at the definitions \eqref{eq:defin:KGamma} and \eqref{eq:defin:KstarDelta}, we obtain the desired conclusion.
\end{proof}

%%%%%%%%%%%%%%%%%%%%%%%%%%%%%%%%%%%%%%%%%%%%%%%%%%

\section{Structures for kaleidoscopic groups and endpoint stabilizers}
\label{sec:kaleidoscopic_structures}

In this section and the next lies the combinatorial core of this paper. 
The goal is to prove the following statements, from which we will derive our description of the UMFs of kaleidoscopic groups.
If $\Gamma \acts M$ is a transitive permutation group and $\Delta \le \Gamma_c$ is a closed subgroup, for some $c \in M$, then:
\begin{itemize}
  \item  if $\Delta$ is presyndetic in $\Gamma_c$, then $\pkf(\Delta)$ is presyndetic in $\pkf(\Gamma_c) = \kf(\Gamma)_{\xi}$ (\Cref{pr:H-presyndetic}).
  \item  if $\Delta$ is oligomorphic then $\pkf(\Delta)$ is oligomorphic (\Cref{c:KstarDeltaOligo}).
  \item  if $\Delta$ is extremely amenable, so is $\pkf(\Delta)$ (\Cref{cor:KstarDeltaExtAmenable}).  
\end{itemize}

The first step is to reduce them to combinatorial problems, by representing $\kf(\Gamma)$, $\pkf(\Gamma_c)$, and $\pkf(\Delta)$ as automorphism groups of appropriate ultrahomogeneous structures.

%%%%%%%%%%%%%%%%%%%%%%%%%%%%%%%%%%%%%%%%%%%%%%%%%%

\subsection{Structures for the homeomorphism groups and endpoint stabilizers}
\label{sec:struct-homeo-groups}

First we explain how to view the full homeomorphism group of the \Wazewski dendrite $W_n$ as the automorphism group of an appropriate ultrahomogeneous structure. A similar approach was already used by Kwiatkowska~\cite{Kwiatkowska2018}; our structure is somewhat different as we use a signature with function symbols rather than a relational one, and it is better adapted to our treatment of kaleidoscopic groups.

For the rest of this section, we fix $n \in \set{3, 4, \ldots, \infty}$ and as before, we will write $\waz$ instead of $W_n$. We consider a signature $\cL_\bX$ with one ternary function symbol $\centerf$ and we define a structure $\bX$ with universe $X \coloneqq \Br(W)$ by interpreting $\centerf$ as the (restriction of) the center function given by \eqref{eq:defn:center}.
It is a particular case of \Cref{p:KalM-is-homogeneous} that $\bX$ is ultrahomogeneous. 

If $A \sub X$ is a finite set, the substructure of $\bX$ generated by $A$ is the smallest center-closed subset of $X$ containing $A$; it can be visualized as a combinatorial tree whose geometric realization embeds as a subdendrite of $W$. A finitely generated substructure of $\bX$ is always finite. We denote $\cF_\bX = \Age(\bX)$.

The function $K$ allows us to define (via a quantifier-free formula) the \df{betweenness relation} $B$ by:
\begin{equation*}
  B(x, y, z) \iff K(x, y, z) = y.
\end{equation*}
Conversely, $K$ can be defined from $B$ by
\begin{equation*}
  K(x_1, x_2, x_3) = y \iff B(x_1, y, x_2) \And B(x_1, y, x_3) \And B(x_2, y, x_3).
\end{equation*}
Thus for checking whether a certain map between $\cF_\bX$-structures is an embedding, it suffices to see that it preserves $B$. This observation allows us to see that our structure is enough to recover the dendrite, at least as far as automorphisms are concerned.
\begin{prop}
    \label{prop:aut-group-of-the-dendrite}
    $\Aut(\wazs) = \Homeo(\waz)$.
\end{prop}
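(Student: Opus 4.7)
The forward inclusion $\Homeo(W) \subseteq \Aut(\bX)$ is immediate and I would dispatch it in one sentence: any homeomorphism of $W$ preserves the order of each point, hence restricts to a bijection of $\Br(W)$, and it preserves arcs, so it preserves the center function $K$ defined by \eqref{eq:defn:center}. Combined with the topological embedding $\Homeo(W) \hookrightarrow \Sym(\Br(W))$ already recalled in Section~\ref{sec:wazewski-dendr-kale}, this realizes $\Homeo(W)$ as a subgroup of $\Aut(\bX)$.

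For the nontrivial direction, given $g \in \Aut(\bX)$, my plan is to extend $g$ to a homeomorphism $\bar g$ of $W$ by exploiting the dendrite topology through the $K$-structure. The central observation is that for any finite set $F \sub \Br(W)$, the connected components of $W \sminus F$ are controlled by the betweenness relation $B$ (definable from $K$): two branch points $y, z \notin F$ lie in the same component of $W \sminus F$ iff for no $x \in F$ does $B(y,x,z)$ hold. Since $g$ preserves $K$, it preserves $B$, so it canonically transports each component $C$ of $W \sminus F$ to a component $g \cdot C$ of $W \sminus g(F)$ (defined via the branch points lying in $C$). I would then use the standard fact about dendrites that, for any point $y \in W$, the closures of components of $W \sminus F$ containing $y$ form a neighborhood basis at $y$ and $\set{y} = \bigcap_{F} \cl{C_F^y}$. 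Define $\bar g(y) \coloneqq \bigcap_F \cl{g \cdot C_F^y}$.

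The main technical point — and what I expect to be the main obstacle — is showing that this intersection is a single point. Non-emptiness follows at once from compactness together with the finite intersection property, which transfers from the family for $y$ since any finite collection of components $\bar C_{F_1}^y, \ldots, \bar C_{F_k}^y$ intersects in $\bar C_{F_1 \cup \cdots \cup F_k}^y$, and its image under $g$ is $\bar{g \cdot C}_{g(F_1) \cup \cdots \cup g(F_k)}$. The intersection is also connected, being a nested intersection of subdendrites, so it is itself a subdendrite. Here I would invoke the crucial no-free-arc property from \Cref{th:Wazewski}: if the intersection contained a nondegenerate arc, this arc would contain a branch point $z$ in its interior; writing $z = g(z')$ with $z' \in \Br(W)$, the fact that $z$ lies in every $\cl{g \cdot C_F^y}$ forces $z'$ to lie in every $\cl{C_F^y}$, hence $z' = y$ when $y$ is a branch point (with a parallel argument in the remaining cases). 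But then $z = g(y)$, which must be an endpoint of the arc, not an interior point, contradicting the density of branch points on the arc.

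Once $\bar g$ is well-defined, continuity is immediate from the construction: any basic open set at $\bar g(y)$ is of the form $\Int \cl{C'}$ for some component $C'$ of $W \sminus F'$, and by construction the preimage under $\bar g$ of this neighborhood contains $\Int \cl{g^{-1} \cdot C'}$, which is open around $y$. Applying the same construction to $g^{-1} \in \Aut(\bX)$ yields a two-sided continuous inverse, so $\bar g \in \Homeo(W)$. Finally, $\bar g$ extends $g$ on $\Br(W)$ because for a branch point $y$, the intersection $\bigcap_F \cl{g \cdot C_F^y}$ contains $g(y)$ (as $g(y)$ lies in every $g \cdot C_F^y$), and by the single-point argument above it equals $\{g(y)\}$.
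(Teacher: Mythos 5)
Your proposal gives a self-contained proof, whereas the paper's own argument is a one-liner: it observes that $K$ and the betweenness relation $B$ are quantifier-free interdefinable, and then cites \cite{Duchesne2019}*{Proposition~2.4} and \cite{Duchesne2019a}*{Proposition~2.4}, which together identify $\Homeo(W)$ with the group of $B$-preserving permutations of $\Br(W)$. What you do is, in effect, reprove that cited fact from scratch: you reconstruct each point of $W$ as the nested intersection of closures of transported components $g \cdot C_F^y$ (defined combinatorially through $B$), and use the no-free-arc property to force the intersection to be a singleton. This buys self-containment at the cost of length, and it does reveal that nothing beyond the dendrite topology and density of branch points is needed. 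The overall strategy is sound: the definition of $g \cdot C$, the finite-intersection-property argument for non-emptiness, the nested-continua argument showing the intersection is a subcontinuum, the neighborhood-basis argument for continuity, and the verification that $\overline{g^{-1}}$ is a two-sided inverse all go through.

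The one step you should tighten is the singleton argument. The sentence ``$z = g(y)$, which must be an endpoint of the arc, not an interior point'' does not express the right contradiction, since there is no a priori reason $g(y)$ should be an endpoint. What you have actually shown is that \emph{every} branch point $z$ in the interior of the putative arc satisfies $g^{-1}(z) = y$; by injectivity of $g$, the interior of the arc therefore contains at most one branch point of $W$ (namely $g(y)$ if $y \in \Br(W)$, and none at all otherwise). Since the no-free-arc property makes branch points dense on every arc, this is absurd, and that is the contradiction you want. Phrase it that way and the step closes cleanly. Two smaller remarks: when $y \in \Br(W)$ you should restrict to finite $F \sub \Br(W)$ with $y \notin F$ so that $C_F^y$ is defined (this restriction is harmless and cofinal, so it changes nothing); and the claimed implication ``$z \in \cl[]{g \cdot C_F^y}$ forces $z' \in \cl[]{C_F^y}$'' for $z' \in F$ is true but requires a small argument of its own — it reduces, via density of branch points in $C_F^y$ and $B$-equivariance of $g$, to the characterization that a point of $F$ is adjacent to $C_F^y$ iff no other point of $F$ separates it from some branch point of $C_F^y$ — and is worth spelling out.
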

\begin{proof}
  By \cite{Duchesne2019}*{Proposition 2.4} and \cite{Duchesne2019a}*{Proposition 2.4}, the group of homeomorphisms of $\waz$ is equal to the group of permutations of $\bps$ which preserve $\betr$. 
\end{proof}

If $\bT \in \cF_{\bX}$ and $x \in T$, we can define, as before, $\comps{x}$ as the set of connected components of $T \sminus \set{x}$ (where we view $T$ as a geometric tree) and $\Comps{T} = \bigsqcup_{x \in T} \comps{x}$. This construction can be captured as an imaginary sort for $\cF_\bX$. Indeed, consider the formulas
\begin{equation*}
  \theta(x, y) \equiv \big(x \neq y\big) \And \eps(x_1, y_1, x_2, y_2) = \big(x_1 = x_2 \land \neg B(y_1, x_1, y_2) \big).
\end{equation*}
It is clear that $\eps$ is precisely the equivalence relation ``$x_1 = x_2$ and $y_1$ and $y_2$ are in the same component of $T \sminus \set{x_1}$''. For a general $\cF_\bX$-structure $\bY$, we will denote by $\Comps{Y}$ the elements of this sort.

Now we describe a structure whose automorphism group is the stabilizer $\Homeo(W)_{\xi}$ for an endpoint $\xi \in \theends(W)$, which we call the root, as in \Cref{sec:rooted-kaleidoscopic}.
Let $\cL^*_\bX$ be the signature $\cL_\bX \sqcup \set{\bxi}$, where $\bxi$ is a constant symbol. Let $\bX^*$ be the $\cL^*_\bX$-expansion of $\bX$ obtained by taking $X^* = X \sqcup \set{\xi}$, interpreting $\bxi$ as $\xi$, and interpreting the function $\centerf$ as usual by \eqref{eq:defn:center}.
Notice that this respects our definition of expansion since $\xi \in \dcl(\emptyset)$ and $X$ is $\Aut(\bX^*)$-invariant. 
Recall that the root allows us to define $\rootf \colon X \to \cps$, the meet ${\meet} \colon (X^*)^2 \to X^*$  and the partial order $\prec$ on $X^*$.
All of their definitions are by quantifier-free formulas, so they are also valid for the age of $\bX^*$.
    
Similarly as before, we have the following.
\begin{prop}
  \label{p:autom-Xstar}
  $\Aut(\bX^*) = \Homeo(W)_\xi$.
\end{prop}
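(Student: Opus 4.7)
The plan is to prove both inclusions using \Cref{prop:aut-group-of-the-dendrite} as the main input, with the extra work being to pin down the role of the constant $\bxi$.

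For the inclusion $\Homeo(W)_\xi \subseteq \Aut(\bX^*)$, take any $g \in \Homeo(W)_\xi$. Its restriction to $X$ preserves $K$ (this is part of \Cref{prop:aut-group-of-the-dendrite}), and since $g(\xi) = \xi$, we can extend to a permutation of $X^* = X \sqcup \set{\xi}$ that fixes $\bxi$. Moreover, $K$ is defined on all of $W^3$ and is preserved by $g$ everywhere, so in particular on triples involving $\xi$; thus this permutation is an $\cL^*_\bX$-automorphism.

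For the reverse inclusion, suppose $g \in \Aut(\bX^*)$. Then $g(\xi) = \xi$ (as $\bxi$ is a constant) and $X$ is $g$-invariant, so $g|_X \in \Aut(\bX)$. By \Cref{prop:aut-group-of-the-dendrite}, $g|_X$ is the restriction to $X$ of a unique homeomorphism $\tilde g \in \Homeo(W)$. It remains to show $\tilde g(\xi) = \xi$. Set $\xi' = \tilde g(\xi)$. For any $x, y \in X$, using that $g$ preserves $K$ on $\bX^*$ and that $K(x, y, \xi) \in X$, we have
\begin{equation*}
\tilde g(K(x, y, \xi)) = g(K(x, y, \xi)) = K(g(x), g(y), g(\xi)) = K(\tilde g(x), \tilde g(y), \xi).
\end{equation*}
On the other hand, $\tilde g$ preserves $K$ on all of $W$, so $\tilde g(K(x, y, \xi)) = K(\tilde g(x), \tilde g(y), \tilde g(\xi)) = K(\tilde g(x), \tilde g(y), \xi')$. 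Since $\tilde g$ is a bijection on $X$, we conclude that $K(a, b, \xi) = K(a, b, \xi')$ for all $a, b \in X$.

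The final step is to show that two endpoints agreeing in this way on all pairs of branch points must coincide. Suppose for contradiction that $\xi \ne \xi'$. By density of $\Br(W)$ in $W$ (which is equivalent to $W$ having no free arcs), pick a branch point $a$ in the open arc $(\xi, \xi')$, and then a branch point $c$ in the open arc $(\xi, a)$. A direct verification from \eqref{eq:defn:center} gives $K(a, c, \xi) = c$ (since $c$ lies between $\xi$ and $a$), while $K(a, c, \xi') = a$ (since $a$ separates $\xi'$ from $c$), a contradiction. The main obstacle is this last geometric step, but it is immediate once the right pair of branch points is selected. Thus $\tilde g(\xi) = \xi$, completing the proof.
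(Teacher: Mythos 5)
Your proof is correct and follows essentially the same strategy as the paper's: for the nontrivial inclusion, restrict $g$ to $X$, extend to $\tilde g\in\Homeo(W)$ via \Cref{prop:aut-group-of-the-dendrite}, suppose $\tilde g(\xi)=\xi'\neq\xi$, and derive a contradiction from two branch points lying on $(\xi,\xi')$ by exploiting that $g$ preserves $K$ (equivalently $B$) both as a homeomorphism (which transports $\xi$ to $\xi'$) and as an automorphism of $\bX^*$ (which fixes the symbol $\bxi$). The paper works directly with the betweenness relation $B(\cdot,\cdot,\bxi)$ rather than isolating the intermediate identity $K(a,b,\xi)=K(a,b,\xi')$, but this is only a cosmetic difference.
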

\begin{proof}
  The $\supseteq$ inclusion follows from Proposition~\ref{prop:aut-group-of-the-dendrite} and the definition of $\bX^*$. For the other, suppose that $g \in \Aut(\bX^*)$, viewed as a homeomorphism of $W$ via the identification $\Aut(\bX^*) \sub \Aut(\bX) = \Homeo(W)$, moves $\xi$ to another endpoint $\xi'$. Let $x \neq y \in X$ be such that $\xi, x, y, \xi'$ lie on an arc, in this order. Then, on the one hand, we have that $B(x, y, \xi')$ and as $g$ is a homeomorphism, we obtain that $B(g^{-1} \cdot x, g^{-1} \cdot y, \xi)$. On the other, $g$ is an automorphism of $\bX^*$, so it preserves the binary relation $B(\cdot, \cdot, \xi)$ and the fact that $B(y, x, \xi)$ implies that $B(g^{-1} \cdot y, g^{-1} \cdot x, \xi)$, contradiction.
\end{proof}

%%%%%%%%%%%%%%%%%%%%%%%%%%%%%%%%%%%%%%%%%%%%%%%%%%

\subsection{Structures for kaleidoscopic groups}
\label{sec:struct-kale-groups}

Next we turn to kaleidoscopic groups. Let $\cL_0$ be a relational signature, let $\bM$ be an ultrahomogeneous $\cL_0$-structure with $|M| = n$ and let $\Gamma = \Aut(\bM)$. (As we explained earlier, this entails no loss of generality: any closed subgroup $\Gamma \leq \Sym(M)$ can be represented in this fashion.)

Fix $\kc\colon \cps \to M$, a kaleidoscopic coloring of $\waz$. 
We use $\kappa$ to define an expansion $\kf(\bM)$ of $\wazs$. 

\begin{defn}
  \label{def:kal-strcture}
  Let $\kf(\bM)$ be the relational expansion of $\wazs$ in the signature $\cL_\bX \sqcup \cL_0$, where the relations in $\cL_0$ are interpreted on the sort $\cps$, as follows. 
  For a $k$-ary relation $R \in \cL_0$ and $\bar a = (a_0, \ldots, a_{k-1}) \in \cps^k$, we define
  \begin{equation*}
    R^{\cK(\bM)}(\bar a) \iff \exists x \in X \ a_0, \ldots, a_{k-1} \in \comps{x} \And R^\bM(\kappa_x(\bar a)).
  \end{equation*}
\end{defn}

In other words, for all $x \in X$, $\kappa_x \colon \comps{x} \to M$ is an $\cL_0$-isomorphism.

It follows easily from the results of \cite{Duchesne2019a} that this structure captures the kaleidoscopic construction, as we see in the following proposition.
\begin{prop}
  \label{p:KalM-is-homogeneous}
  The groups $\cK(\Gamma)$ and $\Aut(\cK(\bM))$ coincide. Moreover, the structure $\kf(\bM)$ is ultrahomogeneous and does not depend on the choice of kaleidoscopic coloring, up to isomorphism. 
\end{prop}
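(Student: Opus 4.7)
The first equality is obtained by a direct verification. By \Cref{prop:aut-group-of-the-dendrite}, $\Aut(\bX) = \Homeo(W)$, so $\Aut(\kf(\bM))$ consists of those $g \in \Homeo(W)$ which preserve each $R \in \cL_0$ on the imaginary sort $\cps$. Since $g$ sends $\comps{x}$ bijectively onto $\comps{gx}$, unravelling \Cref{def:kal-strcture} shows that $g$ preserves $R$ iff, for every $x \in X$, the bijection $\kappa_{gx} \circ g|_{\comps{x}} \circ \kappa_x^{-1}$ preserves $R^\bM$. By \eqref{eq:defn:cocycle} this bijection is exactly $\alpha_\kappa(g,x)$, so $g \in \Aut(\kf(\bM))$ iff $\alpha_\kappa(g,x) \in \Aut(\bM) = \Gamma$ for every $x \in X$, which is precisely \eqref{eq:defin:KGamma}.

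For ultrahomogeneity, let $f \colon \bA \to \bA'$ be an isomorphism between finite substructures, where $A, A' \sub X$ are finite and $\centerf$-closed. For each $x \in A$, the induced bijection $\comps{x}^{\bA} \to \comps{fx}^{\bA'}$, transported through $\kappa_x$ and $\kappa_{fx}$, is a partial isomorphism between finite subsets of $\bM$; since $\bM$ is ultrahomogeneous, each such partial map extends to some $\sigma_x \in \Gamma$. The plan is to build $g \in \kf(\Gamma)$ with $g|_A = f$ and $\alpha_\kappa(g, x) = \sigma_x$ for every $x \in A$ by a back-and-forth argument. Enumerate $X \sminus A$ and $X \sminus A'$ and, at each stage, extend the current finite partial isomorphism by one branch point $y$: its image $y'$ is forced by $\centerf$-preservation to lie on a specific arc of the already placed part of $A'$, and one uses the kaleidoscopic property of $\kappa$ (\Cref{df:kaleido-coloring}) to locate a branch point on that arc whose adjacent colors match the prescription coming from $\kappa_y$ via the already decided local actions. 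The ultrahomogeneity of $\bM$ is then used to choose the local action at $y'$ consistently with these color requirements and to keep it in $\Gamma$.

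The independence of $\kf(\bM)$ from $\kappa$, up to isomorphism, follows from the uniqueness (up to $\Homeo(W)$) of kaleidoscopic colorings proved in \cite{Duchesne2019a}: any other kaleidoscopic coloring has the form $\kappa' = \kappa \circ \widehat{h}^{-1}$ for some $h \in \Homeo(W)$, and such an $h$ is then an isomorphism between the two versions of $\kf(\bM)$. The main technical obstacle is the back-and-forth construction: one must simultaneously preserve the tree structure, respect the fixed global coloring $\kappa$ on newly visible components, and arrange that the local action at every branch point encountered so far lies in $\Gamma$. The density provided by the kaleidoscopic property is precisely what allows these three constraints to be met at each step.
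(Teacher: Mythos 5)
Your first paragraph is a correct and more explicit unwinding of what the paper dismisses as ``follows from the definitions,'' and your last paragraph matches the paper's treatment of independence from the coloring (both invoke the uniqueness of kaleidoscopic colorings from Duchesne--Monod--Wesolek). The one place where you diverge is the ultrahomogeneity argument. The paper's proof takes exactly your first step: given $f \colon \bS \to \bT$, for each $s \in S$ transport $f$ through $\kappa_s$ and $\kappa_{f(s)}$ to a finite partial isomorphism of $\bM$ and extend it (by ultrahomogeneity of $\bM$) to $\gamma_s \in \Gamma$. But then the paper does not run a back-and-forth from scratch; it cites \cite{Duchesne2019a}*{Lemma~5.8}, whose content is precisely that such a family of prescribed local actions, together with the finite partial isomorphism, extends to an element of $\kf(\Gamma)$.

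Your back-and-forth sketch is thus attempting to reprove that lemma, and as written it leaves a genuine amount of work on the table. Two specific points you would need to make precise: (a) you must schedule the enumeration so that over the course of the construction \emph{every} component around \emph{every} branch point that enters the domain is eventually assigned an image --- this is what guarantees that the limiting local action at each $y$ is a total bijection of $M$, and then closedness of $\Gamma$ puts it in $\Gamma$; a naive ``extend by one new branch point at a time'' scheme does not automatically enumerate the (countably infinite) $\comps{y}$ for each $y$ already placed. (b) The limit of your construction is a priori only a bijection of $X$; to get a homeomorphism of $W$ you need to observe that it preserves the betweenness relation $B$ and invoke \cite{Duchesne2019}*{Proposition~2.4} (as is implicit in \Cref{prop:aut-group-of-the-dendrite}). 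Also, the phrase ``$y'$ is forced to lie on a specific arc'' is not quite right in general: if $y$ lies in a component around a current domain point without any other domain point beyond it, then $y'$ is only constrained to a specific component (determined by the already chosen local action there), not an arc. None of these are fatal --- the kaleidoscopic property of $\kappa$ is indeed strong enough to carry out the construction --- but since the lemma already exists in the literature, the clean move is to cite it rather than redo it, which is what the paper does.
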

\begin{proof}
  The first assertion follows from the definitions.
  
  Next we prove ultrahomogeneity. 
  Let $\bS$ and $\bT$ be two finite substructures of $\kf(\bM)$ and let $f \colon \bS \to \bT$ be an isomorphism. 
  For every $\bs \in S$,
  \begin{equation*}
    \kc_{f(\bs)} \circ f \circ \kc_\bs^{-1} \colon \kc_\bs[\comps{\bs}] \to \kc_{f(\bs)}[\Comps{f(\bs)}]
  \end{equation*}
  is an isomorphism of finite substructures of $\bM$. As $\bM$ is ultrahomogeneous, this isomorphism extends to some $\gamma_\bs \in \Aut(\bM)$. Now it follows from \cite{Duchesne2019a}*{Lemma 5.8} that there is $g \in \kf(\Gamma) = \Aut(\kf(\bM))$ which extends $f$.

  Finally, let $\kc' \colon \cps \to M$ be some other kaleidoscopic coloring. 
  By \cite{Duchesne2019a}*{Theorem 3.4}, there is $h \in \Homeo(\waz)$ such that $\kc = \kc' \circ h$ and it is easy to see that $h$ defines an isomorphism between $\cK_\kappa(\bM)$ and $\cK_{\kappa'}(\bM)$.
\end{proof}

%%%%%%%%%%%%%%%%%%%%%%%%%%%%%%%%%%%%%%%%%%%%%%%%%%

\subsection{Structures for rooted kaleidoscopic groups}
\label{sec:rooted-kaleidoscopic-structures}

Let now $\cL_0$ be a relational signature as before and let $\bN$ be an ultrahomogeneous structure in the signature $\cL_0 \sqcup \set{\cc}$, where $\cc$ is a constant symbol. As usual, let $N$ denote the universe of $\bN$ and suppose that $|N| = n$. We denote by $c \in N$ the interpretation of the constant symbol $\cc$. 

We also fix some root-kaleidoscopic coloring $\pkc$ of $W$, with $\pkc(\rootf(x)) = c$ for all $x \in \bps$. 
We use it to define the expansion $\pkf(\bN)$ of the structure $\bX^*$.

\begin{defn}
  Let $\pkf(\bN)$ be the relational expansion of $\bX^*$ to the signature $\cL^*_\bX \sqcup \cL_0$ defined as follows:
  for each $\rR \in \cL_0$ of arity $k$, we interpret $\rR$ as a relation on $\cps$, which for $\bar a = (a_0, \ldots, a_{k-1}) \in \cps^k$, is given by
  \begin{equation*}
    \rR^{\pkf(\bN)}(\bar a) \iff \exists x \in X \ a_0, \ldots, a_{k-1} \in \comps{x} \And \rR^\bN(\pkc(\bar a)).
  \end{equation*}
\end{defn}  

It will be convenient to establish some combinatorial notation and terminology. For us, a \df{combinatorial tree} is just an element of the age of $\bX$. Two vertices of a tree are \df{adjacent} if there is no other vertex between them. 
Similarly, we call a \df{rooted tree} an element of the age of $\bX^*$. Note that every combinatorial tree as well as every rooted tree is finite. The \df{root} of a rooted tree is the interpretation of $\bxi$. A \df{leaf} of a rooted tree is a $\prec$-maximal element. The \df{height} of an element $t$ of a rooted tree $\bT$ is the number $\abs{\set{s \in T : s \prec t \And s \neq t}}$. The \df{height} of the tree is the maximum of the heights of its elements. If $t \in T$, we denote by $\Succ(t)$ the set of $\prec$-immediate successors of $t$. Notice that the immediate successors are adjacent to $t$.
If $A \sub T$, we denote by $\bigwedge A$ the $\prec$-greatest lower bound of $A$. Every rooted tree $\bT$ with more than one vertex also has another distinguished vertex apart from the root, namely $\bigwedge (T \sminus \bxi)$, the unique immediate successor of $\bxi$. As we will use this vertex frequently, we denote it by $r_{\bT}$. Note, however, that $r_{\bT}$ is not preserved by embeddings.
For $t \in T$, let
\begin{equation*}
\subtree{\bT}{t} \coloneqq \set{s \in T : t \preceq s} \cup \set{\xi}.
\end{equation*}
So $t = r_{\subtree{\bT}{t}}$.

Our next step is to describe a cofinal family in the age of $\pkf(\bN)$ that is easier to work with.
\begin{defn}
  \label{df:nice-A-h}
  Let $\bA$ be a finite structure in the signature $\cL_0 \cup \set{\cc}$ and let $h$ be a natural number. We define $\nice{\bA}{h}$ to be the finite $\cL^*_\bX \sqcup \cL_0$-structure such that: 
\begin{itemize}
\item the reduct of $\nice{\bA}{h}$ to $\cL^*_\bX$ is a rooted tree whose every leaf has height $h+1$ and such that every non-leaf different from $\bxi$ has exactly $|A|-1$ immediate successors;

\item for any $\bs \in A[h]$ which is not a leaf and not equal to $\bxi$, the $\cL_0 \sqcup \set{\cc}$-structure on $\comps{s}$, given by interpreting $\cc$ as $\rootf(\bs)$, is isomorphic to $\bA$. We denote by $\lambda_s$ this isomorphism.
\end{itemize}
\end{defn}

\begin{figure}
  \label{fig:Ah-structures}
  \centering{
    \includeinkscape[scale=0.7]{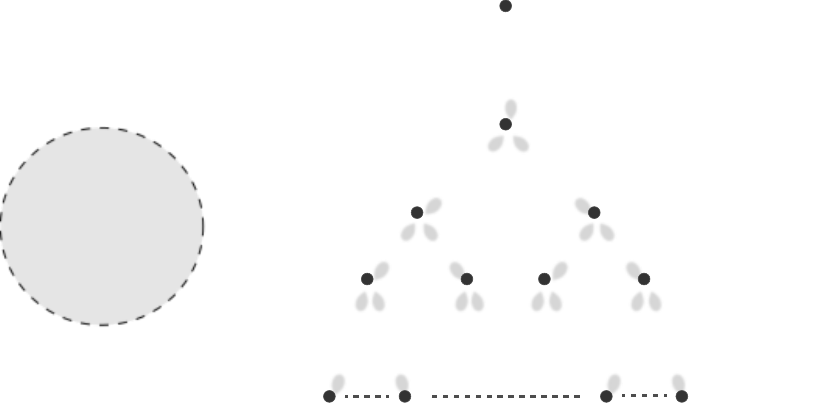}
    }
    \caption{A graphical representation of $A[h]$.}
\end{figure}

\begin{prop}
  \label{prop:cofinal-in-point-kal}
  The family of structures
  \begin{equation*}
    \set{\nice{\bA}{h} : \bA \in \Age(\bN), h = 0, 1, \ldots}
  \end{equation*}
  is cofinal in $\Age(\pkf(\bN))$.
\end{prop}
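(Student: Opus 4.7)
The plan is to verify, for any $\bS \in \Age(\pkf(\bN))$, that $\bS$ embeds into some $\nice{\bA}{h}$ with $\bA \in \Age(\bN)$; the companion fact that each $\nice{\bA}{h}$ does belong to $\Age(\pkf(\bN))$ follows from density of branch points in $W$ together with root-kaleidoscopicity: starting from $\bxi$ and fixing an $\cL_0 \cup \set{\cc}$-embedding $\alpha \colon \bA \hookrightarrow \bN$, one can recursively place branch points of $W$ level by level, at each step choosing the components around the chosen branch point $x$ via $\pkc^{-1} \circ \alpha$, thereby matching the $\lambda_s$ prescribed by $\nice{\bA}{h}$.

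For the cofinality direction, fix $\bS$ and let $h$ be any integer with $h + 1$ at least the height of the rooted tree $\bS$. For every $s \in S$ with $s \ne \bxi$ that is not a leaf of $\bS$, the set $\comps{s}$ in $\bS$, together with its inherited $\cL_0$-structure and with $\cc$ interpreted as $\rootf(s)$, is via $\pkc$ a finite substructure of $\bN$; denote it $\bL_s \in \Age(\bN)$. By the joint embedding property of $\Age(\bN)$, pick a single $\bA \in \Age(\bN)$ admitting an $\cL_0 \cup \set{\cc}$-embedding $\iota_s \colon \bL_s \to \bA$ for every such $s$.

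Now construct the embedding $f \colon \bS \to \nice{\bA}{h}$ by induction on the tree order. Set $f(\bxi_\bS) = \bxi_{\nice{\bA}{h}}$ and, when $\abs{S} \ge 2$, $f(r_\bS) = r_{\nice{\bA}{h}}$. Given $f(s)$ for a non-leaf $s \ne \bxi$ of $\bS$, for each immediate successor $t$ of $s$ in $\bS$ let $a = \compf(s, t) \in \comps{s} \sminus \set{\rootf(s)}$; the component $\lambda_{f(s)}^{-1}(\iota_s(a))$ of $\comps{f(s)}$ in $\nice{\bA}{h}$ is distinct from $\rootf(f(s))$ because $\iota_s$ preserves $\cc$, so it corresponds to a unique immediate successor of $f(s)$, which we declare to be $f(t)$. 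The recursion runs smoothly: $f$ is height-preserving by construction, and the leaves of $\nice{\bA}{h}$ sit at height $h+1$.

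It remains to verify that $f$ is indeed a $\cL^*_\bX \sqcup \cL_0$-embedding. Injectivity and preservation of $\bxi$ are immediate. Preservation of the center function $K$ reduces to preservation of the meet $\meet$ (as $K$ is definable from $\meet$ and $\bxi$), and this holds because two incomparable elements of $\bS$ sharing a closest common ancestor $s$ belong to distinct components of $\comps{s}$ and are thus sent below distinct immediate successors of $f(s)$. I expect the main obstacle to be preservation of the $\cL_0$-relations on the imaginary sort $\cps$: the map induced by $f$ on $\comps{s}$ equals $\lambda_{f(s)}^{-1} \circ \iota_s$, an $\cL_0 \cup \set{\cc}$-embedding by design, so one has to trace carefully through the definitions to confirm that the $\cL_0$-structure on $\comps{s}$ inherited by $\bS$ from $\pkf(\bN)$ via $\pkc$ matches that transported through $\iota_s$ and $\lambda_{f(s)}$ into $\nice{\bA}{h}$.
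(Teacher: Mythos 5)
Your argument is essentially the paper's: membership of $\nice{\bA}{h}$ in $\Age(\pkf(\bN))$ is established by recursively placing branch points using $(\pkc_x)^{-1}$, and the cofinality direction builds $f \colon \bS \to \nice{\bA}{h}$ level by level, matching components around each vertex via the colorings. The only minor deviation is that the paper takes $\bA$ directly to be the substructure of $\bN$ with universe $\set{\pkc(a) : a \in \Comps{S}}$, so the maps $\iota_s$ become literal inclusions and there is no need to invoke joint embedding; note also the small notational slip — since $\iota_s$ is defined on $\pkc_s[\comps{s}]$, you should write $\lambda_{f(s)}^{-1}(\iota_s(\pkc_s(a)))$ rather than $\lambda_{f(s)}^{-1}(\iota_s(a))$.
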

\begin{proof}
  First we show that for any $\bA \in \Age(\bN)$ and $h \in \N$, there is a substructure of $\pkf(\bN)$ isomorphic to $\nice{\bA}{h}$. 
  We do so by fixing $\bA \sub \bN$ and arguing by induction on $h$. 
  For $h = 0$, choose any $x \in \bps$. Then the generated substructure $\gen{x} = \set{x, \xi}$ of $\bX^*$ is isomorphic to $\bA[0]$.
  Now suppose we have found $\bS \sub \pkf(\bN)$ which is isomorphic to $\nice{\bA}{h-1}$, for some $h > 0$. 
  For each leaf $x \in S$, and each $a \in A \setminus \set{c}$,  pick a point $x_a \in (\pkc_x)^{-1}(a)$. 
  Let $\bS'$ be the substructure of $\cK^*(\bN)$ consisting of $\bS$ and all of these new points $x_a$. Notice that this set is closed under $\centerf$. It is also easy to see that $\bS'$ is isomorphic to $\nice{\bA}{h}$.   

  Conversely, let $\bS$ be a finite substructure of $\pkf(\bN)$. Let $A = \set{\pkc(a) : a \in \Comps{S}}$ and let $\bA$ be the substructure of $\bN$ with universe $A$. We also let $h$ be the height of $\bS$ and we define an embedding $f$ of $\bS$ into $\nice{\bA}{h}$ by induction as follows. First we let $f(\xi) = \xi$ and $f(r_\bS) = r_{\nice{\bA}{h}}$. Then, supposing that for some non-leaf $s \in S$, $f(s)$ has been defined, we let
  \begin{equation*}
    f|_{\Succ(s)} = \lambda_{f(s)}^{-1} \circ \kappa_s^*.
  \end{equation*}
  It is easy to check that $f$ is an embedding.
\end{proof}

Now let $\Delta \acts N$ be a permutation group which fixes some $c \in N$. 
Let $\bN$ be an ultrahomogeneous structure in the signature $\cL_0 \cup \set{\cc}$, where $\cc$ is a constant symbol, interpreted as $c$, such that $\Aut(\bN) = \Delta$.

\begin{theorem}
  \label{thm:point-kal-is-homogeneous}
  The groups $\pkf(\Delta)$ and $\Aut(\pkf(\bN))$ coincide.
  Moreover, $\pkf(\bN)$ is ultrahomogeneous and does not depend on the choice of $\pkc$, up to isomorphism.
\end{theorem}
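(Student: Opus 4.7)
The plan is to mirror the argument for Proposition~\ref{p:KalM-is-homogeneous}, adapting it to the rooted setting by working inside $\Aut(\bX^*) = \Homeo(W)_\xi$ (Proposition~\ref{p:autom-Xstar}) and using Definition~\ref{df:rooted-kaleidoscopic-col} in place of Definition~\ref{df:kaleido-coloring}.

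\textbf{Equality of the groups.} First I would unpack the definitions. Any $g \in \Aut(\pkf(\bN))$ must preserve the $\cL_\bX^*$-reduct, so by Proposition~\ref{p:autom-Xstar} it lies in $\Homeo(W)_\xi$. Pick $x \in X$ and $a_0, \dots, a_{k-1} \in \comps{x}$ and a $k$-ary $R \in \cL_0$; by the definition of the interpretation of $R$ in $\pkf(\bN)$, preservation of $R$ amounts to saying that $\pkc_{g\cdot x} \circ g|_{\comps{x}} \circ \pkc_x^{-1}$ preserves $R^{\bN}$. Since this map also fixes $c$ (because $g \cdot \xi = \xi$, so it sends the $\xi$-component around $x$ to the $\xi$-component around $g \cdot x$), it belongs to $\Aut(\bN) = \Delta$, i.e., $\alpha_{\pkc}(g, x) \in \Delta$. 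Hence $g \in \pkf(\Delta)$, and the converse inclusion is immediate from the definitions.

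\textbf{Ultrahomogeneity.} Let $\bS, \bT$ be finite substructures of $\pkf(\bN)$ and let $f \colon \bS \to \bT$ be an isomorphism. For each non-leaf $\bs \in S$, the map
\[
\pkc_{f(\bs)} \circ f \circ \pkc_\bs^{-1} \colon \pkc_\bs[\comps{\bs} \cap S] \to \pkc_{f(\bs)}[\comps{f(\bs)} \cap T]
\]
is an isomorphism of finite substructures of $\bN$; crucially, because $f$ fixes $\bxi$ (so $f$ preserves $\prec$ and $\rootf$), this map sends $c$ to $c$ and therefore extends, by ultrahomogeneity of $\bN$, to some $\delta_\bs \in \Delta$. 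To package the family $(\delta_\bs)_\bs$ into a global element of $\pkf(\Delta)$ extending $f$, I would invoke the rooted analogue of \cite{Duchesne2019a}*{Lemma~5.8}: given any partial assignment of local automorphisms indexed by a finite subtree, compatible with a prescribed partial homeomorphism on that subtree, one can extend to a homeomorphism of $W$ realizing those local actions everywhere. In the rooted case one applies the unrooted statement after fixing a large subtree containing $\bxi$ and then uses the resulting homeomorphism's restriction to check it fixes $\xi$, which is automatic since $f$ does.

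\textbf{Independence of the coloring.} If $\pkc'$ is another root-kaleidoscopic coloring, the uniqueness statement for root-kaleidoscopic colorings (the rooted analogue of \cite{Duchesne2019a}*{Theorem~3.4}, i.e.\ Corollary~\ref{c:uniqueness-root-kalei}) provides an $h \in \Homeo(W)_\xi$ with $\pkc = \pkc' \circ h$, and $h$ then yields an isomorphism between the structures $\pkf(\bN)$ built from $\pkc$ and $\pkc'$, respectively.

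\textbf{Main obstacle.} The only nontrivial point is the gluing step in the ultrahomogeneity argument, where one has to produce a genuine homeomorphism of $W$ fixing $\xi$ that realizes prescribed local actions on an arbitrary finite rooted subtree. This is where the rooted analogue of \cite{Duchesne2019a}*{Lemma~5.8} does the real work, and verifying that the construction there goes through with a root fixed (and uses the root-kaleidoscopicity of $\pkc$ in the right places) is the place that needs the most care.
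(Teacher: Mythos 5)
Your proposal has two genuine problems, both in spots where it diverges from the paper's actual argument.

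\textbf{Circularity in the independence step.} You invoke Corollary~\ref{c:uniqueness-root-kalei} (uniqueness of root-kaleidoscopic colorings up to $\Homeo(W)_\xi$) to conclude that different colorings give isomorphic structures. But Corollary~\ref{c:uniqueness-root-kalei} is itself proved in the paper \emph{as a consequence} of Theorem~\ref{thm:point-kal-is-homogeneous}: its proof starts ``If $g$ is an isomorphism between $\pkf_{\pkc_1}(\bN)$ and $\pkf_{\pkc_2}(\bN)$\dots'', and the existence of such a $g$ is exactly the independence-of-coloring assertion you are trying to prove. The paper instead observes that $\pkf_{\pkc}(\bN)$ and $\pkf_{\pkc'}(\bN)$ are both ultrahomogeneous and have the same age (by Proposition~\ref{prop:cofinal-in-point-kal}), so they are isomorphic by Fra\"iss\'e's theorem — no appeal to a uniqueness statement for colorings is needed. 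Your route could be repaired by proving a rooted analogue of \cite{Duchesne2019a}*{Theorem~3.4} directly, but as written it runs in a circle.

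\textbf{Gap in the ultrahomogeneity step.} You propose to mirror Proposition~\ref{p:KalM-is-homogeneous}: assign a local automorphism $\delta_\bs \in \Delta$ at each vertex and glue via a ``rooted analogue of \cite{Duchesne2019a}*{Lemma~5.8}''. Two problems. First, that lemma is for the unrooted, transitive situation, and $\Delta$ fixes $c$, so it is not transitive on $M$; you cannot just run the lemma with $\Delta$ in place of $\Gamma$. Second, even running the unrooted version with the ambient transitive group does not obviously produce a $g$ fixing $\xi$: $\xi$ is an \emph{endpoint}, not a branch point, so it cannot be placed in the finite subtree on which Lemma~5.8 controls the extension, and ``which is automatic since $f$ does'' does not follow — $f$ fixes $\bxi$ only as an abstract constant of the finite structure, and the global $g$ furnished by the lemma is not constrained to fix the actual point $\xi \in W$. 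Notably, your argument never locates where root-kaleidoscopicity of $\pkc$ enters (you flag this yourself), which is a red flag: that property is the essential ingredient and should be doing visible work. The paper sidesteps the gluing entirely with a one-point-at-a-time back-and-forth, and root-kaleidoscopicity is used precisely when the new meet point $y$ falls on the arc $(\xi, r_\bS)$ or strictly between two adjacent vertices of $\bS$ (Cases~2 and~3), to find a target $y'$ in the corresponding arc with the prescribed component color. That case analysis is what replaces the gluing lemma you would otherwise need to state and prove.

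Your handling of the equality $\pkf(\Delta) = \Aut(\pkf(\bN))$ is fine and matches the paper's ``follows from the definitions'' in spirit, just more explicit.
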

\begin{proof}
  The first assertion follows from the definitions. 

  To verify ultrahomogeneity, we apply the standard back-and-forth argument. Given an isomorphism $f \colon \bS \to \bT$ between finite substructures of $\pkf(\bN)$ and a point $x \in \pkf(\bN) \sminus S$, it suffices to extend $f$ to an embedding $f' \colon \gen{S, x} \to \pkf(\bN)$.
      
  Denote $\bS' = \gen{S, x}$ and let $y$ be the immediate $\prec$-predecessor of $x$ in $\bS'$. 
  There are three cases depending on the position of $y$. 
    
  \textbf{Case 1.} $y \in S$. Let $a = \compf(y, x)$.
  The isomorphism $f$ induces an isomorphism $\tilde f \colon \pkc_y[\Comps{S}] \to \pkc_{f(y)}[\Comps{T}]$ between finite substructures of $\bN$. Since $\bN$ is ultrahomogeneous, there is $\tilde g \in \Aut(\bN)$ that extends $\tilde f$. 
  Let $b = (\pkc_{f(y)})^{-1}(\tilde g(\pkc(a)))$. Finally, choose any $x' \in b \sminus T$ and let $f' = f \cup \set{(x, x')}$. 

  \textbf{Case 2.} $y \in (\xi, r_\bS)$. Let $a = \compf(y, r_\bS)$ and notice that $\pkc(a) \ne c$. By root-kaleidoscopicity of $\pkc$, there is $y' \in (\xi, f(r_\bS))$ such that $\pkc(\compf(y', f(r_\bS))) = \pkc(a)$. We let $f'= f \cup \set{(y, y')}$. If $x = y$, we are done; if not, we have reduced to Case~1.

  \textbf{Case 3.} There are $z_1, z_2 \in S$ adjacent in $\bS$, with $z_1 \in (\xi, z_2)$ and such that $y \in (z_1, z_2)$. Let $a = \compf(y, z_2)$ and note that $\pkc(a) \ne c$. By root-kaleidoscopicity of $\pkc$, there is $y' \in (f(z_1), f(z_2))$ such that $\pkc(\compf(y', f(z_2))) = \pkc(a)$. We let $f'= f \cup \set{(y, y')}$. If $x = y$, we are done; otherwise, we have again reduced to Case~1. This concludes the proof of ultrahomogeneity.

  If ${\pkc}'$ is another root-kaleidoscopic coloring, then $\pkf_{\pkc}(\bN)$ and $\pkf_{{\pkc}'}(\bN)$ are two ultrahomogeneous structures with the same age (by Proposition~\ref{prop:cofinal-in-point-kal}), so they are isomorphic by \Fraisse's theorem.
\end{proof}

By considering a structure $\bN$ with trivial automorphism group, we obtain the following.
\begin{cor}
  \label{c:uniqueness-root-kalei}
  If $\kappa^*_1$ and $\kappa^*_2$ are two root-kaleidoscopic colorings, then there exists $g \in \Homeo(W)_\xi$ such that $\kappa^*_2 = \kappa^*_1 \circ g$.
\end{cor}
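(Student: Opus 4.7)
The plan is to extract the required homeomorphism $g$ directly from the isomorphism part of Theorem~\ref{thm:point-kal-is-homogeneous} (``$\pkf(\bN)$ does not depend on the choice of $\pkc$, up to isomorphism''), applied to a sufficiently rich $\bN$. The idea is that if $\bN$ has trivial automorphism group, then any isomorphism $\pkf_{\kappa^*_1}(\bN) \to \pkf_{\kappa^*_2}(\bN)$ is forced to respect the coloring pointwise, not just up to a local permutation.

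First I would build a rigid ultrahomogeneous rooted structure on a set of size $n$. Fix any set $N$ with $\abs{N} = n$ and a distinguished element $c \in N$, and take $\cL_0$ to consist of a unary predicate symbol $P_a$ for each $a \in N$, interpreted so that $\bN \models P_a(b) \iff b = a$. Every nonempty subset of $N$ is the universe of a unique $\cL_0$-substructure, and every $\cL_0$-isomorphism between finite substructures is the restriction of the identity; hence $\bN$ is trivially ultrahomogeneous and $\Aut(\bN) = \set{1}$. This $\bN$ therefore meets the hypotheses of Theorem~\ref{thm:point-kal-is-homogeneous}.

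Next I would form the two expansions $\pkf_{\kappa^*_1}(\bN)$ and $\pkf_{\kappa^*_2}(\bN)$. They have the same age by Proposition~\ref{prop:cofinal-in-point-kal}, and both are ultrahomogeneous by Theorem~\ref{thm:point-kal-is-homogeneous}, so \Fraisse's theorem yields an isomorphism $h \colon \pkf_{\kappa^*_1}(\bN) \to \pkf_{\kappa^*_2}(\bN)$ between them. As $h$ is, in particular, an automorphism of the common $\cL^*_\bX$-reduct $\bX^*$, Proposition~\ref{p:autom-Xstar} places $h$ in $\Homeo(W)_\xi$. Moreover, $h$ induces a bijection of the imaginary sort $\cps$ (cf.\ Lemma~\ref{l:basic-imaginary-sorts}) which preserves each unary predicate $P_a$; unpacking the definition of $\pkf(\bN)$, this says exactly that $\kappa^*_1(b) = \kappa^*_2(h(b))$ for every $b \in \cps$, i.e., $\kappa^*_2 \circ h = \kappa^*_1$. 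Setting $g \coloneqq h^{-1}$ then gives $\kappa^*_2 = \kappa^*_1 \circ g$, as required.

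I do not expect any serious obstacle: the substantive work (ultrahomogeneity of $\pkf(\bN)$ and uniqueness up to the choice of coloring) has already been done in Theorem~\ref{thm:point-kal-is-homogeneous} and Proposition~\ref{prop:cofinal-in-point-kal}, and the only new ingredient is the observation that rigidity of $\bN$ promotes an abstract isomorphism of expansions to pointwise equality of the colorings up to precomposition with a homeomorphism.
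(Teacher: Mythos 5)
Your argument is correct and follows the same route as the paper's proof: both pick a rigid ultrahomogeneous structure $\bN$, invoke \Cref{thm:point-kal-is-homogeneous} (and \Cref{prop:cofinal-in-point-kal}) to obtain an isomorphism $\pkf_{\kappa^*_1}(\bN) \to \pkf_{\kappa^*_2}(\bN)$, note that it preserves $\bX^*$ and hence lies in $\Homeo(W)_\xi$, and use rigidity of $\bN$ to force the coloring compatibility. You are merely more explicit about which rigid $\bN$ to take and why rigidity yields $\kappa^*_1 = \kappa^*_2 \circ h$, which the paper leaves as a one-line remark.
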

\begin{proof}
  If $g$ is an isomorphism between $\pkf_{\pkc_1}(\bN)$ and $\pkf_{\pkc_2}(\bN)$, then $g$ preserves betweenness on $X$ and $\xi$, so $g \in \Homeo(\waz)_\xi$. Since $\bN$ has trivial automorphism group, it follows that $\kappa^*_2 = \kappa^*_1 \circ g$.
\end{proof}

\begin{cor}
  \label{c:KstarDeltaOligo}
  Let $\Delta \acts N$ be a permutation group which fixes some $c \in N$.
  Then $\pkf(\Delta) \acts \bps$ is oligomorphic if and only if $\Delta \acts N$ is.
\end{cor}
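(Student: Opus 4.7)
The plan is to characterize oligomorphicity on each side as finiteness of the age of the corresponding ultrahomogeneous structure at every fixed size, and then pass between the two ages using the cofinal family from \Cref{prop:cofinal-in-point-kal}. By \Cref{thm:point-kal-is-homogeneous}, both $\bN$ and $\pkf(\bN)$ are ultrahomogeneous, with automorphism groups $\Delta$ and $\pkf(\Delta)$ respectively, and a Ryll--Nardzewski-style argument identifies oligomorphicity of each group with finiteness of the corresponding age at each finite size (equivalently, finitely many orbits on $k$-tuples for each $k$).

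For the forward direction, I will exhibit an injection $\bA \mapsto \nice{\bA}{1}$ from $\Age(\bN)$ into $\Age(\pkf(\bN))$ that raises size by exactly one. The structure $\nice{\bA}{1}$ has $|A|+1$ vertices (namely $\bxi$, its unique successor $r$, and $|A|-1$ leaves at height $2$), and any isomorphism $\nice{\bA}{1} \to \nice{\bA'}{1}$ must fix $\bxi$ and hence send $r$ to $r'$. The induced bijection $\comps{r} \to \comps{r'}$ is then an $\cL_0$-isomorphism that sends $\rootf(r)$ to $\rootf(r')$, and under the labelings $\lambda_r, \lambda_{r'}$ these components are both identified with $c$; composing gives an $\cL_0 \sqcup \set{\cc}$-isomorphism $\bA \to \bA'$. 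Hence if $\Age(\pkf(\bN))$ has only finitely many classes at each size, so does $\Age(\bN)$.

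For the converse, suppose $\Delta$ is oligomorphic and fix $n \in \N$. I claim that any $\bS \in \Age(\pkf(\bN))$ with $|S| \le n$ embeds into some $\nice{\bA}{h}$ with $h \le n$ and $|A| \le n(n-1)$. This is essentially the construction from the proof of \Cref{prop:cofinal-in-point-kal}: take $h$ to be the height of $\bS$ and $\bA$ to be the substructure of $\bN$ on $A = \pkc[\Comps{S}]$, and note that $|\Comps{S}| \le |S|(|S|-1)$, since for each $x \in S$ only the at most $|S|-1$ components in $\comps{x}$ meeting $S \sminus \set{x}$ contribute. Oligomorphicity of $\Delta$ then provides finitely many isomorphism types of such $\bA$, and hence finitely many relevant $\nice{\bA}{h}$; each of these is finite and thus has only finitely many substructures of size at most $n$. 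Therefore $\Age(\pkf(\bN))$ has finitely many isomorphism classes at each size, and $\pkf(\Delta)$ is oligomorphic.

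I do not anticipate a serious obstacle. The only slightly delicate point is making sure that the injection $\bA \mapsto \nice{\bA}{1}$ is sensitive to the constant symbol $\cc$, and this is precisely what the rootedness condition $\pkc(\rootf(x)) = c$ from \Cref{df:rooted-kaleidoscopic-col} enforces via the convention that $\lambda_r$ pairs the distinguished component $\rootf(r)$ with $c$. Beyond this, everything reduces to bookkeeping on top of the cofinal family supplied by \Cref{prop:cofinal-in-point-kal}.
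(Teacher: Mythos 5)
Your proof is correct and follows essentially the same strategy as the paper's: the direction that $\pkf(\Delta)$ oligomorphic implies $\Delta$ oligomorphic uses the map $\bA \mapsto \bA[1]$ exactly as the paper does, and the converse rests on the same counting principle that a size-bounded substructure of $\pkf(\bN)$ is determined by a bounded-size rooted tree together with bounded-size data from $\Age(\bN)$. The only cosmetic difference is that for the converse you route through embeddings into the cofinal structures $\bA[h]$ from \Cref{prop:cofinal-in-point-kal}, whereas the paper observes directly that a finite substructure $\bT$ is determined by its $\cL^*_\bX$-reduct and the isomorphism types of the $\comps{t}$ for $t \in T$; both yield the same finiteness conclusion.
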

\begin{proof}
  $(\Leftarrow)$ 
  Let $\bN$ be an ultrahomogeneous structure such that $\Aut(\bN) = \Delta$, as above. The hypothesis means that we can choose a signature for $\bN$ that contains, apart from $\cc$, finitely many relation symbols in each arity. Theorem~\ref{thm:point-kal-is-homogeneous} implies that the conclusion is equivalent to: for every $k$, there are only finitely many $k$-generated structures in $\Age(\pkf(\bN))$, up to isomorphism. This follows from the fact that a $k$-generated substructure of $\bX^*$ has at most $2k$ elements and that a finite substructure $\bT \sub \pkf(\bN)$ is determined by its reduct to the signature of $\bX^*$ and the isomorphism types of $\comps{t}$ for $t \in T$, as elements of $\Age(\bN)$.
  
  $(\Rightarrow)$ Suppose there is $k > 0$ such that there are infinitely many pairwise non-isomorphic $\bA_1, \bA_2, \ldots$ elements of $\Age(\bN)$ with $k+1$ elements (including $\cc$). Then $\bA_1[1], \bA_2[1], \ldots$ are $k$-generated non-isomorphic elements of $\Age(\pkf(\bN))$.
\end{proof}

Now fix $x \in \bps$ and a component $a \in \comps{x}$ which does not contain the root $\xi$.
Then $a \cup \set{x}$ is a dendrite homeomorphic to $W$ (by Theorem~\ref{th:Wazewski}) and $\pkc$ restricted to it is again a root-kaleidoscopic coloring, this time with root $x$.
By Corollary~\ref{c:uniqueness-root-kalei}, there exists a homeomorphism $h_a \colon \waz \to a \cup \set{x}$ mapping $\xi$ to $x$ and such that $\pkc = \pkc \circ h_a$. Using these homeomorphisms, one can run the argument in \cite{Duchesne2019a}*{Proposition 5.1} to obtain the following.
\begin{cor}
  \label{cor:homo-from-open-subgroup-rooted}
  Let $\Delta \acts n$ be a permutation group which fixes some $c \in n$ and let $x \in \bps$.
  Then the homomorphism $\alpha_{\pkc}(\cdot, x) \colon \pkf(\Delta)_x \to \Delta$ is \df{split-surjective}, i.e., there exists a continuous homomorphism $\phi \colon \Delta \to \pkf(\Delta)_x$ with a closed image such that $c_\pkc(\cdot, x) \circ \phi = \id_\Delta$.
\end{cor}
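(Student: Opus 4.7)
The construction parallels \cite{Duchesne2019a}*{Proposition~5.1}, with one modification to accommodate the root. Write $a_0 \coloneqq \rootf(x) \in \comps{x}$ for the component containing $\xi$, and let $\set{a_i : i \geq 1}$ enumerate the remaining components around $x$. The paragraph preceding the statement provides, for each $i \geq 1$, a homeomorphism $h_{a_i} \colon \waz \to a_i \cup \set{x}$ with $h_{a_i}(\xi) = x$ and $\pkc \circ h_{a_i} = \pkc$; no such homeomorphism is defined for $a_0$, and the key observation is that the hypothesis ``$\Delta$ fixes $c$'' ensures we will never need one.

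Given $\delta \in \Delta$, define $\phi(\delta) \colon \waz \to \waz$ piecewise: set $\phi(\delta)$ to be the identity on $a_0 \cup \set{x}$, and for each $i \geq 1$, let
\[
  \phi(\delta)|_{a_i \cup \set{x}} \coloneqq h_{a_j} \circ h_{a_i}^{-1}, \quad \text{where } a_j = (\pkc_x)^{-1}\bigl(\delta \cdot \pkc_x(a_i)\bigr).
\]
Since $\delta \cdot c = c$, we have $j \geq 1$, so $h_{a_j}$ is defined; the pieces agree at $x$, so $\phi(\delta) \in \Homeo(\waz)$ fixes both $x$ and $\xi$. It remains to check that (i)~$\phi(\delta) \in \pkf(\Delta)$, (ii)~$\alpha_{\pkc}(\phi(\delta), x) = \delta$, (iii)~$\phi$ is a continuous group homomorphism, and (iv)~the image is closed.

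For (i) and (ii), the local action at $x$ is $\delta$ by construction; at any branch point $y \in a_0$ the local action is the identity, and at $y \in a_i$ ($i \geq 1$) it is induced by the $\pkc$-preserving homeomorphism $h_{a_j}\circ h_{a_i}^{-1}$, hence also the identity in $\Sym(M)_c$. For (iii), the homomorphism property is a direct calculation: on $a_i$, the composite $\phi(\delta_1) \circ \phi(\delta_2)$ collapses by $h_{a_k} \circ h_{a_j}^{-1} \circ h_{a_j} \circ h_{a_i}^{-1} = h_{a_k} \circ h_{a_i}^{-1}$ with the correct target index $a_k = (\pkc_x)^{-1}(\delta_1\delta_2 \cdot \pkc_x(a_i))$, and continuity is immediate. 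For (iv), note that $\phi(\Delta)$ is cut out of $\pkf(\Delta)_x$ by the pointwise-closed conditions ``$g$ is the identity on $a_0 \cup \set{x}$'' and ``for each $i\geq 1$, $g|_{a_i} = h_{g(a_i)} \circ h_{a_i}^{-1}|_{a_i}$''. Finally, $\alpha_{\pkc}(\cdot, x) \circ \phi = \id_\Delta$ is part of (ii).

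The one point that requires real care is the asymmetry introduced by the root: we must verify that $\delta$ preserves the distinguished component $a_0$ so that the piecewise definition makes sense. This is precisely the content of the assumption $\delta \cdot c = c$. All other steps are the verbatim translation of the argument in \cite{Duchesne2019a}*{Proposition~5.1} to the rooted setting, using Corollary~\ref{c:uniqueness-root-kalei} in place of the uniqueness statement for kaleidoscopic colorings.
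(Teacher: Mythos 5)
Your proof is correct and fills in precisely the argument the paper only sketches before the corollary statement: the section $\phi$ is built piecewise from the $\pkc$-preserving homeomorphisms $h_a$, exactly as in \cite{Duchesne2019a}*{Proposition~5.1}, and you correctly identify that the hypothesis $\Delta \cdot c = c$ is what guarantees the root component $a_0$ is fixed, so the piecewise definition never needs an $h_{a_0}$. The verification that the local actions away from $x$ are trivial, the homomorphism telescoping, and the closedness-of-image argument are all sound.
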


%%%%%%%%%%%%%%%%%%%%%%%%%%%%%%%%%%%%%%%%%%%%%%%%%%

\subsection{The endpoint stabilizer and root-kaleidoscopic groups}
\label{sec:endpoint-stabilizer}

Let $\cL_0$ be a relational signature, let $\bM$ be an ultrahomogeneous $\cL_0$-structure with universe $M$ and let $\Gamma = \Aut(\bM)$. Assume that the action $\Gamma \actson M$ is transitive. Let $c \in M$ and let $\Delta$ be a closed subgroup of $\Gamma_c$. In this subsection, we explain how to represent $\pkf(\Delta)$ as the automorphism group of $\pkf(\bN)$ for an appropriate expansion $\bN$ of the structure $\bM$ satisfying $\Aut(\bN) = \Delta$.

Let $\kappa$ be some fixed kaleidoscopic coloring of $W$. We have seen in Proposition~\ref{p:KalM-is-homogeneous} that $\kf(\Gamma) = \Aut(\kf(\bM))$. Let $\bM_\cc$ be the expansion of $\bM$ to the signature $\cL_0 \sqcup \set{\cc}$ obtained by interpreting the symbol $\cc$ as $c$. Then $\Aut(\bM_\cc) = \Gamma_c$. Let $\bN$ be an expansion of $\bM_\cc$ to a signature $\cL_0 \sqcup \cL_1 \sqcup \set{\cc}$ such that $\cL_1$ is relational, $\bN$ is ultrahomogeneous, and $\Aut(\bN) = \Delta$. 

We have that $\pkf(\Gamma_c) = \kf(\Gamma)_\xi$ by \Cref{prop:KstarGamma_c-is-stabilizer}, and therefore that $\pkf(\Delta) \le \pkf(\Gamma_c) \le \kf(\Gamma)$. We see how this is reflected at the level of structures and expansions. 

\begin{prop}
  \label{p:point-stabilizer}
  Let $\Gamma \actson M$ be transitive and let $\bM$, $\bM_\cc$ and $\bN$ be defined as above. Then the following hold:
  \begin{enumerate}
  \item \label{i:ps:1} $\pkf(\bN)$ is a relational expansion of $\pkf(\bM_\cc)$ and $\pkf(\bM_\cc)$ is an expansion of $\kf(\bM)$;
  \item \label{i:ps:2} $\pkf(\Delta) =  \Aut(\pkf(\bN))$ and $\pkf(\Gamma_c)  = \Aut(\pkf(\bM_\cc))= \kf(\Gamma)_\xi$.  
  \end{enumerate}
\end{prop}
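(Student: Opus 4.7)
The plan is to reduce both items to earlier results, the only subtlety being a coherent choice of colorings: for \ref{i:ps:1} to hold strictly (not merely up to isomorphism), the kaleidoscopic coloring $\kc$ used to define $\kf(\bM)$ and the root-kaleidoscopic coloring $\pkc$ used to define $\pkf(\bM_\cc)$ and $\pkf(\bN)$ must be linked. Concretely, first fix any kaleidoscopic coloring $\kc$ of $W$ and, using the transitivity of $\Gamma$ on $M$, fix a section $\secf \colon M \to \Gamma$ with $\secf(c) = 1_\Gamma$ and $\secf(a) \cdot a = c$ for all $a \in M$, exactly as in the proof of \Cref{prop:KstarGamma_c-is-stabilizer}. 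Define $\pkc$ from $\kc$ and $\secf$ via \eqref{eq:defn:pkc}; by \Cref{l:root-kaleidoscopic-section}, this is a root-kaleidoscopic coloring. Interpret the $\cL_0$-relations on $\cps$ in both $\pkf(\bM_\cc)$ and $\pkf(\bN)$ using this same $\pkc$. By \Cref{thm:point-kal-is-homogeneous}, up to isomorphism the resulting structures do not depend on this choice.

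With this setup, \ref{i:ps:1} is mostly a matter of unwinding definitions. That $\pkf(\bN)$ is a relational expansion of $\pkf(\bM_\cc)$ is immediate, since the two structures share the $\cL^*_\bX$-reduct $\bX^*$ and the same $\cL_0$-interpretation on $\cps$, while $\pkf(\bN)$ simply adds the relational symbols of $\cL_1$ on $\cps$ via $\pkc$. For the assertion that $\pkf(\bM_\cc)$ is an expansion of $\kf(\bM)$, I would check the three conditions of the expansion definition. The inclusion $X \hookrightarrow X^*$ clearly preserves $\centerf$; the constant $\xi$ is $0$-definable as the interpretation of $\bxi$, so $X^* = \dcl(X) \cup \set{\xi} = \dcl(X)$; and $X$ is invariant under $\Aut(\pkf(\bM_\cc))$ since any automorphism fixes $\xi$. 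The only non-trivial point is the preservation of the $\cL_0$-relations on $\cps$. For $x \in X$ and $\bar a \in \comps{x}^k$, by construction
\[
  \pkc_x(\bar a) = \secf(\kc(\rootf(x))) \cdot \kc_x(\bar a),
\]
and $\secf(\kc(\rootf(x))) \in \Gamma = \Aut(\bM)$, so this element preserves every $R \in \cL_0$. Hence the interpretations of $R^{\kf(\bM)}$ (via $\kc$) and of $R^{\pkf(\bM_\cc)}$ (via $\pkc$) coincide, as required.

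For \ref{i:ps:2}, both equalities $\pkf(\Delta) = \Aut(\pkf(\bN))$ and $\pkf(\Gamma_c) = \Aut(\pkf(\bM_\cc))$ are direct applications of \Cref{thm:point-kal-is-homogeneous} (the second one applied with $\bN$ replaced by $\bM_\cc$ and $\Delta$ replaced by $\Gamma_c$). The final identification $\pkf(\Gamma_c) = \kf(\Gamma)_\xi$ is exactly \Cref{prop:KstarGamma_c-is-stabilizer}. I do not anticipate any real obstacle: the whole proof is essentially bookkeeping, and the one place where care is needed is making sure the two colorings are chosen in the same family, which is handled cleanly by the section $\secf$ taking values in $\Gamma$.
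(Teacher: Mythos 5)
Your proof is correct and follows essentially the same route as the paper's: both reduce \ref{i:ps:2} to \Cref{prop:KstarGamma_c-is-stabilizer} and \Cref{thm:point-kal-is-homogeneous}, and both prove the nontrivial part of \ref{i:ps:1} by observing that the section $\secf$ linking $\kc$ and $\pkc$ takes values in $\Gamma = \Aut(\bM)$, so the two interpretations of the $\cL_0$-relations on $\cps$ coincide. You spell out the verification of the expansion axioms a bit more explicitly, but the argument is the same.
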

\begin{proof}
  \ref{i:ps:1} It is clear that $\pkf(\bN)$ is a relational expansion of $\pkf(\bM_\cc)$. The only symbol of the signature of $\pkf(\bM_\cc)$ which is not in the signature of $\kf(\bM)$ is the constant symbol $\bxi$ and the respective universes of $\kf(\bM)$ and $\pkf(\bM_\cc)$ are $X$ and $X \cup \set{\xi}$. Thus to check the claim, it suffices to prove that the identity on $X$ is an embedding, that is, that the interpretations of the other symbols agree on $X$ and $\cps$. We have already checked that $\bX^*$ is an expansion of $\bX$. For a relation symbol $R$ in $\cL_0$ and $\bar a \in \comps{x}^k$, we have
  \begin{equation*}
    R^{\pkf(\bM_\cc)}(\bar a) \iff R^{\bM_\cc}(\kappa^*(\bar a)) \iff R^\bM(\kappa(\bar a)) \iff R^{\kf(\bM)}(\bar a).
  \end{equation*}
  The middle equivalence follows from the definition \eqref{eq:defn:pkc} of $\pkc$ and the fact that we can use $\sigma$ which take values in $\Gamma = \Aut(\bM)$.

  \ref{i:ps:2} This is \Cref{prop:KstarGamma_c-is-stabilizer} and \Cref{thm:point-kal-is-homogeneous} applied to $\Aut(\bM_\cc) = \Gamma_c$.
\end{proof}

\begin{prop}
  \label{pr:H-presyndetic}
  If $\Gamma \acts M$ is a transitive permutation group and $\Delta \le \Gamma_c$ is a closed presyndetic subgroup, for some $c \in M$, then $\pkf(\Delta)$ is presyndetic in $\kf(\Gamma)_\xi$ and in $\kf(\Gamma)$.
\end{prop}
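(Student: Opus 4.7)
The plan is to express both inclusions $\pkf(\Delta) \le \kf(\Gamma)_\xi \le \kf(\Gamma)$ as presyndetic and then combine them via \Cref{p:presyndetic-transitive}. For the first inclusion, I will use the model-theoretic setup from \Cref{sec:endpoint-stabilizer}: let $\bM$ be a relational ultrahomogeneous structure for $\Gamma$, let $\bM_\cc$ be its expansion by a constant interpreted as $c$, and let $\bN$ be an ultrahomogeneous relational expansion of $\bM_\cc$ with $\Aut(\bN) = \Delta$. By \Cref{p:point-stabilizer}, $\pkf(\bN)$ is an ultrahomogeneous, relational expansion of $\pkf(\bM_\cc)$, with $\Aut(\pkf(\bN)) = \pkf(\Delta)$ and $\Aut(\pkf(\bM_\cc)) = \kf(\Gamma)_\xi$. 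By \Cref{p:presyndetic-criterion}, it therefore suffices to show that $\pkf(\bN)$ is a minimal expansion of $\pkf(\bM_\cc)$, while the same criterion translates the hypothesis on $\Delta$ into: $\bN$ is a minimal expansion of $\bM_\cc$.

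The main combinatorial step is to propagate minimality through the root-kaleidoscopic construction. Given $\bB'' \in \Age(\pkf(\bN))$, I use the cofinal family from \Cref{prop:cofinal-in-point-kal} to embed $\bB''$ into $\nice{\bA'}{h}$ for some $\bA' \in \Age(\bN)$ and $h \ge 0$. Applying the hypothesis to $\bA'$ yields $\bA \in \Age(\bM_\cc)$ such that every $\bN$-expansion of $\bA$ embeds $\bA'$. I claim that $\nice{\bA}{h} \in \Age(\pkf(\bM_\cc))$ witnesses minimality for $\nice{\bA'}{h}$: any $\pkf(\bN)$-expansion $\bT^*$ of $\nice{\bA}{h}$ induces, on the imaginary sort $\comps{s}$ for each non-root non-leaf vertex $s$, an $\bN$-expansion of the prescribed $\bA$-structure, which by choice contains a copy of $\bA'$ (with the image of $c$ forced to be $\rootf(s)$). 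A recursion on height starting at $r_{\nice{\bA}{h}}$---at each stage selecting such a copy of $\bA'$ and restricting to the $|A'|-1$ immediate successors indexed by its non-$c$ elements---produces the desired embedding of $\nice{\bA'}{h}$ into $\bT^*$.

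For the second inclusion, $\kf(\Gamma) \actson W$ is a minimal flow (\Cref{f:kaleidoscopic-facts}\ref{i:kaleido-fact:transitive-ends}), and $\theends(W) = \kf(\Gamma) \cdot \xi$ is a dense $G_\delta$, hence comeager, orbit in $W$. By \Cref{p:SG-comeager}\ref{i:f:SG-comeager:1}, $\MHP_{\kf(\Gamma)}(W) \cong \Sam(\kf(\Gamma)/\kf(\Gamma)_\xi)$, which is minimal because $W$ is; the Samuel-compactification criterion recalled in \Cref{sec:prel-topol-dynam} then gives that $\kf(\Gamma)_\xi$ is presyndetic in $\kf(\Gamma)$. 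Combining the two presyndetic inclusions via \Cref{p:presyndetic-transitive} yields the full conclusion. The main obstacle is the recursion in the second paragraph: one must verify that the $\bN$-expansion structure on each sort $\comps{s}$ really agrees with the $\bA$-labeling inherited from $\nice{\bA}{h}$, so that the locally chosen embeddings of $\bA'$ can be stitched into a coherent global embedding of $\nice{\bA'}{h}$.
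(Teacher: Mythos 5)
Your proposal follows essentially the same route as the paper's proof: decompose presyndeticity through the chain $\pkf(\Delta) \le \kf(\Gamma)_\xi \le \kf(\Gamma)$ and apply \Cref{p:presyndetic-transitive}; translate the first step via \Cref{p:presyndetic-criterion} into showing that $\pkf(\bN)$ is a minimal expansion of $\pkf(\bM_\cc)$, propagated through the cofinal family $\nice{\bA}{h}$ by a level-by-level recursion; and handle the second step by observing that $\Sam(\kf(\Gamma)/\kf(\Gamma)_\xi) = \MHP_{\kf(\Gamma)}(W)$ is minimal. The technical point you flag at the end (that the $\bN$-structure induced on $\comps{s}$ by a $\pkf(\bN)$-expansion of $\nice{\bA}{h}$ is indeed an expansion of $\bA$) is exactly what the paper relies on, and it is immediate from \Cref{df:nice-A-h}, so the argument is complete.
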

\begin{proof}
  Let $\bM$, $\bM_\cc$ and $\bN$ be defined as above.
  We assume that $\bN$ is a minimal expansion of $\bM_\cc$ and prove that $\pkf(\bN)$ is a minimal expansion of $\pkf(\bM_\cc)$.
  Fix $\bB' \in \Age(\bN)$ and let $\bA \in \Age(\bM_\cc)$ be a witness of minimality for $\bB'$.
  We show that $\nice{\bA}{h}$ is a witness of minimality for $\nice{\bB'}{h}$, for each $h \in \N$.
  By \Cref{prop:cofinal-in-point-kal} and Proposition~\ref{p:presyndetic-criterion}, this will be enough. 
    
  So fix an expansion $\bS$ of $\nice{\bA}{h}$.
  Then $\bS \restr{\cL_\bX} = \nice{\bA}{h}\restr{\cL_\bX}$.
  For each $\bs \in \bS$, which is not a leaf nor $\bxi$, the structure $\pkc[\comps{\bs}]$ is an expansion of $\bA$, and thus there is an embedding $g_{\bs} \colon \bB' \to \pkc[\comps{\bs}]$. 
  Therefore there is a level-preserving embedding $f$ of $\nice{\bB'}{h}$ into $\bS$ which is given by mapping the $r_{\nice{\bB}{h}}$ to $r_{\bS}$ and, if $f$ is defined for $p$ and $s$ is an immediate successor of $p$, letting $f(s)$ be the immediate successor of $f(p)$ which belongs to the component $(\pkc_{f(\bp)})^{-1} \circ g_{f(\bp)}\circ \pkc_p(\compf(p, s))$.

  Finally, note that as the action $\kf(\Gamma) \actson W$ is minimal and the orbit $\kf(\Gamma) \cdot \xi$ is $G_\delta$, it follows from Proposition~\ref{p:SG-comeager} and the fact that highly proximal extensions preserve minimality that the action $\kf(\Gamma) \actson \Sam(\kf(\Gamma)/\kf(\Gamma)_\xi)$ is minimal. Thus, by \cite{Zucker2021}*{Proposition~6.6}, $\kf(\Gamma)_\xi$ is presyndetic in $\kf(\Gamma)$ and by Proposition~\ref{p:presyndetic-transitive} and what we already proved, $\pkf(\Delta)$ is also presyndetic in $\kf(\Gamma)$.
\end{proof}

%%%%%%%%%%%%%%%%%%%%%%%%%%%%%%%%%%%%%%%%%%%%%%%%%%

\subsection{Rendering structures full}
\label{sec:rend-struct-full}

In our representation theorems of flows as type spaces, it will be important that we deal with full structures. It is unfortunately not true in general that $\kf(\bM)$ is full, even if we start with a full structure $\bM$, unless $\Aut(\bM)$ is oligomorphic (or, equivalently, $\bM$ is $\aleph_0$-categorical in a countable language). 
So our remedy is to render them qf-full by expanding the language to include relations for all $\Aut(\kf(\bM))$-invariant sets of tuples. We denote this full expansion by $\fullkf(\bM)$ and its signature by $\cl{\cL}$ and note that by Theorem~\ref{th:aleph0-cat}, it eliminates quantifiers (and, of course, its automorphism group is equal to $\Aut(\kf(\bM))$). 
If $\Aut(\bM)$ is transitive and $c \in M$, we define $\fullpkf(\bM_\cc)$ as the relational expansion of $\pkf(\bM_\cc)$ to the signature $\cl{\cL}_{\bxi} \coloneqq \cl{\cL} \sqcup \set{\bxi}$. Note that this expansion does not change the automorphism group of $\pkf(\bM_\cc)$ and that for any quantifier-free $\cl{\cL}$-formula $\phi(\bar v)$ and $\bar x \in X^{\bar v}$, we have that $\fullpkf(\bM_\cc) \models \phi(\bar x) \iff \fullkf(\bM) \models \phi(\bar x)$.

We will call a formula $\phi(\bar v)$ \df{isolating} if it isolates a type, i.e., $\phi(\fullkf(\bM))$ is a single $\Aut(\kf(\bM))$-orbit. Then every $\cl{\cL}$-formula is equivalent in $\fullkf(\bM)$ to a (possibly infinite) disjunction of isolating formulas.

\begin{lemma}
  \label{l:fullkf-fullkpf}
  Suppose that $\Aut(\bM) \actson M$ is transitive, $\bM$ is ultrahomogeneous, and $c \in M$, and define $\kf(\bM)$ and $\pkf(\bM_\cc)$ as before. Then the structure $\fullpkf(\bM_\cc)$ is full and, moreover, every formula is equivalent in $\fullpkf(\bM_\cc)$ to an existential one.
\end{lemma}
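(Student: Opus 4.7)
The plan is to show that every $\kf(\Gamma)_\xi$-orbit on tuples from $X^k$ is definable in $\fullpkf(\bM_\cc)$ by an existential $\cl{\cL}_{\bxi}$-formula; fullness and the existential reduction will both follow.

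By \Cref{thm:point-kal-is-homogeneous}, each $\kf(\Gamma)_\xi$-orbit on $X^k$ is determined by the qf-type of $\bar a$ in $\pkf(\bM_\cc)$, that is, by the isomorphism type of the substructure $\langle \bar a, \xi \rangle$. The key combinatorial observation is that this qf-type is already captured by the $\kf(\bM)$-type of a tuple $(\bar a, r, z) \in X^{k+2}$, where $r \coloneqq \bigwedge \bar a$ is the $\xi$-rooted meet and $z$ is any branch point in $\rho(r)$: for such a $z$, the $z$-rooted meet of $\bar a$ equals $r$, the $z$-direction at each branch point of $\langle \bar a \rangle$ equals the $\xi$-direction, and the colorings $\pkc$ and $\kappa$ produce the same $\bM$-structure on components since they differ only by elements of $\Gamma = \Aut(\bM)$, which preserve $\bM$.

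Now let $D \subseteq X^k$ be $\kf(\Gamma)_\xi$-invariant and let $R \in \cl{\cL}$ be the $\kf(\Gamma)$-saturation of $\set{(\bar a, \bigwedge \bar a, z) : \bar a \in D,\ z \in \rho(\bigwedge \bar a) \cap X}$, a $\kf(\Gamma)$-invariant subset of $X^{k+2}$. Set
\[
\phi_D(\bar v) \coloneqq \exists z\,\Bigl[\, R\bigl(\bar v,\ {\textstyle\bigwedge} \bar v,\ z\bigr) \;\wedge\; \neg B\bigl(\bxi,\ {\textstyle\bigwedge} \bar v,\ z\bigr)\,\Bigr],
\]
an existential $\cl{\cL}_{\bxi}$-formula in which $\bigwedge \bar v$ is a term built from $\centerf$ and $\bxi$ and $\neg B(\bxi, \bigwedge \bar v, z)$ expresses that $z$ lies in the $\xi$-component of $\bigwedge \bar v$. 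The nontrivial direction of $D = \phi_D$ is the backward one: if $g \in \kf(\Gamma)$ sends $(\bar a, r_{\bar a}, z')$, with $\bar a \in D$ and $z' \in \rho(r_{\bar a})$, to $(\bar b, r_{\bar b}, z)$ with $z \in \rho(r_{\bar b})$, then $g r_{\bar a} = r_{\bar b}$ forces $g\xi$ to lie in $\rho(r_{\bar b})$, so one finds $h \in \kf(\Gamma)_{\bar b}$ with $hg\xi = \xi$; then $g' \coloneqq hg \in \kf(\Gamma)_\xi$ sends $\bar a$ to $\bar b$, whence $\bar b \in D$.

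Fullness then follows at once, with tuples in $(X \cup \set{\xi})^k$ having coordinates equal to $\xi$ handled by a case analysis on the $\bxi$-equalities; and the existential reduction is then immediate, since every $\cl{\cL}_{\bxi}$-formula defines a $\kf(\Gamma)_\xi$-invariant set, which we have just defined existentially. The main technical obstacle is the transitivity claim used in the backward direction, namely that $\kf(\Gamma)_{\bar b,\, r_{\bar b}}$ acts transitively on the endpoints of $\rho(r_{\bar b})$; this will rest on the observation that the closure $\rho(r_{\bar b}) \cup \set{r_{\bar b}}$ is a sub-dendrite homeomorphic to $W$ inheriting a kaleidoscopic-like coloring from $\kappa$ (up to a local $\Gamma$-twist), so that the restricted stabilizer action contains a copy of the kaleidoscopic construction and is therefore transitive on endpoints by \Cref{f:kaleidoscopic-facts}\ref{i:kaleido-fact:transitive-ends}.
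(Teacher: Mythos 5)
Your overall idea is the right one and parallels the paper's: replace $\bxi$ by a branch point $z$ ranging over $\rho(\bigwedge \bar v)$ and wrap the resulting $\kf(\Gamma)$-invariant condition in an existential quantifier. However, your verification of the backward direction rests on a transitivity claim that is not established, and the justification you sketch for it is incorrect.

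You need that for any tuple $\bar b$ with $r_{\bar b} = \bigwedge \bar b$, some $h \in \kf(\Gamma)_{\bar b}$ fixing $r_{\bar b}$ carries the endpoint $g\xi$ to $\xi$ inside $\rho(r_{\bar b})$. You propose to deduce this from Fact~\ref{f:kaleidoscopic-facts}\ref{i:kaleido-fact:transitive-ends} applied to the kaleidoscopic structure on the subdendrite $\rho(r_{\bar b}) \cup \{r_{\bar b}\}$. But the elements of $\kf(\Gamma)$ supported in that subdendrite \emph{automatically fix the endpoint $r_{\bar b}$}; so the group you can build this way is isomorphic to an endpoint stabilizer $\pkf(\Gamma_c)$, not to the full kaleidoscopic group of the subdendrite. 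Fact~\ref{f:kaleidoscopic-facts}\ref{i:kaleido-fact:transitive-ends} gives transitivity of $\kf(\Gamma)$ on all endpoints, not transitivity of the stabilizer of a distinguished endpoint on the remaining ones. The latter statement amounts to asking whether $\kf(\Gamma)_\xi = \pkf(\Gamma_c)$ acts transitively on $\theends(W) \setminus \{\xi\}$; this is not obvious, and in particular is not what the cited fact says.

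The good news is that the transitivity detour is unnecessary: your own ``key combinatorial observation'' already does the work. If $g \in \kf(\Gamma)$ sends $(\bar a, r_{\bar a}, z')$ to $(\bar b, r_{\bar b}, z)$ with $z' \in \rho(r_{\bar a})$ and $z \in \rho(r_{\bar b})$, then by that observation the tuples $\bar a$ and $\bar b$ have the same quantifier-free type in $\pkf(\bM_\cc)$ (the centers computed with $z$ or $z'$ agree with those computed with $\bxi$, the $\bxi$-directions agree with the $z$-directions, and the $\cL_0$-structure on components is the same in $\kf(\bM)$ and $\pkf(\bM_\cc)$). Since $\pkf(\bM_\cc)$ is ultrahomogeneous (\Cref{thm:point-kal-is-homogeneous}), some $g' \in \kf(\Gamma)_\xi$ sends $\bar a$ to $\bar b$, whence $\bar b \in D$. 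This is precisely the content of the Claim in the paper's proof, which is established by a purely local, term-by-term comparison of $\centerf$ and $\compf$ values and does not require any transitivity statement about endpoints. The remaining difference from the paper is organizational: the paper decomposes $D$ into $G_\xi$-orbits, isolates each by a formula $\theta_\alpha$, substitutes $\bxi$ by a fresh variable $u$, and uses qf-fullness of $\fullkf(\bM)$ to collapse the disjunction into a single quantifier-free $\cl{\cL}$-formula before re-quantifying over $u \in \rho(\bigwedge \bar v)$; you work directly with the $\kf(\Gamma)$-saturation $R$, which is equivalent once the backward direction is repaired.
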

\begin{proof}
  Let $G = \Aut(\kf(\bM))$ and recall that then $G_\xi = \Aut(\pkf(\bM_\cc))$. 
  Let $D \sub (X\cup \set{\xi})^k$ be a $G_\xi$-invariant set which we want to prove is definable.
  Up to taking a conjunction with a formula stating which of the $k$ variables are equal to $\bxi$, we can assume that tuples in $D$ do not contain $\xi$, i.e., $D \sub X^k$. 
  Let us write $D = \bigsqcup_{\alpha} D_{\alpha}$, where each $D_{\alpha}$ is a $G_\xi$-orbit. 

  By ultrahomogeneity, for each $\alpha$, the tuples in $D_{\alpha}$ are exactly those of a given type, so there is some $\cl{\cL}_{\bxi}$-formula $\theta_{\alpha}(\bar v)$ which isolates it.  
  We may assume that $\compf(\xi, v_i)$ does not occur in $\theta_{\alpha}$, for any $1 \le i \le k$, since no atomic formulas mentioning the unique component around $\xi$ hold in $\fullpkf(\bM_\cc)$.

  Let $\phi_\alpha(u, \bar v)$ be the $\cl{\cL}$-formula obtained by substituting each occurrence of $\bxi$ in $\theta_\alpha$ by $u$.

  \begin{claim*}
    For any $\bar x \in X^k$, $\pkf(\bM_\cc) \models \theta_{\alpha}(\bar x)$ if and only if for some (equivalently, for all) $z \in \rho(\bigwedge \bar x)$, it holds that $\kf(\bM) \models \phi_\alpha(z, \bar x)$.
  \end{claim*}
  \begin{proof}[Proof of Claim]
    Since $\compf(\bxi, v_i)$ does not occur in $\theta_\alpha$, neither does $\compf(u, v_i)$ in $\phi_\alpha$. 
    Therefore it is only a matter of noting that for any $z \in \rho(\bigwedge \bar x)$ and $1 \le i, j \le k$, we have $\centerf(z, x_i, x_j) = \centerf(\xi, x_i, x_j)$ and $\compf(x_i, z) = \compf(x_i, \xi)$. 
  \end{proof}
  Using the qf-fullness of $\fullkf(\bM)$, let $\phi(u, \bar v)$ be the quantifier free $\cl{\cL}$-formula equivalent to the possibly infinite disjunction  $\bigvee_\alpha \phi_\alpha(u, \bar v)$ in $\fullkf(\bM)$. 
  Then we claim that the formula
  \begin{equation}
    \label{eq:equiv-Lstar}
    \psi(\bar v) \coloneqq \exists u \in \rho(\bigwedge \bar v) \ \phi(u, \bar v)
  \end{equation}
  defines $D$.
  If $\bar x \in D$, then there is $\alpha$ such that $\bar x \in D_\alpha$, so $\pkf(\bM_\cc) \models \theta_\alpha(\bar x)$. 
  The Claim tells us that for any $z \in \rho(\bigwedge \bar v)$, $\fullkf(\bM) \models \phi_\alpha(z, \bar x)$. 
  As $\phi_\alpha$ is quantifier-free, this implies that $\fullpkf(\bM_\cc) \models \phi_\alpha(z, \bar x)$, so $\fullpkf(\bM_\cc) \models \psi(\bar x)$.

  Conversely, suppose that $\fullpkf(\bM_\cc) \models \psi(\bar x)$. 
  It follows that there are $z \in \rho(\bigwedge \bar x)$ and $\alpha$ such that $\fullpkf(\bM_\cc) \models \phi_\alpha(z, \bar x)$.
  Since $\phi_\alpha$ doesn't mention $\bxi$, $\fullkf(\bM) \models \phi_\alpha(z, \bar x)$, which, by the Claim, shows that $\fullpkf(\bM_\cc) \models \theta_\alpha(\bar x)$, so $\bar x \in D_\alpha \sub D$.
\end{proof}

%%%%%%%%%%%%%%%%%%%%%%%%%%%%%%%%%%%%%%%%%%%%%%%%%%

\section{A Ramsey interlude}
\label{sec:ramsey-interlude}

The goal of this section is to prove a transfer Ramsey theorem that will be our main combinatorial tool for calculating universal minimal flows of kaleidoscopic groups. First we recall the definition of the Ramsey property.

If $\bA$ and $\bB$ are two structures in the same signature, we denote by $\binom{\bB}{\bA}$ the set of embeddings of $\bA$ into $\bB$. For three structures $\bA, \bB, \bC$ and $k \in \N$, we denote by $\bC \to (\bB)_\bA^k$ the statement that $\binom{\bC}{\bA} \neq \emptyset$ and for any coloring $\gamma \colon \binom{\bC}{\bA} \to k$, there exists $f \in \binom{\bC}{\bB}$ such that $\gamma$ is constant on the set $f \circ \binom{\bB}{\bA}$.

Let $\cF$ be an age. We will say that $\bA \in \cF$ is a \df{Ramsey structure} for $\cF$ if for every $\bB \in \cF$, there exists $\bC \in \cF$ such that $\bC \to (\bB)_\bA^k$. The age $\cF$ has the \df{Ramsey property} if all structures in $\cF$ are Ramsey for $\cF$. We will say that an ultrahomogeneous structure $\bN$ has the Ramsey property if its age does.

The reason the Ramsey property is important in topological dynamics is the following fundamental result of Kechris, Pestov, and Todor\v{c}evi\'c.
\begin{theorem}[Kechris--Pestov--Todor\v{c}evi\'c~\cite{Kechris2005}]
  \label{th:KPT}
  Let $\bN$ be an ultrahomogeneous structure. Then the following are equivalent:
  \begin{itemize}
  \item $\bN$ has the Ramsey property;
  \item $\Aut(\bN)$ is extremely amenable.
  \end{itemize}
\end{theorem}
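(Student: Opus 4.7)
The plan is to prove both directions separately, using the correspondence between Ramsey phenomena in $\Age(\bN)$ and the dynamics of the non-archimedean Polish group $G \coloneqq \Aut(\bN)$. Ultrahomogeneity will be used repeatedly in the form: every partial isomorphism between finite substructures of $\bN$ extends to some $g \in G$, so in particular $G$ acts transitively on $\binom{\bN}{\bA}$ for every $\bA \in \Age(\bN)$.

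For the $(\Rightarrow)$ direction I would fix $\bA, \bB \in \Age(\bN)$ and $k \in \N$ and argue by contradiction. Assuming that no $\bC \in \Age(\bN)$ satisfies $\bC \to (\bB)^k_\bA$, a compactness-and-diagonalization argument, using that $\bN$ is the directed union of its finitely generated substructures, produces a single coloring $c \colon \binom{\bN}{\bA} \to k$ such that no $\tau \in \binom{\bN}{\bB}$ makes $c \circ \tau_*$ constant, where $\tau_* \colon \binom{\bB}{\bA} \to \binom{\bN}{\bA}$ denotes post-composition with $\tau$. The compact space $k^{\binom{\bN}{\bA}}$ carries a continuous $G$-action by $(g \cdot \chi)(\sigma) = \chi(g^{-1} \sigma)$, and extreme amenability yields a $G$-fixed $c^* \in \overline{G \cdot c}$. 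Since $G$ is transitive on $\binom{\bN}{\bA}$, any $G$-fixed coloring is constant, so $c^*$ is constant. Pick any $\tau_0 \in \binom{\bN}{\bB}$; since $G \cdot c$ is dense in $\overline{G \cdot c}$, some $g \in G$ satisfies $g \cdot c \equiv c^*$ on the finite set $\tau_0 \binom{\bB}{\bA}$, which translates into $c$ being constant on $(g^{-1} \tau_0) \binom{\bB}{\bA}$, contradicting the choice of $c$.

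For the harder $(\Leftarrow)$ direction I would fix a continuous $G$-flow $G \acts X$ and a point $x_0 \in X$, and show by a routine approximation that it is enough to prove: for every finite open cover $\cU = \{U_1, \ldots, U_k\}$ of $X$ and every finite $F \sub G$ there exist $g \in G$ and $U \in \cU$ with $gf \cdot x_0 \in U$ for all $f \in F$. By continuity of $g \mapsto g \cdot x_0$ and non-archimedeanness of $G$, I would pick a finite $\bA \sub \bN$ whose pointwise stabilizer $G_\bA$ moves $x_0$ inside a single element of $\cU$ (refining the cover if needed). Enlarging $\bA$ if necessary, I may assume that $F \cdot \bA$ is contained in the universe of a substructure $\bB \sub \bN$. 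Define $\chi \colon \binom{\bN}{\bA} \to \{1, \ldots, k\}$ by letting $\chi(\sigma)$ be the least $i$ with $\widetilde{\sigma} \cdot x_0 \in U_i$, where $\widetilde{\sigma} \in G$ is any extension of $\sigma$; the choice of $\bA$ makes this well-defined. Invoking the Ramsey property, take $\bC \in \Age(\bN)$ with $\bC \to (\bB)^k_\bA$, embed $\bC \sub \bN$, extract a $\chi$-monochromatic embedding $\tau \colon \bB \to \bN$, and extend it to $g \in G$. For each $f \in F$ we have $gf|_\bA = \tau \circ f|_\bA \in \tau \binom{\bB}{\bA}$, so the points $gf \cdot x_0$ all lie in a common $U_i$.

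The main obstacle lies in the $(\Leftarrow)$ direction: one must delicately interleave the uniform structure on $G$ (encoded by pointwise stabilizers of finite substructures) with the combinatorics of the coloring, so that $\bA$ is small enough for $G_\bA$ to displace $x_0$ only within a cover element, yet $\bB$ is large enough to absorb the action of the finite set $F$ on $\bA$. The $(\Rightarrow)$ direction, by contrast, reduces almost immediately to the key observation that ultrahomogeneity forces any $G$-fixed coloring of $\binom{\bN}{\bA}$ to be constant.
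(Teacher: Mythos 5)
The paper cites this theorem from \cite{Kechris2005} without proof, so there is no in-paper argument to compare against; I assess your proposal on its own terms. (Your labels $(\Rightarrow)$ and $(\Leftarrow)$ are swapped relative to the order of the bullets, but that is cosmetic.) The direction ``extreme amenability implies the Ramsey property'' is correct as written.

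The other direction contains a real gap: the colouring $\chi$ is not well-defined as set up. If $\widetilde{\sigma}_0$ is one extension of $\sigma$ to an automorphism, the others are exactly the $\widetilde{\sigma}_0 u$ with $u\in G_\bA$, so the set of candidate values of $\widetilde{\sigma}\cdot x_0$ is $\widetilde{\sigma}_0(G_\bA\cdot x_0)$. Your hypothesis that $G_\bA\cdot x_0$ lies in a single cover element only places this set inside $\widetilde{\sigma}_0 U_j$, which in general is not contained in any member of $\cU$: the homeomorphism $\widetilde{\sigma}_0$ is not uniformly controlled, so it may scatter $U_j$ across the cover. This is tied to a second problem: the statement you reduce to, ``there are $g\in G$ and $U\in\cU$ with $gf\cdot x_0\in U$ for all $f\in F$,'' i.e.\ $g\cdot(F\cdot x_0)\subseteq U$, is not the standard finite-oscillation criterion, and the reduction to it is not routine. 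The functions $g\mapsto\psi(g\cdot x_0)$ with $\psi\in C(X)$ that arise from a flow are \emph{right}-uniformly continuous, and the greatest-ambit criterion concerns oscillation on right translates $Fg$, that is, one needs $F\cdot(g\cdot x_0)\subseteq U$. Both defects are repaired simultaneously by colouring with the inverse: set $\chi(\sigma)$ to be the least $i$ with $\widetilde{\sigma}^{-1}\cdot x_0\in U_i$. Then the ambiguity set is $G_\bA\cdot(\widetilde{\sigma}_0^{-1}x_0)$, which sits inside a single cover element once $\bA$ is chosen so that $G_\bA\cdot y$ lies in a single cover element for \emph{every} $y\in X$; this is a Lebesgue-number argument using compactness of $X$ and joint continuity of $G\times X\to X$, and is the statement you actually need rather than the weaker one involving only $x_0$. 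Now take $\bB\supseteq\bA\cup F^{-1}\bA$, extract a $\chi$-monochromatic $\tau$ from the Ramsey property, extend $\tau$ to $g'\in G$, and put $g=g'^{-1}$. The identity $(g'f^{-1})|_\bA=\tau\circ(f^{-1}|_\bA)\in\tau\binom{\bB}{\bA}$ shows $\chi\bigl((g'f^{-1})|_\bA\bigr)$ is constant over $f\in F$, and since $(g'f^{-1})^{-1}\cdot x_0=fg\cdot x_0$, this gives $F\cdot(g\cdot x_0)\subseteq U_i$ for a fixed $i$, the correct conclusion.
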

There is also a local version of the KPT theorem, which we will use. If $G$ is any topological group, $V \leq G$ is an open subgroup and $Z$ is a compact space, then $G$ acts naturally on $Z^{G/V}$ by
\begin{equation*}
  (g \cdot z)(hV) = z(g^{-1}hV).
\end{equation*}
We will say that $V \leq G$ is a \df{Ramsey subgroup of $G$} if the only minimal subflows of $2^{G/V}$ (equivalently, of $Z^{G/V}$ for any zero-dimensional compact $Z$) are the fixed points, that is, the constant functions $G/V \to 2$. Note that if $V \leq G$ is Ramsey and $\phi \in \Aut(G)$, then $\phi[V]$ is also Ramsey. 

Let now $\bN$ be ultrahomogeneous and let $G = \Aut(\bN)$. If $\bA$ is a finitely generated substructure of $\bN$, we will denote by $G_\bA$ the pointwise stabilizer of $\bA$ in $G$. This is an open subgroup of $G$ and it is well-known and easy to see that $G_{\bA}$ is a Ramsey subgroup of $G$ iff $\bA$ is a Ramsey substructure of $\bN$.
One equivalent way to state Theorem~\ref{th:KPT} is that a non-archimedean $G$ is extremely amenable iff every open subgroup of $G$ is Ramsey.

It is well-known that extreme amenability is stable under taking group extensions. Below we prove a local version of this fact.
\begin{prop}
  \label{p:local-group-ext}
  Let $G$ be a topological group, $H \lhd G$ be a closed, normal subgroup and let $K = G/H$. Denote by $\pi \colon G \to K$ the quotient map. Let $V \leq G$ be an open subgroup of $G$. If $V \cap H$ is Ramsey in $H$ and $\pi(V)$ is Ramsey in $K$, then $V$ is Ramsey in $G$.
\end{prop}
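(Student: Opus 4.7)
The plan is to show that any minimal $G$-subflow $X \subseteq 2^{G/V}$ is a fixed point, by applying the two Ramsey hypotheses in sequence: first the one about $V \cap H$ in $H$ to conclude that $X$ consists of $H$-fixed functions, and then the one about $\pi(V)$ in $K$ to conclude that these fixed functions are constant.

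The preliminary observation is a description of the left $H$-action on $G/V$. By normality of $H$, for any $g \in G$ the inner automorphism $\operatorname{Ad}_g$ of $G$ restricts to a topological automorphism of $H$; the stabilizer of $gV$ in $H$ is $H \cap gVg^{-1} = g(V \cap H)g^{-1}$, and the orbit space $H \backslash (G/V)$ is in natural bijection with $K/\pi(V)$ via $HgV \mapsto \pi(g)\pi(V)$.

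Next I would show $X \subseteq (2^{G/V})^H$. Let $Y \subseteq X$ be an $H$-minimal subflow (which exists by Zorn's lemma applied to closed $H$-invariant subsets of the compact space $X$), and for each $H$-orbit $\alpha \subseteq G/V$ fix a representative $g_\alpha V$, giving an $H$-equivariant bijection $\alpha \cong H/g_\alpha(V \cap H)g_\alpha^{-1}$. By the remark recalled before the proposition, applied to $\operatorname{Ad}_{g_\alpha} \in \operatorname{Aut}(H)$, the conjugate $g_\alpha(V \cap H)g_\alpha^{-1}$ is Ramsey in $H$. Thus the projection of $Y$ to $2^\alpha$, being a minimal $H$-subflow of $2^{H/g_\alpha(V \cap H)g_\alpha^{-1}}$, must be a single constant function. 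Since $\alpha$ was arbitrary, every $y \in Y$ is constant on each $H$-orbit of $G/V$, hence $H$-invariant, so $X^H \ne \emptyset$. Since $H$ is normal in $G$, the set $X^H$ is closed and $G$-invariant, and the $G$-minimality of $X$ forces $X = X^H$.

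To finish, identify $(2^{G/V})^H$ with $2^{H \backslash (G/V)} = 2^{K/\pi(V)}$; the $G$-action on these $H$-invariant functions descends through $\pi$ to the natural $K$-action on $2^{K/\pi(V)}$. So $X$ is a $K$-minimal subflow of $2^{K/\pi(V)}$, and since $\pi(V)$ is Ramsey in $K$ (and open in $K$ as $\pi$ is an open map), $X$ is a constant function on $G/V$, as required. The main delicate point is the identification of the stabilizers of the $H$-action on $G/V$ with $G$-conjugates of $V \cap H$ together with the observation that these conjugates are still Ramsey in $H$ because $\operatorname{Ad}_g$ is a topological automorphism of $H$; once this is established, the proof is a routine chaining of the two Ramsey hypotheses through the factorization $2^{G/V} \rightsquigarrow (2^{G/V})^H \cong 2^{K/\pi(V)}$.
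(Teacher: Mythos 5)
Your proof is correct and follows essentially the same route as the paper: decompose $G/V$ into $H$-orbits (whose stabilizers are $G$-conjugates of $V\cap H$, still Ramsey in $H$), use Zorn to pass to an $H$-minimal subflow whose coordinates must then be constant, deduce that the $H$-fixed set is non-empty (and, in your version, equals $X$ by $G$-minimality), and finally transfer to a $K$-flow on $2^{K/\pi(V)}$ to apply the second Ramsey hypothesis. The paper organizes this as "every orbit closure $\cl{G\cdot z_0}$ contains a fixed point" rather than "every minimal subflow is a singleton," but these are equivalent and the underlying chaining of the two hypotheses through $2^{G/V}\rightsquigarrow (2^{G/V})^H\cong 2^{K/\pi(V)}$ is identical.
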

\begin{proof}
  Let $z_0 \in 2^{G/V}$. Our goal is to show that $\cl{G \cdot z_0}$ contains a fixed point. Let $T \sub G$ be a set of representatives for the double $(H, V)$-cosets in $G$, i.e., such that $G = \bigsqcup_{t \in T} HtV$. For $t \in G$, we denote by $V^{t^{-1}}$ the conjugate $tVt^{-1}$.
  There is a natural $H$-equivariant bijection  $\theta \colon G/V \to \bigsqcup_{t \in T} H/(V^{t^{-1}} \cap H)$ given by
  \begin{equation*}
    \theta(gV) = h(V^{t^{-1}}\cap H), \quad \text{ where } h \in H, t \in T, gV = htV,
  \end{equation*}
  which allows to identify $2^{G/V}$ and $\prod_{t \in T} 2^{H/(V^{t^{-1}} \cap H)}$, with the diagonal action, as $H$-spaces. Our assumption that $V \cap H$ is Ramsey in $H$ tells us that the set $Z$ of $H$-fixed points in $\cl{G \cdot z_0}$ is non-empty. 
  Indeed, the only minimal $H$-subflows of each factor $2^{H/(V^{t^{-1}} \cap H)}$ are the fixed points, so we can take an element of the product all of whose coordinates are fixed points.
  We can define an injective map $\phi \colon Z \to 2^{K/\pi(V)}$ by
  \begin{equation*}
    \phi(z)(\pi(g)\pi(V)) = z(gV).
  \end{equation*}
To conclude, note that any $K$-fixed point in $\phi(Z)$ gives us a $G$-fixed point in $Z$.
\end{proof}

Note also that $G$ is always a Ramsey subgroup of itself.
\begin{cor}
  \label{c:Ramsey-products}
  Let $\set{G_i : i \in I}$ be a family of topological groups and let $G = \prod_{i \in I} G_i$. Let $V_i \leq G_i$ be an open Ramsey subgroup for each $i \in I$, with $V_i = G_i$ for all but finitely many $i$. Then $\prod_i V_i$ is a Ramsey subgroup of $G$.
\end{cor}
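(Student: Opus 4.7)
The plan is to proceed by induction on $n = |\{i \in I : V_i \neq G_i\}|$, which is finite by hypothesis, using Proposition~\ref{p:local-group-ext} for the inductive step. First I would note that the finiteness condition on the non-trivial $V_i$'s is precisely what makes $V \coloneqq \prod_i V_i$ open in $G$ (in the product topology), so that the notion of Ramsey subgroup applies. The base case $n = 0$ is immediate, since then $V = G$, and $G$ is trivially Ramsey in itself as $2^{G/G}$ is a single point.

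For the inductive step with $n \geq 1$, I would fix an index $i_0 \in I$ with $V_{i_0} \neq G_{i_0}$ and consider the closed normal subgroup
\[
  H = G_{i_0} \times \prod_{i \neq i_0}\{1_{G_i}\} \;\cong\; G_{i_0}
\]
of $G$. The quotient map $\pi \colon G \to G/H$ is then naturally identified with the projection onto $K \coloneqq \prod_{i \neq i_0} G_i$. Under these identifications, $V \cap H$ corresponds to $V_{i_0}$, which is Ramsey in $G_{i_0}$ by hypothesis, while $\pi(V) = \prod_{i \neq i_0} V_i$ is an open subgroup of $K$ with only $n - 1$ proper factors, and is therefore Ramsey in $K$ by the inductive hypothesis. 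Applying Proposition~\ref{p:local-group-ext} to the triple $(G, H, V)$ yields that $V$ is Ramsey in $G$, completing the induction.

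The argument is essentially just bookkeeping once Proposition~\ref{p:local-group-ext} is in hand; I do not anticipate any serious obstacle. The only things to verify carefully are the identification of $G/H$ with $K$ as topological groups (which is immediate from the product structure) and the computations of $V \cap H$ and $\pi(V)$, both of which are routine for direct products.
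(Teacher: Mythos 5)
Your argument is correct and rests on the same key ingredient as the paper's, namely Proposition~\ref{p:local-group-ext} applied to product decompositions. The only difference is organizational: the paper first treats finite $I$ by induction on $|I|$ and then handles arbitrary $I$ by one further application of the proposition to the splitting $G = \bigl(\prod_{i\in F} G_i\bigr) \times \bigl(\prod_{i\in I\setminus F} G_i\bigr)$, whereas you induct directly on $n = |\{i : V_i \neq G_i\}|$, peeling off one proper factor at a time, which handles infinite $I$ uniformly without the two-case split.
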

\begin{proof}
  The case where $I$ is finite follows by induction on $|I|$ from Proposition~\ref{p:local-group-ext}. For an arbitrary $I$, let $F = \set{i \in I : V_i \neq G_i }$ and apply Proposition~\ref{p:local-group-ext} to the extension
  \begin{equation*}
    1 \to \prod_{i \in F} G_i \to G \to \prod_{i \in I \sminus F} G_i \to 1. \qedhere
  \end{equation*}
\end{proof}

As elements of the age of $\pkf(\bN)$ are rooted trees, to prove Theorem~\ref{th:main-Ramsey}, we will be led to do inductive arguments on the height of the tree.
In order to facilitate this, we introduce an auxiliary class of structures.

Let $\cL_0$ be a relational signature and let $\cc$ be a constant symbol. Let $\bN$ be an ultrahomogeneous structure in the signature $\cL_0 \cup \set{\cc}$ and denote $\Delta = \Aut(\bN)$.
Let $\pkf(\bN)$ be constructed as in Section~\ref{sec:kaleidoscopic_structures}, so $\Aut(\pkf(\bN)) = \pkf(\Delta)$. 
Denote $\cN = \Age(\bN)$ and $\pkf(\cN) = \Age(\pkf(\bN))$, and let $\cL^*$ be the language of $\pkf(\bN)$. The main goal of this section is to show that if $\cN$ has the Ramsey property, then so does $\pkf(\cN)$. An important role in the proof is played by the following notion. If $\bT, \bS \in \pkf(\cN)$, an embedding $f \colon \bT \to \bS$ is called \df{rooted} if $f(r_\bT) = r_\bS$. 
We can introduce a new constant $\rr$ to the language and consider the class $\pkf_\rr(\cN)$ of $\cL^* \cup \set{\rr}$-structures $\bT_\rr$, obtained from $\bT \in \pkf(\cN)$  by interpreting $\rr$ as $r_{\bT}$.

Fix some $r \in \bps$, and let $C_\xi$ denote the component $\rho(r)$ considered as a subset of $X$. Let $\pkf_\rr(\bN)$ denote the substructure of $\pkf(\bN)$ whose domain is $(\bps \setminus C_\xi) \cup \set{\xi}$ with the new constant $\rr$ interpreted as $r$. In order to study the structure $\pkf_\rr(\bN)$ and its automorphism group, we will repeatedly make use of the following lemma from \cite{Duchesne2019a}, which allows us to patch automorphisms from pieces.

\begin{lemma}[\cite{Duchesne2019a}*{Lemma~2.3}]
  \label{l:patchwork}
  Let $\cU$ be a family of disjoint open subsets of a dendrite $D$. For each $U \in \cU$, let $f_U$ be a homeomorphism of $D$ which is the identity outside of $U$. Then the map $f \colon D \to D$ given by $f_U$ on each $U$ and the identity outside of $\bigcup \cU$ is a homeomorphism of $D$.
\end{lemma}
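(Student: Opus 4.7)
The plan is to verify that $f$ is a bijection and then prove its continuity; since the inverse of $f$ is given by patching the family $\set{f_U^{-1}}$ by the same recipe, compactness of $D$ will then yield that $f$ is a homeomorphism. Bijectivity is immediate: each $f_U$, being a homeomorphism of $D$ that fixes $D \sminus U$ pointwise, restricts to a bijection of $U$ onto itself, so the pairwise disjointness of $\cU$ makes $f$ well-defined and bijective.

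For continuity at a point $x \in D$ I would split into three cases. If $x \in U$ for some $U \in \cU$, then $U$ is an open neighborhood of $x$ on which $f$ coincides with the continuous map $f_U$. If $x$ lies in the open set $D \sminus \closure{\bigcup \cU}$, then $f$ agrees with the identity on that neighborhood of $x$. The only delicate case is $x \in \closure{\bigcup \cU} \sminus \bigcup \cU$, where $f(x) = x$ and one must control sequences $y_n \to x$.

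For this delicate case I would argue sequentially. After passing to a subsequence, either no $y_n$ lies in $\bigcup \cU$, and then $f(y_n) = y_n \to x$, or $y_n \in U_n$ for some $U_n \in \cU$. Using the unique arc connectedness of the dendrite, the arc $[y_n, x]$ exits $U_n$ at a unique first point $q_n \in \bdry U_n$, and since $q_n$ lies on the arc from $y_n$ to $x$, the convergence $y_n \to x$ forces $q_n \to x$. The homeomorphism $f_{U_n}$ fixes $q_n$ and carries $U_n$ onto itself, so the displacement of $y_n$ by $f_{U_n}$ is controlled by the geometry of $U_n$ near $q_n$; combining continuity of $f_{U_n}$ at $q_n$ with the tree-like structure of $D$ and the disjointness of the $U_n$'s forces $f_{U_n}(y_n) \to x$.

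The main obstacle lies in this last step: one needs a careful dendrite-theoretic argument that a disjoint family of open sets accumulating at $x$ must, in a suitable uniform sense, be confined to ever smaller regions around $x$, so that however the individual $f_{U_n}$ act on their supports, the images $f_{U_n}(y_n)$ cannot escape a neighborhood of $x$. The key input is that every sufficiently small connected neighborhood of $x$ in a dendrite has essentially finite topological boundary, which — combined with connectivity and the disjointness of $\cU$ — limits to finitely many the $U_n$ that can ``stick out'' of such a neighborhood.
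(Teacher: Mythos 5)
The paper does not prove this lemma; it is imported from \cite{Duchesne2019a}*{Lemma~2.3}, so there is no in-text argument to compare yours against. Judged on its own merits, your overall strategy is sound: bijectivity is as you say, the case split for continuity is the right one, and once $f$ is a continuous bijection of the compact Hausdorff space $D$ it is automatically a homeomorphism. The gap is in the one nontrivial case, continuity at $x \in \closure{\bigcup\cU} \sminus \bigcup\cU$, and I do not think the argument you sketch closes as written.

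Two concrete problems. First, the exit-point material is a red herring. You are right that $q_n \to x$ (arcs in a dendrite vary continuously with their endpoints), but ``continuity of $f_{U_n}$ at $q_n$'' is a property of each map separately; there is no shared modulus of continuity across the family $\set{f_{U_n}}$, so nothing forces $f_{U_n}(y_n)\to x$ from $q_n \to x$ and $d(y_n,q_n)\to 0$. Second, the ``key input'' at the end misstates what finiteness of $\bdry V$ gives you. The sets $U\in\cU$ are not assumed connected, so infinitely many of them can intersect both a fixed small connected neighborhood $V\ni x$ and $D\sminus\closure{V}$ (with components on each side) while none of them meets $\bdry V$. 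Disjointness of $\cU$ plus finiteness of $\bdry V$ only says that at most finitely many $U\in\cU$ contain a point of $\bdry V$; it does not confine the remaining $U$'s near $x$ in any sense.

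What does work, using precisely the tool you identify, is this. Pick an open connected $V\ni x$ with finite boundary inside the target neighborhood (for any finite $F\sub D\sminus\set{x}$, the component of $D\sminus F$ containing $x$ is open, connected, has boundary contained in $F$, and by a compactness argument these form a neighborhood basis at $x$). All but finitely many $U\in\cU$ satisfy $U\cap\bdry V=\emptyset$. For such a $U$, $f_U$ fixes $\bdry V$ pointwise, hence permutes the connected components of $D\sminus\bdry V$; $V$ is one of these components, and since $x\notin U$ we have $x\in V\sminus U$, so $f_U(x)=x$ lies in both $V$ and $f_U(V)$, forcing $f_U(V)=V$. Thus for $n$ large enough that $y_n\in V$ and $U_n\cap\bdry V=\emptyset$, we get $f(y_n)=f_{U_n}(y_n)\in V$, which is exactly $f(y_n)\to x$. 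Note that this argument makes no use of $q_n$ and does not require, or assert, that the $U_n$ become small: it is $f_{U_n}(V)=V$, not smallness of $U_n$, that keeps $f_{U_n}(y_n)$ near $x$.
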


An immediate consequence of Lemma~\ref{l:patchwork} is that $\Aut(\pkf_\rr(\bN))$ can be identified with the pointwise stabilizer of $C_\xi$ in $\pkf(\Delta)$. The following are the basic properties of $\pkf_\rr(\bN)$ which we will use.
\begin{prop}
  \label{p:pkfrrN}
  Let $\pkf_\rr(\bN)$ be defined as above. Then the following hold:
  \begin{enumerate}
  \item \label{i:pkfrr:ultrahom} $\pkf_\rr(\bN)$ is ultrahomogeneous and does not depend on the choice of $r \in \bps$, up to conjugation by an element of $\pkf(\bN)$;
  \item \label{i:pkfrr:age} $\Age(\pkf_\rr(\bN)) = \pkf_\rr(\cN)$;
  \item \label{i:pkfrr:cofinal} $\set{\bA[h+1]_{\rr} : \bA \in \cN, h \in \N}$ is cofinal in $\pkf_\rr(\cN)$.
  \end{enumerate}
\end{prop}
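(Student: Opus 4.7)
The plan is to derive all three claims from the ultrahomogeneity of $\pkf(\bN)$ (Theorem~\ref{thm:point-kal-is-homogeneous}) and the cofinality result of Proposition~\ref{prop:cofinal-in-point-kal}, using repeatedly the geometric fact that for any branch point $r$, the component $C_\xi = \rho(r)$ consists precisely of the branch points which are \emph{not} $\succeq r$ in the order determined by $\xi$.

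For \ref{i:pkfrr:ultrahom}, I would start with an isomorphism $f \colon \bS_1 \to \bS_2$ between finite substructures of $\pkf_\rr(\bN)$. Since $\rr$ and $\bxi$ are constants, $f$ fixes $r$ and $\xi$; forgetting $\rr$, $f$ becomes an isomorphism between finite substructures of $\pkf(\bN)$, and Theorem~\ref{thm:point-kal-is-homogeneous} extends it to some $g \in \Aut(\pkf(\bN)) = \pkf(\Delta)$. Viewed as a homeomorphism of $W$, $g$ fixes both $r$ and $\xi$, hence preserves every component of $W \sminus \set{r}$ setwise; in particular it preserves $C_\xi$, so it restricts to an automorphism of $\pkf_\rr(\bN)$ extending $f$. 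For the independence of $r$, I claim that any two branch points $r, r' \in \bps$ lie in the same $\pkf(\Delta)$-orbit: the one-element substructures $\set{r}$ and $\set{r'}$ together with the constant $\bxi$ generate isomorphic substructures of $\pkf(\bN)$ in an evident way (the $\cL^*_\bX$-reducts agree, and both $\comps{r}$ and $\comps{r'}$ carry isomorphic copies of $\bN$ with distinguished element colored $c$), so ultrahomogeneity produces $g \in \pkf(\Delta)$ with $g(r) = r'$, and this $g$ carries $\pkf_\rr(\bN)$ onto $\pkf_{\rr'}(\bN)$.

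For \ref{i:pkfrr:age}, one inclusion is immediate: any finite substructure $\bS$ of $\pkf_\rr(\bN)$ contains $r$ and $\xi$, and every other element lies in $\bps \sminus C_\xi$, so satisfies $\succeq r$; this forces $r = r_\bS$, whence $\bS$, with $\rr$ interpreted as $r$, belongs to $\pkf_\rr(\cN)$. For the reverse inclusion, given $\bT_\rr \in \pkf_\rr(\cN)$, Theorem~\ref{thm:point-kal-is-homogeneous} provides an embedding $e \colon \bT \to \pkf(\bN)$, and the transitivity established above allows me to post-compose with an element of $\pkf(\Delta)$ to arrange $e(r_\bT) = r$. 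Embeddings preserve the quantifier-free-definable relation $\succeq r$, so the image of $e$ sits inside $(\bps \sminus C_\xi) \cup \set{\xi}$, yielding the desired embedding $\bT_\rr \to \pkf_\rr(\bN)$.

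For \ref{i:pkfrr:cofinal}, the main work is a careful rerun of the construction in Proposition~\ref{prop:cofinal-in-point-kal}. Given $\bT_\rr \in \pkf_\rr(\cN)$ of height $h+1$, I take $\bA \in \cN$ to be the substructure of $\bN$ with universe $\set{\pkc(a) : a \in \Comps{T}} \cup \set{c}$, and I realize a copy of $\bA[h+1]$ inside $\pkf_\rr(\bN)$ that also contains $\bT$: start at level $0$ with $\set{\xi, r}$, and at each subsequent stage, for every current leaf $x$ and every $a \in A \sminus \set{c}$, choose a point $x_a \in (\pkc_x)^{-1}(a) \cap \bps$, taking $x_a = y$ whenever $x \in T$ has an immediate $\prec$-successor $y$ in $\bT$ with $\pkc(\compf(x, y)) = a$. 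The only nontrivial point, and the main obstacle, is to see that these choices remain compatible with $\bT$ throughout the induction; but this is guaranteed by the choice of $A$, since every color appearing in $\bT$ already belongs to $A$ by construction. After $h$ iterations, the resulting substructure of $\pkf_\rr(\bN)$ is isomorphic to $\bA[h+1]_\rr$ and contains $\bT_\rr$.
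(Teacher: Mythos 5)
Your proof is correct and follows essentially the same line as the paper, which is terser (it dispatches (ii) as ``clear'' and (iii) as a consequence of (ii) and Proposition~\ref{prop:cofinal-in-point-kal}, whose proof already produces a \emph{rooted} embedding). One slip in part (i): a homeomorphism $g$ fixing $r$ and $\xi$ need not preserve \emph{every} component of $W \sminus \set{r}$ setwise --- it may permute the components not containing $\xi$ --- but it does preserve the one you actually use, namely $C_\xi = \rho(r)$, since that is the unique component containing the fixed point $\xi$; so the conclusion that $g$ restricts to an automorphism of $\pkf_\rr(\bN)$ is correct. For comparison, the paper obtains the same restriction via Lemma~\ref{l:patchwork}, patching $g$ outside $C_\xi$ with the identity inside; your direct restriction is a slightly leaner way to reach the same automorphism.
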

\begin{proof}
  \ref{i:pkfrr:ultrahom} Let $f \colon \bT \to \bT'$ be an isomorphism between finite substructures of $\pkf_\rr(\bN)$. Then, by ultrahomogeneity of $\pkf(\bN)$, there is $g \in \Aut(\pkf(\bN))$ such that $g|_\bT = f$; in particular, $g(r)=r$.
  We can then use Lemma~\ref{l:patchwork} to obtain an automorphism of $\pkf_\rr(\bN)$ by patching $g|_{X \sminus (C_\xi \cup \set{r})}$ with the identity on $C_\xi$.
  The second statement is a consequence of the transitivity of the action $\pkf(\bN) \actson X$, which follows from Theorem~\ref{thm:point-kal-is-homogeneous}.

  \ref{i:pkfrr:age} This is clear.

  \ref{i:pkfrr:cofinal} This follows from Proposition~\ref{prop:cofinal-in-point-kal} and \ref{i:pkfrr:age}.
\end{proof}

Now we are able to state one of the main ingredients used in the proof of the Ramsey theorem. 
\begin{lemma}
  \label{l:RootedRamsey}
  Let $\bA \in \cN$ and $h \in \N$. 
  If $\bA[h]$ is Ramsey in $\pkf(\cN)$ and $\bA$ is Ramsey in $\cN$ then $\bA[h+1]_\rr$ is Ramsey in $\pkf_\rr(\cN)$. 
\end{lemma}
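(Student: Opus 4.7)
The plan is to recast Ramseyness of $\bA[h+1]_\rr$ in $\pkf_\rr(\cN)$ as Ramseyness of the open subgroup $V := G_{\bA[h+1]_\rr}$ in $G := \Aut(\pkf_\rr(\bN))$, and then to verify the latter via \Cref{p:local-group-ext} and \Cref{c:Ramsey-products} applied to a natural short exact sequence arising from the local action at the root. Every element of $G$ fixes $r$ (the interpretation of $\rr$), so the induced action on $\comps{r}^{\pkf_\rr(\bN)}$, which is an $\bN$-structure with the $\xi$-component labeled by $c$, yields a continuous homomorphism $\pi \colon G \to \Delta$; it is surjective by lifting any $\delta \in \Delta$ to some $g' \in \pkf(\Delta)_r$ and patching it with the identity on $C_\xi$ via \Cref{l:patchwork}. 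The kernel $K_r := \ker\pi$ consists of elements fixing each non-$\xi$ component $A_i$ around $r$ setwise and, again by \Cref{l:patchwork}, splits as a topological product $\prod_i H_i$. By \Cref{th:Wazewski} and \Cref{c:uniqueness-root-kalei}, each $(\cl{A_i}, r, \pkc|_{\cl{A_i}})$ is isomorphic to $(\waz, \xi, \pkc)$ as a rooted colored dendrite, so $H_i \cong \pkf(\Delta)$.

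Let $s_1, \ldots, s_{|A|-1}$ denote the immediate successors of $\rr$ in $\bA[h+1]_\rr$, indexed so that $s_j \in A_j$. The argument rests on two identifications. First, $V$ pointwise fixes $s_1, \ldots, s_{|A|-1}$ together with $\xi$, and these determine the $\bA$-substructure of $\comps{r}$ built into $\bA[h+1]_\rr$; an application of ultrahomogeneity of $\pkf(\bN)$ within each branch $A_j$ shows that $\pi(V) = \Delta_\bA$, which is a Ramsey open subgroup of $\Delta$ by the hypothesis that $\bA$ is Ramsey in $\cN$. Second, for each $j \leq |A|-1$ the subtree $\subtree{\bA[h+1]_\rr}{s_j}$ is isomorphic in $\pkf(\cN)$ to $\bA[h]$ via $\xi \mapsto \xi$ and $s_j \mapsto r_{\bA[h]}$: deleting the $\rr$-level lowers the heights of $s_j$ and its descendants by one, placing leaves at height $h+1$, while the required $\bA$-decoration on the components of each surviving non-leaf vertex is inherited unchanged. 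Hence the projection of $V \cap K_r$ to $H_j \cong \pkf(\Delta)$ is the pointwise stabilizer of a copy of $\bA[h]$ for $j \leq |A|-1$, and is all of $H_j$ for the remaining indices. By the hypothesis that $\bA[h]$ is Ramsey in $\pkf(\cN)$ and the fact that $H_j$ is trivially a Ramsey subgroup of itself, \Cref{c:Ramsey-products} then gives that $V \cap K_r$ is a Ramsey open subgroup of $K_r$.

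Feeding these two statements into \Cref{p:local-group-ext} applied to $1 \to K_r \to G \xrightarrow{\pi} \Delta \to 1$, we conclude that $V$ is a Ramsey open subgroup of $G$, which is equivalent to $\bA[h+1]_\rr$ being Ramsey in $\pkf_\rr(\cN)$, as desired. I expect the main obstacle to be the topological bookkeeping surrounding the exact sequence: verifying that $\pi$ is a continuous surjection onto $\Delta$ in the rooted setting and that the patching decomposition $K_r \cong \prod_i \pkf(\Delta)$ is a topological isomorphism of Polish groups, so that \Cref{c:Ramsey-products} applies to a product of topological groups rather than a bare algebraic product.
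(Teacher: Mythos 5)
Your proof is correct and takes essentially the same route as the paper's: the same short exact sequence $1 \to \prod_{a \in N \sminus \set{c}} H_{(a)} \to \Aut(\pkf_\rr(\bN)) \xrightarrow{\pi} \Delta \to 1$ arising from the local action at the root, the same identifications $\pi(V) = \Delta_A$ and $V \cap \ker\pi = \prod_a (H_{(a)} \cap V)$ with the nontrivial factors being pointwise stabilizers of copies of $\bA[h]$, followed by the same applications of \Cref{c:Ramsey-products} and \Cref{p:local-group-ext}.
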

\begin{proof}
  To simplify notation, let $G = \Aut(\pkf_\rr(\bN))$, $H = \Aut(\pkf(\bN))$, and recall that $\Delta = \Aut(\bN)$.
  For any $a \in N \setminus \set{c}$, denote by $C_a$ the set of points in $X$ in the component $(\pkc_r)^{-1}(a)$ and let $H_{(a)}$ be the pointwise stabilizer of $\bps \sminus C_a$ in $H$. Note that, by the remarks preceding \Cref{cor:homo-from-open-subgroup-rooted}, $H_{(a)}$ is isomorphic to $H$. 
  By ultrahomogeneity of $\bN$ and $\pkf(\bN)$, the hypotheses translate to $\Delta_A$ and $H_{\bA[h]}$ being Ramsey subgroups of $\Delta$ and $H$, respectively.
  
  We have the following short exact sequence: 
\begin{equation}
    \label{eq:pkfrr}
    1 \to \prod_{a \in N \setminus \set{c}}H_{(a)} \to G \overset{\pi}{\to} \Delta \to 1,  
\end{equation}
where $\pi(g) = \alpha_{\pkc}(g,r)$. Indeed, it is clear that $\pi$ is a homomorphism and it is surjective by Corollary~\ref{cor:homo-from-open-subgroup-rooted}. If we let $K = \ker \pi$, we can define a homomorphism $\theta_a \colon K \to H_{(a)}$ by $\theta_a(g) = g|_{C_a} \cup \id|_{X \sminus C_a}$. It is clear that the product $\prod_{a \in N \sminus \set{c}} \theta_a$ is injective and it is onto $\prod_{a \in N \setminus \set{c}}H_{(a)}$ by Lemma~\ref{l:patchwork}.

Recall the isomorphism $\lambda_\rr$ from \Cref{df:nice-A-h}. 
Realize $\bA[h+1]_\rr$ as a substructure of $\pkf_\rr(\bN)$, with $\pkc_r \lambda_\rr^{-1}(a) = a$ for all $a \in \bA$. 
Consider the open subgroup $V \coloneqq G_{\bA[h+1]_\rr}$ of $G$. 
Then 
\[
  U \coloneqq V \cap \prod_{a \in N \setminus \set{c}}H_{(a)} = \prod_{a \in N \setminus \set{c}} (H_{(a)} \cap V).
\]
This follows from the fact that for all $a$, $\theta_a(V \cap K) = H_{(a)} \cap V$. Note that $H_{(a)} \cap V = H_{(a)}$ for all $a \in M \setminus A$. For $a \in A \sminus \set{c}$, the $\cL^*$-substructure of $\bA[h+1]_\rr$ with domain $C_a \cup \set{\xi}$ is isomorphic to $\bA[h]$, so $H_{(a)} \cap V$ is conjugate to $H_{\bA[h]}$, which is a Ramsey subgroup of $H$, by hypothesis. Thus, by Corollary~\ref{c:Ramsey-products}, $U$ is a Ramsey subgroup of $K$. Also, $\pi[V] = \Delta_A$, so $\pi[V]$ is a Ramsey subgroup of $\Delta$. Thus, we can apply Proposition~\ref{p:local-group-ext} and conclude that $V$ is a Ramsey subgroup of $G$, i.e.,
that $\bA[h+1]_\rr$ is Ramsey in $\pkf_\rr(\cN)$. 
\end{proof}

\begin{remark}
  \label{rem:wreath-product}
  To understand better the group $\Aut(\pkf_\rr(\bN))$, as well as our strategy for proving the Ramsey theorem below, it will be perhaps helpful to observe that the exact sequence \eqref{eq:pkfrr} splits (by Corollary~\ref{cor:homo-from-open-subgroup-rooted}), so $\Aut(\pkf_\rr(\bN))$ is in fact isomorphic to the permutational wreath product
  \begin{equation*}
    \Aut(\pkf(\bN))^{N \sminus \set{c}} \wr \Delta.
  \end{equation*}
\end{remark}

The main theorem of this section is the following.
\begin{theorem}
  \label{th:main-Ramsey}
  Let $\cL_0$ be a relational signature and let $\cc$ be a constant symbol. Let $\bN$ be an ultrahomogeneous $\cL_0 \sqcup \set{\cc}$-structure with the Ramsey property. Then $\pkf(\bN)$ has the Ramsey property.
\end{theorem}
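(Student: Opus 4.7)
By \Cref{prop:cofinal-in-point-kal}, the family $\set{\bA[h] : \bA \in \cN,\ h \in \N}$ is cofinal in $\Age(\pkf(\bN)) = \pkf(\cN)$, so it suffices to show that each such $\bA[h]$ is Ramsey in $\pkf(\cN)$. The plan is to proceed by induction on $h$.

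For the base case $h = 0$, the structure $\bA[0]$ consists only of the root $\xi$ and its unique immediate successor $r$ and carries no $\cL_0$ data, so an embedding $\bA[0] \hookrightarrow \bS$ amounts to a choice of vertex in $S \setminus \set{\xi}$. The Ramsey statement here reduces to a direct coloring fact about decorated trees that I would verify using the Ramsey property of $\bN$ applied at the branching structure of a single vertex (together with iterated pigeonhole to propagate monochromaticity across the finitely many branch points of a copy of $\bB$).

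For the inductive step, assume that $\bA'[h]$ is Ramsey in $\pkf(\cN)$ for every $\bA' \in \cN$; I want to deduce the same statement for $h+1$. Since $\bN$ has the Ramsey property, $\bA$ is itself Ramsey in $\cN$, and \Cref{l:RootedRamsey} then yields at once that $\bA[h+1]_\rr$ is Ramsey in the rooted class $\pkf_\rr(\cN)$. It then remains to promote this rooted Ramsey statement to the unrooted Ramsey property of $\bA[h+1]$ in $\pkf(\cN)$.

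The key observation driving the derootification step is that any embedding $f \colon \bA[h+1] \to \bS$ is determined by its image vertex $v = f(r_{\bA[h+1]}) \in S$ together with a rooted embedding $\bA[h+1]_\rr \hookrightarrow \subtree{\bS}{v}_\rr$. Given $\bB \in \pkf(\cN)$ --- which by cofinality may be assumed of the form $\bE[H]$ --- and a number of colors $k$, the plan is to construct a suitable $\bC \in \pkf(\cN)$ by iteratively invoking rooted Ramsey in $\pkf_\rr(\cN)$, once for each possible position of $r_{\bA[h+1]}$ inside an embedded copy of $\bB$, and then to extract a monochromatic copy of $\bB$ via a nested pigeonhole. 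This derootification is where the main difficulty lies: one must order the iteration carefully so that successive rooted Ramsey extensions do not disturb previously stabilized colorings and so that the final $\bC$ still contains a copy of $\bB$ on which every embedding of $\bA[h+1]$ --- regardless of where its root lands --- receives one and the same color.
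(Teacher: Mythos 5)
Your high-level strategy coincides with the paper's: reduce to the cofinal family $\set{\bA[h] : \bA \in \cN, h \in \N}$, induct on $h$, and use \Cref{l:RootedRamsey} to pass from the Ramsey property of $\bN$ and the inductive hypothesis to the rooted Ramsey property of $\bA[h+1]_\rr$. However, the two steps you leave as sketches are exactly where the work lies. For the base case, invoking ``the Ramsey property of $\bN$'' is not what is needed --- as you yourself observe, $\bA[0]$ carries no $\cL_0$-data, so the problem is a pure tree vertex-Ramsey statement, independent of the decoration; the paper supplies it by citing Deuber's lemma (\cite{Deuber1975}*{Lemma~5}). This is not a routine pigeonhole: choosing a majority color level by level does not assemble into a monochromatic copy of $\bB[h]$, because such a copy need not sit on consecutive levels. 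Note also that this tree-Ramsey statement is reused at the very end of the inductive step, not just at $h=0$.

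The more serious gap is the derootification, which you correctly flag as ``the main difficulty'' but do not carry out; it is in fact the bulk of the paper's proof. The paper handles it with a two-pass construction. First, starting from $\bD[h]$ satisfying the base-case arrow $\bD[h] \to (\bU)_2^{\bA[0]}$, it builds $\bT = \bC_0[n_0]$ by iterating the rooted arrow $\tort$ backwards through the levels, once per level. Second, given a $2$-coloring $\gamma$ of the copies of $\bA[\ell]$ in $\bT$, it constructs a decreasing chain $\bS_0 \supseteq \cdots \supseteq \bS_h$ such that in $\bS_i$ any two copies of $\bA[\ell]$ whose roots coincide at a vertex of height $< i$ receive the same $\gamma$-color; this is achieved by applying rooted Ramsey inside each subtree hanging below a height-$i$ vertex while \emph{freezing} the portion of the tree of height $\le i$, which is precisely what guarantees that the stabilizations achieved at earlier stages are not destroyed. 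After $h$ steps, $\gamma$ factors through a vertex-coloring $\delta$ of $\bS_h$, and a closing application of the base-case tree-Ramsey theorem (not a pigeonhole) extracts a $\delta$-monochromatic, hence $\gamma$-monochromatic, copy of $\bU$. Your outline gestures at one rooted-Ramsey call per root position and a ``nested pigeonhole,'' but without the level-freezing bookkeeping and the final vertex-Ramsey application the argument does not close.
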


Combining this with Theorem~\ref{th:KPT}, we obtain:
\begin{cor}
  \label{cor:KstarDeltaExtAmenable}
  Let $\Delta \actson M$ be a permutation group, let $c \in M$, and suppose that $\Delta$ stabilizes $c$. If $\Delta$ is extremely amenable, then so is $\pkf(\Delta)$.
\end{cor}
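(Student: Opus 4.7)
By Proposition~\ref{prop:cofinal-in-point-kal}, the family $\set{\bA[h] : \bA \in \cN,\ h \ge 0}$ is cofinal in $\pkf(\cN)$, so it suffices to show that each $\bA[h]$ is Ramsey in $\pkf(\cN)$, for every $\bA \in \cN$ and every $h \ge 0$. I would proceed by induction on $h$, with Lemma~\ref{l:RootedRamsey} supplying the inductive engine. Granting the inductive hypothesis that $\bA[h]$ is Ramsey in $\pkf(\cN)$ for every $\bA \in \cN$, the lemma (combined with the standing assumption that $\cN$ has the Ramsey property) immediately yields that $\bA[h+1]_\rr$ is Ramsey in the rooted age $\pkf_\rr(\cN)$, for every $\bA \in \cN$. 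The remaining task is the \emph{unrooting step}: to derive from this that $\bA[h+1]$ is Ramsey in $\pkf(\cN)$.

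For the unrooting, I would exploit the canonical decomposition
\[
  \binom{\bC}{\bA[h+1]} \;=\; \bigsqcup_{v \in C \sminus \set{\bxi}} \binom{\subtree{\bC}{v}_\rr}{\bA[h+1]_\rr},
\]
which records that any embedding $f\colon \bA[h+1] \to \bC$ is determined by the vertex $v = f(r_{\bA[h+1]})$ together with a rooted embedding of $\bA[h+1]_\rr$ into the substructure $\subtree{\bC}{v}_\rr$ (with $v$ serving as the $\rr$-constant). A $k$-coloring of the left-hand side induces $k$-colorings of each piece on the right, and an iterated application of Ramsey-ness of $\bA[h+1]_\rr$ in $\pkf_\rr(\cN)$ --- performed over sufficiently large $\bC = \bD[K]$ with $\bD \in \cN$ and $K$ chosen by repeatedly invoking Lemma~\ref{l:RootedRamsey} --- trivializes these local colorings. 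A final pigeonhole across the positions $v \in C \sminus \set{\bxi}$, which is precisely the $h=0$ instance of the very property we are proving, then synchronizes the colors across different $v$ and extracts a single monochromatic copy of the target $\bB$.

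The base case $h = 0$ concerns $\bA[0] = \set{\bxi, r}$, which is the trivial two-element rooted tree common to all $\bA$; its Ramsey-ness in $\pkf(\cN)$ is the vertex-Ramsey statement that any $k$-coloring of the non-root vertices of a sufficiently rich $\bC \in \pkf(\cN)$ admits an embedding of any prescribed $\bB$ whose non-root vertices are monochromatic. I would handle this directly by a Milliken-type argument on the underlying rooted-tree skeleton, enriched to accommodate the $\cL_0$-decorations at each non-leaf vertex via the Ramsey property of $\bN$. The main obstacle is the unrooting step: iterating $\pkf_\rr(\cN)$-Ramsey simultaneously over the many possible positions $v$ (which range over a potentially complicated set with varying local structure) and stitching the local monochromatic conclusions together into a globally monochromatic embedded copy of $\bB$, while keeping the witnessing structure $\bC$ inside $\pkf(\cN)$. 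An alternative, possibly cleaner route is to prove extreme amenability of $\pkf(\Delta)$ directly by a group-theoretic induction that repeatedly applies Proposition~\ref{p:local-group-ext} to the semidirect product decomposition of $\Aut(\pkf_\rr(\bN))$ exhibited in the proof of Lemma~\ref{l:RootedRamsey}, thereby sidestepping the need to formulate the unrooting combinatorially.
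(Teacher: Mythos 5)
Your strategy matches the paper's almost step for step: cofinality of the $\bA[h]$ via Proposition~\ref{prop:cofinal-in-point-kal}, induction on $h$, Lemma~\ref{l:RootedRamsey} as the bridge from the unrooted to the rooted Ramsey property, and a combinatorial unrooting step that feeds into the base case. The two points you flag as obstacles are precisely where the paper does the real work, but both resolve a bit more simply than you anticipate. For the base case, there is no need for a decoration-aware Milliken-type argument: since $\cN$ has the Ramsey property, every $\bB \in \cN$ is rigid, hence so is each $\bB[h]$, and embeddings $\bB[h] \to \bB[k]$ are then determined by their images as subtrees. Coloring copies of $\bA[0]$ is simply coloring non-root vertices, so the statement collapses to a plain Ramsey theorem for regular rooted trees, and the paper invokes Deuber's lemma to obtain $\bB[2h-1] \to (\bB[h])^{\bA[0]}_2$. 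For the unrooting step, your decomposition over the positions $v = f(r_{\bA[h+1]})$ is the correct one, and the paper implements exactly your ``iterated application plus final pigeonhole'' plan, but carefully: it first produces a descending chain $\bC_h \subseteq \cdots \subseteq \bC_0$ in $\cN$ and heights $n_h \le \cdots \le n_0$ witnessing rooted Ramsey arrows, sets $\bT = \bC_0[n_0]$, and then constructs inside $\bT$ a decreasing chain of substructures $\bS_0 \supseteq \cdots \supseteq \bS_h$, thinning one level at a time so that the color of an embedding of $\bA[\ell]$ depends only on the image of its $r$-vertex; the base case then extracts a monochromatic copy of the target.

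One warning about your proposed alternative: deducing extreme amenability of $\pkf(\Delta)$ purely group-theoretically from Proposition~\ref{p:local-group-ext} and the wreath product decomposition of $\Aut(\pkf_\rr(\bN))$ is circular. Remark~\ref{rem:wreath-product} gives $\Aut(\pkf_\rr(\bN)) \cong \Aut(\pkf(\bN))^{N \sminus \set{c}} \wr \Delta$, so to conclude it is extremely amenable via the extension argument you would already need to know that $\Aut(\pkf(\bN)) = \pkf(\Delta)$ is extremely amenable, which is the statement to be proved. Moreover $\Aut(\pkf_\rr(\bN))$ is a proper closed (and far from co-precompact) subgroup of $\pkf(\Delta)$, so extreme amenability of the rooted group would not transfer even if established independently. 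The point of the Ramsey reformulation is precisely that it converts this circular-looking group-theoretic recursion into a well-founded induction on the finite height $h$, and the unrooting step is where that induction is closed; it cannot be sidestepped.
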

\begin{proof}[Proof of \Cref{th:main-Ramsey}]
  We denote
  \begin{equation*}
    \cF = \set{\bA[h] : \bA \in \Age(\bN), h = 0, 1, \ldots}.
  \end{equation*}
  By Proposition~\ref{prop:cofinal-in-point-kal}, $\cF$ is cofinal in $\Age(\pkf(\bN))$.
  By ultrahomogeneity of $\pkf(\bN)$, it thus suffices to show that each $\bA[h]$ is a Ramsey structure in $\cF$. We do so by fixing $\bA$ and carrying out induction on $h$.
  
  For the base case, we prove that $\nice{\bA}{0}$ is Ramsey. To that end, take any $\nice{\bB}{h} \in \cF$. Notice that, for any $\bT \in \cF$, coloring the embeddings of $\nice{\bA}{0}$ in $\bT$ corresponds to coloring the points of $\bT$. Moreover, since $\bB$ is rigid, embeddings from $\nice{\bB}{h}$ to $\nice{\bB}{k}$ correspond to subtrees of $\nice{\bB}{k}$ isomorphic to $\nice{\bB}{h}$. Therefore, this is a statement about trees, and we can apply \cite{Deuber1975}*{Lemma~5} to get that $\nice{\bB}{2h-1} \to (\nice{\bB}{h})_2^{\nice{\bA}{0}}$.

  Next we do the induction step. Let $\ell > 0$ and suppose that $\nice{\bA}{\ell-1}$ is Ramsey. Let $\bU \in \cF$ be arbitrary such that $\nice{\bA}{\ell} \sub \bU$; we will find $\bT \in \cF$ such that $\bT \to (\bU)^{\nice{\bA}{\ell}}_2$ holds.

  We denote by $\tort$ the arrow relation for rooted embeddings.  It follows from \Cref{l:RootedRamsey} and the induction hypothesis that $\nice{\bA}{\ell}$ is Ramsey for rooted embeddings. 
  
  By the base of induction we know that there is $\nice{\bD}{h} \in \cF$ such that $\nice{\bD}{h} \to (\bU)^{\nice{\bA}{0}}_2$. Let $\bC_h \coloneqq \bD$ and $n_h \coloneqq 0$. Suppose we have defined $\bC_{i+1} \in \cN$ and $n_{i+1} \in \N$, for some $0 \le i < h$. Let $\bC_i \supseteq \bC_{i+1}, n_i \ge n_{i+1}$ be such that
  \begin{equation*}
    \nice{\bC_i}{n_i} \tort (\nice{\bC_{i+1}}{n_{i+1} +1})_2^{\nice{\bA}{\ell}}.
  \end{equation*}
  Let $\bT \coloneqq \nice{\bC_0}{n_0}$. We show that $\bT \to (\bU)_2^{\nice{\bA}{\ell}}$. To that end, fix a coloring $\gamma \colon \binom{\bT}{\nice{\bA}{\ell}} \to 2$. Let $\bp = r_{\nice{\bA}{\ell}}$.
  We construct a substructure $\bS$ of $\bT$ such that any two embeddings $f, f' \colon \nice{\bA}{\ell} \to \bS$ that agree on $\bp$ receive the same color. 
  
  Let $\bS_0 \coloneqq \bT$. 
  For $0 \le i < h$, suppose we have found $\bS_i \sub \bT$ such that
  \begin{enumerate}
      \item \label{itm:1}$\bS_i$ has height $i+n_i$;
      \item \label{itm:2}if $0 \le j<i$, then $\bS_i \restr{j} = \bS_j \restr{j}$;
      \item \label{itm:3}for any $\bs \in \bS_i$ of height $i$, $\bS_i(\bs) \cong \nice{\bC_i}{n_i}$; 
      \item \label{itm:4}for any $f, f' \colon \nice{\bA}{\ell} \to \bS_i$ such that $f(\bp) = f'(\bp)$ has height less than $i$, it holds that $\gamma(f) = \gamma(f')$. 
  \end{enumerate}
  It is clear that $\bS_0$ satisfies the conditions. We suppose that $\bS_i$ has been constructed and we show how to construct $\bS_{i+1} \sub \bS_i$. First, we let $\bS_{i+1} \restr{i}  = \bS_{i} \restr{i}$, which ensures that $\bS_{i+1}$ satisfies point \ref{itm:2} above. 
  Now fix $\bs$ of height $i$ in $\bS_i$. 
  Since $\bS_i(s) \cong \nice{\bC_i}{n_i}$, we have that $\bS_i(s) \overset{r}{\to} (\nice{\bC_{i+1}}{n_{i+1} +1})_2^{\nice{\bA}{\ell}}$, so there is a $\gamma$-monochromatic rooted embedding of $\nice{\bC_{i+1}}{n_{i+1} +1}$ into $\bS_i(s)$. 
  We let $\bS_{i+1}(s)$ be the image of this embedding, so $\bS_{i+1}$ satisfies point \ref{itm:3}. 
  Then $\bS_{i+1}$ has height $i+ n_{i+1} + 1$, so point \ref{itm:1} is taken care of. 
  Suppose that $f, f' \colon \nice{\bA}{\ell} \to \bS_{i+1}$ are such that $\bs \coloneqq f(\bp) = f'(\bp)$ has height $j \le i$. 
  If $j<i$, then $\bs$ has height $j$ in $\bS_j$ by point \ref{itm:2}, so $\gamma(f) = \gamma(f')$ by \ref{itm:4} for $\bS_j \supseteq \bS_i$. 
  Otherwise, $f$ and $f'$ are rooted embeddings of $\nice{\bA}{\ell}$ into $\bS_{i+1}(s)$, so by construction, they receive the same $\gamma$-color. 
  This takes care of point \ref{itm:4} and completes the inductive construction. 

  Let $\bS = \bS_h$. 
  By \ref{itm:1} it has height $h + n_h = h$. 
  By points \ref{itm:2} and \ref{itm:3}, for any $\bs \in S$ of height less than $h$, $\nice{\bD}{1}$ embeds into $\bS(s) \restr{1}$, so $\nice{\bD}{h}$ embeds into $\bS$.
  It follows that $\bS \to (\bU)_2^{\nice{\bA}{0}}$.  
  By point \ref{itm:4}, for each $\bs \in S$ there is $\delta(s) \in \set{0, 1}$ such that $\gamma(f) = \delta(s)$ for each $f \colon \nice{\bA}{\ell} \to \bS$ such that $f(\bp)=\bs$. (Note that, as $\ell \geq 1$, there is no embedding $f \colon \bA[\ell] \to \bS$ with $f(p)$ of height $h$.) By recalling that coloring points is equivalent to coloring embeddings of $\nice{\bA}{0}$, we find a copy $\bU_0$ of $\bU$ in $\bS \subseteq \bT$ which is $\delta$-monochromatic. It follows that $\bU_0$ is $\gamma$-monochromatic as well and we are done.
\end{proof}

\begin{remark}
  It follows from the representation given in Remark~\ref{rem:wreath-product} that if $\Delta$ is extremely amenable, then so is $\Aut(\pkf_\rr(\bN))$.
\end{remark}

%%%%%%%%%%%%%%%%%%%%%%%%%%%%%%%%%%%%%%%%%%%%%%%%%%

\section{The universal highly proximal extension of the action on \texorpdfstring{$W$}{W}}
\label{sec:maxim-highly-prox}

As a first application of the tools we have developed, we give a description of the universal highly proximal extension of the flow $\kf(\Gamma) \actson W$ and characterize when it is metrizable. 

\begin{theorem}
  \label{th:S_GW}
  Let $\Gamma \actson M$ be transitive and let $\bM$ be an ultrahomogeneous, relational structure for $\Gamma$ with universe $M$. Then
  \begin{equation}
    \label{eq:MHP_W}
    \MHP_{\kf(\Gamma)}(W) \cong \set{p \in \tS_1(\fullkf(\bM)) : \forall x, y \in X \ p(v) \models \neg B(x, v, y)},
  \end{equation}
  with the action of $\kf(\Gamma)$ on the right-hand side given by permuting the parameters. The highly proximal factor map $\pi \colon \MHP_{\kf(\Gamma)}(\waz) \to \waz$ is given by
  \begin{equation}
    \label{eq:SGW-factor-map}
  \pi(p) = \zeta \iff \forall x \in \bps \setminus \set{\zeta}\ p(v) \models \compf(x, v) = \compf(x, \zeta).
\end{equation}
\end{theorem}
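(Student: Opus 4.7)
The plan is to compose three identifications. First, the action $\kf(\Gamma) \actson \waz$ is minimal and transitive on $\theends(\waz)$ by \Cref{f:kaleidoscopic-facts}, so $\kf(\Gamma) \cdot \xi = \theends(\waz)$, which is a dense $G_\delta$ subset of $\waz$ and therefore comeager. \Cref{p:SG-comeager} then yields
\begin{equation*}
  \MHP_{\kf(\Gamma)}(\waz) \cong \Sam(\kf(\Gamma)/\kf(\Gamma)_\xi).
\end{equation*}

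Second, I identify $\kf(\Gamma)_\xi$ as the automorphism group of the appropriate full expansion: by \Cref{prop:KstarGamma_c-is-stabilizer} and \Cref{p:point-stabilizer}, $\kf(\Gamma)_\xi = \pkf(\Gamma_c) = \Aut(\fullpkf(\bM_\cc))$. The hypotheses of \Cref{p:SGH-as-types} are met, because $\fullkf(\bM)$ is full and hence atomic by \Cref{th:aleph0-cat}, and $\fullpkf(\bM_\cc)$ is full by \Cref{l:fullkf-fullkpf}. This gives
\begin{equation*}
  \Sam(\kf(\Gamma)/\kf(\Gamma)_\xi) \cong \Exp(\fullkf(\bM), \fullpkf(\bM_\cc)).
\end{equation*}

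Third, I would reduce the expansion type space to the $1$-type space in the statement. Since $\fullpkf(\bM_\cc)$ extends $\fullkf(\bM)$ only by the constant $\bxi$, the content of a type $p \in \Exp(\fullkf(\bM), \fullpkf(\bM_\cc))$ beyond the fixed $\cl{\cL}$-type of the enumeration of $X$ consists precisely of the formulas $\phi(\bxi, \bar b)$ with $\phi$ in $\cl{\cL}$ and $\bar b \sub X$. Substituting a fresh variable $v$ for $\bxi$ produces a $1$-type $q(v) \in \tS_1(\fullkf(\bM))$; the assignment $p \mapsto q$ is continuous and $\kf(\Gamma)$-equivariant. Because $\xi$ is an endpoint, $B(x, \bxi, y)$ fails in $\fullpkf(\bM_\cc)$ for all $x, y \in X$, so $q$ satisfies $\neg B(x, v, y)$ for all $x, y \in X$. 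The main obstacle is the converse: given such $q$, one must verify that the candidate $p$ is consistent with $\Th(\fullpkf(\bM_\cc))$. I would handle this by appealing to the reduction carried out in the proof of \Cref{l:fullkf-fullkpf}, which rewrites every $\cl{\cL}_\bxi$-formula over $X$-parameters as an existential $\cl{\cL}$-formula of the form $\exists u \in \rho(\bigwedge \bar v)\ \phi(u, \bar v)$; consistency of $p$ then reduces to realization of such existential statements in an elementary extension of $\fullkf(\bM)$, and the $\neg B$ condition is precisely what permits a witness to sit as an endpoint-like element.

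Finally, for the factor map \eqref{eq:SGW-factor-map}: the natural map $\kf(\Gamma)/\kf(\Gamma)_\xi \to \waz$ sending $g\kf(\Gamma)_\xi$ to $g \cdot \xi$ extends by continuity to the MHP factor map $\pi$. Translating through the above identifications, a type $p$ (equivalently, a $1$-type $q$) corresponds to a ``virtual endpoint'' $v$, and $\pi(p)$ is the unique point $\zeta \in \waz$ with $\compf(x, v) = \compf(x, \zeta)$ for every $x \in X$. Since a point of $\waz$ is determined by this component data (branch points are dense and components are open), this gives exactly \eqref{eq:SGW-factor-map}, and agreement on the dense orbit of $\xi$ completes the verification.
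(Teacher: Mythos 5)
Your overall strategy — three identifications, first $\MHP_{\kf(\Gamma)}(W) \cong \Sam(\kf(\Gamma)/\kf(\Gamma)_\xi)$ via \Cref{p:SG-comeager} and the comeagerness of $\theends(W)$, then $\Sam(\kf(\Gamma)/\kf(\Gamma)_\xi) \cong \Exp(\kf(\bM), \fullpkf(\bM_\cc))$ via \Cref{p:SGH-as-types} and \Cref{l:fullkf-fullkpf}, then a translation to $1$-types — matches the paper exactly. The first two steps and the factor-map verification are fine. The problem is in your third step.

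Direct substitution of a fresh variable $v$ for $\bxi$ does not produce a consistent element of $\tS_1(\fullkf(\bM))$. The subtlety is that the expanded signature $\cl{\cL}$ contains a relation symbol $R$ for every $\Aut(\kf(\bM))$-invariant subset of $X^k$, and in $\fullpkf(\bM_\cc)$ these relations are interpreted on $X^k$, not on $(X^*)^k$. In particular, for any $x_1 \in X$ and any binary $R \in \cl{\cL}$, the atomic formula $R(\bxi, v_1)$ is \emph{false} in $\fullpkf(\bM_\cc)$, simply because $\xi \notin X$. Applying your substitution, $q(v)$ would then contain $\neg R(v, x_1)$ for \emph{every} such $R$ — in particular for both $R$ and the relation symbol $R'$ naming the complement $X^2 \sminus R$ — but $\Th(\fullkf(\bM))$ entails $\forall v \, (R(v, x_1) \lor R'(v, x_1))$, so $q$ is inconsistent. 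For the same reason, formulas involving $\compf(\bxi, v_i)$ are all false in $\fullpkf(\bM_\cc)$ while their substituted versions are not automatically so. The fix, and the point where the two arguments genuinely diverge, is that the paper does \emph{not} substitute $\bxi$ by $v$. Its map $\Theta$ sends the $\cl{\cL}_\bxi$-information in $p$ to formulas of the form $\exists z \in \rho(\bigwedge \bar x)\, \phi(z, \bar x)$, i.e.\ it replaces ``at $\xi$'' by ``at a branch point on the root side of $\bigwedge\bar x$''. This is precisely the rewriting in the Claim of \Cref{l:fullkf-fullkpf}, which you invoke — but you invoke it only for surjectivity, whereas it is what makes the forward map $p \mapsto q$ well-defined in the first place. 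As written, your forward direction is not merely sketchy but wrong; if you reorganize so that the passage to existential $\cl{\cL}$-formulas precedes the definition of $q$, your argument becomes the paper's.
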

\begin{proof}
  Write $G = \kf(\Gamma)$ and let $\xi \in W$ be an endpoint. By Fact~\ref{f:kaleidoscopic-facts}, the orbit $G \cdot \xi = \theends(W)$ is comeager in $W$, so from Proposition~\ref{p:SG-comeager}, we have that $\MHP_G(W) \cong \Sam(G/G_\xi)$.  Let $\cc$ be a new constant symbol, fix some $c \in M$ and let $\bM_\cc$ be the expansion of $\bM$ where one interprets $\cc$ as $c$, as in Subsection~\ref{sec:endpoint-stabilizer}. Proposition~\ref{p:point-stabilizer} gives us a representation of $G_\xi$ as the automorphism group $\Aut(\pkf(\bM_\cc))$. Denote by $\cl{\cL}$ the qf-full signature of $\fullkf(\bM)$ and let $\cl{\cL}_{\bxi} \coloneqq \cl{\cL} \sqcup \set{\bxi}$. By Lemma~\ref{l:fullkf-fullkpf}, $\fullpkf(\bM_\cc)$, the expansion of $\pkf(\bM_\cc)$ to $\cl{\cL}_{\bxi}$, is full.
  Now we can apply Proposition~\ref{p:SGH-as-types} to obtain that
  \begin{equation}
    \label{eq:S_MHP_W2}
    \MHP_G(W) \cong \Exp(\kf(\bM), \fullpkf(\bM_\cc)).
  \end{equation}
 Let $\cS$ and $\cS'$ denote the right-hand sides of \eqref{eq:S_MHP_W2} and \eqref{eq:MHP_W}, respectively. Next we explain how to identify $\cS$ and $\cS'$.
 We define the map $\Theta \colon \cS \to \tS_1(\fullkf(\bM))$ by
  \begin{equation*}
    \Theta(p)(v) \models \phi(v, \bar x) \iff p \models \exists z \in \rho(\bigwedge \bar x) \ \phi(z, \bar x) 
  \end{equation*}
  for every quantifier-free $\cl{\cL}$-formula $\phi$.

  That $\Theta$ is well-defined follows from the fact that $\fullkf(\bM)$ eliminates quantifiers in $\cl{\cL}$. It is obvious that $\Theta$ is continuous and $G$-equivariant. To see that it is injective, note that if $p_1 \neq p_2 \in \cS$, then the proof of Lemma~\ref{l:fullkf-fullkpf} and \eqref{eq:equiv-Lstar} give us a quantifier-free $\cl{\cL}$-formula $\phi$ which witnesses that $\Theta(p_1) \neq \Theta(p_2)$. $\Im \Theta \sub \cS'$ because the sentence
  \begin{equation*}
    \forall x_1, x_2 \ \forall z \in \rho(x_1 \meet x_2) \ \neg B(x_1, z, x_2)
  \end{equation*}
  holds in $\fullpkf(\bM_\cc)$, so $\Theta(p)(v) \models \neg B(x_1, v, x_2)$. 
  Next we see that if $\phi(v, \bar x)$ is an isolating, quantifier-free $\cl{\cL}$-formula and $\exists v \ \phi(v, \bar x) \land \bigwedge_{i, j} \neg B(x_i, v, x_j)$ is true in $\fullkf(\bM)$, then the formula  $\exists z \in \rho(\bigwedge \bar x) \ \phi(z, \bar x)$ is consistent with $\Th(\fullpkf(\bM_\cc))$. Notice that for such a $v$ there is a single component $a \in \comps{v}$ such that $\bar x$ is contained in $a$.
  Now, it suffices to choose as $\bxi$ an endpoint in a component around $v$ which is not $a$ (and recall that for all choices of an endpoint $\xi$ the obtained models are isomorphic). 
  Indeed, for any such $\xi$, we have $v \in \rho(\bigwedge \bar x)$, so we can take $z = v$. 
  This shows $\Im \Theta$ is dense in $\cS'$, so by compactness, it is all of it.

  It remains to check the final assertion of the theorem. First, we see that \eqref{eq:SGW-factor-map} indeed defines a function. If $\zeta_1 \neq \zeta_2$, then there exists $x \in X \cap (\zeta_1, \zeta_2)$, so $\Phi(x, \zeta_1) \neq \Phi(x, \zeta_2)$. That $\pi$ is continuous follows from the fact that the sets of the form $\set{\zeta \in W : \Phi(x, \zeta) = a}$, where $x$ varies in $X$ and $a$ varies in $\comps x$, form a subbasis for the topology of $W$. That $\pi$ is $G$-equivariant is obvious. Finally the uniqueness of the factor map (Proposition~\ref{p:SG-comeager}) implies that $\pi$ is indeed the highly proximal factor map.
\end{proof}

\begin{cor}
  \label{c:metrizability-SGW}
  Let $\Gamma \actson M$ be transitive. Then the following are equivalent:
  \begin{enumerate}
  \item \label{i:thSGW:metrizable} $\MHP_{\kf(\Gamma)}(W)$ is metrizable;
  \item \label{i:thSGW:olig} $\Gamma$ is oligomorphic;
  \item \label{i:thSGW:coprecmop} $\kf(\Gamma)_\xi$ is co-precompact in $\kf(\Gamma)$.
  \end{enumerate}
\end{cor}
\begin{proof}
  \begin{cycprf}
  \item[\impnext] Let $\pi \colon \MHP_{\kf(\Gamma)}(W) \to W$ be the map given by \eqref{eq:SGW-factor-map}. Let $\bM$ be a structure with universe $M$ as in Theorem~\ref{th:S_GW}. Fix $x_0 \in X$ and consider the map $\Psi \colon \pi^{-1}(x_0) \to \tS_1(M)$ given by
  \begin{equation}
      \label{eq:map-to-S1M}
    \Psi(p)(v) \models \theta(v, \bar a) \iff p(v) \models \theta(\compf(x_0, v), \kc_{x_0}^{-1}(\bar a)),
  \end{equation}
  for any quantifier-free formula $\theta$ in the language of $\bM$ and any tuple $\bar a$ from $M$. It is clear that $\Psi$ is continuous. It is also surjective because the realized types are dense in $\tS_1(M)$ and they belong to the image of $\Psi$. If $\Gamma \actson M$ is not oligomorphic, then $\tS_1(M)$ is not metrizable by Corollary~\ref{p:samuel-open}, so $\MHP_{\kf(\Gamma)}(W)$ is not metrizable either.
  
\item[\impnext] If $\Gamma$ is oligomorphic, then $\Gamma_c \actson M$ is oligomorphic, so, by \Cref{c:KstarDeltaOligo}, $\pkf(\Gamma_c) \actson X$ is also oligomorphic. From Proposition~\ref{p:point-stabilizer} \ref{i:ps:2}, we have that
  $\kf(\Gamma)_\xi = \pkf(\Gamma_c)$, so $\kf(\Gamma)_\xi \actson X$ is oligomorphic. Now \ref{i:thSGW:coprecmop} follows from the general fact that an oligomorphic subgroup of any permutation group is always co-precompact.
  
\item[\ref{i:thSGW:coprecmop} $\Leftrightarrow$ \ref{i:thSGW:metrizable}] Since endpoints in $W$ are a comeager orbit for the action of $\kf(\Gamma)$, we have that $\MHP_{\kf(\Gamma)}(W) = \Sam(\kf(\Gamma)/\kf(\Gamma)_\xi)$, which is metrizable if and only if $\kf(\Gamma)/\kf(\Gamma)_\xi$ is precompact.
\end{cycprf}
\end{proof}

Since $\MHP_{\kf(\Gamma)}(W)$ is minimal, this shows that if $\Gamma$ is not oligomorphic then $\kf(\Gamma)$ is not CAP. 

\begin{example}
  \label{ex:3-transitive}
  As an illustration of Theorem~\ref{th:S_GW}, we give a description of the preimages $\pi^{-1}(\zeta)$ (where $\pi \colon \MHP_{\kf(\Gamma)}(W) \to W$ is the factor map given by \eqref{eq:SGW-factor-map}) for the different types of points $\zeta \in W$ in the case when the action $\Gamma \actson M$ is $3$-transitive. 
  Let $\bM$ be a qf-full structure for $\Gamma$ with universe $M$.
  We show that the preimages of endpoints and regular points are singletons and for each $x \in \bps$, we have $\pi^{-1}(x) \cong \tS_1(M)$. Recall that $\tS_1(M)\cong \Sam(\Gamma / \Gamma_c)$, for some $c \in M$, by \Cref{p:samuel-open}. 

  If $\xi \in \waz$ is an endpoint, $\pi^{-1}(\xi)$ is always a singleton, by \Cref{p:SG-comeager}. 

  Now fix a regular point $\zeta \in \waz$. We show that $\pi^{-1}(\zeta)$ is also a singleton. Let $p \in \tS_1(\kf(\bM))$ be such that $\pi(p) = \zeta$ and let $T \sub X$ be a finite subtree such that $\zeta$ belongs to the subdendrite generated by $T$. Let $x_0, x_1 \in T$ be such that $\zeta \in (x_0, x_1)$ and there are no vertices of $T$ in $(x_0, x_1)$. Let $y$ be a realization of $p|_T$ and let $z = K(x_0, x_1, y)$. Note that the tree generated by $T$ and $y$ is $T' \coloneqq T \cup \set{y, z}$. By quantifier elimination, the type $\tp(y/T)$ is determined by the isomorphism type of the substructure $\bT' \sub \kf(\bM)$ with domain $T'$. The isomorphism type of $\bT'$ as a tree (in the language $\cL_\bX$) is already determined by $\zeta$ and the rest is decided by
  \begin{equation}
    \label{eq:type-zeta}
    \tp_{\bM}(\kappa(\Phi(z, y)), \kappa(\Phi(z, x_0)), \kappa(\Phi(z, x_1))),
  \end{equation}
  which is unique by the $3$-transitivity assumption. 

  Now consider a branching point $x_0 \in \waz$. 
  Let the map $\Psi \colon \pi^{-1}(x_0) \to \tS_1(M)$ be defined by \eqref{eq:map-to-S1M}. We will show that if $\Gamma$ is $3$-transitive, then $\Psi$ is a homeomorphism. We only have to prove that $\Psi$ is injective. 
  We do so by showing that $\Psi^{-1}(q)$ is a singleton for any $q \in \tS_1(M)$. Let $p$ be any element of $\Psi^{-1}(q)$. We have two cases.

  \textbf{Case 1.} The type $q$ is realized in $\bM$ by an element $a_0$. Let $T$ be a finite subtree of $X$ containing $x_0$ and having a point in the component $\kappa_{x_0}^{-1}(a_0)$. Let $x_1 \in T$ be the point in this component closest to $x_0$. Take a realization $y$ of $p|_T$ and let $z = K(x_0, x_1, y)$. Then the substructure of $\kf(\bM)$ with domain $T' \coloneqq T \cup \set{y, z}$ again has isomorphism type determined by $a_0$ and the $3$-type \eqref{eq:type-zeta}. So we conclude by $3$-transitivity as before.

  \textbf{Case 2.} The type $q$ is not realized in $\bM$. Let again $T$ be a finite subtree of $X$ containing $x_0$ and let $y$ be a realization of $p|_T$. Now $T \cup \set{y}$ is already closed under $K$, so the isomorphism type of the substructure of $\kf(\bM)$ with domain $T\cup \set{y}$ is determined by $\tp_\bM(\kappa_{x_0}^{-1}(\Phi(x_0, y)) / \kappa_{x_0}^{-1}(\comps x_0  \cap \Comps{T}))$, which is decided by $q$.

  We conclude with two remarks about the analysis above. First, one can describe $\MHP_{\kf(\Gamma)}(W)$ even without the $3$-transitivity assumption but the description is more complex as it has to involve the space of $3$-types $\tS_3(\Th(\bM))$. Second, one obtains a particularly simple description when $\kf(\Gamma)$ is the full homeomorphism group of $W$ (which will be used in Section~\ref{sec:case-full-home} below). Namely, in that case, $\tS_1(M)$ can simply be identified with $M$ when $M$ is finite and with the one-point compactification of $M$ when $M$ is infinite.
\end{example}

%%%%%%%%%%%%%%%%%%%%%%%%%%%%%%%%%%%%%%%%%%%%%%%%%%

\section{Universal minimal flows of kaleidoscopic groups}
\label{sec:univ-minim-flows}

%%%%%%%%%%%%%%%%%%%%%%%%%%%%%%%%%%%%%%%%%%%%%%%%%%

\subsection{The UMFs of endpoint stabilizers}
\label{sec:univ-minim-stabilizer}

With the Ramsey theorem from Section~\ref{sec:ramsey-interlude} in hand, we start by describing the UMFs of the endpoint stabilizer $\kf(\Gamma)_\xi$.
\begin{theorem}
  \label{th:KGammaxiComeagerorbit}
  Let $\Gamma \acts M$ be transitive and let $c \in M$.
  If $\umf(\Gamma_c) = \Sam(\Gamma_c / \Delta)$ for some closed $\Delta \le \Gamma$, then $\umf(\kf(\Gamma)_\xi) = \Sam(\pkf(\Gamma_c) / \pkf(\Delta))$. In particular, if $\umf(\Gamma_c)$ has a comeager orbit, so does $\umf(\kf(\Gamma)_\xi)$.
\end{theorem}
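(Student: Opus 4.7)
The plan is to derive this theorem as a clean combination of three earlier results: Fact~\ref{f:facts-umf}\ref{i:f:umf:1}, which characterizes when a Samuel compactification is the UMF, the presyndeticity transfer Proposition~\ref{pr:H-presyndetic}, and the extreme amenability transfer Corollary~\ref{cor:KstarDeltaExtAmenable} (which is the dynamical incarnation of the Ramsey Theorem~\ref{th:main-Ramsey}). Since all the heavy lifting has already been done, the proof should essentially be a ``tying together'' of ingredients.

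More concretely, first I would unpack the hypothesis. By Fact~\ref{f:facts-umf}\ref{i:f:umf:1} applied to $\Gamma_c$, the equation $\umf(\Gamma_c) = \Sam(\Gamma_c/\Delta)$ tells us that $\Delta$ is simultaneously extremely amenable and presyndetic in $\Gamma_c$ (and of course $\Delta \le \Gamma_c$, so $\Delta$ fixes $c$). Next I would transfer each of these properties to the root-kaleidoscopic group: Corollary~\ref{cor:KstarDeltaExtAmenable} yields extreme amenability of $\pkf(\Delta)$, and Proposition~\ref{pr:H-presyndetic} (together with Proposition~\ref{prop:KstarGamma_c-is-stabilizer}, which identifies $\pkf(\Gamma_c)$ with $\kf(\Gamma)_\xi$) yields that $\pkf(\Delta)$ is presyndetic in $\pkf(\Gamma_c) = \kf(\Gamma)_\xi$.

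With both properties established, I would apply Fact~\ref{f:facts-umf}\ref{i:f:umf:1} in the other direction, this time to the Polish group $\kf(\Gamma)_\xi$ and its closed subgroup $\pkf(\Delta)$, to conclude
\[
  \umf(\kf(\Gamma)_\xi) \;=\; \Sam(\kf(\Gamma)_\xi / \pkf(\Delta)) \;=\; \Sam(\pkf(\Gamma_c)/\pkf(\Delta)),
\]
which is the desired identification. For the ``in particular'' clause, I would invoke Fact~\ref{f:facts-umf}\ref{i:f:umf:2}: the comeager-orbit hypothesis on $\umf(\Gamma_c)$ produces a closed extremely amenable presyndetic $\Delta \le \Gamma_c$ with $\umf(\Gamma_c) = \Sam(\Gamma_c/\Delta)$, so the main part applies, and then one more application of Fact~\ref{f:facts-umf}\ref{i:f:umf:2} (in its $(\Leftarrow)$ direction) to the pair $(\kf(\Gamma)_\xi, \pkf(\Delta))$ gives a comeager orbit in $\umf(\kf(\Gamma)_\xi)$.

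There is no real obstacle remaining at this stage --- the substantive difficulties have been packaged into the Ramsey theorem and the minimal-expansion argument for presyndeticity in the previous sections. The only small subtlety I would flag while writing is ensuring that the hypothesis ``closed $\Delta \le \Gamma$'' is read as $\Delta \le \Gamma_c$ (which is forced by the appearance of $\Gamma_c/\Delta$), so that the fixed-point hypothesis of Corollary~\ref{cor:KstarDeltaExtAmenable} is satisfied and the root-kaleidoscopic group $\pkf(\Delta)$ is defined.
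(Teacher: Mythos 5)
Your proposal is correct and follows the paper's own proof essentially verbatim: unpack the hypothesis via Fact~\ref{f:facts-umf}\ref{i:f:umf:1} to get that $\Delta$ is extremely amenable and presyndetic in $\Gamma_c$, transfer the two properties by Corollary~\ref{cor:KstarDeltaExtAmenable} and Proposition~\ref{pr:H-presyndetic} (using $\pkf(\Gamma_c)=\kf(\Gamma)_\xi$), and reassemble via Fact~\ref{f:facts-umf}\ref{i:f:umf:1}--\ref{i:f:umf:2}. Your observation that the hypothesis ``closed $\Delta\le\Gamma$'' should be read as $\Delta\le\Gamma_c$ is a fair catch of a small typo in the statement, but it does not affect the argument.
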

\begin{proof}
  By Fact~\ref{f:facts-umf}, $\Delta$ is extremely amenable and presyndetic in $\Gamma_c$. 
  By \Cref{cor:KstarDeltaExtAmenable}, $\pkf(\Delta)$ is extremely amenable and by \Cref{pr:H-presyndetic}, it is presyndetic in $\kf(\Gamma)_\xi = \pkf(\Gamma_c)$. So $\umf(\kf(\Gamma)_\xi) = \Sam(\pkf(\Gamma_c) / \pkf(\Delta))$, by Fact~\ref{f:facts-umf}. In particular, we have the statement about the comeager orbits.
\end{proof}

When $\Gamma_c$ is moreover CAP, we have a more explicit description of $\umf(\kf(\Gamma)_\xi)$.
  
\begin{theorem}
  \label{th:KGammaxiCAPUMF}
  Let $\Gamma \acts M$ be transitive and let $c \in M$. If $\Gamma_c$ is CAP, we have that:
  \begin{equation*}
    \umf(\kf(\Gamma)_\xi) \cong \umf(\Gamma_c)^X,
  \end{equation*}
  with the action $\kf(\Gamma)_\xi \acts \umf(\Gamma_c)^\bps$ given by
  \begin{equation*}
    (g \cdot q)(x) = \alpha_{\pkc}(g, g^{-1} \cdot x) \cdot q(g^{-1} \cdot x),
  \end{equation*}
  where $\pkc$ is a root-kaleidoscopic coloring and $\alpha_{\pkc}$ is the cocycle defined in Section~\ref{sec:kale-root-kale}.
\end{theorem}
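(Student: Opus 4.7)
By \Cref{prop:KstarGamma_c-is-stabilizer}, $\kf(\Gamma)_\xi = \pkf(\Gamma_c)$. Since $\Gamma_c$ is CAP, \Cref{f:facts-umf} furnishes a closed, extremely amenable, co-precompact $\Delta \le \Gamma_c$ with $\umf(\Gamma_c) = \Sam(\Gamma_c/\Delta)$, and then \Cref{th:KGammaxiComeagerorbit} yields $\umf(\kf(\Gamma)_\xi) = \Sam(\pkf(\Gamma_c)/\pkf(\Delta))$. The task thus reduces to identifying this Samuel compactification with $\umf(\Gamma_c)^X$ equipped with the stated action.

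I would first check that the formula defines a continuous $\pkf(\Gamma_c)$-action, using the cocycle identity \eqref{eq:cocycle-id} for associativity together with continuity of $\alpha_{\pkc}$, of $\pkf(\Gamma_c) \actson X$, and of $\Gamma_c \actson \umf(\Gamma_c)$. Let $q_0 \in \umf(\Gamma_c)$ be the image of the coset $\Delta$ under the dense embedding $\Gamma_c/\Delta \hookrightarrow \Sam(\Gamma_c/\Delta) = \umf(\Gamma_c)$; by extreme amenability of $\Delta$, it is the unique $\Delta$-fixed point. Set $q^*(x) = q_0$ for all $x \in X$. Unwinding the action formula shows $g \cdot q^* = q^*$ iff $\alpha_{\pkc}(g, y) \in \Delta$ for every $y \in X$, i.e., $\mathrm{Stab}_{\pkf(\Gamma_c)}(q^*) = \pkf(\Delta)$. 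Since $\pkf(\Delta)$ is extremely amenable by \Cref{cor:KstarDeltaExtAmenable}, the universal property of the Samuel compactification gives a unique $\pkf(\Gamma_c)$-equivariant continuous map $\phi \colon \Sam(\pkf(\Gamma_c)/\pkf(\Delta)) \to \umf(\Gamma_c)^X$ sending the identity coset to $q^*$.

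Surjectivity of $\phi$ follows from density of $\pkf(\Gamma_c) \cdot q^*$ in $\umf(\Gamma_c)^X$. Given finitely many $x_1, \ldots, x_k \in X$ and targets $\gamma_i \Delta \in \Gamma_c/\Delta$ (dense in $\umf(\Gamma_c)$), I would construct $g \in \pkf(\Gamma_c)$ fixing each $x_i$ with $\alpha_{\pkc}(g, x_i) = \gamma_i$, by applying split-surjectivity of $\alpha_{\pkc}(\cdot, x_i) \colon \pkf(\Gamma_c)_{x_i} \to \Gamma_c$ (\Cref{cor:homo-from-open-subgroup-rooted}) on disjoint subtrees around the $x_i$ and patching via \Cref{l:patchwork}.

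The main obstacle will be injectivity of $\phi$. The cleanest route is to invoke the type-space representation of \Cref{p:SGH-as-types}: after rendering $\pkf(\bM_\cc)$ and $\pkf(\bN)$ full via the machinery of \Cref{sec:rend-struct-full} (where $\bN$ is the relational expansion of $\bM_\cc$ with $\Aut(\bN) = \Delta$), we identify $\Sam(\pkf(\Gamma_c)/\pkf(\Delta)) \cong \Exp(\pkf(\bM_\cc), \pkf(\bN))$. The key structural observation is that by \Cref{def:kal-strcture}, the new relations of $\pkf(\bN)$ hold only on tuples contained in a single component $\comps{x}$, so an expansion of $\pkf(\bM_\cc)$ to $\pkf(\bN)$ decomposes as an independent choice, for each $x \in X$, of an expansion of $\comps{x} \cong \bM_\cc$ to $\bN$. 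This yields a natural homeomorphism $\Exp(\pkf(\bM_\cc), \pkf(\bN)) \cong \Exp(\bM_\cc, \bN)^X = \umf(\Gamma_c)^X$, which a short verification using the cocycle identity shows is $\pkf(\Gamma_c)$-equivariant under the action given in the statement and agrees with $\phi$ on the dense orbit, hence globally.
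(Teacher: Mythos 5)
Your overall architecture matches the paper's: reduce to $\pkf(\Gamma_c) = \kf(\Gamma)_\xi$, invoke Fact~\ref{f:facts-umf} and \Cref{th:KGammaxiComeagerorbit} to get $\umf(\kf(\Gamma)_\xi) = \Sam(\pkf(\Gamma_c)/\pkf(\Delta))$, and then identify this Samuel compactification with $\umf(\Gamma_c)^X$ via the type-space representation of \Cref{p:SGH-as-types}. The formula for the action, the stabilizer computation, and the density argument are all fine. However, there is a genuine gap precisely where you invoke the CAP hypothesis, and your ``short verification'' at the end is actually the heart of the proof.

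The issue is the following. To apply \Cref{p:SGH-as-types} you need a \emph{full} structure $\fullpkf(\bN)$ with $\Aut(\fullpkf(\bN)) = \pkf(\Delta)$, and for the product decomposition $\Exp(\pkf(\bM_\cc), \fullpkf(\bN)) \cong \Exp(\bM_\cc,\bN)^X$ you need the relations of this full structure that go beyond $\cl{\cL}_{\bxi}$ to live entirely on single components $\comps{x}$. The observation that the $\cL_1\sminus\cL_0$-relations of $\pkf(\bN)$ are componentwise (from \Cref{def:kal-strcture}) is correct, but \emph{a priori} the qf-full expansion of $\pkf(\bN)$ produced by Section~\ref{sec:rend-struct-full} might need additional relations coupling several components, and then the decomposition fails. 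One must prove that no such coupling occurs, i.e., that the structure $\fullpkf(\bM_\cc) \sqcup (\cL_1\sminus\cL_0)$ is already qf-full for $\Aut(\pkf(\bN))$. The paper isolates this as a Claim, and its proof is exactly where CAP is used: an $\Aut(\pkf(\bN))$-invariant set $D$ is a union of orbits described by formulas $\phi_i' \land \psi_i$ with $\phi_i'$ in $\cl{\cL}$ and $\psi_i$ a conjunction of $\cL_1\sminus\cL_0$-relations. Because CAP lets you choose $\cL_1 \sminus \cL_0$ with only finitely many relation symbols in each arity, there are only finitely many possible $\psi_i$'s of arity bounded by the length of the tuple, so you can group the orbits by $\psi_i$ and absorb the remaining (possibly infinite) disjunctions of $\cl{\cL}$-formulas into $\fullpkf(\bM_\cc)$ by qf-fullness of $\fullkf(\bM)$. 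Without CAP this grouping breaks down and there is no guarantee the decomposition holds. You need to make this argument explicit; as written, your proposal glosses over the one step where the hypothesis is actually doing work.

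A secondary, minor point: the detour through the universal property of the Samuel compactification (constructing $\phi$ abstractly, checking the stabilizer of $q^*$, proving density) is logically redundant once you have the type-space identification; the paper's $\Theta$ map directly gives the homeomorphism, with surjectivity and injectivity both following from the Claim. Your abstract $\phi$ is correct but adds no leverage and duplicates work.
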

\begin{proof}
  Let $\bM$ be a structure on $M$ such that $\Gamma = \Aut(\bM)$ and let $c \in M$. Let $\Delta \leq \Gamma_c$ be such that $\umf(\Gamma_c) = \Sam(\Gamma_c / \Delta)$ and let $\bN$ be an ultrahomogeneous relational expansion of $(\bM, \cc)$ with $\Delta = \Aut(\bN)$. Let $\cL_0$ be the signature of $\bM$ and $\cL_1 \sqcup \set{\cc}$ be the signature of $\bN$. Our assumption that $\Gamma_c$ is CAP, that is, that $\umf(\Gamma_c) = \Sam(\Gamma_c / \Delta)$ is metrizable, translates to the fact that we can take $\cL_1$ such that $\cL_1 \sminus \cL_0$ contains finitely many relation symbols in each arity. Now construct the structures $\pkf(\bM_{\cc})$ and $\pkf(\bN)$ as in Section~\ref{sec:kaleidoscopic_structures} and recall that $\pkf(\Gamma_c) = \Aut(\pkf(\bM_\cc))$ and $\pkf(\Delta) = \Aut(\pkf(\bN))$. Let $\fullpkf(\bM_\cc)$ be a qf-full structure for $\Aut(\pkf(\bM_\cc))$ and denote by $\cl{\cL}_{\bxi}$ its signature.
  \begin{claim*}
    The $\cl{\cL} \sqcup (\cL_1 \sminus \cL_0)$-expansion of $\fullpkf(\bM_\cc)$ in which the relations in $\cL_1 \sminus \cL_0$ are interpreted on the imaginary sort $\cps$ as usual is qf-full for $\Aut(\pkf(\bN))$.
  \end{claim*}
  \begin{proof}
    Let $D \sub \pkf(\bN)^k$ be an $\Aut(\pkf(\bN))$-invariant set and write $D = \bigcup D_i$, where each $D_i$ is an $\Aut(\pkf(\bN))$-orbit. By ultrahomogeneity of $\pkf(\bN)$, there exists an isolating formula $\phi_i(\bar v)$ which describes the isomorphism type of tuples in $D_i$ and we can write $\phi_i$ in the form $\phi'_i \land \psi_i$, where $\phi_i'$ is an $\cl{\cL}$-formula and $\psi_i$ is a conjunction of relations in $\cL_1 \sminus \cL_0$. As there are only finitely many such relations of arity $\leq k$, there are only finitely many choices for the $\psi_i$, say $\psi_{i_1}, \ldots, \psi_{i_m}$. Now we have that
    \begin{equation*}
      \begin{split}
        \bar x \in D &\iff \bar x \in \bigcup_i D_i \\
        &\iff \pkf(\bN) \models \bigvee_i (\phi_i'(\bar x) \land \psi_i(\bar x)) \\
        &\iff \pkf(\bN) \models \bigvee_{j \leq m} \big( \psi_{i_j}(\bar x) \land \bigvee_{\set{i : \psi_i = \psi_{i_j}}} \phi_i'(\bar x) \big).
      \end{split}
    \end{equation*}
    By qf-fullness of $\fullpkf(\bM_{\cc})$, each $\bigvee_{\set{i : \psi_i = \psi_{i_j}}} \phi_i'(\bar x)$ is equivalent to a quantifier-free $\cl{\cL}$-formula, so this proves the claim.
  \end{proof}
  Now we can apply Proposition~\ref{p:SGH-as-types} and obtain the representation
  \begin{equation}
    \label{eq:pkfGamma_c}
    \umf(\pkf(\Gamma_c)) = \Sam(\pkf(\Gamma_c) / \pkf(\Delta)) = \Exp(\pkf(\bM_\cc), \fullpkf(\bN)).
  \end{equation}
  Similarly,
  \begin{equation}
    \label{eq:Gamma_c}
    \umf(\Gamma_c) = \Sam(\Gamma_c / \Delta) = \Exp(\bM_\cc, \bN),
  \end{equation}
  We can define a map $\Theta \colon \umf(\pkf(\Gamma_c)) \to \umf(\Gamma_c)^X$ as follows:
  \begin{equation*}
    \Theta(p)(x) \models \phi(\bar a) \iff p \models \phi((\pkc)_x^{-1}(\bar a))
  \end{equation*}
  for all $x \in X$, $\bar a \in M^k$ and quantifier-free $\cL_1$-formulas $\phi$.
  In words, to determine $\Theta(p)$ in the $x$-th coordinate, we just look at the components around $x$ and copy the expansion using $\pkc$. It is clear that this map is continuous, surjective, and $\pkf(\Gamma_c)$-equivariant. Injectivity follows from the Claim.
\end{proof}

\begin{cor}
  \label{c:KGammaxiEAandCAP}
  Let $\Gamma \acts M$ be a transitive permutation group and let $c \in M$. Then:
  \begin{enumerate}
    \item \label{i:c:KGX:ea} $\kf(\Gamma)_\xi$ is extremely amenable iff $\Gamma_c$ is;
    \item \label{i:c:KGX:cap} $\kf(\Gamma)_\xi$ is CAP iff $\Gamma_c$ is.
  \end{enumerate}
\end{cor}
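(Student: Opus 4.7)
\textit{Plan.} Both equivalences follow quickly from results already in hand, using the identification $\kf(\Gamma)_\xi = \pkf(\Gamma_c)$ from Proposition~\ref{prop:KstarGamma_c-is-stabilizer}.

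The ``if'' direction of \ref{i:c:KGX:ea}, that extreme amenability of $\Gamma_c$ implies that of $\kf(\Gamma)_\xi$, is an immediate consequence of Corollary~\ref{cor:KstarDeltaExtAmenable} applied with $\Delta = \Gamma_c$. For the ``if'' direction of \ref{i:c:KGX:cap}, I would invoke Theorem~\ref{th:KGammaxiCAPUMF}: if $\Gamma_c$ is CAP then $\umf(\Gamma_c)$ is compact metrizable, and since $X = \Br(W)$ is countable, the product $\umf(\kf(\Gamma)_\xi) \cong \umf(\Gamma_c)^X$ is also compact metrizable, so $\kf(\Gamma)_\xi$ is CAP.

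The two ``only if'' directions can be handled uniformly. Fix any branch point $x \in X$ and consider the stabilizer $(\kf(\Gamma)_\xi)_x = \pkf(\Gamma_c)_x$, which is an open subgroup of $\kf(\Gamma)_\xi$. By Fact~\ref{f:ea-open-subgroup}, it inherits extreme amenability (respectively, CAP) from $\kf(\Gamma)_\xi$. Corollary~\ref{cor:homo-from-open-subgroup-rooted}, applied with $\Delta = \Gamma_c$, produces a continuous surjective homomorphism $\pkf(\Gamma_c)_x \to \Gamma_c$, and a second application of Fact~\ref{f:ea-open-subgroup} transfers the relevant property to $\Gamma_c$.

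Since all the heavy lifting has already been done, there is no real obstacle: everything reduces to the closure of extreme amenability and of CAP under open subgroups, continuous surjective images, and (for the CAP ``if'' direction) countable products. The only point to watch is that $X$ is countable, which keeps the product $\umf(\Gamma_c)^X$ metrizable.
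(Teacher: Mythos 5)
Your proof is correct and follows essentially the same route as the paper: the paper also uses Theorem~\ref{th:KGammaxiCAPUMF} for the ``if'' directions (deducing the EA case from the product formula since $\umf(\Gamma_c)$ is then a singleton) and, for ``only if,'' passes to the open subgroup $\kf(\Gamma)_{\set{\xi,x}}$, applies the surjection from Corollary~\ref{cor:homo-from-open-subgroup-rooted}, and closes with Fact~\ref{f:ea-open-subgroup}. Your choice to invoke Corollary~\ref{cor:KstarDeltaExtAmenable} directly for the EA ``if'' direction is a marginally more direct citation of the same machinery, not a genuinely different argument.
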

\begin{proof}
  The ($\Leftarrow$) direction of both \ref{i:c:KGX:ea} and \ref{i:c:KGX:cap} follows from Theorem~\ref{th:KGammaxiCAPUMF}.

  For the other direction, fix $x \in \bps$. 
  Then the stabilizer $\kf(\Gamma)_{\set{\xi, x}}$ is an open subgroup of  $\kf(\Gamma)_\xi$ and admits a surjective homomorphism to $\Gamma_c$ (cf. \Cref{cor:homo-from-open-subgroup-rooted}) so the ($\Rightarrow$) direction of both \ref{i:c:KGX:ea} and \ref{i:c:KGX:cap} follows from \Cref{f:ea-open-subgroup}.
\end{proof}

%%%%%%%%%%%%%%%%%%%%%%%%%%%%%%%%%%%%%%%%%%%%%%%%%%

\subsection{The UMFs of kaleidoscopic groups}

By combining the results so far, we obtain a description of the UMF of $\kf(\Gamma)$.

\begin{theorem}
  \label{th:UMF-kal}
  Let $\Gamma \acts M$ be transitive and let $c \in M$. If $\umf(\Gamma_c) = \Sam(\Gamma_c/\Delta)$ for some closed $\Delta \leq \Gamma_c$, then
  \begin{equation*}
    \umf(\kf(\Gamma)) = \Sam(\kf(\Gamma)/\pkf(\Delta)).
  \end{equation*}
  In particular, if $\umf(\Gamma_c)$ has a comeager orbit, then so does $\umf(\kf(\Gamma))$.
\end{theorem}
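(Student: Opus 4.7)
The strategy is to reduce the statement to Fact~\ref{f:facts-umf}\ref{i:f:umf:1}, which characterizes when $\umf(G) = \Sam(G/H)$ in terms of $H$ being extremely amenable and presyndetic in $G$. So the plan is to verify these two properties for $H = \pkf(\Delta)$ inside $G = \kf(\Gamma)$, by lifting them from $\Delta \le \Gamma_c$.

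First, extract from the hypothesis $\umf(\Gamma_c) = \Sam(\Gamma_c/\Delta)$, using Fact~\ref{f:facts-umf}\ref{i:f:umf:1}, that $\Delta$ is extremely amenable and presyndetic in $\Gamma_c$. For extreme amenability: applying Corollary~\ref{cor:KstarDeltaExtAmenable} to $\Delta \acts N$ (where $N$ is the universe of a structure $\bN$ with $\Aut(\bN)=\Delta$ stabilizing the distinguished constant $c$) yields that $\pkf(\Delta)$ is extremely amenable. For presyndeticity: Proposition~\ref{pr:H-presyndetic} applied to the closed presyndetic subgroup $\Delta \le \Gamma_c$ gives that $\pkf(\Delta)$ is presyndetic in $\kf(\Gamma)$.

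Now invoke the converse direction of Fact~\ref{f:facts-umf}\ref{i:f:umf:1} with the pair $\pkf(\Delta) \le \kf(\Gamma)$: extreme amenability together with presyndeticity yields
\begin{equation*}
  \umf(\kf(\Gamma)) = \Sam(\kf(\Gamma)/\pkf(\Delta)),
\end{equation*}
which is the main statement. For the ``in particular'' clause, suppose $\umf(\Gamma_c)$ has a comeager orbit. By Fact~\ref{f:facts-umf}\ref{i:f:umf:2}, there exists a closed, extremely amenable, presyndetic $\Delta \le \Gamma_c$ with $\umf(\Gamma_c) = \Sam(\Gamma_c/\Delta)$, so the main statement applies and $\umf(\kf(\Gamma)) = \Sam(\kf(\Gamma)/\pkf(\Delta))$; a second application of Fact~\ref{f:facts-umf}\ref{i:f:umf:2} (in the other direction) to the extremely amenable and presyndetic subgroup $\pkf(\Delta) \le \kf(\Gamma)$ shows that $\umf(\kf(\Gamma))$ has a comeager orbit.

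The substantive content has already been done elsewhere in the paper: the Ramsey-theoretic core lives in Corollary~\ref{cor:KstarDeltaExtAmenable} (extreme amenability transfer), and the combinatorial minimality/expansion argument lives in Proposition~\ref{pr:H-presyndetic} (presyndeticity transfer). Thus no new obstacle arises here; the proof is a direct two-line assembly from these inputs plus Fact~\ref{f:facts-umf}.
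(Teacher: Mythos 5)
Your proof is correct and takes essentially the same route as the paper: extract extreme amenability and presyndeticity of $\Delta$ from the hypothesis via Fact~\ref{f:facts-umf}, transfer them to $\pkf(\Delta)$ via Corollary~\ref{cor:KstarDeltaExtAmenable} and Proposition~\ref{pr:H-presyndetic}, and reapply Fact~\ref{f:facts-umf}. You separate the main clause (item~\ref{i:f:umf:1}) from the ``in particular'' clause (item~\ref{i:f:umf:2}) a bit more explicitly than the paper does, but the underlying argument is identical.
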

\begin{proof}
  Recall that if $\umf(\Gamma_c)$ has a comeager orbit, then it has the form $\Sam(\Gamma_c / \Delta)$ for some  extremely amenable and presyndetic $\Delta \leq \Gamma_c$ (Fact~\ref{f:facts-umf}). By Corollary~\ref{cor:KstarDeltaExtAmenable}, $\pkf(\Delta)$ is extremely amenable and by Proposition~\ref{pr:H-presyndetic}, it is presyndetic in $\kf(\Gamma)$. Applying Fact~\ref{f:facts-umf} in the other direction, we obtain the conclusion.
\end{proof}

It is also possible to represent $\umf(\kf(\Gamma))$ as a space of expansions using Proposition~\ref{p:SGH-as-types}.
\begin{cor}
  \label{c:umf-KGamma-expansions}
  Let $\Gamma \actson M$ be transitive and $c \in M$. Suppose that the structure $\bM_\cc$ has a minimal, Ramsey, ultrahomogeneous expansion $\bN$. Then
  \begin{equation*}
    \umf(\kf(\Gamma)) = \Exp(\kf(\bM), \fullpkf(\bN)).
  \end{equation*}
\end{cor}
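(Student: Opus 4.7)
The plan is to reduce the corollary to Theorem~\ref{th:UMF-kal} by first extracting an extremely amenable, presyndetic subgroup of $\Gamma_c$ from the Ramsey and minimality hypotheses on $\bN$, and then to invoke Proposition~\ref{p:SGH-as-types} to rewrite the resulting Samuel compactification as the claimed space of expansions.

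Set $\Delta = \Aut(\bN)$. Since $\bN$ is Ramsey and ultrahomogeneous, Theorem~\ref{th:KPT} shows that $\Delta$ is extremely amenable. Since $\bN$ is a minimal relational expansion of $\bM_\cc$, Proposition~\ref{p:presyndetic-criterion} shows that $\Delta$ is presyndetic in $\Gamma_c = \Aut(\bM_\cc)$. By Fact~\ref{f:facts-umf}\ref{i:f:umf:1}, these together force $\umf(\Gamma_c) = \Sam(\Gamma_c/\Delta)$, and hence Theorem~\ref{th:UMF-kal} delivers
\begin{equation*}
\umf(\kf(\Gamma)) = \Sam(\kf(\Gamma)/\pkf(\Delta)).
\end{equation*}

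To identify the right-hand side with $\Exp(\kf(\bM), \fullpkf(\bN))$, the next step is to apply Proposition~\ref{p:SGH-as-types} with $\kf(\bM)$ in the role of the base structure and $\fullpkf(\bN)$ in the role of its full expansion. The structure $\kf(\bM)$ is ultrahomogeneous by Proposition~\ref{p:KalM-is-homogeneous}, hence atomic. Following the convention of Section~\ref{sec:rend-struct-full}, $\fullpkf(\bN)$ is to be read as the relational expansion of $\pkf(\bN)$ to a signature extending $\cl{\cL}_{\bxi}$ and containing the relations of $\bN$ interpreted on the sort $\cps$, enlarged as necessary so that it becomes full for $\pkf(\Delta) = \Aut(\pkf(\bN))$. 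That $\fullpkf(\bN)$ is an expansion of $\kf(\bM)$ in the formal sense of Section~\ref{sec:expansions-types} follows by chaining the expansions $\kf(\bM) \subseteq \pkf(\bM_\cc) \subseteq \pkf(\bN) \subseteq \fullpkf(\bN)$ provided by Proposition~\ref{p:point-stabilizer}\ref{i:ps:1}: the added point $\xi$ lies in $\dcl^{\cL'}(\emptyset)$ as the interpretation of the constant $\bxi$, and $X$ is $\pkf(\Delta)$-invariant because $\pkf(\Delta)$ fixes $\xi$.

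The main technical point is fullness of $\fullpkf(\bN)$. This is immediate if one simply adjoins symbols for every $\pkf(\Delta)$-invariant relation on $(X \cup \set{\xi})^k$; to obtain it in the more economical signature $\cl{\cL}_{\bxi} \sqcup \cL_1$ one follows the argument of Lemma~\ref{l:fullkf-fullkpf}: decompose an invariant set $D$ into $\pkf(\Delta)$-orbits, isolate each orbit by a quantifier-free formula $\theta_\alpha$ in the enlarged signature, rewrite $\theta_\alpha(\bar x)$ as $\exists u \in \rho(\bigwedge \bar x)\, \phi_\alpha(u,\bar x)$ to push the parameter $\bxi$ to a generic branch point below $\bar x$, and collect the infinite disjunction $\bigvee_\alpha \phi_\alpha$ using qf-fullness of $\fullkf(\bM)$ together with the freshly adjoined $\cL_1$-relations on $\cps$. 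Once fullness is granted, Proposition~\ref{p:SGH-as-types} immediately produces the desired identification, completing the proof.
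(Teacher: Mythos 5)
Your overall strategy is exactly the one the paper intends: the corollary is stated immediately after Theorem~\ref{th:UMF-kal} with the remark that the Samuel compactification can be rewritten via Proposition~\ref{p:SGH-as-types}, and your first two paragraphs reconstruct precisely this, with the correct combination of Theorem~\ref{th:KPT}, Proposition~\ref{p:presyndetic-criterion}, Fact~\ref{f:facts-umf}\ref{i:f:umf:1}, and Theorem~\ref{th:UMF-kal}. Two of the supporting steps, however, do not go through as you state them.

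First, ``$\kf(\bM)$ is ultrahomogeneous, hence atomic'' is not a valid implication. Ultrahomogeneity tells you that orbits are determined by quantifier-free types, but it does not tell you that those types are isolated; if $\cL_0$ is chosen badly, a realized type can fail to be isolated. What rescues the statement is the freedom to choose $\bM$: taking $\bM$ in the minimal (countable) ultrahomogeneous signature for $\Gamma$, every $\Gamma$-orbit on $M^k$ is named by a single relation symbol, and then each orbit of $\kf(\Gamma)$ on $X^k$ is described by the finite tree shape together with finitely many $\cL_0$-formulas, so $\kf(\bM)$ is atomic. You need to invoke this choice (or another argument) rather than ultrahomogeneity per se.

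Second, your ``economical signature'' route to fullness of $\fullpkf(\bN)$ imports an argument (the Claim inside the proof of Theorem~\ref{th:KGammaxiCAPUMF}) that relies on $\cL_1$ having only finitely many relation symbols of each arity, i.e., on $\Gamma_c$ being CAP. In Corollary~\ref{c:umf-KGamma-expansions} there is no CAP hypothesis, so the step where you ``collect the infinite disjunction $\bigvee_\alpha \phi_\alpha$'' by grouping disjuncts according to their $\cL_1$-parts can fail: there may be infinitely many groups. In this generality one should simply take $\fullpkf(\bN)$ to be the qf-full structure for $\pkf(\Delta)$ obtained by adjoining a symbol for every $\pkf(\Delta)$-invariant relation on $(X\cup\set{\xi})^k$, as you note is ``immediate''; this is tautologically full, has the right automorphism group, and Proposition~\ref{p:SGH-as-types} then applies directly. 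With these two repairs the proof is complete and coincides with the paper's intended argument.
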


Using the results from the last section, we obtain a characterization of when $\kf(\Gamma)$ is CAP. 

\begin{theorem}
  \label{th:KGammaCAP}
  Let $\Gamma \acts M$ be a transitive permutation group. Then the following are equivalent:
  \begin{enumerate}
    \item \label{i:th:KGammaCAP-2} $\Gamma$ is CAP and the action $\Gamma \actson M$ is oligomorphic;
    \item \label{i:th:KGammaCAP-1} $\kf(\Gamma)$ is CAP.
  \end{enumerate}
\end{theorem}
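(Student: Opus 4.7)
The plan is to chain together results from Sections~\ref{sec:univ-minim-stabilizer} and \ref{sec:maxim-highly-prox} so that neither direction requires new combinatorial input; essentially all the work has already been done.

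For the implication \ref{i:th:KGammaCAP-2}~$\Rightarrow$~\ref{i:th:KGammaCAP-1}, I will start from the hypothesis that $\Gamma$ is CAP and $\Gamma \actson M$ is oligomorphic. Since $\Gamma_c$ is an open subgroup of $\Gamma$, Fact~\ref{f:ea-open-subgroup} transfers the CAP property from $\Gamma$ down to $\Gamma_c$. Corollary~\ref{c:KGammaxiEAandCAP}\ref{i:c:KGX:cap} then lifts it to $\kf(\Gamma)_\xi$. Oligomorphicity of $\Gamma$ feeds into Corollary~\ref{cor:KGamma-c-coprecompact} to give that $\kf(\Gamma)_\xi$ is co-precompact in $\kf(\Gamma)$, and finally Corollary~\ref{cor:umf-subgroups}\ref{i:cor:mf-subgroups-coprecompact} delivers CAP for $\kf(\Gamma)$.

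For the converse \ref{i:th:KGammaCAP-1}~$\Rightarrow$~\ref{i:th:KGammaCAP-2}, I will argue as follows. Assuming $\kf(\Gamma)$ is CAP, every minimal $\kf(\Gamma)$-flow is metrizable; in particular, the universal highly proximal extension $\MHP_{\kf(\Gamma)}(W)$ is a minimal $\kf(\Gamma)$-flow, since it is a highly proximal extension of the minimal flow $\kf(\Gamma) \actson W$ (and highly proximal extensions preserve minimality). Hence $\MHP_{\kf(\Gamma)}(W)$ is metrizable, and Corollary~\ref{c:metrizability-SGW} then forces $\Gamma$ to be oligomorphic. To obtain CAP for $\Gamma$ itself, I will fix any branch point $x \in X$ and observe that $\kf(\Gamma)_x$ is an open subgroup of the CAP group $\kf(\Gamma)$, hence CAP by Fact~\ref{f:ea-open-subgroup}. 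The continuous surjective homomorphism $\alpha_\kappa(\cdot, x) \colon \kf(\Gamma)_x \to \Gamma$ supplied by Fact~\ref{f:kaleidoscopic-facts}\ref{i:kaleido-fact:surjective-homo} then passes CAP down to $\Gamma$ via a second application of Fact~\ref{f:ea-open-subgroup}.

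The only mild subtlety is verifying that $\MHP_{\kf(\Gamma)}(W)$ is indeed minimal, but this is immediate from preservation of minimality under highly proximal extensions. Aside from that, the proof is pure bookkeeping: the two nontrivial inputs (the extreme amenability transfer of Corollary~\ref{cor:KstarDeltaExtAmenable} via the Ramsey theorem, and the model-theoretic description of $\MHP_{\kf(\Gamma)}(W)$ as a type space) have already been absorbed into Corollary~\ref{c:KGammaxiEAandCAP} and Corollary~\ref{c:metrizability-SGW}, so there is no genuine obstacle at this stage.
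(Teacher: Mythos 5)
Your proof is correct and follows the paper's own argument essentially verbatim in both directions: the forward implication chains Fact~\ref{f:ea-open-subgroup}, Corollary~\ref{c:KGammaxiEAandCAP}, Corollary~\ref{cor:KGamma-c-coprecompact}, and Corollary~\ref{cor:umf-subgroups}, while the converse uses the open stabilizer $\kf(\Gamma)_x$ with its surjective homomorphism onto $\Gamma$ plus the metrizability of the minimal flow $\MHP_{\kf(\Gamma)}(W)$ and Corollary~\ref{c:metrizability-SGW}.
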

\begin{proof}
  \begin{cycprf}
  \item[\impnext] If $\Gamma$ is CAP, then so is $\Gamma_c$ by \Cref{f:ea-open-subgroup}. 
  By \Cref{c:KGammaxiEAandCAP}, $\kf(\Gamma)_\xi$ is CAP, and by Corollary~\ref{c:metrizability-SGW}, it is co-precompact in $\kf(\Gamma)$.
  So we can conclude by point \ref{i:cor:mf-subgroups-coprecompact} of \Cref{cor:umf-subgroups}. 
  
  \item[\impfirst] Fix $x \in \bps$. Then the stabilizer $\kf(\Gamma)_{x}$ is an open subgroup of  $\kf(\Gamma)$ and admits a surjective homomorphism to $\Gamma$ by \Cref{f:kaleidoscopic-facts} \ref{i:kaleido-fact:surjective-homo}, so $\Gamma$ is CAP by \Cref{f:ea-open-subgroup}. Also, $\MHP_{\kf(\Gamma)}(W)$ is a minimal flow of $\kf(\Gamma)$, so it must be metrizable. Now Corollary~\ref{c:metrizability-SGW} implies that $\Gamma \actson M$ is oligomorphic.
  \end{cycprf}
\end{proof}

\begin{remark}
  \label{rem:open-subgroup}
  In view of \Cref{th:UMF-kal}, it may be more natural to replace the condition ``$\Gamma$ is CAP'' in \ref{i:th:KGammaCAP-2} above by ``$\Gamma_c$ is CAP''. However, for an oligomorphic $\Gamma$ they are equivalent.
\end{remark}

\begin{remark}
  We remark that the implication \ref{i:th:KGammaCAP-1} $\Rightarrow$ \ref{i:th:KGammaCAP-2} does not require $\Gamma$ to be transitive. Indeed, the argument showing that $\Gamma$ is CAP does not use transitivity. For oligomorphicity, one can show that if $\Gamma \actson M$ is not oligomorphic, then the minimal flow $\kf(\Gamma) \actson \MHP_{\kf(\Gamma)}(W)$ is not metrizable. Indeed, denote $G = \kf(\Gamma)$. If $\Gamma$ is not oligomorphic, it is easy to find a finite subset $F \sub X$ such that the set $G_F \backslash G / G_\xi$ is infinite and by Fact~\ref{f:SGH-non-archimedean}, $\MHP_G(W)$ admits a surjective map to $\beta(G_F \backslash G / G_\xi)$.
\end{remark}

A particularly simple situation is when the stabilizer $\Gamma_c$ is already extremely amenable (in particular, when $\Gamma$ is extremely amenable). Then we get the following.
\begin{cor}
  \label{c:Gamma_c-ea}
  Let $\Gamma \actson M$ be a transitive action and suppose that $\Gamma_c$ is extremely amenable. Then
  \begin{equation*}
    \umf(\kf(\Gamma)) = \MHP_{\kf(\Gamma)}(W).
  \end{equation*}
\end{cor}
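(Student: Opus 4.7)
The plan is to combine three results already established in the paper.

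First, since $\Gamma_c$ is extremely amenable, $\umf(\Gamma_c)$ is a singleton, which is trivially of the form $\Sam(\Gamma_c/\Delta)$ for $\Delta = \Gamma_c$. Applying Theorem~\ref{th:UMF-kal} directly gives
\begin{equation*}
    \umf(\kf(\Gamma)) = \Sam(\kf(\Gamma)/\pkf(\Gamma_c)).
\end{equation*}

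Next, I would invoke Proposition~\ref{prop:KstarGamma_c-is-stabilizer}, which (using transitivity of $\Gamma$) identifies $\pkf(\Gamma_c) = \kf(\Gamma)_\xi$, so the right-hand side becomes $\Sam(\kf(\Gamma)/\kf(\Gamma)_\xi)$.

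Finally, by Fact~\ref{f:kaleidoscopic-facts}\ref{i:kaleido-fact:transitive-ends}, the $\kf(\Gamma)$-orbit of $\xi$ is $\theends(W)$, which is a dense $G_\delta$ subset of $W$ and hence comeager. Therefore Proposition~\ref{p:SG-comeager}\ref{i:f:SG-comeager:1} applies to the flow $\kf(\Gamma) \actson W$ with basepoint $\xi$ and yields
\begin{equation*}
    \MHP_{\kf(\Gamma)}(W) \cong \Sam(\kf(\Gamma)/\kf(\Gamma)_\xi),
\end{equation*}
which combined with the previous displays gives the desired isomorphism. There is no real obstacle here; the corollary is a direct assembly of Theorem~\ref{th:UMF-kal}, Proposition~\ref{prop:KstarGamma_c-is-stabilizer}, and Proposition~\ref{p:SG-comeager}.
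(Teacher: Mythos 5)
Your proof is correct and follows exactly the route the paper takes: apply Theorem~\ref{th:UMF-kal} with $\Delta = \Gamma_c$, identify $\pkf(\Gamma_c) = \kf(\Gamma)_\xi$ via Proposition~\ref{prop:KstarGamma_c-is-stabilizer}, and invoke Proposition~\ref{p:SG-comeager} for the comeager orbit of $\xi$. You have simply spelled out steps that the paper leaves implicit.
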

\begin{proof}
  This follows from Theorem~\ref{th:UMF-kal} and \Cref{p:SG-comeager}, according to which $\MHP_{\kf(\Gamma)}(W) = \Sam(\kf(\Gamma)/\kf(\Gamma)_\xi)$.
\end{proof}

%%%%%%%%%%%%%%%%%%%%%%%%%%%%%%%%%%%%%%%%%%%%%%%%%%

\subsection{The Furstenberg boundary of $\kf(\Gamma)$}
\label{sec:furst-bound-kfgamma}

Let $G$ be a topological group. Recall that a flow $G \actson X$ is called \df{strongly proximal} if for every Borel probability measure $\mu$ on $X$, there exists a net $(g_i)$ of elements of $G$ such that $g_i \cdot \mu$ converges to a Dirac measure in the weak$^*$-topology. Similarly to the universal minimal flow, every topological group $G$ admits a universal minimal, strongly proximal flow $\cF(G)$ called the \df{Furstenberg boundary of $G$}. $G$ is amenable iff $\cF(G)$ is trivial. As $\cF(G)$ is MHP, the general techniques of Zucker~\cite{Zucker2021} apply; in particular, he managed to give a characterization of when $\cF(G)$ has a comeager orbit similar to the one for $\umf(G)$. This, together with our previous results, allows us to compute $\cF(G)$ for some kaleidoscopic groups. A similar calculation was previously done by Duchesne~\cite{Duchesne2020} for the full homeomorphism group of $W$.

We start with an amenability result for the endpoint stabilizer.
\begin{theorem}
  \label{th:KGammaxiAmenable}
  Let $\Gamma \acts M$ be a transitive permutation group and let $c \in M$. Suppose that $\Gamma_c$ is CAP. Then $\kf(\Gamma)_\xi$ is amenable if and only if $\Gamma_c$ is.
\end{theorem}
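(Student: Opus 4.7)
The proof will mirror that of Corollary~\ref{c:KGammaxiEAandCAP}, with the role of extreme amenability replaced by the existence of an invariant probability measure on the universal minimal flow. The CAP hypothesis on $\Gamma_c$ is needed only for the ``if'' direction, where it lets us invoke Theorem~\ref{th:KGammaxiCAPUMF}.

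For the ``only if'' direction, the plan is to argue exactly as in the extreme amenability case. Fix any $x \in \bps$. Then $\kf(\Gamma)_{\set{\xi, x}} = \pkf(\Gamma_c)_x$ is an open subgroup of $\kf(\Gamma)_\xi$, so by Fact~\ref{f:ea-open-subgroup} it is amenable. Corollary~\ref{cor:homo-from-open-subgroup-rooted} provides a continuous surjective homomorphism $\alpha_{\pkc}(\cdot, x) \colon \pkf(\Gamma_c)_x \to \Gamma_c$, so another application of Fact~\ref{f:ea-open-subgroup} gives amenability of $\Gamma_c$. Notice that this direction does not actually use the CAP hypothesis.

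For the ``if'' direction, recall that a Polish group $H$ is amenable iff $\umf(H)$ carries an $H$-invariant Borel probability measure (any invariant measure on a minimal subflow of any flow pulls back, and $\umf(H)$ maps onto every minimal flow). Since $\Gamma_c$ is amenable, we can fix a $\Gamma_c$-invariant Borel probability measure $\mu$ on $\umf(\Gamma_c)$. By Theorem~\ref{th:KGammaxiCAPUMF}, which is available precisely because $\Gamma_c$ is CAP, we have
\begin{equation*}
  \umf(\kf(\Gamma)_\xi) \cong \umf(\Gamma_c)^X,
\end{equation*}
with the action given by $(g \cdot q)(x) = \alpha_{\pkc}(g, g^{-1} \cdot x) \cdot q(g^{-1} \cdot x)$. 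I propose to check that the product measure $\mu^X$ is $\kf(\Gamma)_\xi$-invariant: the map $q \mapsto g \cdot q$ is the composition of the permutation of coordinates $x \mapsto g^{-1} \cdot x$ (which preserves any product measure) with, coordinatewise, the $\Gamma_c$-action at $x$ by the element $\alpha_{\pkc}(g, g^{-1} \cdot x) \in \Gamma_c$ (which preserves $\mu$ in each coordinate by $\Gamma_c$-invariance of $\mu$). Hence $\mu^X$ is $\kf(\Gamma)_\xi$-invariant, and $\kf(\Gamma)_\xi$ is amenable.

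There is no real obstacle: the argument is essentially bookkeeping once Theorem~\ref{th:KGammaxiCAPUMF} is in hand. The only delicate point is verifying invariance of the product measure under the cocycle-twisted action, but this is immediate from the two factorized observations above.
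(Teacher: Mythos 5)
Your proof is correct and follows essentially the same route as the paper: the $(\Rightarrow)$ direction by passing through the open subgroup $\kf(\Gamma)_{\{\xi,x\}}$ and the surjection onto $\Gamma_c$, and the $(\Leftarrow)$ direction by showing the product measure $\mu^{\otimes X}$ on $\umf(\Gamma_c)^X$ is invariant under the cocycle-twisted action from Theorem~\ref{th:KGammaxiCAPUMF}. Your factorization of the action into a coordinate permutation followed by coordinatewise $\Gamma_c$-translations is a clean way to phrase the cylinder computation that the paper writes out explicitly.
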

\begin{proof}
  ($\Rightarrow$) This follows from \Cref{f:ea-open-subgroup} as in the proof of Corollary~\ref{c:KGammaxiEAandCAP}.

  ($\Leftarrow$) Write $G = \kf(\Gamma)$ and suppose $\Gamma_c$ is amenable, that is, $\umf(\Gamma_c)$ admits an invariant probability measure $\mu$. 
  By \Cref{th:KGammaxiCAPUMF}, $\umf(G_\xi) \cong \umf(\Gamma_c)^\bps$. 
  As in \cite{Duchesne2020}*{Proposition 8.4},
  our goal is to prove that the product measure $\mu^{\otimes \bps}$ on $\umf(\Gamma_c)^\bps$ is invariant under the action of $G_{\xi}$. We will check that the measure of cylinders is preserved. Let $F \sub \bps$ be finite and, for each $x \in F$, let $A_x \sub \umf(\Gamma_c)$ be a measurable set.
  Let $A = \prod_{x \in F} A_x \times \umf(\Gamma_c)^{X \setminus F}$. Then
  \begin{equation*}
    \begin{split}
      \mu^{\otimes X}(g \cdot A) &= \mu  \Big( \big( \prod_{y \in gF} \alpha_{\pkc}(g, g^{-1} \cdot y)  \cdot A_{g^{-1} \cdot y} \big) \times \umf(\Gamma_c)^{X \setminus gF} \Big) \\
      &= \prod_{y \in gF} \mu\big(\alpha_{\pkc}(g, g^{-1} \cdot y)  \cdot A_{g^{-1} \cdot y} \big) \\
      &= \prod_{x \in F} \mu(A_x) = \mu^{\otimes X}(A). \qedhere
    \end{split} 
  \end{equation*}
\end{proof}

\begin{theorem}
  \label{th:furstenberg-boundary}
  Let $\Gamma \acts M$ be a transitive permutation group and let $c \in M$. 
  If $\Gamma_c$ is amenable and $\Gamma_c$ is CAP, then $\cF(\kf(\Gamma)) \cong \MHP_{\kf(\Gamma)}(\waz)$.
\end{theorem}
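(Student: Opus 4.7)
The plan is to identify $\MHP_{\kf(\Gamma)}(W)$ with $\cF(\kf(\Gamma))$ by mirroring the strategy used for $\umf(\kf(\Gamma))$ in Theorem~\ref{th:UMF-kal}, with ``extremely amenable'' replaced by ``amenable'' and the universal property of $\umf$ replaced by that of the Furstenberg boundary.

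First, I would extract amenability of $\kf(\Gamma)_\xi$: since $\umf(\Gamma_c)$ is metrizable, $\Gamma_c$ is CAP, and together with the assumed amenability of $\Gamma_c$, Theorem~\ref{th:KGammaxiAmenable} gives that $\kf(\Gamma)_\xi$ is amenable. Next, Proposition~\ref{p:SG-comeager}, applied to the comeager orbit $\theends(W) \sub W$, yields
\[
  \MHP_{\kf(\Gamma)}(W) \cong \Sam(\kf(\Gamma)/\kf(\Gamma)_\xi),
\]
which is minimal since $\kf(\Gamma) \actson W$ is minimal (Fact~\ref{f:kaleidoscopic-facts}) and highly proximal extensions preserve minimality.

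The main task is to verify that $\MHP_{\kf(\Gamma)}(W)$ is strongly proximal. The plan is to prove first that the base flow $\kf(\Gamma) \actson W$ is strongly proximal, following the approach of Duchesne~\cite{Duchesne2020} for the full homeomorphism group; strong proximality then lifts to $\MHP_{\kf(\Gamma)}(W)$ by the preservation of strong proximality under highly proximal extensions (stated before Fact~\ref{f:Effros}). To show $\kf(\Gamma) \actson W$ is strongly proximal, given $\mu \in P(W)$ one constructs a net $(g_\alpha)$ in $\kf(\Gamma)$ such that $g_\alpha \mu$ weak$^*$-converges to a Dirac measure. The key is to exhibit ``hyperbolic'' elements of $\kf(\Gamma)$ with prescribed attracting and repelling endpoints; these are built by specifying the local action at each branch point along the arc connecting the two endpoints to be the identity of $\Gamma$, which is always available. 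The kaleidoscopicity of the coloring and the transitivity of $\kf(\Gamma)$ on endpoints (Fact~\ref{f:kaleidoscopic-facts}) then allow these ``shifts'' to be chosen between any prescribed pair of endpoints.

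Once strong proximality of $\MHP_{\kf(\Gamma)}(W)$ is established, the conclusion follows by invoking the analog for the Furstenberg boundary of Fact~\ref{f:facts-umf}\ref{i:f:umf:2}, proved in~\cite{Zucker2021}: if $H \leq G$ is amenable and $\Sam(G/H)$ is minimal and strongly proximal, then $\cF(G) \cong \Sam(G/H)$. Applying this with $G = \kf(\Gamma)$ and $H = \kf(\Gamma)_\xi$ yields $\cF(\kf(\Gamma)) \cong \MHP_{\kf(\Gamma)}(W)$.

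The main obstacle is the strong proximality of $\kf(\Gamma) \actson W$: while Duchesne's argument handles the case of the full homeomorphism group where arbitrary local actions are allowed, here one must check that the restriction to a specific $\Gamma$ still leaves enough contracting dynamics in $\kf(\Gamma)$. The availability of $1_\Gamma$ and the abundance of ``free'' branch points between any two endpoints should suffice, but care is needed to verify that the contraction rate can be made uniform along a geodesic constructed from elements of $\kf(\Gamma)$.
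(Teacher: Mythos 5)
Your proposal takes a genuinely different route through the final step than the paper does, and it is worth comparing the two.

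Where the paper goes directly: it cites \cite{Duchesne2018}*{Theorem 10.1} for the fact that $\kf(\Gamma) \actson W$ is strongly proximal and minimal for \emph{any} transitive $\Gamma$ (so $\kf(\Gamma)$ is not amenable), then shows that $\kf(\Gamma)_\xi$ is presyndetic (Proposition~\ref{pr:H-presyndetic}), maximal (Fact~\ref{f:kaleidoscopic-facts}\ref{i:kaleido-fact:maximal-sub}, via primitivity of the action on endpoints), and amenable (Theorem~\ref{th:KGammaxiAmenable}), and invokes \cite{Zucker2021}*{Theorem~7.5} — whose Furstenberg-boundary clause is stated in terms of \emph{presyndetic, maximal amenable} subgroups. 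Your plan instead seeks to verify directly that $\MHP_{\kf(\Gamma)}(W)$ is minimal and strongly proximal and then invokes a characterization of the form ``$H$ amenable and $\Sam(G/H)$ minimal strongly proximal $\Rightarrow \Sam(G/H) \cong \cF(G)$.'' The difference is substantive: the paper replaces a dynamical verification (strong proximality of the flow) by an algebraic fact about the subgroup (maximality), which comes essentially for free from \cite{Duchesne2019a}. Your route avoids invoking maximality at the cost of carrying strong proximality through the argument.

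Two caveats. First, the characterization you attribute to \cite{Zucker2021} is not stated there in that form — Zucker's Theorem~7.5 is formulated in terms of maximal amenable presyndetic subgroups, which is precisely what the paper uses. Your version is nonetheless derivable: the factor map $\cF(G) \to \Sam(G/H)$ exists by universality; conversely, an $H$-invariant measure $\mu$ on $\cF(G)$ gives a $G$-map $\Sam(G/H) \to P(\cF(G))$ whose image is a minimal flow containing a point mass (by strong proximality), hence is contained in the point masses and yields a factor map $\Sam(G/H) \to \cF(G)$; composing the two gives an endomorphism of $\Sam(G/H) = \MHP_G(W)$, which by Proposition~\ref{p:SG-comeager}\ref{i:f:SG-comeager:3} is almost 1-to-1, hence highly proximal, hence an automorphism since $\Sam(G/H)$ is MHP. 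You should spell this out rather than attribute it as a verbatim quote of Zucker. Second, you propose to re-derive strong proximality of $\kf(\Gamma) \actson W$ by adapting \cite{Duchesne2020}, but this is already established for all transitive $\Gamma$ in \cite{Duchesne2018}*{Theorem 10.1}, which is what the paper cites; re-proving it is unnecessary, and your sketch (set the local actions along the translation axis to $1_\Gamma$) glosses over the point that one must first choose the translated branch points so that the colors of the components toward $\xi_+$ and toward $\xi_-$ match up — this is where kaleidoscopicity enters — and then patch on the off-axis subdendrites using uniqueness of the kaleidoscopic coloring. Neither caveat is a fatal gap, but the proof as sketched is not self-contained.
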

\begin{proof}
  By \cite{Duchesne2018}*{Theorem 10.1}, $\kf(\Gamma) \acts \waz$ is a strongly proximal, minimal action, so $\kf(\Gamma)$ is not amenable. 
  By \Cref{pr:H-presyndetic}, \Cref{f:kaleidoscopic-facts} \ref{i:kaleido-fact:maximal-sub}, and \Cref{th:KGammaxiAmenable}, $\kf(\Gamma)_\xi$ is a presyndetic, maximal amenable closed subgroup of $\kf(\Gamma)$, so by \cite{Zucker2021}*{Theorem 7.5}, $\MHP_{\kf(\Gamma)}(\waz) = \Sam(\kf(\Gamma)/\kf(\Gamma)_\xi)$ is the Furstenberg boundary of $\kf(\Gamma)$. 
\end{proof}

%%%%%%%%%%%%%%%%%%%%%%%%%%%%%%%%%%%%%%%%%%%%%%%%%%

\section{The case of the full homeomorphism group}
\label{sec:case-full-home}

When $\Gamma = \Sym(M)$, $\kf(\Gamma)$ becomes the full homeomorphism group of the dendrite $W$, which we denote by $G$. In this case, $\umf(G)$ was already calculated by Kwiatkowska~\cite{Kwiatkowska2018}. We start by explaining what Corollary~\ref{c:umf-KGamma-expansions} gives in this situation. First, $\bM$ is a structure in the language with only equality and $\kf(\bM) = \bX$. Let $c \in M$. Then a minimal Ramsey expansion $\bN$ of $\bM_\cc$ is given by:
\begin{itemize}
\item a linear order on $M$, where $c$ is the least element, in the case where $M$ is finite (in that case the group $\Aut(\bN)$ is the trivial group);
  
\item a linear order on $M$, where $c$ is the least element and the order on $M \sminus \set{c}$ is isomorphic to $(\bQ, <)$, in the case where $M$ is infinite (in that case, the group $\Aut(\bN)$ is isomorphic to $\Aut(\bQ)$).
\end{itemize}

Then, by Corollary~\ref{c:umf-KGamma-expansions}, $\Exp(\bX, \pkf(\bN))$ is the UMF of $G$. A point $p$ in this space is determined by choosing a root $q \in \MHP_G(W)$ (as per Theorem~\ref{th:S_GW} and Example~\ref{ex:3-transitive}) and then a linear order around each point of the form $\bigwedge \bar x$, for $\bar x$ a tuple from $X$, which has the component of the root as its least element.

In \cite{Duchesne2020}, Duchesne constructed an interesting minimal flow of $G$ consisting of linear orders on $X$, namely the \df{convex and converging} linear orders, which we proceed to describe. He only gave the construction for the infinitely-branching case but it works equally well for finitely-branching dendrites.

A linear order $<$ on $\bps$ is \df{converging} if for all triples of points $x_1, x_2, x_3$ lying on a line in this order (i.e., such that $x_2$ is between $x_1$ and $x_3$), it is not the case that $x_1 < x_3 < x_2$. If $<$ is a converging linear order and $Y \sub X$, a sequence $(x_n)_{n \in \N}$ of elements of $Y$ is called \df{$<$-minimizing in $Y$} if for all $y \in Y$, we have that for all but finitely many $n$, $x_n \leq y$. It follows from the proof of \cite{Duchesne2020}*{Lemma 7.2} that for any $x, y \in \bps$, there is a unique (possibly regular) point $m(x, y) \in [x, y]$ which is the limit of every $<$-minimizing sequence in $\bps \cap [x, y]$.

Duchesne then defines\footnote{The definition in \cite{Duchesne2020}*{Definition~7.3} contains a typo: allowing $x'$ and $y'$ to be equal to $m(x, y)$ yields a condition which is never satisfied.} a converging linear order to be \df{convex}
if whenever $x, x', y', y \in \bps$ are such that $x' \in [x, m(x, y) )$, $y' \in (m(x, y), y]$, and $x \le y$, one has $x' \le y'$. It is not immediately clear that this condition is closed, but it follows straightforwardly from the following lemma. 
\begin{lemma}
  \label{lem:convex-equiv-def}
  Let $<$ be a converging linear order on $\bps$. Then the following are equivalent:
  \begin{enumerate}
  \item \label{i:convex:Bruno} $<$ is convex;
  \item \label{i:convex:forbidden} for all $x_1, x_2, x_3, x_4 \in X$ lying on a line in this order, it is not the case that $x_2 < x_3 < x_1 < x_4$.
  \end{enumerate}
\end{lemma}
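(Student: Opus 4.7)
The plan is to prove both implications by contradiction, leveraging Duchesne's characterization (via Lemma~7.2 of \cite{Duchesne2020}) of $m(x,y)$ as the unique topological limit of every $<$-minimizing sequence in $[x,y] \cap X$. Inserting a term $z_n$ of such a sequence as an auxiliary fourth point bridges the analytic convexity definition (i) and the combinatorial four-point pattern (ii): the point $z_n$ can be made $<$-smaller than every named point and, for $n$ large, lies topologically close to $m(x,y)$ and hence geometrically on the arc wherever we need it.

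For the direction (ii) $\Rightarrow$ (i), I assume (ii) and suppose convexity fails for some $x, x', y', y \in X$ with $x \le y$, $x' \in [x, m(x,y)) \cap X$, $y' \in (m(x,y), y] \cap X$, and $y' < x'$. The boundary cases $x = x'$ or $y = y'$ reduce directly to the converging hypothesis applied to the triple $(x, x', y)$ or $(x, y', y)$, whose middle point cannot be the $<$-maximum. In the generic case $x \ne x'$ and $y \ne y'$, applying converging to the triples $(x, x', y)$, $(x, x', y')$ and $(x', y', y)$ yields the chain $y' < x' \le x < y$, with $x < y$ strict by non-degeneracy. Fixing a $<$-minimizing sequence $(z_n) \to m(x,y)$, for $n$ large $z_n$ sits geometrically strictly between $x'$ and $y'$, is distinct from the named points, and is strictly $<$-smaller than each of $x, x', y', y$. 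The four points $x, z_n, y', y$ then lie on a line in this geometric order with $<$-order $z_n < y' < x < y$, matching the forbidden pattern of (ii), a contradiction.

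For the direction (i) $\Rightarrow$ (ii), I assume convexity and suppose $x_1, x_2, x_3, x_4 \in X$ lie on a line in this order with $x_2 < x_3 < x_1 < x_4$. Applying convexity to $(x, y) = (x_1, x_4)$ with $m = m(x_1, x_4)$, I split on the geometric position of $m$: when $m$ lies strictly before $x_3$ on the arc, taking $x' = x_1$ and $y' = x_3$ gives $x_1 \le x_3$, contradicting $x_3 < x_1$; when $m$ lies at or past $x_3$, a term $z_n$ of the minimizing sequence either lies geometrically in $(m, x_4] \cap X$ and serves as $y' = z_n$ with $x' = x_1$ to give the contradiction $x_1 \le z_n < x_1$, or all $z_n$ approach $m$ from the $x_1$-side, in which case I reapply the argument to the pair $(z_n, x_4)$ for $n$ large (noting that $(z_k)_{k > n}$ remains a minimizing sequence for $[z_n, x_4] \cap X$ with the same limit $m$) and iterate until a contradicting configuration is forced.

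The main obstacle is handling the boundary configurations where $m(x,y)$ coincides with a named point so that the direct substitution into convexity is vacuous; the remedy is to use the density of the minimizing sequence near $m$ together with the uniqueness of $m$ (guaranteed by the converging hypothesis) to produce a usable auxiliary point on the geometrically correct side. Taking $n$ large enough also ensures that $z_n$ avoids the finitely many named points, which is essential for the four-point pattern arguments to be valid.
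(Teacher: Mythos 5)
The proposal correctly identifies the right tools (minimizing sequences, inserting a term $z_n$ to manufacture a forbidden four-point configuration), and the direction \ref{i:convex:forbidden} $\Rightarrow$ \ref{i:convex:Bruno} is essentially sound. Note, though, that your treatment of the boundary case $x = x'$ does not go through as claimed: in the triple $(x, y', y)$ the middle point $y'$ is the $<$-minimum, not the maximum, so the converging hypothesis gives no contradiction there. This is harmless only because the generic four-point argument with $x, z_n, y', y$ applies verbatim when $x' = x$. The paper sidesteps all of this by first observing that, by converging, every $z \in [x, m(x,y))$ satisfies $z \le x$, so one may reduce at the outset to the case $x' = x$; this absorbs the boundary cases and shortens the argument to a single four-point insertion.

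The genuine gap is in \ref{i:convex:Bruno} $\Rightarrow$ \ref{i:convex:forbidden}, in the sub-case where $m \coloneqq m(x_1, x_4)$ lies at or past $x_3$ and the minimizing sequence approaches $m$ from the $x_1$-side. Your remedy is to ``reapply the argument to $(z_n, x_4)$ and iterate,'' but this is circular: as you yourself note, $m(z_n, x_4) = m$ and the minimizing sequence for $[z_n, x_4] \cap \bps$ is a tail of the original one, so the iterated configuration is the same as the starting one -- same $m$, same side -- and no progress is made; the iteration does not terminate in a ``contradicting configuration.'' What actually closes this case is the \emph{converging} property, which your argument never invokes here. When $m$ lies strictly past $x_3$, a minimizing term $z$ geometrically in $(x_3, m)$ satisfies $z < x_2 < x_3$ while $x_2, x_3, z$ lie on a line in that order, so $x_3$ is the $<$-maximum of three collinear points, violating converging directly -- no convexity is needed. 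The remaining possibility $m = x_3$ is handled in the paper as a separate third case, using that a branch-point value of $m(x_1, x_4)$ forces $x_3 \le x_2$ (the relevant property of $m$ comes out of the proof of Duchesne's Lemma~7.2), again contradicting $x_2 < x_3$. Your proposal relies exclusively on convexity in this whole regime and misses the role of converging, which is precisely what the iteration was meant to compensate for but cannot.
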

\begin{proof}
  First, notice that we can reduce to the case $x' = x$ in the definition of convex: since $<$ is converging, any $z \in [x, m(x, y) )$ satisfies $z \le x$. 

  \ref{i:convex:Bruno} $\Rightarrow$ \ref{i:convex:forbidden}. 
  Suppose that $<$ is convex and let $x_1, x_2, x_3, x_4$ be on a line, with $x_2 < x_3 < x_1 < x_4$. 
  We have three cases, depending on the position of $m(x_1, x_4)$:
  \begin{itemize}
    \item if $m(x_1, x_4) \in [x_1, x_3)$, it follows from convexity applied to $x_1, x_3, x_4$, that $x_1 < x_3$;
    \item if $m(x_1, x_4) = x_3$, then clearly $x_3 < x_2$; 
    \item if  $m(x_1, x_4) \in (x_3, x_4)$, then there exists $z$ close to $m(x_1, x_4)$ such that $B(z, x_3, x_2)$ and  $z < x_2 < x_3$, contradicting convergence.
  \end{itemize} 
  In each case we reached a contradiction.
  
  \ref{i:convex:forbidden} $\Rightarrow$ \ref{i:convex:Bruno}. Let $x, y', y$ be such that $y' \in (m(x, y), y]$ and $B(x, y', y)$.
  Suppose towards contradiction that $x < y$ and $y' < x$.
  We can find $z < y'$ close to $m(x, y)$ such that $x, z, y', y$ are on a line and $z < y' < x < y$, which is forbidden. 
\end{proof}

From now on, by a \df{convex} linear order we will mean one satisfying condition \ref{i:convex:forbidden} in Lemma~\ref{lem:convex-equiv-def}. This definition also has the advantage that it makes sense on finite trees.
We record the following useful facts about CCLOs (cf. \cite{Duchesne2020}*{Lemma 7.5}).

\begin{lemma}
  \label{lem:cclo-5-point-lemma}
  Let $<$ be a convex converging linear order on (a finite subtree of) $\bps$.
  \begin{enumerate}
    \item \label{i:lem:cclo-5-point-lemma-1}
    If $z_1, y_1, x, y_2, z_2$ are and on a line in this order (not necessarily all distinct) and such that $x < y_1 < y_2$, then $z_1 < z_2$; 
    \item \label{i:lem:cclo-5-point-lemma-2}
    Let $x < y$ and let $a_1, a_2 \in \comps{y}$ be components which do not contain $x$. For any $z_1, z_1' \in a_1$ and $z_2, z'_2 \in a_2$, if $z_1 < z_2$, then $z_1' < z_2'$. 
  \end{enumerate}
\end{lemma}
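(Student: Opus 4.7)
For part (i), I work entirely from the forbidden orderings supplied by converging (the geometric middle of a collinear triple is never the strict $<$-maximum) and by the quadruple form of convexity in condition~\ref{i:convex:forbidden} of Lemma~\ref{lem:convex-equiv-def}. Starting from the chain $(z_1, y_1, x, y_2, z_2)$, I read off three nested subconfigurations along the reverse direction $(z_2, y_2, x, y_1, z_1)$. First, the convexity condition applied to the quadruple $(y_2, x, y_1, z_1)$ forbids $x < y_1 < y_2 < z_1$ and so gives $z_1 \leq y_2$. Second, converging on the triple $(z_2, y_2, y_1)$ forbids $z_2 < y_1 < y_2$ and yields $z_2 \geq y_1$, which is strict since $z_2 \neq y_1$ geometrically. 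Third, convexity on $(z_2, x, y_1, z_1)$ forbids $x < y_1 < z_2 < z_1$; combined with $x < y_1 < z_2$, this forces $z_2 \geq z_1$, and distinctness delivers $z_1 < z_2$. The only coincidences among the five points that can occur are between neighbors in the chain (such as $z_1 = y_1$ or $z_2 = y_2$), and in each such case the same three arguments collapse to a triple version that still delivers the conclusion.

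For part (ii), I first observe that every point of $a_1 \cup a_2$ lies strictly above $y$ in the order: for such a $z$, the triple $(z, y, x)$ is collinear in this order, and converging applied to this triple and to its reverse, together with $x < y$ and $z \notin \{x, y\}$, forces $z > y$. In particular all of $z_1, z_1', z_2, z_2'$ are strictly above $y$.

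The conclusion $z_1' < z_2'$ is then obtained in two symmetric sub-steps. Set $c_2 = K(z_2, z_2', y)$; the arc $[z_2, z_2']$ is contained in $a_2$, so $c_2 \in a_2$ and $y < c_2$. The convexity quadruple condition applied to $(z_1, y, c_2, z_2)$ forbids $y < c_2 < z_1 < z_2$, and the hypotheses $y < c_2$, $z_1 < z_2$, together with $z_1 \neq c_2$, yield $z_1 < c_2$. Then converging on the triple $(z_2', c_2, z_1)$ forbids $z_2' < z_1 < c_2$, giving $z_1 \leq z_2'$ and hence $z_1 < z_2'$ by distinctness. Symmetrically, with $c_1 = K(z_1, z_1', y) \in a_1$, converging on $(z_1, c_1, z_2')$ yields $c_1 < z_2'$, and convexity applied to $(z_2', y, c_1, z_1')$ then forbids $y < c_1 < z_2' < z_1'$ and delivers $z_1' < z_2'$. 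The main difficulty is organizational, since converging and the convex quadruple condition are one-sided in the $<$-direction: one must orient each subchain correctly and verify that the forbidden pattern actually constrains the unknown inequality rather than evaporating. The degenerate cases in which a Steiner point coincides with one of its inputs (e.g. $c_2 \in \{z_2, z_2'\}$, or $c_1 \in \{z_1, z_1'\}$) are handled by collapsing the offending quadruple to a triple and appealing to converging alone.
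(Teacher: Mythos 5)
Your proof is correct. For part~(i) you follow essentially the same route as the paper: the decisive step in both arguments is to apply converging to $(y_1,y_2,z_2)$ to get $y_1 < y_2 \le z_2$, and then convexity (in the form of condition~\ref{i:convex:forbidden}) to the quadruple $(z_2,x,y_1,z_1)$ to rule out $z_2 < z_1$. Your first sub-step, deducing $z_1 \le y_2$ from the quadruple $(y_2,x,y_1,z_1)$, is never actually used in the remainder of the argument and can be dropped; the paper's proof likewise records the unused inequality $y_1 \le z_1$.

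For part~(ii) your route is genuinely different from the paper's and arguably cleaner in one respect. The paper introduces both medians $w_1 = K(y,z_1,z_1')$ and $w_2 = K(y,z_2,z_2')$ at once, shows $y < w_1 < w_2$ (using $z_1 < z_2$ together with convexity on the quadruple $(w_1,y,w_2,z_2)$), and then reduces to part~(i) applied to the five-point chain $z_1', w_1, y, w_2, z_2'$. You instead chain through the intermediate inequality $z_1 < z_2'$: one median $c_2$ on the $a_2$ side upgrades $z_1 < z_2$ to $z_1 < z_2'$, and then the median $c_1$ on the $a_1$ side upgrades this to $z_1' < z_2'$, each time using only converging and the quadruple convexity criterion. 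This avoids any appeal to part~(i), so your two items are logically independent, whereas the paper's item~(ii) rests on item~(i). The amount of bookkeeping (orienting each triple/quadruple so that the forbidden pattern bites, checking that the Steiner point lands in the right component, collapsing degenerate quadruples to triples) is comparable in the two approaches. Both arguments also rely on the same preliminary observation, that every point of $a_1 \cup a_2$ is strictly $<$-above $y$, via converging applied to $(x,y,\cdot)$.
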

\begin{proof}
  \ref{i:lem:cclo-5-point-lemma-1}
  Since $<$ is converging, $y_1 \leq z_1$ and $y_1 < y_2 \leq z_2$. If $z_1 > z_2$, then $z_2, x, y_1, z_1$ are on a line and such that $x < y_1 < z_2 < z_1$, contradicting convexity.
  
  \ref{i:lem:cclo-5-point-lemma-2}
  Let $w_i = \centerf(y, z_i, z'_i)$ for $i = 1,2$. The hypothesis implies that $w_i \neq y$. 
  By applying the converging property twice, $x < y < w_i \le z_i$, so in particular $w_1 \le z_1 < z_2$. 
  Suppose that $w_2 < w_1$ (in particular $w_2 \ne z_2$), then $w_1, y, w_2,z_2$ are in a line and $y < w_2 < w_1 < z_2$, contradicting convexity. 
  It follows that $y < w_1 < w_2$; but $z'_1, w_1, y, w_2,z'_2$ are on a line (possibly not distinct), so we conclude by point \ref{i:lem:cclo-5-point-lemma-1}.
\end{proof}

Denote by $\CCLO(\bps)$ the space of all converging and convex linear orders on $X$. This is a closed and $G$-invariant subset of $2^{X^2}$, so it is a $G$-flow. In the infinite-branching case, Duchesne proved that this flow is minimal, that it has a comeager orbit $G \cdot \eta$, and that the stabilizer $G_{\eta}$ is extremely amenable. Then he claimed that the natural map $G/G_{\eta} \to G \cdot \eta$ is a uniform homeomorphism and concluded that $\CCLO(\bps)$ is the UMF of $G$. This last claim is, however, not correct, as the following theorem shows.

\begin{theorem}
  \label{th:CCLO}
  Let $n = 3, 4, \ldots, \infty$, let $G = \Homeo(W_n)$, and let $\bps$ denote the set of branch points of $W_n$. Then the following hold:
  \begin{enumerate}
  \item \label{i:CCLO:n-finite} If $n$ is finite, then the flow $G \actson \CCLO(X)$ is isomorphic to $\umf(G)$;
  \item \label{i:CCLO:n-infinite} If $n = \infty$, then the flow $G \actson \CCLO(X)$ is a proper factor of $\umf(G)$. In particular, $\CCLO(X)$ is not isomorphic to $\umf(G)$.
  \end{enumerate}
\end{theorem}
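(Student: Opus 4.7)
The plan is to use Corollary~\ref{c:umf-KGamma-expansions} to describe $\umf(G)$ concretely. Taking $\Gamma = \Sym(M)$, so $\Gamma_c = \Sym(M \sminus \set{c})$, the minimal, Ramsey, ultrahomogeneous expansion of $\bM_\cc$ is the structure $\bN$ obtained by endowing $M$ with a linear order $<$ having $c$ as minimum, with $(M \sminus \set{c}, <) \cong (\bQ, <)$ when $n = \infty$. Since $\Sym(M)$ is oligomorphic and CAP (its UMF being $\LO(M)$), Theorem~\ref{th:KGammaCAP} ensures $G$ is CAP, so $\umf(G) \cong \Exp(\kf(\bM), \fullpkf(\bN))$ is metrizable for every $n$.

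I next construct a natural factor map $\Psi \colon \umf(G) \to \CCLO(\bps)$. For $p \in \umf(G)$ and distinct $x, y \in \bps$, let $z = \centerf(x, y, \bxi)$ be their meet relative to the root, and declare $x <_p y$ iff either $z = x$ (so $x$ lies between $\bxi$ and $y$) or $z \notin \set{x, y}$ and $\compf(z, x) < \compf(z, y)$ in the linear order on $\comps{z}$ specified by $p$. The defining condition is quantifier-free in the language of $\pkf(\bN)$, so $\Psi$ is continuous and visibly $G$-equivariant. That $<_p$ is always converging and convex follows by a case analysis on tree configurations, using the $\pkf(\bN)$-axiom that $\rootf(z)$ is the minimum of the order at each branch point. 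Surjectivity of $\Psi$ follows from Duchesne's result that $\CCLO(\bps)$ is a minimal $G$-flow together with the universality of $\umf(G)$.

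For part (i), I construct an explicit inverse of $\Psi$ when $n$ is finite. Given a CCLO $<$, Lemma~\ref{lem:cclo-5-point-lemma} yields a well-defined induced linear order on each $\comps{z}$ via branch-point representatives, whose minimum plays the role of $\rootf(z)$. By Example~\ref{ex:3-transitive}, for finite $M$ we have $\tS_1(\bM) \cong M$, so the coherent family $(\rootf(z))_z$ uniquely determines a virtual root in $\MHP_G(\waz)$; combined with the induced orderings, this reconstructs the unique type in $\umf(G)$ whose $\Psi$-image is the given CCLO.

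For part (ii), I exhibit distinct $p_1, p_2 \in \umf(G)$ with $\Psi(p_1) = \Psi(p_2)$. The failure of injectivity for $n = \infty$ reflects the fact that $\tS_1(\bM) = M \cup \set{\infty}$ contains an additional ``generic'' one-type, making the fibres of $\MHP_G(\waz) \to \waz$ over branch points strictly richer than in the finite case. Concretely, the approach is to find a net $(g_\alpha)$ in $G$ such that $g_\alpha \cdot \eta$ converges in $\CCLO(\bps)$ while $g_\alpha \cdot p$ admits two distinct accumulation points in $\umf(G)$ for a suitable $p$ above $\eta$, witnessing that the Samuel uniformity on $G/\pkf(\Aut(\bQ))$ is strictly finer than the uniformity inherited from the product topology on $2^{\bps \times \bps}$. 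Producing such a pair, and verifying that they are genuinely distinct types which nonetheless project to the same CCLO, is the main technical obstacle of the theorem.
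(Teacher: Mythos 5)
Your framework and factor map are exactly the paper's. You use the same expansion representation of $\umf(G)$ from Corollary~\ref{c:umf-KGamma-expansions} and define the same map (the paper's $\pi$, your $\Psi$) by declaring $x <_p y$ iff $x$ lies on the root side of $y$ or the component around the meet containing $x$ is $\prec$-smaller. Your part~(i) strategy --- inverting $\Psi$ by recovering from a CCLO the order on each $\comps{z}$ and the root component, then observing that in the finite-branching case $\tS_1(\bM) \cong M$ pins down the root uniquely --- is a valid variant of the paper's direct injectivity argument, and captures the same mechanism (Lemma~\ref{lem:cclo-5-point-lemma}\ref{i:lem:cclo-5-point-lemma-2} for well-definedness, Example~\ref{ex:3-transitive} for the fibre description).

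Part~(ii), however, is not proved. You correctly identify the governing heuristic (for $n = \infty$, $\tS_1(\bM) = M \cup \set{\infty}$ has an extra generic type, so a branch-point fibre of $\MHP_G(\waz) \to \waz$ is strictly larger than $\comps{x}$), but you then explicitly defer the construction: ``Producing such a pair \ldots is the main technical obstacle.'' The route you sketch in its place --- a net $(g_\alpha)$ with $g_\alpha \cdot \eta$ convergent in $\CCLO(\bps)$ but $g_\alpha \cdot p$ having two accumulation points in $\umf(G)$ --- is a different and vaguer strategy than the paper's, and is not carried out. The paper instead gives a direct finitary construction: fix $x_0 \neq x_1$, set $A = \set{p : p \models \rho(x_0) = \compf(x_0, x_1)}$ and $B = \set{p : p \models \rho(x_0) \neq \compf(x_0, x_1)}$ (closed and disjoint), and for every finite subtree $T \ni x_0, x_1$ build $q_1 \in A$ and $q_2 \in B$ by placing the root endpoint respectively near $x_0$ on the $x_1$-side and (using $n = \infty$) in a component of $x_0$ missing $T$ entirely, then choosing the $\prec$-orders around the finitely many relevant points so that $\pi(q_1)$ and $\pi(q_2)$ agree on $T$; compactness supplies the desired distinct types with the same image. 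To close the gap you must actually perform a construction of this kind.

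Two secondary issues worth noting. First, your surjectivity argument invokes Duchesne's minimality of $\CCLO(\bps)$ for all $n$, but Duchesne only established that for $n = \infty$; for finite $n$ your inverse construction subsumes it, but the appeal should be restricted accordingly (the paper avoids this by proving surjectivity directly via density and compactness). Second, you claim $<_p$ is converging and convex for \emph{every} $p$ by ``a case analysis,'' but the paper only verifies this for a single realized type $p_0$ (whose root is a genuine endpoint) and then uses minimality of $\umf(G)$ plus closedness of $\CCLO(\bps)$ to conclude the image is contained in it; the claim for all $p$ including non-realized roots requires a more careful argument than you indicate.
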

\begin{proof}
  We will use the description of $\umf(G)$ given by Corollary~\ref{c:umf-KGamma-expansions}, as discussed in the beginning of this section. Denote $\Xi = \Exp(\bX, \pkf(\bN))$ and a define a map $\pi \colon \Xi  \to 2^{X^2}$  by
  \begin{multline*}
    x <_{\pi(p)} y \iff p \models x \neq y \ \& \ \big( x \meet y = x  \lor {} \\
    \Phi(x \meet y, x) \prec \Phi(x \meet y, y) \big) \quad \text{for } x, y \in X.
  \end{multline*}
  Here we use the symbol $\prec$ to denote the order in the expansion $\pkf(\bN)$ in order to distinguish it from the order $\pi(p)$ on $X$ that we are defining. It is clear that this map is continuous and $G$-equivariant. 
  We check that its image is equal to $\CCLO(X)$. 
  We do so in two steps;
  first we show that there is some point $p_0 \in \Xi$ such that $\pi(p_0) \in \CCLO(X)$. 
  Then we prove that for all finite $T \sub \bps$ and any convex and converging linear order $<_T$ on $T$ there is $p \in \Xi$ such that $<_{\pi(p)}$ agrees with $<_T$ on $T$.
  By minimality of $\Xi$ and compactness of $\CCLO(X)$, this is enough.
  
  We choose $p_0$ to be the type corresponding to the expansion $\pkf(\bN)$ of $\bX$, and then the root is an endpoint in $W$. First, it is easy to check that $\pi(p_0)$ is a linear order.

  Next we check that $\pi(p_0)$ is converging. To avoid clutter, denote $<_{\pi(p_0)}$ simply by $<$. Towards a contradiction, suppose that $x, y, z \in X$ are such that $B(x, y, z)$ and $x,z < y$. We have that $x \meet z \in [x, z]$, and by symmetry, we may assume that $x \meet z \in [x, y]$. But then $y \meet z = y$, which implies that $y < z$, contradiction.

  We finally see that $<$ is convex. Towards a contradiction, suppose that $x, y, z, w$ lie on a line in this order and that $y < z < x < w$. We consider several cases depending on the position of $x \meet w$. If $x \meet w = x$, then $x \meet y = x$, so $x < y$. If $x \meet w \in (x, z)$, then $x \meet z = x \meet w$ and $\Phi(x \meet z, z) = \Phi(x \meet z, w)$, so $z$ and $w$ must be on the same side of $x$ in $<$. Finally, if $x \meet w \in [z, w]$, then $z \meet y = z$, so $z < y$. In all cases, we obtained a contradiction.

  Now, let $<_T$ be a convex and converging  linear order on a finite $T \sub \bps$, which we can suppose is center-closed and has at least two points. 
  Let $x_0$ be the minimum of $<_T$ and let $x_1$ be $<_T$-greatest of its neighbors. 
  We chose an endpoint $\xi$ as a root in a way that $x_0 \meet x_1 \in (x_0, x_1)$. 
  Call $\bS$ the $\cL^*_\bX$-structure generated by $T$ and $\xi$. 
  We now define an $\cL^*_\bX \cup \set{\prec}$ expansion $\bS'$ of $\bS$ by describing $\prec$ on each $\comps{x} \in \Comps{S}$. 
  Let $\compf(x_0 \meet x_1, x_0) \prec \compf(x_0 \meet x_1, x_1)$. 
  For $x \in T$, and any $a, a' \in \comps{x} \sminus \set{\rho(x)}$, let $\rho(x) \prec a$, and $a \prec a'$ if and only if there are $y \in a, y' \in a'$ with $y <_T y'$. 
  By \Cref{lem:cclo-5-point-lemma} \ref{i:lem:cclo-5-point-lemma-2}, $\prec$ is a linear order on each $\comps{x}$, so $\bS' \in \Age(\pkf(\bN))$.
  Let $p \in \Xi$ be any type extending the type of $\bS'$. 
  To show that $<_{\pi(p)}$ agrees with $<_T$, let $x,x' \in T$ with $x <_T x'$. 
  Since $<_T$ is converging and $x_0$ is the minimum, it cannot be that $\betr(x_0, x', x)$, so $x \meet x' \ne x'$. 
  If $x \meet x' = x$, then $x <_{\pi(p)} x'$ and we are done. 
  Suppose now that $x \meet x' = x_0 \meet x_1$. It follows from \Cref{lem:cclo-5-point-lemma} \ref{i:lem:cclo-5-point-lemma-1}, the fact that $x <_T x'$ and the fact that $x_1$ is the greatest of $x_0$'s neighbors, that $x, x_0, x_1, x'$ are on a line. So $\compf(x \meet x', x) = \compf(x_0 \meet x_1, x_0) \prec \compf(x_0 \meet x_1, x_1) = \compf(x \meet x', x')$ and thus $x <_{\pi(p)} x'$. 
  Otherwise, $x \meet x' \in T$, so by definition $\compf(x \meet x', x) \prec \compf(x \meet x', x')$ and again $x <_{\pi(p)} x'$.

  \ref{i:CCLO:n-finite} Now we suppose that $n$ is finite and show that $\pi$ is injective. Suppose that $p_1 \neq p_2$ in order to show that $\pi(p_1) \neq \pi(p_2)$. We denote by $\theta \colon \Xi \to \MHP_G(W)$ the natural factor map defined by the restriction of the type to the language of $\pkf(\bM_\cc)$ and taking $\tp(\bxi/X)$ (here we use the description of $\MHP_G(W)$ given by Theorem~\ref{th:S_GW}). First suppose that $\theta(p_1) \neq \theta(p_2)$. Then by the description of $\MHP_G(W)$ given in Example~\ref{ex:3-transitive}, there exist $x, y_1, y_2 \in X$ such that $x \in (y_1, y_2)$ and
  \begin{equation*}
    p_1 \models \Phi(x, y_1) = \rho(x) \And p_2 \models \Phi(x, y_2) = \rho(x).
  \end{equation*}
  (This is where we use that $\comps x$ is finite. In the infinite-branching case, there exist types $p$ such that $p \models \Phi(x, y) \neq \rho(x)$ for all $y \in X$; see the proof of \ref{i:CCLO:n-infinite} below.) But then
  \begin{equation*}
    p_1 \models x \meet y_2 = x \ \& \ y_1 \meet x = y_1 \meet y_2,
  \end{equation*}
  which implies that $x <_{\pi(p_1)} y_1$ and $x, y_2$ are on the same side of $y_1$ in $<_{\pi(p_1)}$. Similarly, when we replace $p_1$ by $p_2$ and exchange $y_1$ and $y_2$. These conditions imply that the orders $\pi(p_1)$ and $\pi(p_2)$ cannot agree on $x, y_1, y_2$.

  Finally, suppose that $\theta(p_1) = \theta(p_2)$. Realize $\theta(p_1)$ in some model $\bY^*$ of $\Th(\bX^*)$. In that way, we can identify $X$ with a subset of this model. We can consider $p_1$ and $p_2$ as the types of the same tuple (as usual, indexed by $X$) in two expansions of $\bY^*$ to different models of $\Th(\pkf(\bN))$. In particular, these two models have the same universe and only differ in the interpretation of $\prec$.
  Then there must exist tuples $\bar x, \bar y, \bar z$ from $X$ such that, denoting $x = \bigwedge \bar x, y = \bigwedge \bar y, z = \bigwedge \bar z$, we have that $x \in (y, z)$ and
  \begin{equation*}
    p_1 \models \Phi(x, y) \prec \Phi(x, z) \And p_2 \models \Phi(x, z) \prec \Phi(x, y).
  \end{equation*}
  (Note, however, that $x, y, z$ need not belong to $X$. They are elements of the model of $\Th(\bX^*)$, where we realized $p_1$ and $p_2$.)
  We consider the position of $y \meet z$. If $y \meet z \in [y, x)$, then $\rho(x) = \Phi(x, y)$ must be the $\prec$-least element of $\comps x$ in both $p_1$ and $p_2$, which is not possible. Similarly, it is not possible that $y \meet z \in [z, x)$, so the only possibility is that $y \meet z = x$. But then, for each $i, j$, we have that $y_i \meet z_j = x$, $\Phi(x, y_i) = \Phi(x, y), \Phi(x, z_j) = \Phi(x, z)$, so by the definition of $\pi$, $y_i <_{\pi(p_1)} z_j$ and $z_j <_{\pi(p_2)} y_i$, implying that $\pi(p_1) \neq \pi(p_2)$.

  \ref{i:CCLO:n-infinite} We consider the case $n = \infty$ and construct two types $p_1 \neq p_2 \in \Xi$ with $\pi(p_1) = \pi(p_2)$. Fix two distinct vertices $x_0, x_1 \in X$ and let
  \begin{align*}
    A &= \set{p \in \Xi : p \models \rho(x_0) = \Phi(x_0, x_1)}, \\
    B &= \set{p \in \Xi : p \models \rho(x_0) \neq \Phi(x_0, x_1)}.
  \end{align*}
  Note that $A$ and $B$ are closed and disjoint. By compactness, it suffices to construct for every finite subtree $T \sub X$ containing $x_0$ and $x_1$, two types $q_1 \in A$ and $q_2 \in B$ such that the orders $\pi(q_1)$ and $\pi(q_2)$ agree on $T$. For $q_1$, we choose an endpoint $\xi_1$ as the root in a way that $x_0 \meet x_1 \in (x_0, x_1)$ and no point in $T$ lies between $x_0 \meet x_1$ and $x_0$. 
  For $q_2$, using that the dendrite is infinitely branching, we choose a root $\xi_2$ that does not belong to any component in $\comps x_0 \cap \Comps{T}$ (so that $q_2 \models \rho(x_0) \neq \Phi(x_0, y)$ for all $y \in T \sminus \set{x_0}$).
  Let $\bS_1, \bS_2$ be the $\cL^*_{\bX}$-structures generated from $T$ and the choices of $\xi_1, \xi_2$ for $\bxi$, respectively. In particular, $S_1 = T \cup \set{\xi_1, x_0 \meet x_1}$, $S_2 = T \cup \set{\xi_2}$.
  In particular, each $x \in T \sminus \set{x_0}$ has the same set of components $\comps x$ in $\bS_1$ and in $\bS_2$.
  We can choose the same arbitrary ordering $\prec$ on $\comps x$ (for $x \in T \sminus \set{x_0}$) for both $q_1$ and $q_2$, subject to the condition that $\rho(x)$ is always the least element.
  
  For clarity, denote by $A_1, A_2$ the respective sets of components around $x_0$ in $\bS_1$ and $\bS_2$, and by $\rho_1(x_0), \rho_2(x_0)$ the components containing the root. 
  Notice that $\rho_1(x_0) = \compf(x_0, x_1)$ and that $\rho_2(x_0)$ does not appear in $A_1$. 
  For $q_1$, let $\prec_1$ be an arbitrary order on $A_1$ with minimum $\rho_1(x_0)$. 
  For $q_2$, let $\prec_2$ be the same order except that $\Phi(x_0, x_1) = \rho_1(x_0)$ is the maximum, and extend it to $A_2$ as usual, by letting $\rho_2(x_0)$ be its minimum. 
  Finally, for the order around the point $x_0 \meet x_1$ in $q_1$, we let $q_1 \models \rho(x_0 \meet x_1) \prec  \Phi(x_0 \meet x_1, x_0) \prec \Phi(x_0 \meet x_1, x_1)$.
 
  Extend $q_1$ and $q_2$ arbitrarily to full types in $\Xi$. We claim that $\pi(q_1)$ and $\pi(q_2)$ agree on $T$.
  The only non-trivial relation to check is that the orders between $x_1$ and any $y$ such that $\betr(y, x_0, x_1)$ coincide. 
  We have that $q_1 \models y \meet x_1 = x_0 \meet x_1$, and that $q_1 \models \Phi(x_0 \meet x_1, y) = \Phi(x_0 \meet x_1, x_0) \prec \Phi(x_0 \meet x_1, x_1)$, so $y <_{\pi(q_1)} x_1$. 
  On the other hand, $q_2 \models (y \meet x_1) = x_0$. 
  If $y = x_0$, by the definition of $\pi$, $y <_{\pi(q_2)} x_1$; 
  if $y \ne x_0$, then by the definition of $\prec_2$, $q_2 \models \Phi(x_0, y) \prec \Phi(x_0, x_1)$, so again $y <_{\pi(q_2)} x_1$.

  The final claim in the statement follows from the fact that every endomorphism of an UMF is an automorphism.
\end{proof}

\bibliography{main.bib}

\end{document}